\theoremstyle{plain}
\newtheorem{theorem}{Theorem}[section]
\newtheorem{lemma}[theorem]{Lemma}
\newtheorem{question}[theorem]{Question}
\newtheorem{remark}[theorem]{Remark}
\newtheorem{corollary}[theorem]{Corollary}
\newtheorem*{claim*}{Claim}
\newtheorem*{subclaim*}{Subclaim}
\theoremstyle{definition}
\newtheorem{definition}[theorem]{Definition}
\newcommand{\crit}[1]{{{\rm{crit}}\left({#1}\right)}}
\newcommand{\cof}[1]{{{\rm{cof}}(#1)}}
\newcommand{\ran}[1]{{{\rm{ran}}(#1)}}
\newcommand{\length}[1]{{\rm{lh}}({#1})}
\newcommand{\POT}[1]{{\mathcal{P}}({#1})}
\newcommand{\map}[3]{{#1}:{#2}\longrightarrow{#3}}
\newcommand{\Map}[5]{{#1}:{#2}\longrightarrow{#3};~{#4}\longmapsto{#5}}
\newcommand{\Set}[2]{\{{#1}~\vert~{#2}\}}
\newcommand{\seq}[2]{\langle{#1}~\vert~{#2}\rangle}
\newcommand{\anf}[1]{{\text{``}\hspace{0.3ex}{#1}\hspace{0.3ex}\text{''}}}
\newcommand{\HH}[1]{{\rm{H}}(#1)}
\newcommand{\Ult}[2]{{\mathrm{Ult}}({#1},{#2})}
\newcommand{\Add}[2]{{\rm{Add}}({#1},{#2})}
\newcommand{\Col}[2]{{\rm{Col}}({#1},{#2})}
\newcommand{\id}{{\rm{id}}}
\newcommand{\Lim}{{\rm{Lim}}}
\newcommand{\On}{{\rm{On}}}
\newcommand{\LL}{{\rm{L}}}
\newcommand{\ZFC}{{\rm{ZFC}}}
\newcommand{\GCH}{{\rm{GCH}}}
\newcommand{\CH}{{\rm{CH}}}
\newcommand{\MMM}{{\mathbb{M}}}
\newcommand{\PPP}{{\mathbb{P}}}
\newcommand{\QQQ}{{\mathbb{Q}}}
\newcommand{\RRR}{{\mathbb{R}}}
\newcommand{\KK}{{\rm{K}}}
\newcommand{\VV}{{\rm{V}}}
\newcommand{\calT}{\mathcal{T}}
\newcommand{\calQ}{\mathcal{Q}}
\newcommand{\calM}{\mathcal{M}}
\newenvironment{enumerate-(a)}{\begin{enumerate}[label={\upshape (\alph*)}, leftmargin=2pc]}{\end{enumerate}}
\newenvironment{enumerate-(a)-r}{\begin{enumerate}[label={\upshape (\alph*)}, leftmargin=2pc,resume]}{\end{enumerate}}
\newenvironment{enumerate-(A)}{\begin{enumerate}[label={\upshape (\Alph*)}, leftmargin=2pc]}{\end{enumerate}}
\newenvironment{enumerate-(A)-r}{\begin{enumerate}[label={\upshape (\Alph*)}, leftmargin=2pc,resume]}{\end{enumerate}}
\newenvironment{enumerate-(i)}{\begin{enumerate}[label={\upshape (\roman*)}, leftmargin=2pc]}{\end{enumerate}}
\newenvironment{enumerate-(i)-r}{\begin{enumerate}[label={\upshape (\roman*)}, leftmargin=2pc,resume]}{\end{enumerate}}
\newenvironment{enumerate-(I)}{\begin{enumerate}[label={\upshape (\Roman*)}, leftmargin=2pc]}{\end{enumerate}}
\newenvironment{enumerate-(I)-r}{\begin{enumerate}[label={\upshape (\Roman*)}, leftmargin=2pc,resume]}{\end{enumerate}}
\newenvironment{enumerate-(1)}{\begin{enumerate}[label={\upshape (\arabic*)}, leftmargin=2pc]}{\end{enumerate}}
\newenvironment{enumerate-(1)-r}{\begin{enumerate}[label={\upshape (\arabic*)}, leftmargin=2pc,resume]}{\end{enumerate}}
 \title[$\Sigma_1(\kappa)$-definable subsets of H($\kappa^+$)]{$\mathbf{\Sigma_1({\boldsymbol \kappa})}$-definable subsets of H($\mathbf{\boldsymbol\kappa^+}$)}
 \author{Philipp L\"ucke}
\address{Mathematisches Institut, Universit\"at Bonn\\Endenicher Allee 60\\53115 Bonn\\Germany}
\email{pluecke@math.uni-bonn.de} 
 \author{Ralf Schindler}
 \address{Institut f\"ur Mathematische Logik und Grundlagenforschung, Universit\"at M\"unster, Einsteinstr. 62, 48149 M\"unster, Germany}  
\email{rds@math.uni-muenster.de} 
 \author{Philipp Schlicht}
  \address{Mathematisches Institut, Universit\"at Bonn\\Endenicher Allee 60\\53115 Bonn\\Germany} 
 \email{schlicht@math.uni-bonn.de}
\thanks{During the preparation of this paper, the first and the third author were partially supported by DFG-grant LU2020/1-1. This research was partially done whilst all three author were visiting fellows at the Isaac Newton Institute for Mathematical Sciences in the programme `Mathematical, Foundational and Computational Aspects of the Higher Infinite' (HIF) funded by EPSRC grant EP/K032208/1. The authors would like to thank the organizers for the opportunity to participate in this programme. Finally, the authors would also  like to thank Philip Welch for many helpful comments on an earlier version of the paper and the anonymous referee for the careful reading of the manuscript.}
\subjclass[2010]{03E45, 03E47, 03E55} 
\keywords{$\Sigma_1$-definability, lightface formulas, large cardinals, well-orders, club filter, nonstationary ideal, Bernstein sets, iterated ultrapowers, iterated generic ultrapowers}
\begin{document} 

\begin{abstract} 
 We study $\Sigma_1(\omega_1)$-definable sets (i.e. sets that are equal to the collection of all sets satisfying a certain $\Sigma_1$-formula with parameter $\omega_1$) in the presence of large cardinals.  
Our results show that the existence of a Woodin cardinal and a measurable cardinal above it imply that no well-ordering of the reals is $\Sigma_1(\omega_1)$-definable, the set of all stationary subsets of $\omega_1$ is not $\Sigma_1(\omega_1)$-definable and the complement of every $\Sigma_1(\omega_1)$-definable Bernstein subset of ${}^{\omega_1}\omega_1$ is not $\Sigma_1(\omega_1)$-definable.  
In contrast, we show that the existence of a Woodin cardinal is compatible with the existence of a $\Sigma_1(\omega_1)$-definable well-ordering of $\HH{\omega_2}$ and the existence of a $\Delta_1(\omega_1)$-definable Bernstein subset of  ${}^{\omega_1}\omega_1$. 
We also show that, if there are  infinitely many Woodin cardinals and a measurable cardinal above them, then there is no $\Sigma_1(\omega_1)$-definable uniformization of the club filter on $\omega_1$. 
Moreover, we prove a perfect set theorem for $\Sigma_1(\omega_1)$-definable subsets of ${}^{\omega_1}\omega_1$, assuming that there is a measurable cardinal and the non-stationary ideal on $\omega_1$ is saturated.  The proofs of these results use iterated generic ultrapowers and Woodin's $\PPP_{\mathrm{max}}$-forcing. 
Finally, we also prove variants of some of these results for $\Sigma_1(\kappa)$-definable subsets of ${}^{\kappa}\kappa$, in the case where $\kappa$ itself has certain large cardinal properties. 
\end{abstract}

\maketitle



\section{Introduction}

Given an uncountable regular cardinal $\kappa$, we study subsets of the collection $\HH{\kappa^+}$ of all sets of hereditary cardinality at most $\kappa$ that are definable over $\HH{\kappa^+}$ by simple formulas.

\begin{definition} 
 Let $M$ be a non-empty class, let $R_0,\ldots,R_{n-1}$ be relations on $M$ and let $a_0,\ldots,a_{m-1}$ be elements of $M$. Set $\MMM=\langle M,\in,R_0,\ldots,R_{n-1}\rangle$. 
 \begin{enumerate}
  \item A subset $X$ of $M$ is \emph{$\Sigma_1(a_0,\ldots,a_{m-1})$-definable over $\MMM$} if there is a $\Sigma_1$-formula $\varphi(v_0,\ldots,v_m)$ in the language of set theory extended by predicate symbols $\dot{P}_0,\ldots,\dot{P}_{n-1}$ such that $X  = \Set{x\in M}{\MMM\models\varphi(a_0,\ldots,a_{m-1},x)}$.   
  
  \item A subset $Y$ of $M$ is \emph{$\Pi_1(a_0,\ldots,a_{m-1})$-definable over $\MMM$} if $M\setminus Y$ in $M$ is $\Sigma_1(a_0,\ldots,a_{m-1})$-definable over $\MMM$.
  
  \item A subset of $M$ is \emph{$\Delta_1(a_0,\ldots,a_{m-1})$-definable over $\MMM$} if the subset  is both $\Sigma_1(a_0,\ldots,a_{m-1})$- and $\Pi_1(a_0,\ldots,a_{m-1})$-definable over $\MMM$. 
 \end{enumerate} 
\end{definition}

Since $\Sigma_1$-formulas 
are absolute between $\VV$ and $\HH{\kappa^+}$, we will not mention the models $\langle\VV,\in\rangle$ and $\langle\HH{\kappa^+},\in\rangle$ in our statements about $\Sigma_1$-definability.

In this paper, we will focus on the following subjects: $\Sigma_1(\kappa)$-definable well-orderings of $\HH{\kappa^+}$, $\Delta_1(\kappa)$-definitions of the club filter on $\kappa$ and $\Delta_1(\kappa)$-definable Bernstein subsets of ${}^\kappa\kappa$ (see Definition \ref{definition:PerfectBernstein} below). In the case of formulas containing arbitrary parameters from $\HH{\kappa^+}$, it  was shown that the existence of such objects is independent from $\ZFC$ together with large cardinal axioms (see \cite{HL}, \cite{MR2987148} and \cite{MR1242054}). Moreover, it is known that such $\Sigma_1(\kappa)$-definitions exists in certain models of set theory that do not contain larger large cardinals (see \cite{MR3320477} and \cite{MR3591274}). This leaves open the question whether such $\Sigma_1(\kappa)$-definitions are compatible with larger large cardinals. 
The main results of this paper show that large cardinal axioms imply the non-existence of such definitions for $\kappa=\omega_1$.

Using results of Woodin on the $\Pi_2$-maximality of the $\PPP_{max}$-extension of $\LL(\RRR)$ (see \cite{MR2768703} and \cite{MR1713438}), it is easy to show that the assumptions that there are infinitely many Woodin cardinals with a measurable cardinal above them all implies that no well-ordering of the reals is $\Sigma_1(\omega_1)$-definable.   We will derive this conclusion from a much weaker assumption that is  in some sense optimal (see remarks below).

\begin{theorem}\label{theorem:Omega1WO}
 Assume that there is a Woodin cardinal and a measurable cardinal above it. Then no well-ordering of the reals is $\Sigma_1(\omega_1)$-definable. 
\end{theorem}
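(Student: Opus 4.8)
The plan is to derive a contradiction from the assumption that some well-ordering $\prec$ of $\mathbb{R}$ is $\Sigma_1(\omega_1)$-definable, by exploiting absoluteness through a generic ultrapower obtained from saturating the nonstationary ideal, or more directly by using the large-cardinal hypothesis to reflect the $\Sigma_1$-definition into a different model where $\omega_1$ has changed. Here is the rough shape. Fix a $\Sigma_1$-formula $\varphi$ with $\prec = \{x : \varphi(\omega_1, x)\}$, where we may code $\prec$ as a subset of $\mathbb{R}$ (e.g. via pairs of reals). Since a Woodin cardinal $\delta$ with a measurable above it gives us, after forcing with $\mathrm{Col}(\omega_1, {<}\delta)$ or via $\mathbb{Q}_{\max}$-style technology, a saturated nonstationary ideal on $\omega_1$ together with $\Sigma^1_2$-absoluteness (in fact the relevant determinacy), we can form a generic elementary embedding $j : V \to M$ with critical point $\omega_1^V$, $M$ transitive, and $j(\omega_1^V) = \omega_2^V$ (or larger). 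The key point: reals are not moved by $j$, so $j$ fixes every real and fixes the relation $\prec$ coded as a set of reals, but $j(\omega_1^V) \neq \omega_1^V$.

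\smallskip

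Now I would use $\Sigma_1$-upward-absoluteness. If $x \prec y$, then $\varphi(\omega_1^V, \langle x,y\rangle)$ holds in $V$, hence (being $\Sigma_1$, hence upward absolute to outer models and to $M$ via $j$) $M \models \varphi(j(\omega_1^V), \langle x, y\rangle)$; by elementarity this says $M \models$ "$\langle x,y\rangle \in j(\prec)$". But $j(\prec)$ is, in $M$, the relation defined by the same formula $\varphi$ with parameter $j(\omega_1^V)$. The contradiction comes from well-orderedness: $j(\prec)$ is forced to be a well-ordering of $\mathbb{R}^M$ in $M$, and $\mathbb{R}^M = \mathbb{R}^{V[G]} \supseteq \mathbb{R}^V$. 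Since $j \restriction \mathbb{R}^V = \mathrm{id}$ and $j$ is $\Sigma_1$-elementary on a $\Sigma_1$-definable relation, $j(\prec) \cap (\mathbb{R}^V \times \mathbb{R}^V) = \prec$. So in $V[G]$ we have two well-orderings of a set containing $\mathbb{R}^V$ — the point pushed through in these arguments is that the same $\Sigma_1$-definition, read with parameter $\omega_1^{V[G]}$, must also define a well-ordering of $\mathbb{R}^{V[G]}$, and an order-type/absoluteness comparison between "$\varphi(\omega_1^V, -)$ restricted to old reals" and "$\varphi(\omega_1^{V[G]}, -)$ on all reals" forces, via a back-and-forth / comparison of the two $\Sigma_1$-definable orderings across the embedding, that one of them fails to be a linear order or fails to be well-founded — exploiting that a $\Sigma_1$ relation cannot distinguish the two parameters once enough reals have been added and the tree of the well-order would give a $\Sigma^1_2$-in-a-real ill-founded witness contradicting the determinacy/absoluteness available from the Woodin and measurable.

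\smallskip

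More precisely, the clean route: by Woodin's theorems, under a Woodin cardinal with a measurable above, every $\mathrm{ROO}$ (real-ordinal-definable from a real) set of reals is universally Baire enough that $\Sigma^1_2$-statements are absolute to the generic extension adding a generic for $\mathbb{P}_{\max}$ (or to any $\Sigma^1_3$-correctness situation); a $\Sigma_1(\omega_1)$-definable well-ordering of $\mathbb{R}$ would be, after relativizing $\omega_1$ out, a $\Sigma^1_3$ (or $\Sigma_1$ over $L(\mathbb{R})$-flavored) well-ordering, and by Woodin's $\Pi_2$-maximality of the $\mathbb{P}_{\max}$-extension and $\mathrm{AD}^{L(\mathbb{R})}$ (which follows from the hypothesis) there is no such definable well-ordering — the existence of a $\Sigma_1(\omega_1)$ well-order of the reals is a $\Sigma_1$-property of the structure $\langle H(\omega_2), \in, \mathrm{NS}_{\omega_1}\rangle$ (or of $\langle H(\omega_1), \in\rangle$ with the definition relativized), which by maximality would already be forced over $L(\mathbb{R})$ by $\mathbb{P}_{\max}$, yet in the $\mathbb{P}_{\max}$-extension $\omega_1 = \omega_1^{L(\mathbb{R})}$ and the reals have no ordinal-definable (hence no $\Sigma_1(\omega_1)$-definable) well-order by $\mathrm{AD}$. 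The authors promise a \emph{weaker} optimal hypothesis, so the real work is replacing "infinitely many Woodins + measurable" by "one Woodin + measurable": this is done by noticing that one Woodin with a measurable above suffices for $\mathbb{P}_{\max}$-absoluteness of $\Sigma_1$-statements over the relevant structures (the measurable iterability plus a single Woodin gives $\Sigma^2_1$-absoluteness for the $\mathbb{P}_{\max}$-extension of $L(\mathbb{R})$, which is exactly the complexity of "there is a $\Sigma_1(\omega_1)$ well-order").

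\smallskip

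The main obstacle I expect is \emph{calibrating the large-cardinal strength}: making the reflection/absoluteness argument go through with only one Woodin and one measurable above rather than with full $\mathrm{AD}^{L(\mathbb{R})}$-strength. Concretely, one must show that a single Woodin $\delta$ with a measurable $\kappa > \delta$ yields enough iterability of the minimal active mouse $M_1^{\#}$-below-$\kappa$ (or equivalently enough $\boldsymbol{\Sigma}^1_2$-determinacy plus the measurable for $\boldsymbol{\Sigma}^1_3$-absoluteness under set forcing, i.e. the "$\Sigma^2_1$-absoluteness" level) so that the statement "$\varphi(\omega_1, -)$ well-orders $\mathbb{R}$" — which is $\Sigma_1$ over $H(\omega_2)$ and, once we add a Cohen real or collapse something, persists — can be pushed into a model where $\omega_1$ has been collapsed or into the $\mathbb{P}_{\max}$-extension, producing a well-order of the reals there that is ordinal-definable, contradicting the failure of uniformization/choice coming from the Woodin. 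The delicate bookkeeping is checking that the $\Sigma_1$-formula, evaluated at the moved value of $\omega_1$, still "means" a well-ordering after the embedding — this is where one uses that $\Sigma_1$-statements go up under the generic embedding and that well-orderedness of a $\Sigma_1(\omega_1)$-definable relation, being a consequence of a $\Pi_1$ statement about $H(\omega_2)$, is itself preserved or can be forced to fail, giving the contradiction.
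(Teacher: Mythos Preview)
Your proposal sketches two lines of attack, and neither closes the gap at the stated hypothesis.

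The first sketch (generic ultrapower $j:V\to M$ with $\crit{j}=\omega_1^V$, reals fixed) does not produce a contradiction as written. You correctly observe that $j(\prec)\cap(\RRR^V\times\RRR^V)=\prec$ and that $j(\prec)$ well-orders $\RRR^M$ in $M$; but these two facts are perfectly compatible --- an extension of a well-order to a larger set is not contradictory, and nothing forces the $\Sigma_1$-definition evaluated at $\omega_1^{V[G]}$ to fail to be a well-order. The hand-wave about ``back-and-forth / comparison'' and ``ill-founded $\Sigma^1_2$ witness'' is not an argument.

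The second sketch (the ``clean route'' via $\PPP_{\mathrm{max}}$) needs $\mathrm{AD}^{\LL(\RRR)}$, hence infinitely many Woodins with a measurable above, as the paper itself notes before stating the theorem. Your claim that a single Woodin with a measurable above yields the relevant $\PPP_{\mathrm{max}}$-maximality or ``$\Sigma^2_1$-absoluteness'' is not correct; that is exactly the calibration problem you flag, and you do not solve it.

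The paper's actual argument is the remark you make in passing but do not substantiate: it proves directly (Lemma~\ref{lemma:HOmega2Sigma13}) that under the hypothesis, every $\Sigma_1(\omega_1)$-definable set of reals is already $\Sigma^1_3$. The proof is via countable iterable structures: one shows that $x$ satisfies the $\Sigma_1(\omega_1)$-formula iff there is a countable transitive $M\models\ZFC^-$ with a Woodin cardinal $\delta$ such that $M$ is $\omega_1$-iterable with respect to $\QQQ_{<\delta}^M$ and its images, and the formula holds in $M$ with parameter $\omega_1^M$. Existence of such $M$ is $\Sigma^1_3$; the forward inclusion uses countable hulls of $M_1^\#(C)$ for suitable $C\subseteq\omega_1$ (available from one Woodin plus a measurable above), and the backward inclusion iterates $M$ generically $\omega_1$ times to stretch $\omega_1^M$ up to $\omega_1^V$ and invoke $\Sigma_1$-upward absoluteness. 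Once the well-order is $\Sigma^1_3$, the contradiction is immediate: the hypothesis gives $\mathbf{\Sigma}^1_2$-determinacy, hence every $\mathbf{\Sigma}^1_3$ set has the Baire property, so no $\mathbf{\Sigma}^1_3$ well-ordering of the reals exists. This is the missing key lemma in your proposal.
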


In contrast, we will show that the existence of a $\Sigma_1(\omega_1)$-definable well-ordering of $\HH{\omega_2}$ is compatible with the existence of a Woodin cardinal (see Theorem \ref{Sigma1 wellorder in M1}). Together with the above theorem, this answers {\cite[Question 1.9]{MR3591274}}.

Given a regular cardinal  $\kappa$, the \emph{generalized Baire space} for $\kappa$ consists of the  set ${}^\kappa\kappa$ of all functions from $\kappa$ to $\kappa$ equipped with the topology whose basic open sets are of the form $N_s=\Set{x\in{}^\kappa\kappa}{s\subseteq x}$ for some $\map{s}{\alpha}{\kappa}$ with $\alpha<\kappa$.

\begin{definition}\label{definition:PerfectBernstein}
 Let $\kappa$ be a regular cardinal.
 \begin{enumerate} 
  \item 
  A \emph{perfect subset} of ${}^{\kappa}\kappa$ is the set of branches $[T]$ of a \emph{perfect subtree} of ${}^{{<}\kappa}\kappa$, i.e. a ${<}\kappa$-closed tree with branching nodes above all nodes. 

  \item A subset $A$ of ${}^{\kappa}\kappa$ has the \emph{perfect set property} if either $A$ has cardinality at most $\kappa$ or $A$ contains a perfect subset. 
  

  \item A \emph{Bernstein set} is a subset of ${}^{\kappa}\kappa$ with the property that neither $A$ nor its complement contains a perfect subset.  
 \end{enumerate}
\end{definition}

\begin{theorem}\label{theorem:Omega1Bernstein}
 Assume that there is a Woodin cardinal and a measurable cardinal above it. Then no Bernstein subset of ${}^{\omega_1}\omega_1$ is $\Delta_1(\omega_1)$-definable over $\langle\HH{\omega_2},\in\rangle$. 
\end{theorem}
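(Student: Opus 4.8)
The plan is to reduce this to Theorem \ref{theorem:Omega1WO}, or rather to the method underlying it, by exploiting the coding of reals into ${}^{\omega_1}\omega_1$ together with the fact that a $\Delta_1(\omega_1)$-definable set comes with a $\Sigma_1(\omega_1)$-definable complement. Suppose, toward a contradiction, that $A \subseteq {}^{\omega_1}\omega_1$ is a Bernstein set that is $\Delta_1(\omega_1)$-definable; fix $\Sigma_1$-formulas $\varphi_0, \varphi_1$ defining $A$ and ${}^{\omega_1}\omega_1 \setminus A$ respectively. The first step is to observe that, since $A$ is Bernstein, \emph{every} perfect subtree $T$ of ${}^{{<}\omega_1}\omega_1$ has branches in $A$ and branches outside $A$; in particular this applies to perfect trees all of whose branches code reals in a prescribed way. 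Concretely, for a real $r \in {}^{\omega}\omega$ (or a subset of $\omega_1$ coding a structure on $\omega_1$) one can build, in a $\Sigma_1(\omega_1)$-uniform way, a perfect tree $T_r$ whose branches are exactly the extensions of a canonical branch coding $r$; the point is that ``$x \in [T_r]$'' is decided on a club by an initial segment of $x$, so membership of a given $x$ in $A$ versus in the complement transfers information about $r$.

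The second step is the key absoluteness/reflection argument, mirroring the proof of Theorem \ref{theorem:Omega1WO}. One runs the iterated generic ultrapower / $\PPP_{\mathrm{max}}$-style machinery: starting from a countable elementary submodel $M \prec \HH{\theta}$ containing the relevant parameters, one iterates the generic ultrapower of $M$ coming from the (precipitous, by the measurable) nonstationary ideal, sending $\omega_1^M$ to $\omega_1$. Under ``a Woodin cardinal with a measurable above it'', the reals of the iterate $M^*$ are exactly ${}^{\omega}\omega$ and $M^*$ correctly computes $\Sigma_1(\omega_1)$-facts about its reals; moreover $\omega_1$ is sent to $\omega_1$ and $M^*$ knows the restriction of the $\Sigma_1(\omega_1)$-definition to its own universe. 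The contradiction is then extracted exactly as for the well-ordering: a $\Delta_1(\omega_1)$ Bernstein set would, via the coding $r \mapsto T_r$, yield a $\Sigma_1(\omega_1)$-definable (hence sufficiently absolute) way of separating reals, which one shows is incompatible with the homogeneity of the iteration — two generic iterations of $M$ agreeing on a tail but disagreeing on whether a coded branch lands in $A$ would force $x \in A$ and $x \notin A$ simultaneously in a common further iterate, using that \emph{both} $\varphi_0$ and $\varphi_1$ are $\Sigma_1$ and hence upward absolute to the bigger model.

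A cleaner way to phrase the last step, which I would actually carry out, is: if $A$ is $\Delta_1(\omega_1)$ then the relation ``$(r,s) \mapsto$ the $T_r$-branch coding $s$ lies in $A$'' is $\Delta_1(\omega_1)$ on pairs of reals, and by the reflection provided by iterable countable models this relation is in $\LL(\RRR)$-provably absolute, so it would give an $\mathrm{OD}$ (indeed projective-in-a-real) separation of all reals into two nonempty ``sides'' that is preserved under the symmetric-extension homogeneity of $\PPP_{\mathrm{max}}$ — contradicting that in the $\PPP_{\mathrm{max}}$-extension no such $\Sigma_1(\omega_1)$-definable partition can be Bernstein, by the $\Pi_2$-maximality cited before Theorem \ref{theorem:Omega1WO}.

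The main obstacle I anticipate is the first step: arranging the coding $r \mapsto T_r$ so that membership of an \emph{arbitrary} $x \in {}^{\omega_1}\omega_1$ in $[T_r]$ is both $\Sigma_1(\omega_1)$-definable and, crucially, stable under the generic iterations (i.e., the branch coding $r$ in $M$ still codes $r$ in $M^*$, and the tree $T_r$ of $M^*$ is the ``same'' tree as externally computed). This is a matter of making the tree construction sufficiently absolute and canonical — using, e.g., a tree whose $\alpha$-th splitting level is governed by a fixed recursive bookkeeping and whose branches' values at limit ordinals are determined by continuity — but it requires care to ensure no new branches appear in the iterate. Once that coding lemma is in place, the rest is a routine adaptation of the argument for Theorem \ref{theorem:Omega1WO}.
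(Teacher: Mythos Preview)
Your proposal has a genuine gap: the reduction to Theorem~\ref{theorem:Omega1WO} (or to any ``separation of reals'') does not go through as you describe. Knowing that a Bernstein set $A$ meets every perfect set $[T_r]$ and that its complement does too gives you no uniform, definable choice---you cannot extract a well-ordering, nor a $\Delta_1(\omega_1)$ partition of the reals, from the mere fact that each $[T_r]$ is split. Your ``cleaner'' $\PPP_{\mathrm{max}}$ version has two further problems: it needs the full $\Pi_2$-maximality machinery (infinitely many Woodin cardinals with a measurable above), which is strictly more than the hypothesis of the theorem, and it still requires you to argue that the $\PPP_{\mathrm{max}}$-extension of $\LL(\RRR)$ has no $\Delta_1(\omega_1)$-definable Bernstein subset of ${}^{\omega_1}\omega_1$---a statement you have not justified and which is essentially the content of the theorem itself in that model.

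The paper's argument is direct and avoids any reduction to reals. The key lemma (Lemma~\ref{lemma:Sigma1SetContainsBistationary}) shows: if $X\subseteq{}^{\omega_1}\omega_1$ is $\Sigma_1(\omega_1)$-definable and contains some $x$ with $\bar{x}=\{\alpha:x(\alpha)>0\}$ bistationary, then $X$ contains a perfect subset. The construction is to take a countable iterable model $M$ (a collapse of a hull of $M_1^\#(C)$ for suitable $C$) containing $x$, and then build a \emph{tree} of generic iterations of $M$ with respect to the countable stationary tower: at node $s\in{}^{<\omega_1}2$ one has a model $M_s$ in which the image of $\bar{x}$ is still bistationary, and one chooses the next generic to contain the image of $\bar{x}$ if $s^\frown\langle 1\rangle$ is taken, or the image of $\omega_1\setminus\bar{x}$ if $s^\frown\langle 0\rangle$ is taken. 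The images of $x$ along the $2^{\omega_1}$ branches are then pairwise distinct (they disagree at the critical points, which form a club), all lie in $X$ by $\Sigma_1$-upward absoluteness, and the map $z\mapsto (j_z\circ\pi)(x)$ is a continuous injection of ${}^{\omega_1}2$ into $X$. Given this lemma, the theorem is immediate: one of $A$, ${}^{\omega_1}\omega_1\setminus A$ (both $\Sigma_1(\omega_1)$) contains a bistationary-coding element, hence a perfect set, contradicting Bernstein. Your remark about ``two generic iterations \ldots\ disagreeing on whether a coded branch lands in $A$'' is gesturing in the right direction, but the point is not to derive a contradiction from upward absoluteness of both $\varphi_0$ and $\varphi_1$---it is to use the branching to \emph{manufacture} a perfect set inside whichever of the two $\Sigma_1$ sets you started in.
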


We will also show that the large cardinal assumption of the above result is close to optimal by showing that the existence of such a Bernstein subset is compatible with the existence of a Woodin cardinal (see Lemma \ref{Delta1 Bernstein set in M_1}).

Next, we consider $\Delta_1(\omega_1)$-definitions of the club filter $\mathrm{C}_{\omega_1}$ and the nonstationary ideal $\mathrm{NS}_{\omega_1}$ on $\omega_1$. In \cite{FriedmanWu}, Friedman and Wu showed that the existence of a proper class of Woodin cardinals implies that $\mathrm{NS}_{\omega_1}$ is not $\Delta_1(\omega_1)$-definable. We will derive a stronger conclusion from a weaker hypothesis. 
 In the following, we say that a subset $X$ of $\POT{\kappa}$ \emph{separates the club filter from the nonstationary ideal } if $X$ contains $\mathrm{C}_{\omega_1}$ as a subset and is disjoint from $\mathrm{NS}_{\omega_1}$.

\begin{theorem}\label{theorem:Omega1Club}
 Assume that there is a Woodin cardinal and a measurable cardinal above it. Then no subset of $\POT{\omega_1}$ that separates the club filter from the nonstationary ideal is $\Delta_1(\omega_1)$-definable over $\langle\HH{\omega_2},\in\rangle$.  
\end{theorem}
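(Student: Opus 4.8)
The plan is to argue by contradiction. Suppose that there is a set $X\subseteq\POT{\omega_1}$ that separates the club filter from the nonstationary ideal and is $\Delta_1(\omega_1)$-definable over $\langle\HH{\omega_2},\in\rangle$, and fix $\Sigma_1$-formulas $\varphi_0$ and $\varphi_1$ defining $X$ and $\HH{\omega_2}\setminus X$ over $\langle\HH{\omega_2},\in\rangle$, respectively; thus $\mathrm{C}_{\omega_1}\subseteq X$, $X\cap\mathrm{NS}_{\omega_1}=\emptyset$, and for every $x\in\HH{\omega_2}$ exactly one of $\varphi_0(\omega_1,x)$ and $\varphi_1(\omega_1,x)$ holds. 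The contradiction will come from iterated generic ultrapowers of a suitable countable model, as in the proofs of Theorems \ref{theorem:Omega1WO} and \ref{theorem:Omega1Bernstein}.

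First I would use the large cardinal hypothesis to produce a countable transitive model $M$ that (i) models a sufficiently large fragment of $\ZFC$; (ii) is \emph{generically iterable}: every linear iteration of $M$ of length at most $\omega_1$, formed by successively passing to the ultrapower of the current model by one of its generic filters on $\POT{\omega_1}/\mathrm{NS}_{\omega_1}$ and taking direct limits at limit stages, has only well-founded models; (iii) satisfies, over its own version of $\HH{\omega_2}$, that $\varphi_0$ and $\varphi_1$ define complementary sets and that the set defined by $\varphi_0$ separates the club filter from the nonstationary ideal; and (iv) contains a set $\bar S$ with the property that $\bar S$ and $\omega_1^M\setminus\bar S$ are both stationary in $M$. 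As in the proofs of the previous two theorems, such an $M$ arises as the transitive collapse of a countable elementary submodel of a large enough $\langle\HH{\theta},\in\rangle$ containing the Woodin cardinal and the measurable cardinal above it: properties (iii) and (iv) are then immediate by elementarity, whereas (ii) is where the hypothesis is genuinely consumed --- through the theory of iterated generic ultrapowers, the Woodin cardinal of $M$ allowing $\mathrm{NS}_{\omega_1}$ to be rendered precipitous in forcing extensions of $M$ and the measurable cardinal above it supplying the needed iterability. Locating a model that is both elementary enough for (iii) and (iv) and generically iterable for (ii) is the step I expect to be the main obstacle.

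I would then isolate two properties of a generic iteration $j=j_{0,\omega_1}\colon M\to M_{\omega_1}$ of $M$ of length $\omega_1$, with successive models $\langle M_\xi\mid\xi\le\omega_1\rangle$. First, $M_{\omega_1}$ is well-founded, hence transitive, with $\omega_1^{M_{\omega_1}}=\omega_1$, because the critical points $\gamma_\xi=\omega_1^{M_\xi}$ form a strictly increasing, continuous sequence of countable ordinals of length $\omega_1$, hence a club in $\omega_1$; consequently $\HH{\omega_2}^{M_{\omega_1}}\subseteq\HH{\omega_2}$, and since $\Sigma_1$-formulas are upward absolute between transitive models, every $A\in M_{\omega_1}\cap\POT{\omega_1}$ with $M_{\omega_1}\models\varphi_0(\omega_1,A)$ lies in $X$, while every such $A$ with $M_{\omega_1}\models\varphi_1(\omega_1,A)$ lies outside $X$. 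Second, there is a steering mechanism: for each $\xi<\omega_1$ one has $\gamma_\xi\in j(\bar S)$ if and only if $j_{0,\xi}(\bar S)$ belongs to the $M_\xi$-generic filter chosen at stage $\xi$, and since $j_{0,\xi}(\bar S)$ and its complement are, by elementarity, both stationary and hence $\mathrm{NS}_{\omega_1}$-positive in $M_\xi$, at every stage either one may be put into the generic filter. Always choosing $j_{0,\xi}(\bar S)$ therefore yields an iteration with $\Set{\gamma_\xi}{\xi<\omega_1}\subseteq j(\bar S)$, so that $j(\bar S)$ contains a club, while always choosing the complement yields an iteration with $j(\bar S)\cap\Set{\gamma_\xi}{\xi<\omega_1}=\emptyset$, so that $j(\bar S)$ is nonstationary.

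Finally I would combine these observations. By (iii) and the elementarity of $j$, for every generic iteration $j\colon M\to M_{\omega_1}$ of length $\omega_1$ exactly one of $M_{\omega_1}\models\varphi_0(\omega_1,j(\bar S))$ and $M_{\omega_1}\models\varphi_1(\omega_1,j(\bar S))$ holds, and which one it is is the same for all such $j$, being decided already in $M$ by whether $\bar S\in X^M$. If it is $\varphi_0$, then steering $j$ so that $j(\bar S)$ is nonstationary gives $j(\bar S)\in X\cap\mathrm{NS}_{\omega_1}$, contradicting $X\cap\mathrm{NS}_{\omega_1}=\emptyset$. If it is $\varphi_1$, then steering $j$ so that $j(\bar S)$ contains a club gives $j(\bar S)\in\mathrm{C}_{\omega_1}\setminus X$, contradicting $\mathrm{C}_{\omega_1}\subseteq X$. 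The dichotomy is exhaustive, so in both cases we reach a contradiction. Note that it is precisely the second case that forces us to assume full $\Delta_1(\omega_1)$-definability rather than mere $\Sigma_1(\omega_1)$-definability of $X$: there it is the truth of the $\Sigma_1$-formula $\varphi_1$ about $j(\bar S)$, and not of the $\Pi_1$-statement $\neg\varphi_0$, that must transfer upward from $M_{\omega_1}$ to $\VV$.
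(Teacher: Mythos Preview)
Your approach is essentially the paper's: steer a length-$\omega_1$ generic iteration of a suitable countable model so that the image of a fixed bistationary set lands either in the club filter or in the nonstationary ideal, then use $\Sigma_1$-upward absoluteness from the final model to $\VV$ to contradict the separation property. The paper packages the steering step into Lemma~\ref{lemma:Sigma1SetContainsBistationary} and Lemma~\ref{lemma:Omega1DenseClubNS}, and then derives the theorem (as Theorem~\ref{separate club filter and nonstationary ideal}) by applying that density lemma once to $X$ and once to its complement; your direct case split on whether $\bar S\in X^M$ is equivalent.

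The one genuine soft spot is precisely the one you flag: your choice of $\POT{\omega_1}/\mathrm{NS}_{\omega_1}$ as the forcing driving the iteration. The hypothesis ``a Woodin cardinal with a measurable above'' does \emph{not} give precipitousness of $\mathrm{NS}_{\omega_1}$ in $\VV$, hence (by elementarity) not in $M$ either, so already the first generic ultrapower of $M$ may be ill-founded and (ii) fails as stated. Your remedy of first forcing over $M$ to make $\mathrm{NS}_{\omega_1}$ precipitous can be made to work, but then you are iterating $M[G]$ rather than $M$, and you must observe that you no longer need the full complementarity property (iii) in $M[G]$: the single $\Sigma_1$-fact $\varphi_i(\omega_1^M,\bar S)$ that held in $M$ persists in $M[G]$ by upward absoluteness, and that is all your final argument uses. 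The paper avoids this detour by iterating not with $\mathrm{NS}_{\omega_1}$ but with the countable stationary tower $\QQQ_{{<}\delta}$, taken inside a countable elementary submodel of $M_1^\#(C)$ for a suitable $C\subseteq\omega_1$; the stationary tower is precipitous at a Woodin cardinal outright, $\bar S$ and $\omega_1\setminus\bar S$ are conditions in it, and Lemma~\ref{omega1 iterable structure} together with the top measure of $M_1^\#(C)$ supplies the $\omega_1$-iterability.
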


 We will in fact prove more general versions of the above theorems. First, we will derive the above conclusions from the assumption that $M_1^\#(A)$ exists for every subset $A$ of $\omega_1$ (see {\cite[p. 1738]{MR2768699}} and {\cite[p. 1660]{MR2768698}}). This assumption follows from the existence of a Woodin cardinal and a measurable cardinal above it (see \cite{MR1300637} and \cite{MR1257469}). 
 In Section \ref{section: forcing axioms}, we will show that it also follows from $\mathsf{BMM}$ (Bounded Martin's Maximum) together with the assumption that the nonstationary ideal $\mathrm{NS}_{\omega_1}$ on $\omega_1$ is precipitous.  Second, we will allow as parameters subsets of $\omega_1$ that are $\mathbf{\Sigma}^1_2$-definable in the codes. We will also prove this for all subsets of $\omega_1$ which are universally Baire in the codes, assuming that there is a proper class of Woodin cardinals.  
Finally, we will prove results on 
perfect subsets of $\Sigma_1(\omega_1)$-subsets of ${}^{\omega_1}\omega_1$ (see Section \ref{subsection:PerfectSet}),  
the nonexistence of $\Sigma_1(\omega_1)$-definable uniformizations of the club filter (see Section \ref{subsection:Uniformizations of the club filter}) and the absoluteness of $\Sigma_1(\omega_1)$-statements (see Section \ref{subsection:absoluteness}).

The above results raise the question whether large cardinals have a similar influence on $\Sigma_1(\kappa)$-definability for regular cardinals $\kappa>\omega_1$. Variations of the techniques used in the proofs of the above results will allow us to prove 
analogous statements hold for $\Sigma_1(\kappa)$-definable subsets of $\HH{\kappa^+}$ in the case where $\kappa$ itself has certain large cardinal properties.

\begin{theorem}\label{theorem:LargeCardinalsWO}
 If $\kappa$ is either a measurable cardinal above a Woodin cardinal or a Woodin cardinal below a measurable cardinal, then there is no $\Sigma_1(\kappa)$-definable well-ordering of the reals. 
\end{theorem}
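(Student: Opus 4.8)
The plan is to prove Theorem~\ref{theorem:LargeCardinalsWO} by combining the iterated generic ultrapower machinery behind Theorem~\ref{theorem:Omega1WO} with a reduction of the parameter $\kappa$. First observe that in either configuration there is a Woodin cardinal with a measurable cardinal above it and, since Woodin cardinals are inaccessible, $\omega_1$ lies below the Woodin cardinal; hence $M_1^\#(A)$ exists for every $A\subseteq\omega_1$. In particular $M_1^\#(x)$ exists for every real $x$, so $\boldsymbol\Pi^1_2$-determinacy holds and there is no $\boldsymbol\Delta^1_3$-definable well-ordering of the reals. So I would assume, towards a contradiction, that a $\Sigma_1$-formula $\varphi$ is such that $x<y:\Longleftrightarrow\HH{\kappa^+}\models\varphi(\kappa,x,y)$ defines a well-ordering of the reals, and work towards showing that $<$ is then $\boldsymbol\Delta^1_3$-definable.

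To reduce the parameter, suppose first that $\kappa$ is a measurable cardinal above a Woodin cardinal $\delta$ and fix a normal measure $U$ on $\kappa$ with associated ultrapower embedding $\map{j_U}{\VV}{\Ult{\VV}{U}}$. Since $\Ult{\VV}{U}$ is closed under $\kappa$-sequences we have $\HH{\kappa^+}^{\Ult{\VV}{U}}=\HH{\kappa^+}$, and an application of {\L}o\'{s}'s theorem --- using that $j_U$ fixes the reals and the elementary observation that $<$ remains well-founded in $\Ult{\VV}{U}$ because both models have the same reals --- shows that $x<y$ holds if and only if $\Set{\theta<\kappa}{\HH{\theta^+}\models\varphi(\theta,x,y)}\in U$, and moreover that for $U$-almost every $\theta<\kappa$ the formula $\varphi$ defines a well-ordering of the reals with parameter $\theta$. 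Suppose instead that $\kappa$ is a Woodin cardinal below a measurable cardinal. Then forcing with the countable stationary tower $\QQQ_{{<}\kappa}$ produces a generic elementary embedding $\map{j}{\VV}{N}$ with $\crit{j}=\omega_1$, $j(\omega_1)=\kappa$, $\RRR^{N}=\RRR^{\VV[G]}=\RRR$ and ${}^{\omega}N\cap\VV[G]\subseteq N$; this allows one to analyze $<$ through $j$, with $\omega_1$ in the role of the parameter, since $<$ (restricted to $\RRR$) agrees with the $j$-image of the $\Sigma_1(\omega_1)$-relation on $\RRR$ defined over $\HH{\omega_2}$ by $\varphi$, up to the absoluteness issues addressed next.

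The remaining, and principal, task is to run the iterated generic ultrapower construction underlying Theorem~\ref{theorem:Omega1WO} for the reduced parameter. Granting $M_1^\#$-closure, that construction shows that a $\Sigma_1(\omega_1)$-definable subset of the reals is captured by countable models: for reals $x,y$ one has $\HH{\omega_2}\models\varphi(\omega_1,x,y)$ exactly when there is a countable generically iterable model $\calM$ with $x,y\in\calM$ such that $\calM\models\varphi(\omega_1^{\calM},x,y)$ and $\calM$ iterates, via generic ultrapowers by its own nonstationary ideal, onto models whose $\omega_1$ reaches the true $\omega_1$; since the relevant iterability is $\boldsymbol\Pi^1_2$ under $M_1^\#$-closure, the resulting relation is $\boldsymbol\Delta^1_3$. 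The difficulty --- and the reason the two large cardinal configurations are best treated in tandem --- is making this correctness statement go through when the anchoring cardinal is one of the $\theta<\kappa$ supplied by the reduction above (which need not have strong large cardinal properties), respectively a cardinal moved by the stationary tower embedding, rather than the true $\omega_1$: one must keep careful track of which reals the generic iterations add and ensure that the $\Sigma_1$-witnesses appearing in the countable models retain the right hereditary size, so that these models genuinely compute $\varphi$ correctly with their versions of the relevant cardinal. I expect this bookkeeping, rather than the construction of the embeddings or the mouse-theoretic inputs, to be the main obstacle; once it is settled, $<$ is $\boldsymbol\Delta^1_3$-definable, contradicting $\boldsymbol\Pi^1_2$-determinacy.
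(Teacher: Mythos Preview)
Your approach has genuine gaps in both cases, and the paper proceeds quite differently.

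In the Woodin case, forcing with $\QQQ_{{<}\kappa}$ adds reals (indeed it collapses every ordinal below $\kappa$ to be countable), so your assertion $\RRR^{\VV[G]}=\RRR$ is false. More fundamentally, elementarity of $j$ relates $\varphi(\omega_1^\VV,\cdot,\cdot)$ computed in $\VV$ to $\varphi(\kappa,\cdot,\cdot)$ computed in $N$, not in $\VV$; since $\HH{\kappa^+}^N$ has no reason to coincide with $\HH{\kappa^+}^\VV$, you do not obtain any information about the $\Sigma_1(\omega_1)$-relation in $\VV$. In the measurable case your {\L}o\'{s} reflection is correct and does produce many inaccessible $\theta<\kappa$ for which $\varphi(\theta,\cdot,\cdot)$ defines a well-ordering in $\VV$, but these $\theta$ carry no iterability structure of their own. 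What you call ``bookkeeping'' is in fact the entire content of the argument: to show that such a relation is $\Sigma^1_3$ one must be able to iterate a countable model so that its version of the parameter reaches $\theta$, and nothing in your reduction supplies such iterations for an arbitrary inaccessible $\theta$.

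The paper avoids any reduction to $\omega_1$. It proves directly (Lemma~\ref{characterization of Sigma1(kappa)}) that if $\kappa$ is $\omega_1$-iterable, or a regular stationary limit of $\omega_1$-iterable cardinals, then every $\Sigma_1(\kappa)$-definable set of reals is already $\Sigma^1_3$. The point is that in either case one can take a countable elementary substructure carrying a weakly amenable ultrafilter and iterate it linearly (not generically) so that the image of the critical point reaches $\kappa$; this is the large-cardinal analogue of Lemma~\ref{lemma:HOmega2Sigma13}, with ordinary iterated ultrapowers replacing stationary-tower iterations. Since measurable cardinals are $\omega_1$-iterable and Woodin cardinals are stationary limits of measurables, both configurations of the theorem fall under this lemma, and one concludes from $\boldsymbol\Sigma^1_2$-determinacy that no $\Sigma^1_3$ well-ordering of the reals exists.
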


\begin{theorem}\label{theorem:LargeCardinalsBernstein}
 If  $\kappa$ is a measurable cardinal with the property that there are two distinct  normal ultrafilters on $\kappa$, then no Bernstein subset of ${}^\kappa\kappa$ is $\Delta_1(\kappa)$-definable over $\langle\HH{\kappa^+},\in\rangle$.  
\end{theorem}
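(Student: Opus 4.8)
The plan is to suppose toward a contradiction that $A\subseteq{}^\kappa\kappa$ is a Bernstein set that is $\Delta_1(\kappa)$-definable over $\langle\HH{\kappa^+},\in\rangle$, and to produce a perfect subtree of ${}^{{<}\kappa}\kappa$ all of whose branches lie in $A$ (or all in its complement), contradicting the Bernstein property. By definition, both $A$ and $({}^\kappa\kappa)\setminus A$ are $\Sigma_1(\kappa)$-definable over $\langle\HH{\kappa^+},\in\rangle$; fix $\Sigma_1$-formulas $\varphi_0,\varphi_1$ defining them respectively, fix distinct normal ultrafilters $U_0\neq U_1$ on $\kappa$, and fix a set $Y\subseteq\kappa$ with $Y\in U_0\setminus U_1$.

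First I would isolate the absoluteness that makes iterations harmless. If $j\colon V\to\calM$ is an iterated ultrapower of $V$ that uses only images of $U_0$ and $U_1$, then $\crit{j}=\kappa$ and every critical point of the iteration is at least $\kappa$; a routine computation then gives $\HH{\kappa^+}^{\calM}=\HH{\kappa^+}^V$, so that the structures $\langle\HH{\kappa^+},\in\rangle$ of $V$ and of $\calM$ coincide. Since $A$ is $\Delta_1(\kappa)$ over this structure, $\calM$ and $V$ agree, for every $x\in{}^\kappa\kappa$, on whether $x\in A$; perfectness of subtrees of ${}^{{<}\kappa}\kappa$ and the computation of branch sets are likewise absolute, so ``$A$ is a Bernstein set'' holds in $\calM$ as well. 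Finally, $j$ sends $\kappa$ to a measurable cardinal $j(\kappa)>\kappa$ of $\calM$ and sends $A$ to a set $j(A)$ that $\calM$ regards as a $\Delta_1(j(\kappa))$-definable Bernstein subset of ${}^{j(\kappa)}j(\kappa)$.

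The core is to build, by recursion on levels $\alpha<\kappa$, a perfect subtree $T$ of ${}^{{<}\kappa}\kappa$ together with a tree $\langle j_s\mid s\in{}^{{<}\kappa}2\rangle$ of iterated ultrapower embeddings of $V$, where $j_{\langle\rangle}=\mathrm{id}$ and passing from $j_s$ to $j_{s^\frown 0}$ (resp.\ $j_{s^\frown 1}$) applies one further ultrapower step using the $j_s$-image of $U_0$ (resp.\ of $U_1$). The aim is to arrange that along each branch $\eta\in{}^\kappa 2$ the embeddings $j_{\eta\restriction\alpha}$ cohere into a length-$\kappa$ iteration $j_\eta\colon V\to\calM_\eta$, that the node of $T$ coded by $\eta\restriction\alpha$ grows into a genuine branch $x_\eta\in({}^\kappa\kappa)^V$ with $\eta\mapsto x_\eta$ continuous and injective, and that $x_\eta\in A$ — the last point being checked by combining elementarity of $j_\eta$ (applied, after swapping $A$ with its complement if necessary, to a single fixed membership fact such as $\chi_Y\in A$) with the absoluteness of the previous paragraph. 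Given this, $[T]\subseteq A$ contradicts the Bernstein property of $A$. The two distinct normal ultrafilters are what make this possible: a single normal measure yields only a linear iteration with no branching, whereas $U_0$ and $U_1$ make the tree of iterations genuinely binary-branching, and it is this branching that is converted into the perfect tree $T$. At limit stages $\alpha<\kappa$ one passes to direct limits of the iterations and unions of the tree-nodes, which remains coherent because $\kappa$, being measurable, is inaccessible.

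The step I expect to be the main obstacle is precisely the coordination in this recursion: arranging simultaneously that $T$ is everywhere-branching and ${<}\kappa$-closed and that each branch $x_\eta$ is both recoverable from $j_\eta$ in a way that is continuous in $\eta$ and provably a member of $A$. The difficulty is that in general $j_{U_0}(\kappa)\neq j_{U_1}(\kappa)$, so the critical sequences of the various branch iterations do not line up and $x_\eta$ cannot simply be taken to be a reindexed critical sequence; instead one must exploit the disagreement of $U_0$ and $U_1$ on $Y$ to imprint recoverable data — for instance, recording at stage $\xi$ whether the $\xi$-th critical point of $j_\eta$ has entered the corresponding image of $Y$ — onto the nodes of one fixed perfect tree, and then verify separately that $\eta\mapsto x_\eta$ is injective and continuous and that every $x_\eta$ lies in $A$. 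Making this encoding interact correctly with the elementarity argument is where the real work lies.
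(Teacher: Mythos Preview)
Your high-level plan is the paper's: use the two measures to build a binary-branching tree of iterations and convert it into a perfect subset of $A$ or of its complement. Your absoluteness claim $\HH{\kappa^+}^{\calM}=\HH{\kappa^+}^\VV$ for iterated ultrapowers of $\VV$ by images of $U_0,U_1$ is also correct. The gap is precisely at the point you yourself flag as ``where the real work lies.'' When you iterate $\VV$, the embedding $j_\eta$ moves $\kappa$ strictly upward, so from $\chi_Y\in A$ elementarity only yields $j_\eta(\chi_Y)\in j_\eta(A)\subseteq{}^{j_\eta(\kappa)}j_\eta(\kappa)$, which says nothing about membership in $A\subseteq{}^\kappa\kappa$. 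Your absoluteness tells you that $A$ is computed the same way in $\VV$ and in $\calM_\eta$, but it does not manufacture new elements of $A$: since $\crit{j_\eta}=\kappa$, one has $j_\eta(\chi_Y)\restriction\kappa=\chi_Y$, independently of $\eta$. The ``imprinted data'' you propose (recording at stage $\xi$ whether the $\xi$-th critical point enters the current image of $Y$) does recover $\eta$, but there is no elementarity statement connecting the resulting $x_\eta$ to $A$; in fact your $x_\eta$ is essentially $\eta$ (or $1-\eta$), and nothing forces an arbitrary element of ${}^\kappa 2$ into $A$.

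The missing idea is to iterate a \emph{small} model rather than $\VV$. The paper picks an elementary $N\prec\HH{\theta}$ of cardinality ${<}\kappa$ with $\kappa,x,U_0,U_1\in N$ (where $x$ plays the role of your $\chi_Y$), collapses via $\map{\pi}{N}{M}$, and builds the branching system $\langle M_s,j_{s,t}\rangle$ by iterating $M$ with the collapsed measures $\pi(U_0),\pi(U_1)$ and their images. Because each $M_s$ with $s\in{}^{{<}\kappa}2$ still has size ${<}\kappa$, after $\kappa$ many steps one gets $(j_z\circ\pi)(\kappa)=\kappa$ for every branch $z\in{}^\kappa 2$. Setting $i(z)=(j_z\circ\pi)(x)\in{}^\kappa\kappa$, elementarity gives $M_z\models\varphi(\kappa,i(z))$, and now plain $\Sigma_1$-\emph{upward} absoluteness from the transitive $M_z$ to $\VV$ yields $i(z)\in X$, where $X$ is whichever of $A$, ${}^\kappa\kappa\setminus A$ contains $x$. (Only the $\Sigma_1$ half of the $\Delta_1$ hypothesis is used on each side; your detour through absoluteness of $\HH{\kappa^+}$ is not needed.) Injectivity and continuity of $i$ then come from the computation $i(z)(c_z(\alpha))>0\iff z(\alpha)=1$ along the critical sequence $c_z$, and the branch-dependent sequences $c_z$ are synchronized by passing to the club of uncountable cardinals below $\kappa$, on which every $c_z$ is the identity. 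The single substantive correction to your outline is therefore: collapse first, then iterate.
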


In contrast, we will show that consistently there can be a measurable cardinal $\kappa$ and a Bernstein subset of ${}^\kappa\kappa$ that is $\Delta_1(\kappa)$-definable over $\langle\HH{\kappa^+},\in\rangle$.

Next, we consider the $\Delta_1(\kappa)$-definability of sets separating the club filter from the non-stationary ideal at \emph{$\omega_1$-iterable cardinals} (see Definition \ref{definition:IterableCardinal}).

\begin{theorem}\label{theorem:MeasurableCardinalClubFilter}
 If $\kappa$ is an $\omega_1$-iterable cardinal and $X$ is a subset of $\POT{\kappa}$ that separates the club filter from the nonstationary ideal, then $X$ is not $\Delta_1(\kappa)$-definable over $\langle\HH{\kappa^+},\in\rangle$. 
\end{theorem}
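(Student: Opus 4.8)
The plan is to argue by contradiction, using the $\omega_1$-iterability of $\kappa$ to build a length-$\omega_1$ iteration whose critical sequence is a club in the image of $\kappa$, and then to play the $\Sigma_1$-half and the $\Pi_1$-half of the $\Delta_1(\kappa)$-definition of $X$ off against the club filter and the nonstationary ideal of the iterate. So suppose toward a contradiction that $X\subseteq\POT{\kappa}$ separates the club filter from the nonstationary ideal and is $\Delta_1(\kappa)$-definable over $\langle\HH{\kappa^+},\in\rangle$, and fix $\Sigma_1$-formulas $\varphi$ and $\psi$ with $X=\Set{a}{\HH{\kappa^+}\models\varphi(\kappa,a)}$ and $\HH{\kappa^+}\setminus X=\Set{a}{\HH{\kappa^+}\models\psi(\kappa,a)}$. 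Unwinding Definition \ref{definition:IterableCardinal}, fix a normal ultrafilter $U$ on $\kappa$ (necessarily extending the club filter) all of whose linear iterations of length at most $\omega_1$ are well-founded, and form the iteration $\seq{\calM_i,j_{i,i'}}{i\leq i'\leq\omega_1}$ with $\calM_0=V$ and $j:=j_{0,\omega_1}\colon V\to\calM:=\calM_{\omega_1}$. Then $\crit{j}=\kappa$; writing $\kappa_i=j_{0,i}(\kappa)=\crit{j_{i,i+1}}$, the sequence $\seq{\kappa_i}{i<\omega_1}$ is strictly increasing and continuous; $\lambda:=\sup_{i<\omega_1}\kappa_i=j(\kappa)$; the set $\bar C:=\Set{\kappa_i}{i<\omega_1}$ is closed unbounded in $\lambda$ of order-type $\omega_1$, so $\bar C\in V$ but $\bar C\notin\calM$ since $\calM$ still regards $\lambda$ as regular; and for every $S\subseteq\kappa$ one has $\bar C\subseteq j(S)$ if $S\in U$ and $\bar C\cap j(S)=\emptyset$ if $S\notin U$. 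A further feature to record is that every club of $\lambda$ lying in $\calM$ contains a tail of $\bar C$, so that the club filter of $\calM$ on $\lambda$ is generated modulo bounded sets by the critical sequence.

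Next I would choose $S\subseteq\kappa$ that is stationary, costationary and lies in $U$: starting from any stationary costationary $S_0$, one of the two stationary costationary sets $S_0$ and $\kappa\setminus S_0$ lies in $U$ because $U$ extends the club filter. Then $\bar C\subseteq j(S)$. By elementarity of $j$, $j(X)$ separates the club filter from the nonstationary ideal on $\lambda$ as computed in $\calM$, and is defined over $\langle\HH{\lambda^+}^{\calM},\in\rangle$ by $\varphi(\lambda,\cdot)$ with complement defined by $\psi(\lambda,\cdot)$. The crucial mismatch is now visible: since $\bar C\subseteq j(S)$, the set $j(S)$ contains a club of $\lambda$ in $V$, although by elementarity it is costationary in $\lambda$ in $\calM$; dually, $j(\kappa\setminus S)=\lambda\setminus j(S)$ is stationary in $\lambda$ in $\calM$ but, being disjoint from the club $\bar C$, it is nonstationary in $V$. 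One then feeds this into the defining formulas: $\varphi(\kappa,\cdot)$ holds of every set containing a club (as $X$ includes $\mathrm C_\kappa$) and $\psi(\kappa,\cdot)$ holds of every nonstationary set (as $X$ is disjoint from $\mathrm{NS}_\kappa$), both properties being $\Sigma_1$ and hence persisting upward from $\HH{\lambda^+}^{\calM}$ into $V$; tracking the resulting $\Sigma_1$-witnesses through $j$ and back down to $\HH{\kappa^+}$ should force $S$ to both belong to and not belong to $X$.

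The main obstacle — the step the actual argument must handle with care — is that the behaviour of $X$ on stationary costationary sets is entirely unconstrained, equivalently that $\varphi$ and $\psi$ are guaranteed complementary only over $\HH{\kappa^+}$ (with parameter $\kappa$) and over its $j$-image $\HH{\lambda^+}^{\calM}$ (with parameter $\lambda$), but \emph{not} over $\HH{\lambda^+}^V$ at the enlarged parameter $\lambda$; since upward absoluteness only transports $\Sigma_1$-truths from $\calM$ into $V$ and not back, the ``collision'' cannot be staged naively in $V$. The way around this is to keep the argument anchored at the structures where complementarity does hold and to exploit the fine structure of the iteration described above — in particular that every club of $\lambda$ in $\calM$ absorbs a tail of $\bar C$ — in order to convert the $V$-level clubhood of $j(S)$, respectively the $V$-level nonstationarity of $\lambda\setminus j(S)$, into $\Sigma_1$-statements whose witnesses already reside in $\calM$, at which point the complementarity of $\varphi(\lambda,\cdot)$ and $\psi(\lambda,\cdot)$ over $\HH{\lambda^+}^{\calM}$ produces the contradiction. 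If this conversion needs more room, one first replaces $V$ by a countable elementary hull and iterates that hull $\omega_1$ times, so that the image of $\kappa$ is realised as $\omega_1^V$ itself and one may reason with genuine clubs of $\omega_1$, which are plentiful in $V$; this is the form in which the companion result for $\kappa=\omega_1$ is obtained, and the present theorem should follow by the same bookkeeping with $\omega_1$ replaced by $j(\kappa)$ throughout.
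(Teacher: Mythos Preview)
There is a genuine gap at the very first step: you have misread Definition~\ref{definition:IterableCardinal}. An $\omega_1$-iterable cardinal $\kappa$ need not be measurable. The definition only provides, for each $A\subseteq\kappa$, a weak $\kappa$-model $M$ (a transitive $\ZFC^-$-model of size $\kappa$) with $A\in M$ together with a weakly amenable \emph{$M$-ultrafilter} $U$ such that $\langle M,\in,U\rangle$ is $\omega_1$-iterable. There is no normal ultrafilter on $\kappa$ in $\VV$, so you cannot set $\calM_0=\VV$ and iterate. Even if one granted measurability, iterating $\VV$ upward to $\lambda=j(\kappa)$ runs straight into the obstacle you yourself name and do not resolve: the complementarity of $\varphi$ and $\psi$ is available only inside $\calM$, while the club $\bar C\subseteq j(S)$ lives in $\VV\setminus\calM$, and since $j(S)$ is bistationary in $\calM$ there is no $\calM$-club inside it for your ``conversion'' to use.

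The paper's argument runs in the opposite direction. It takes a \emph{countable} elementary submodel of the weak $\kappa$-model supplied by the definition and iterates it $\kappa$ many times, so that the image of $\kappa$ is $\kappa$ itself; this is Lemma~\ref{lemma:NormalUltrafilterNoSeparation}. The point is that the iterated image $y=(j_{0,\kappa}\circ\pi)(x)$ then satisfies $\varphi(\kappa,y)$ in $\VV$ with the \emph{same} parameter $\kappa$, so the hypotheses on $X$ apply directly, and the critical-sequence club lies in $\kappa$ and is contained in or disjoint from $y$ according to whether $x\in U$. Applying this to both $\varphi$ and $\psi$ shows $X\cap M=U\cap M$ for an elementary $M\prec\HH{\kappa^+}$ of size $\kappa$ with $\kappa+1\subseteq M$; by elementarity $X$ is then a genuine normal ultrafilter on $\kappa$ in $\VV$, and the contradiction is that $X$, being definable over $\HH{\kappa^+}=\HH{\kappa^+}^{\Ult{\VV}{X}}$, would belong to its own ultrapower.
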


Friedman and Wu showed that the club filter on $\kappa$ is not $\Pi_1(\kappa)$-definable over $\langle\HH{\kappa^+},\in\rangle$ if $\kappa$ is a weakly compact cardinal (see {\cite[Proposition 2.1]{FriedmanWu}}). We will show that this conclusion also holds for stationary limits of $\omega_1$-iterable cardinals. Note that these cardinal need not be weakly compact and Woodin cardinals are stationary limits of $\omega_1$-iterable cardinals. Moreover, {\cite[Lemma 5.2]{MR2817562}} shows that $\omega_1$-Erd\H{o}s cardinals are stationary limits of $\omega_1$-iterable cardinals.

\begin{theorem}\label{theorem:IterableCardinalsClubFilter}
  If $\kappa$ is a regular cardinal that is a stationary limit of $\omega_1$-iterable cardinals, then the club filter on $\kappa$ is not $\Pi_1(\kappa)$-definable over $\langle\HH{\kappa^+},\in\rangle$. 
\end{theorem}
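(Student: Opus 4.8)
The plan is to argue by contradiction via a reflection-and-iteration argument. Since $\kappa$ is regular, if $\mathrm{C}_\kappa$ were $\Pi_1(\kappa)$-definable over $\langle\HH{\kappa^+},\in\rangle$ then the collection of all \emph{stationary} subsets of $\kappa$ would be $\Sigma_1(\kappa)$-definable over $\langle\HH{\kappa^+},\in\rangle$: the complement of the club filter is $\Set{A\subseteq\kappa}{\kappa\setminus A\text{ is stationary}}$, and complementation relative to $\kappa$ is a $\Delta_0$-operation in the parameter $\kappa$. So fix a $\Sigma_1$-formula $\varphi(v_0,v_1)\equiv\exists u\,\chi(v_0,v_1,u)$ with $\chi$ a $\Sigma_0$-formula defining this collection; by $\Sigma_1$-absoluteness between $\HH{\kappa^+}$ and $\VV$, a set $S\subseteq\kappa$ is stationary if and only if $\chi(\kappa,S,u)$ holds for some $u$. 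Now let $T$ be the set of $\omega_1$-iterable cardinals below $\kappa$, which is stationary by hypothesis; then $\kappa\setminus T$ is stationary as well, so I can fix $u_0,u_1$ with $\chi(\kappa,T,u_0)$ and $\chi(\kappa,\kappa\setminus T,u_1)$. Using the stationarity of $T$ and the regularity of $\kappa$, I would build a continuous elementary chain and obtain $X\prec\HH{\theta}$ for a sufficiently large regular $\theta$ with $\{T,u_0,u_1\}\subseteq X$, $\bar\kappa:=X\cap\kappa\in T$, and $\bar\kappa\subseteq X$; collapsing transitively via $\sigma\colon X\to\bar X$ yields a weak $\bar\kappa$-model $\bar X$ (transitive, of size $\bar\kappa$, modelling $\ZFC^{-}$, with $\bar\kappa+1\subseteq\bar X$) with $\sigma(\kappa)=\bar\kappa$, $\sigma(T)=T\cap\bar\kappa=:\bar T$, $\sigma(u_i)=:\bar u_i$, and, by elementarity and $\Sigma_0$-absoluteness, $\chi(\bar\kappa,\bar T,\bar u_0)$ and $\chi(\bar\kappa,\bar\kappa\setminus\bar T,\bar u_1)$ both hold.

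Since $\bar\kappa\in T$ is $\omega_1$-iterable, I would next apply Definition~\ref{definition:IterableCardinal} to a subset of $\bar\kappa$ coding $\langle\bar T,\bar u_0,\bar u_1\rangle$ to obtain a weak $\bar\kappa$-model $M$ with $\bar T,\bar u_0,\bar u_1\in M$ together with a weakly amenable $M$-ultrafilter $U$ on $\bar\kappa$ whose length-$\omega_1$ iteration is well-founded; as is standard in this context, it then follows that \emph{all} linear iterations of $\langle M,U\rangle$ are well-founded. By $\Sigma_0$-absoluteness $M\models\chi(\bar\kappa,\bar T,\bar u_0)\wedge\chi(\bar\kappa,\bar\kappa\setminus\bar T,\bar u_1)$, hence $M\models\varphi(\bar\kappa,\bar T)\wedge\varphi(\bar\kappa,\bar\kappa\setminus\bar T)$. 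As $\bar T\in M$ and $U$ is an $M$-ultrafilter, exactly one of $\bar T$ and $\bar\kappa\setminus\bar T$ lies outside $U$; I set $S_0$ to be that one, so that $S_0\in M$, $M\models\varphi(\bar\kappa,S_0)$, and $S_0\notin U$.

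I would then iterate $\langle M,U\rangle$ linearly through $\kappa$ steps, obtaining well-founded models $M_\alpha$, embeddings $j_{\alpha\beta}\colon M_\alpha\to M_\beta$ and critical points $\bar\kappa_\alpha=j_{0\alpha}(\bar\kappa)$ for $\alpha\le\beta\le\kappa$. For linear iterations of a single ultrafilter the sequence $\seq{\bar\kappa_\alpha}{\alpha<\kappa}$ is strictly increasing and continuous, and since each $M_\alpha$ with $\alpha<\kappa$ is transitive of size $<\kappa$ (note that $\kappa$, being a regular limit of inaccessibles, is inaccessible) while $\bar\kappa_\alpha\ge\alpha$, we get $\bar\kappa_\kappa=j_{0\kappa}(\bar\kappa)=\kappa$; thus $E:=\Set{\bar\kappa_\alpha}{\alpha<\kappa}$ is a club in $\kappa$. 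On the one hand, $j_{0\kappa}(S_0)\subseteq\kappa$ and $M_\kappa\models\varphi(\kappa,j_{0\kappa}(S_0))$ by elementarity, so $\varphi(\kappa,j_{0\kappa}(S_0))$ holds in $\VV$ by upward $\Sigma_1$-absoluteness from the transitive model $M_\kappa$, whence $j_{0\kappa}(S_0)$ is stationary in $\kappa$. On the other hand, $S_0\notin U$ gives $\bar\kappa\notin j_{01}(S_0)$, and the usual computation along the iteration (membership of $\bar\kappa_\alpha$ in $j_{0,\alpha+1}(S_0)$ would require $S_0\in U$, and $j_{\alpha+1,\kappa}$ has critical point above $\bar\kappa_\alpha$) yields $\bar\kappa_\alpha\notin j_{0\kappa}(S_0)$ for every $\alpha<\kappa$, so $j_{0\kappa}(S_0)\cap E=\emptyset$ and $j_{0\kappa}(S_0)$ is nonstationary in $\kappa$ --- a contradiction.

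The main obstacle, and the reason the hypothesis asks for a \emph{stationary limit} of $\omega_1$-iterable cardinals (rather than merely one such cardinal, or $\kappa$ itself being $\omega_1$-iterable), is that the reflected iterable structure has to be iterated all the way up to $\kappa$, so that after the iteration the parameter of the $\Sigma_1$-definition is again exactly $\kappa$; this relies essentially on the (standard but non-trivial) fact that $\omega_1$-iterability of $\bar\kappa$ already provides a weakly amenable $M$-ultrafilter that is fully iterable, and the careful verification that this machinery applies, together with the continuity and cofinality properties of the critical sequence, is where the real work lies. The secondary subtlety --- ensuring that the set which $\varphi$ wrongly certifies as stationary is disjoint from the critical club $E$ --- is handled above by reflecting the $\Sigma_1$-witnesses of both $T$ and $\kappa\setminus T$ and passing to whichever of $\bar T$, $\bar\kappa\setminus\bar T$ avoids $U$.
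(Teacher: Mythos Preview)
Your proof is correct, but the route differs from the paper's in one interesting way. Both arguments reflect to an $\omega_1$-iterable $\bar\kappa<\kappa$, pick up a weakly amenable iterable $M$-ultrafilter there, linearly iterate back up to $\kappa$, and derive a contradiction from $\Sigma_1$-upward absoluteness. The divergence is in how the ``wrong side'' of the club filter is arranged. The paper works with the single set $z=\kappa\setminus T$ (the complement of the $\omega_1$-iterable cardinals), and chooses the reflection point $\delta$ to be the \emph{least} member of the critical club $C$ that is $\omega_1$-iterable; minimality then forces $C\cap\delta\subseteq z\cap\delta$, so the reflected set already contains a club in $\delta$, and its iterated image visibly contains the club $j_{0,\kappa}(C\cap\delta)$ in $\kappa$---contradicting $\varphi$. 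You instead reflect witnesses for the stationarity of \emph{both} $T$ and $\kappa\setminus T$, and then let the $M$-ultrafilter $U$ decide: whichever of $\bar T$, $\bar\kappa\setminus\bar T$ is outside $U$ has iterated image disjoint from the club of critical points, hence nonstationary---again contradicting $\varphi$.

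Your argument is essentially the ``$x\notin U$'' half of the paper's Lemma~\ref{lemma:NormalUltrafilterNoSeparation} applied directly, and it avoids the minimality trick entirely; the cost is that you must reflect two $\Sigma_1$-witnesses rather than one. The paper's approach, conversely, never needs to look inside $U$ at all once the model $M_0$ is chosen, because the club witness is already packaged into $M_0$. Both are clean; your version makes the role of the ultrafilter more explicit, while the paper's makes clearer why the \emph{set of iterables itself} is the natural test object.
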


We outline the content of this paper. In Section \ref{section: forcing axioms} we will show that the condition that $M_1^{\#}(A)$ exists for all subsets $A$ of $\omega_1$ follows from $\mathsf{BMM}$ and the assumption that the non-stationary ideal $\mathrm{NS}_{\omega_1}$ on $\omega_1$ is precipitous. 
In Section \ref{section: Sigma-1 and Sigma-1-3} we characterize $\Sigma_1(\omega_1)$-definable sets of reals and extend this characterization to formulas with universally Baire parameters, assuming that there is a proper class of Woodin cardinals. 
In Section \ref{section: main results} we prove the main results about $\Sigma_1(\omega_1)$-definable subsets of $H(\kappa^+)$. 
In Section \ref{section: M1} we show that the assumptions of some of the previous results are optimal by showing that some of the results fail in $M_1$. 
In Section \ref{section: large cardinals} we prove version of some of the previous results for $\Sigma_1(\kappa)$-definable subsets of $H(\kappa^+)$, where $\kappa$ is a large cardinal, for instance a measurable cardinal or an $\omega_1$-iterable cardinal. We close this paper by listing several open questions motivated by the above results in Section \ref{section:Questions}.


\section{Forcing axioms and $M_1^{\#}(A)$} \label{section: forcing axioms} 

We will frequently make use of the hypothesis that $M_1^{\#}(A)$ exists for every subset $A$ of $\omega_1$. 
We show that this follows from $\mathsf{BMM}$ together with the assumption that the nonstationary ideal $\mathrm{NS}_{\omega_1}$ on $\omega_1$ is precipitous, by varying arguments from \cite{MR2963017}. 

\begin{theorem}
Assume $\mathsf{BMM}$ and that $\mathrm{NS}_{\omega_1}$ is precipitous.
Then $M_1^{\#}(A)$ exists for every $A\subseteq\omega_1$. 
\end{theorem}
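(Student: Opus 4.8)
The plan is to adapt the Ketchersid--Larson--Zapletal-style arguments (\cite{MR2963017}) on obtaining sharps from forcing axioms to the specific level of $M_1^\#$, i.e.\ the sharp for ``one Woodin cardinal''. The goal is, given an arbitrary $A\subseteq\omega_1$, to produce $M_1^\#(A)$. The strategy is to run a \emph{core model induction} / $\KK^c$-type argument \emph{relativized to $A$}: assume toward a contradiction that $M_1^\#(A)$ does not exist. Then, by the theory of $\KK$ below one Woodin cardinal (Steel, Jensen--Steel), the relativized core model $\KK(A)$ exists, is $\Sigma_1^3(A)$-correct in the appropriate sense, computes successors of singulars correctly, and is rigid. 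In particular $\KK=\KK(A)$ is definable and absolute enough that it survives the forcing we are about to do. The point of $\mathsf{BMM}$ together with precipitousness of $\mathrm{NS}_{\omega_1}$ is to manufacture an elementary embedding with critical point $\omega_1^{V}$ that moves $\KK(A)$ and thereby contradicts rigidity (or more precisely, contradicts the covering-type / universality properties of $\KK$).

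Concretely, the key steps would be: (1) Fix $A\subseteq\omega_1$ and suppose $M_1^\#(A)$ does not exist; set up $\KK=\KK(A)$ built in $V$ (or in a suitable $\HH{\theta}$), using that the non-existence of $M_1^\#(A)$ gives the anti-large-cardinal hypothesis needed for the $\KK$-existence theorem below a Woodin cardinal. (2) Use precipitousness of $\mathrm{NS}_{\omega_1}$: force with $\POT{\omega_1}/\mathrm{NS}_{\omega_1}$ to obtain, in a generic extension $V[G]$, a generic elementary embedding $\map{j}{V}{N}$ with $N$ transitive, $\crit{j}=\omega_1^V$, and $j(\omega_1^V)$ at least $\omega_2^V$; crucially $A\in V$ and $j(A)\cap\omega_1^V=A$, so $j$ acts on $A$-relativized objects coherently. (3) By the generic absoluteness content of $\mathsf{BMM}$ --- bounded Martin's Maximum gives $\Sigma_1$-absoluteness for $\HH{\omega_2}$ between $V$ and stationary-set-preserving extensions, and more to the point lets us reflect suitable statements from $V[G]$ back into $V$, or run the Ketchersid--Larson--Zapletal trick of absorbing the generic embedding into a stationary-set-preserving forcing --- transport the situation so that the generic embedding (or a fragment of it witnessing movement of $\KK(A)$) exists in a $\mathsf{BMM}$-controlled extension, hence reflects. (4) Apply the embedding to $\KK(A)$: since $\KK(A)^V$ is (by the $\KK$-theory) also, up to level-by-level comparison, the core model as computed in $N$ relativized to $j(A)$, the embedding $j\restriction\KK(A)^V$ is a non-trivial elementary embedding of $\KK(A)^V$ into $\KK(j(A))^N$ with critical point $\omega_1^V$; standard core-model arguments (the embedding must come from iterating, but $\KK$ below one Woodin has no such embedding with a measurable-type critical point inside it unless the target has a Woodin, etc.) yield that $\KK(A)$ must in fact reach ``one Woodin cardinal'', i.e.\ $M_1^\#(A)$ exists after all --- contradiction.

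The precise mechanics of step (3)--(4) are where the real work lies, and this is also where I expect the \emph{main obstacle}. One has to be careful that the generic ultrapower embedding $j$ genuinely moves the core model rather than being captured by $\KK$'s internal iterability (the analogous phenomenon for $0^\#$: a generic embedding with critical point $\omega_1$ applied to $\LL$ contradicts $\LL$'s rigidity only because $\LL$ has no non-trivial elementary self-embeddings --- here $\KK(A)$ below one Woodin is similarly rigid, but verifying that the \emph{relativized} $\KK$ machinery, its rigidity, its correctness, and its stability under the $A$-parametrized generic embedding all go through requires invoking the full apparatus of \cite{MR2963017} with $A$ threaded through). A secondary subtlety is matching up $\KK(A)$ as computed in $V$ with $\KK(j(A))$ as computed in $N$ (or in $V[G]$): one needs $N$ to be sufficiently correct and closed, which is where the measurable-cardinal-like closure of generic ultrapowers under precipitousness, plus the $\mathsf{BMM}$-absoluteness, must be combined. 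I would organize the write-up by first isolating, as a black box, the precise ``$\KK$ below one Woodin'' facts needed (existence under $\neg M_1^\#(A)$, rigidity, $\Sigma^1_3$-correctness, generic absoluteness of the $\KK$-construction), then deploying $\mathsf{BMM}$ + precipitous $\mathrm{NS}_{\omega_1}$ exactly as in the sharp-from-$\mathsf{BMM}$ argument to derive the contradiction, pointing at \cite{MR2963017} for the parts that are verbatim and spelling out only the modifications forced by (a) relativizing to $A\subseteq\omega_1$ and (b) pushing from ``$A^\#$ / $M_0^\#(A)$'' up to ``$M_1^\#(A)$''.
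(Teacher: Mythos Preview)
Your plan diverges from the paper's, and while a relativized-$\KK$ strategy is not obviously hopeless, it carries more weight than needed and leaves the decisive step underspecified. The paper does \emph{not} build $\KK(A)$ for $A\subseteq\omega_1$. It argues in two stages. First, working \emph{unrelativized}, the contradiction between \cite[Theorem~0.3]{MR2963017} (precipitousness of $\mathrm{NS}_{\omega_1}$ gives $(\omega_1)^{+\KK}=\omega_2$) and \cite[Lemma~7.1]{MR2963017} ($\mathsf{BMM}$, in fact already $\mathsf{BPFA}$, gives $(\omega_1)^{+\KK}<\omega_2$) produces an inner model with a Woodin cardinal; combined with closure under sharps (from $\mathsf{BMM}$ via \cite{MR2096166}) and a theorem of Woodin, this yields $M_1^\#(x)$ for every real $x$. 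Second, for arbitrary $A\subseteq\omega_1$ one takes the generic embedding $\map{j}{\VV}{M}$ from precipitousness; in $M$ the set $A$ is bounded below $\omega_1^M$, hence essentially a real, so $(M_1^\#(A))^M$ exists by elementarity. The genuine technical content is then showing $(M_1^\#(A))^M\in\VV$ and is fully iterable there, via two mutually generic embeddings, coiteration, and a tree-of-attempts argument exploiting that the sharp function is universally Baire and that $\calQ$-structures certify branches.

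The gap in your outline is step~(3). You invoke $\mathsf{BMM}$ to ``reflect'' the generic embedding, but $\POT{\omega_1}/\mathrm{NS}_{\omega_1}$ is not stationary-set-preserving, so $\mathsf{BMM}$ gives you nothing about that extension directly; an absorption trick would have to be made precise, and it is not clear the statement you want to reflect is $\Sigma_1$ over $\HH{\omega_2}$ with the right parameters. Separately, the Jensen--Steel $\KK$-theory you need relativizes cleanly to reals, not to arbitrary $A\subseteq\omega_1$, so your $\KK(A)$ and its weak-covering/rigidity properties would require real justification; and your step~(4) contradiction (``the embedding must come from iterating\ldots'') is too loose to evaluate --- the paper's clean successor-computation contradiction is only available unrelativized. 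The paper's two-stage route sidesteps all of this: it touches $\KK$ exactly once, unrelativized, and then lifts to $A\subseteq\omega_1$ by an entirely different mechanism that you do not mention at all.
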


\begin{proof} 
Let us first assume that there is no inner model with a Woodin cardinal,
and let $\KK$ denote the core model (see for example \cite{MR3135495}).
By {\cite[Theorem 0.3]{MR2963017}}, 
the fact that $\mathrm{NS}_{\omega_1}$ is precipitous (or just the fact that there is a 
normal precipitous ideal on $\omega_1$) yields $(\omega_1^\VV)^{+\KK}
= \omega_2^\VV$, whereas by
{\cite[Lemma 7.1]{MR2963017}}, $\mathsf{BMM}$ (or just $\mathsf{BPFA}$)
gives that $(\omega_1^\VV)^{+\KK}
< \omega_2^\VV$. This is a plain contradiction, so that there must be an inner 
model with
a Woodin cardinal.

By {\cite[Theorem 1.3]{MR2096166}}, $\mathsf{BMM}$ yields that $\VV$ is closed
under $X \mapsto X^\#$.
By a theorem of Woodin, the facts that there is an inner model with a 
Woodin cardinal and $\VV$ is closed under the sharp operation imply 
that $M_1^\#$ exists and is fully iterable.\footnote{This result is unpublished, but the methods used in the (known) proof can be found in \cite{CabalVolume3}.}  
This argument relativizes to show that for any real $x$, $M_1^\#(x)$
exists and is fully iterable.

Let us now fix $A \subseteq \omega_1$ and prove that $M_1^\#(A)$ exists and is
countably iterable.
Let $\map{j}{\VV}{M \subseteq \VV[G]}$, 
where $G$ is $\mathrm{NS}_{\omega_1}$-generic over $\VV$ and
$j$ is the induced generic elementary embedding such that $M$ is transitive.
By elementarity, $M_1^\#(A)$ exists in $M$ and is fully iterable in $M$.
We aim to see that $(M_1^\#(A))^M \in\VV$ and it is fully iterable in $\VV$.

As $\VV$ is closed under the sharp operation, $F = \Set{ \langle x, x^\# \rangle}{x\in\RRR}$ is universally Baire. Suppose that $T$ and $U$ are (class sized) trees such that
$F=p[T]$ in $\VV$ and $p[U] = {\mathbb R}^2 \setminus
p[T]$ in every generic extension of $\VV$. By well-known arguments, we must have $p[j(T)] = p[T]$ in $\VV[G]$ and in fact in every generic extension of $\VV[G]$. 

We first claim that $(M_1^\#(A))^M$ is $\omega_1$-iterable in $\VV[G]$ and in fact
in every generic extension $\VV[G,H]$ of $\VV[G]$ via its unique iteration strategy. 
In order to see this, let $W \in M$ be a canonical tree of attempts to find

\begin{enumerate}
\item[(a)] $\map{\sigma}{N}{(M_1^\#(A))^M}$, where $N$ is countable,
\item[(b)] $\calT$ is a countable iteration tree on $N$ 
\item[(c)] $\seq{\calQ_\lambda}{\lambda \in {\rm Lim} \cap ({\rm lh}(\calT)+1)}$ is such that for
every $\lambda \in\Lim \cap ({\rm lh}(\calT)+1)$, $\calQ_\lambda \trianglelefteq
(\mathcal{M}(\calT \upharpoonright \lambda))^\#$ is a $\calQ$-structure for
$\mathcal{M}(\calT)$, and for every ordinal $\lambda \in\Lim \cap {\rm lh}(\calT)$, we have $\calQ_\lambda \trianglelefteq \calM_\lambda^\calT$, and either
 \begin{enumerate}
  \item[(d1)] $\calT$ has a last ill-founded model, or else 

  \item[(d2)] $\calT$ has limit length but no cofinal branch $b$ such that
$\calQ_{{\rm lh}(\calT)} \trianglelefteq \calM_b^\calT$.
 \end{enumerate}
\end{enumerate}

Notice that we may use $j(T)$ to certify the first part of (c). 
If $(M_1^\#(A))^M$ were not $\omega_1$-iterable in $\VV[G,H]$, then
$W$ would be ill-founded in $\VV[G,H]$, hence in $M$, and then
$(M_1^\#(A))^M$ would not be iterable in $M$. Contradiction!

Let $\map{j'}{\VV}{M' \subseteq\VV[H] \subseteq \VV[G,H]}$, 
where $H$ is $(\mathrm{NS}_{\omega_1})^\VV$-generic over $\VV[G]$ and
$j'$ is the induced generic elementary embedding such that $M'$ is transitive.
By the above argument, $(M_1^\#(A))^M$ and $(M_1^\#(A))^{M'}$
may be successfully coiterated inside $\VV[G,H]$, so that in fact
$(M_1^\#(A))^M = (M_1^\#(A))^{M'}$, and hence
$(M_1^\#(A))^M \in\VV$.

Assume $(M_1^\#(A))^M$ were not $\omega_1$-iterable in some generic
extension $\VV[H]$ of $\VV$. We may without loss of generality assume that
$H$ is generic over $\VV[G]$. Let $W' \in\VV$ be defined exactly 
as the tree $W$ above, except for that
we use $T$ instead of $j(T)$ to certify the first part of (c). 
By $p[j(T)]=p[T]$ in $\VV[G,H]$, we must have $p[W'] = p[W]$ in $\VV[G,H]$.
As we assume $(M_1^\#(A))^M$ to be not $\omega_1$-iterable in 
$\VV[G,H]$, $W'$ would be
ill-founded in $\VV[G,H]$, so that $W$ would be ill-founded in $\VV[G,H]$ and
hence in $M$. Contradiction!

The argument given shows that $(M_1^\#(A))^M \in\VV$ is fully 
iterable in $\VV$.
\end{proof}


\section{$\Sigma_1(\omega_1)$-definable sets and $\Sigma^1_3$ sets} \label{section: Sigma-1 and Sigma-1-3}

We give a characterization of $\Sigma_1(\omega_1)$-definable sets of reals which we will use in the proof of Theorem \ref{theorem:Omega1WO}. Let $\mathrm{WO}$ denote the $\Pi^1_1$-set of all reals that code a well-ordering of $\omega$ (in some fixed canonical way) and, given $z\in\mathrm{WO}$, let $\Vert z\Vert$ denote the order-type of the well-ordering coded by $z$.  Remember that, given a class $\Gamma$ of subsets of $\RRR$, a subset $A$ of $\omega_1$ is \emph{$\Gamma$ in the codes} if there is $W\in\Gamma$ such that $A=\Set{\Vert z\Vert}{z\in W\cap\mathrm{WO}}$.  Note that $\omega_1$ is $\Sigma^1_2$ in the codes.

\begin{lemma}\label{Sigma13HigherCardinals}
 If $a\in\RRR$, $X$ is a $\Sigma^1_3(a)$-subset of $\RRR$ and $\kappa$ is an uncountable cardinal, then $X$ is $\Sigma_1(\kappa,a)$-definable. 
\end{lemma}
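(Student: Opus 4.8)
The plan is to unwind the standard $\Sigma^1_3$ normal form and push the third-order (i.e.\ real) quantifiers into first-order quantifiers over $\HH{\kappa^+}$ while using $\kappa$ as a parameter to name $\omega_1$. Concretely, since $X$ is $\Sigma^1_3(a)$, write $x\in X \iff \exists y\in\RRR\ \forall z\in\RRR\ \exists w\in\RRR\ R(x,y,z,w,a)$ for a recursive relation $R$; equivalently, by Shoenfield-style considerations, $x\in X$ iff there is a real $y$ such that the $\Pi^1_2$ statement $\forall z\,\exists w\,R(x,y,z,w,a)$ holds. The key tool is the tree representation of $\Pi^1_2$ sets via well-founded relations (Shoenfield trees): a $\Pi^1_2(a,x,y)$ statement is equivalent to the existence of a function (a ``homogeneity'' or order-preserving map) witnessing well-foundedness of the associated tree of attempts to refute it, and such a witnessing function, together with the reals $x,y$, is an element of $\HH{\kappa^+}$ once $\kappa$ is uncountable.

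First I would recall the Shoenfield tree: for a $\Sigma^1_2(a)$ set $S$, there is a tree $T$ on $\omega\times\omega_1$ recursive in $a$ such that $u\in S \iff \exists f\in{}^{\omega}\omega_1\ (u,f)\in[T]$, and this is absolute. Dually, $x\in X$ via the $\Sigma^1_3$ form means: $\exists y\in\RRR$ such that the set $\{(x,y)\}$ lies in the complement of the projection of the Shoenfield tree for the $\Sigma^1_2(a)$ set $\{(x,y) : \exists z\,\neg\exists w\,R\}$ — more cleanly, $\exists y$ such that $\langle x,y\rangle$ belongs to a $\Pi^1_2(a)$ set $P$, and membership in a $\Pi^1_2$ set is witnessed by a rank function on the corresponding well-founded Shoenfield tree (on $\omega\times\omega_1$) associated to the $\Sigma^1_2$ complement. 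So I would assert: there is a $\Sigma_1$-formula $\psi$ (in the language with no extra predicates) such that $x\in X$ iff there exist, in $\HH{\kappa^+}$, a real $y$, a countable ordinal (coded by an element of $\omega_1\le\kappa$), a well-founded tree $t$ on $\omega\times\omega_1$, and a rank function $\rho\colon t\to\On$, all satisfying a $\Delta_0$ relationship with $x,a,\kappa$ that encodes ``$\langle x,y\rangle\in P$ as certified by $\rho$.'' Here $\kappa$ enters only to guarantee that $\omega_1\subseteq\HH{\kappa^+}$ and that trees on $\omega\times\omega_1$ and their countable-rank functions are available as elements of $\HH{\kappa^+}$; the formula says ``$\omega_1$ is (the second component index set) and there is such a witness,'' which is expressible as $\Sigma_1$ because $\omega_1$ is itself $\Sigma_1$-definable from $\kappa$ over $\HH{\kappa^+}$ (it is the largest cardinal below $\kappa$ that is $\le\aleph_1$, or simply: $\omega_1$ exists and is countable-stage-definable). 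Actually the cleanest route: $\omega_1$ is $\Sigma_1(\kappa)$-definable over $\HH{\kappa^+}$ since ``$\alpha=\omega_1$'' is equivalent to ``$\alpha$ is an uncountable cardinal and $\exists$ a surjection $\kappa\to$ (every ordinal $<\alpha$ is countable)'' — more carefully, $\omega_1$ is the unique ordinal such that every smaller ordinal is countable and it is not, which is $\Pi_1$, but with parameter $\kappa$ one also gets a $\Sigma_1$ definition since $H(\kappa^+)$ contains a bijection between $\kappa$ and anything of size $\le\kappa$; I would verify that $\omega_1$ is in fact $\Delta_1(\kappa)$ over $\HH{\kappa^+}$ and cite this as the enabling observation.

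The main steps, then, are: (1) reduce $\Sigma^1_3(a)$ to $\exists y\in\RRR$ applied to a $\Pi^1_2(a,x,y)$ matrix; (2) represent that $\Pi^1_2$ matrix via Shoenfield absoluteness as the statement that a certain recursive-in-$(a,x,y)$ tree on $\omega\times\omega_1$ is well-founded, and note well-foundedness of such a tree is witnessed by a rank function into the ordinals, an object of size $\le\aleph_1\le\kappa$, hence in $\HH{\kappa^+}$; (3) observe that ``$\exists$ such a rank function'' is $\Sigma_1$ over $\HH{\kappa^+}$ (existential over a set, with the body $\Delta_0$ once we have the tree, which is itself obtained from $a,x,y,\omega_1$ by an absolute recursive procedure expressible $\Sigma_1$); (4) bundle the existential over $y$ and over the rank function into one block of $\Sigma_1$ quantifiers, using $\kappa$ only to name $\omega_1$; conclude $X$ is $\Sigma_1(\kappa,a)$-definable. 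The one subtlety I would be careful about is the direction of the quantifier over the Shoenfield branch versus the rank function: $\Pi^1_2$ corresponds to ``the tree searching for a bad $\Sigma^1_2$-witness is ill-founded is false,'' i.e.\ that tree is well-founded, giving a rank function — so the outer shape is indeed $\exists y\,\exists(\text{rank function})$, both existential, which is exactly what $\Sigma_1$-definability allows; I expect this bookkeeping — making sure every quantifier that needs to be existential really is, and that the tree construction from the parameters is $\Sigma_1$ rather than merely $\Pi_1$ — to be the only real obstacle, and it is handled by the standard fact that Shoenfield trees are computed by an absolute recursive operation and that $\omega_1$ is $\Sigma_1(\kappa)$ over $\HH{\kappa^+}$.
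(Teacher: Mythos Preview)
Your approach is correct in outline but takes a more hands-on route than the paper, and you introduce an avoidable complication.

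The paper's proof is essentially one line: by Shoenfield absoluteness, $x\in X$ if and only if there is a transitive model $M$ of $\ZFC^-$ in $\HH{\kappa^+}$ with $a,x\in M$, $\kappa\subseteq M$, and $\psi(a,x)^M$, where $\psi$ is the $\Sigma^1_3$-formula defining $X$. The condition $\kappa\subseteq M$ is $\Delta_0$ in $M,\kappa$ and forces $\omega_1\subseteq M$, so $\Pi^1_2$-absoluteness between $\VV$ and $M$ gives upward $\Sigma^1_3$-absoluteness from $M$ to $\VV$; conversely, if $x\in X$ one takes a suitable $\LL_\gamma[a,x,y]$ of size $\kappa$ (with $y$ the outer existential witness) as $M$. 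This yields a $\Sigma_1(\kappa,a)$-definition directly, with no need to name $\omega_1$ at all.

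Your route---unwind the Shoenfield tree for the inner $\Pi^1_2$ clause and witness its well-foundedness by a rank function---also works, but the detour through ``first show $\omega_1$ is $\Delta_1(\kappa)$-definable'' is both unnecessary and the shakiest part of your write-up: you never actually verify it, and your sketched attempts do not land (the clause ``$\alpha$ is uncountable'' is genuinely $\Pi_1$). The clean fix is to build the Shoenfield tree on $\omega\times\kappa$ rather than on $\omega\times\omega_1$. Since the Shoenfield construction depends only on order types, the projection is unchanged for any ordinal index $\ge\omega_1$, so well-foundedness of the tree on $\omega\times\kappa$ is still equivalent to the $\Pi^1_2$ statement. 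Then the tree has size $\kappa$, the rank function lives in $\HH{\kappa^+}$, and $\kappa$ is the only ordinal parameter you need.

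With that adjustment your argument goes through. The paper's version is really the same idea packaged at a higher level of abstraction: rather than existentially quantifying over the tree and a rank function, it existentially quantifies over a small transitive model that internally verifies the $\Sigma^1_3$ statement. What the paper's formulation buys is brevity and the avoidance of all the bookkeeping about expressing ``$T$ is exactly the Shoenfield tree'' in a $\Delta_0$ way; what yours buys is an explicit description of the witnessing objects.
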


\begin{proof}
Pick a $\Sigma^1_3$-formula $\psi(v_0,v_1)$ that defines $X$ using the parameter $a$. In this situation, Shoenfield absoluteness implies that the set $X$ is equal to the set of all $x\in\RRR$ with the property that there is a transitive model $M$ of $\ZFC^-$ in $\HH{\kappa^+}$ such that $a,x\in M$, $\kappa\subseteq M$ and $\psi(a,x)^M$. This  yields a $\Sigma_1(\kappa,a)$-definition of $X$.   
\end{proof}

 In the following, we will show that the converse of the above implication for $\omega_1$ holds in the presence of large cardinals. This argument makes use of the \emph{countable stationary tower $\QQQ_{{<}\delta}$} introduced by Woodin (see {\cite[Section 2.7]{MR2069032}}) and results of Woodin on generic iteration (see {\cite[Lemma 3.10 \& Remark 3.11]{MR1713438}).

\begin{lemma} \label{omega1 iterable structure} 
 Let $M$ be a transitive model of $\mathsf{ZFC}^-$ with a largest cardinal $\kappa$ and let $\PPP$ be a partial order in $M$ of cardinality less than $\kappa$ such that the following conditions hold: 
\begin{enumerate} 
 \item Forcing with $\mathbb{P}$ adds a $(\mu,\nu)$-extender over $M$ for some $\mu,\nu<\kappa$.  

 \item There is an $\omega_1$-iterable $M$-ultrafilter $U$ on $\kappa$.  
\end{enumerate} 
Then $M$ is $\omega_1$-iterable with respect to $\mathbb{P}$ and its images. 
\end{lemma}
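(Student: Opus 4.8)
\medskip

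The plan is to derive the $\omega_1$-iterability of $M$ with respect to $\PPP$ from hypothesis~(2) by comparing an arbitrary iteration of $M$ driven by images of $\PPP$ with the linear iteration of $M$ by $U$ and its images, realizing the former inside an iterate of the latter.

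First I would fix a putative iteration $\langle M_\alpha, E_\alpha, j_{\alpha\beta}\mid\alpha\le\beta\le\theta\rangle$ of $M$ with respect to $\PPP$ and its images, with $M_0=M$ and $\theta\le\omega_1$: at a successor $\alpha+1$ the extender $E_\alpha$ is added over $M_\alpha$ by a generic for $\PPP_\alpha:=j_{0\alpha}(\PPP)$ and $M_{\alpha+1}=\Ult(M_\alpha,E_\alpha)$, while at limit stages one takes direct limits. By elementarity of $j_{0\alpha}$ --- which is available as long as $M_\alpha$ is well-founded --- condition~(1) persists along the iteration: $E_\alpha$ is a $(\mu_\alpha,\nu_\alpha)$-extender whose critical point and support lie below the largest cardinal $j_{0\alpha}(\kappa)$ of $M_\alpha$, and $\PPP_\alpha$ is of size $<j_{0\alpha}(\kappa)$ in $M_\alpha$. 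In particular, each individual ultrapower step $M_\alpha\to M_{\alpha+1}$ is well-founded by the very definition of an iteration with respect to $\PPP$, so the content of the lemma is confined to the limit stages $\lambda\le\theta$, where the direct limit of well-founded models could a priori be ill-founded.

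In parallel, using~(2), I would form the iteration $\langle N_\alpha, U_\alpha, k_{\alpha\beta}\mid\alpha\le\beta\le\theta\rangle$ of $M$ by $U$ and its images: $N_0=M$, $U_0=U$, $N_{\alpha+1}=\Ult(N_\alpha,U_\alpha)$, with $k_{\alpha,\alpha+1}$ the ultrapower map, $U_{\alpha+1}$ the $k_{\alpha,\alpha+1}$-image of $U_\alpha$, and direct limits at limits. Since $U$ is $\omega_1$-iterable and $\theta\le\omega_1$, every $N_\alpha$ is well-founded; moreover $\crit{k_{\alpha,\alpha+1}}$ is the largest cardinal $\kappa^N_\alpha:=k_{0\alpha}(\kappa)$ of $N_\alpha$, so the $N_\alpha$ all agree with $M$ below $\kappa$. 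I would then construct, by recursion on $\alpha\le\theta$, a strictly increasing continuous function $h\colon\theta+1\to\omega_1+1$ together with elementary embeddings $\pi_\alpha\colon M_\alpha\to N_{h(\alpha)}$ satisfying $\pi_0=\id$, $\pi_\beta\circ j_{\alpha\beta}=k_{h(\alpha)h(\beta)}\circ\pi_\alpha$ for $\alpha\le\beta$, and with $\crit{\pi_\alpha}$ kept above $\nu_\alpha$, so that $\pi_\alpha$ fixes $\HH{\nu_\alpha^+}^{M_\alpha}$ pointwise. Such a family ensures that each $M_\alpha$ embeds elementarily into the well-founded $N_{h(\alpha)}$ and hence is well-founded, which is precisely the conclusion; the limit stages of this recursion are routine, as one puts $h(\lambda)=\sup_{\alpha<\lambda}h(\alpha)$ and $\pi_\lambda=\varinjlim_{\alpha<\lambda}\pi_\alpha$, so that the limit models $M_\lambda$ come out well-founded as a byproduct.

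The hard part will be the successor step $\alpha\to\alpha+1$: one must realize $M_{\alpha+1}=\Ult(M_\alpha,E_\alpha)$ inside a further $U$-iterate of $N_{h(\alpha)}$ in a way that commutes with the iteration maps. The point to exploit is that $E_\alpha$ is small --- it measures only subsets of sets lying in $\HH{\nu_\alpha^+}^{M_\alpha}$, which $\pi_\alpha$ fixes --- so that, via $\pi_\alpha$, $E_\alpha$ may be regarded as a pre-extender over $N_{h(\alpha)}$ of support below $\kappa^N_{h(\alpha)}$; because condition~(1) is preserved along the $U$-iteration maps, this copy is itself a generic extender added over $N_{h(\alpha)}$, so its ultrapower is well-founded, and, its support lying below $\crit{k_{h(\alpha),h(\alpha)+1}}$, it is captured by the $U$-iteration in the sense that the corresponding ultrapower of $N_{h(\alpha)}$ embeds, commuting with the iteration maps, into a one- or two-step-further $U$-iterate. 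The shift lemma then lifts $\pi_\alpha$ across the $E_\alpha$-ultrapower, and composing with that embedding yields $\pi_{\alpha+1}$ and $h(\alpha+1)$, with the critical-point bookkeeping carried along. Making this step precise --- in particular verifying that a generic extender over a $U$-iterate whose support lies below the iteration's next critical point does not disturb well-foundedness and that commutativity is maintained --- is the one genuinely delicate point, and it is here that one invokes Woodin's analysis of generic iterations (cf.\ {\cite[Lemma~3.10 \& Remark~3.11]{MR1713438}}). The case $\theta=\omega_1$ is then subsumed by the same continuity considerations used at the limit stages.
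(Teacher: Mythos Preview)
Your proposed commuting squares $\pi_{\alpha+1}\circ j_{\alpha,\alpha+1}=k_{h(\alpha),h(\alpha+1)}\circ\pi_\alpha$ cannot exist. The generic ultrapower map $j_{\alpha,\alpha+1}$ has critical point $\mu_\alpha<\kappa$, while by your own bookkeeping $\crit{\pi_\alpha}>\nu_\alpha>\mu_\alpha$ and $\crit{k_{h(\alpha),h(\alpha+1)}}=\kappa^N_{h(\alpha)}>\mu_\alpha$; hence the right-hand side fixes $\mu_\alpha$, but the left-hand side sends it strictly above $\mu_\alpha$ (since $j_{\alpha,\alpha+1}(\mu_\alpha)>\mu_\alpha$ and $\pi_{\alpha+1}$, being elementary into a transitive model, does not decrease ordinals). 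The concrete false step is the assertion that the $E_\alpha$-ultrapower of $N_{h(\alpha)}$ ``embeds into a one- or two-step-further $U$-iterate'': the $U$-iteration fixes everything below $\kappa$, whereas the $E_\alpha$-ultrapower moves $\mu_\alpha$, so no such embedding commuting with the iteration maps can exist. Without the commuting squares your limit-stage map $\pi_\lambda$ is undefined, and the argument breaks down exactly where well-foundedness is in question.

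The paper does not attempt to realize the generic iteration linearly inside the $U$-iteration. Instead it builds a two-dimensional commuting grid $M^\beta_\gamma$: rows are $U$-iterates of $M$, and along each row one runs the \emph{same} generic iteration, using the very same filters $G_\gamma$ --- this is possible precisely because $\vert\PPP\vert<\kappa=\crit{U}$, so the $U$-maps fix $\PPP$ and its generics pointwise. The vertical maps restrict to the identity on $\HH{\kappa^\beta_\gamma}$, which is what makes the grid commute. Woodin's Lemma~3.10 is invoked once, at row $\alpha+1$ (where $\alpha$ is the putative least failure), to conclude that the generic iteration along that row has well-founded direct limit; the bottom-row direct limit then embeds into it via the commuting vertical maps. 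The two iterations are orthogonal rather than nested, and it is the two-dimensional arrangement --- not a copying construction --- that converts hypothesis~(2) into the desired conclusion.
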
 

\begin{proof} 
 We first suppose that $M$ is countable.  Let $$\langle\seq{M^\alpha_0}{\alpha<\omega_1},  ~ \seq{\map{j^{\alpha,\beta}_0}{M^\alpha_0}{M^\beta_0}}{\alpha\leq\beta<\omega_1} \rangle$$ denote the iteration of $\langle M,\in,U\rangle$ of length $\omega_1$. Set $\kappa_0^\alpha=j_0^{0,\alpha}(\kappa)$ for all $\alpha<\omega_1$. 
Given $\alpha<\beta<\omega_1$, we have $M^\alpha_0=\HH{(\kappa^\alpha_0)^+}^{M^\alpha_0}=\HH{(\kappa^\alpha_0)^+}^{M^\beta_0}$.

In the following, we show that $M=M^0_0$ is $\alpha$-iterable for all $\alpha<\omega_1$.  
Assume that $\alpha$ is the least counterexample to this statement. Then $\alpha$ is a limit ordinal by our assumptions on $\PPP$. 
Suppose that $\seq{G_\beta}{\beta<\alpha}$ is a sequence of generic filters given by a generic iteration of $M_0^0$ by the extenders added by $\PPP$ its images such that the corresponding direct limit is ill-founded. 
Let $$\langle\seq{M_\beta^0}{\beta<\alpha}, ~ \seq{\map{k^0_{\beta,\gamma}}{M^0_\beta}{ M^0_\gamma}}{\beta\leq\gamma<\alpha}$$ denote the corresponding system of models and elementary embeddings.

By simultaneous recursion, we define a system 
\begin{itemize} 
 \item $\seq{M^\beta_\gamma}{\beta\leq\alpha+1, ~ \gamma<\alpha}$ of transitive models of $\mathsf{ZFC}^-$, and 

 \item commuting systems 
\[\seq{\map{j_\delta^{\beta,\gamma}}{M^\beta_\delta}{M_\delta^\gamma}}{\beta\leq\gamma\leq\alpha+1, ~ \delta<\alpha}\]  
\[\seq{\map{k^\beta_{\gamma,\delta}}{M^\beta_\gamma}{M^\beta_\delta}}{\beta\leq\alpha+1, \gamma\leq\delta<\alpha}\]  
of elementary embeddings. 
\end{itemize} 
such that the following properties can be verified by simultaneous induction,where $\kappa^\gamma_\delta=k^\gamma_{0,\delta}(\kappa^\gamma_0)$ for $\gamma<\alpha+1$ and $\delta<\alpha$: 
\begin{itemize} 
 \item $M^\beta_\delta=\HH{(\kappa^\beta_\delta)^+}^{M^\gamma_\delta}$ for all  $\beta\leq\gamma\leq\alpha+1$ and $\delta<\alpha$. 

 \item $j_\delta^{\beta,\gamma}\restriction\HH{\kappa^\beta_\delta}^{M^\beta_\delta}$ is the identity on $\HH{\kappa^\beta_\delta}^{M^\beta_\delta}$. 

 \item For all $\beta\leq\alpha+1$, $\seq{M^\beta_\gamma}{\gamma<\alpha}$ is the generic iteration of $M^\beta_0$ by $\PPP=j_0^{0,\beta}(\PPP)$ and its images $\seq{k^0_{0,\beta}(\PPP)}{\beta<\alpha}$ using the sequence $\seq{G_\gamma}{\gamma<\alpha}$ of filters. 
\end{itemize} 

By {\cite[Lemma 3.10 \& Remark 3.11]{MR1713438}}, the sequence $\seq{M^{\alpha+1}_\gamma}{\gamma<\alpha}$ has a well-founded direct limit. Since the system of elementary embeddings commutes, the direct limit of the sequence $\seq{M^0_\gamma}{\gamma<\alpha}$ embeds into this model and hence it is well-founded, a contradiction. 

For arbitrary models $M$, the claim follows by forming a countable elementary substructure of some $\HH{\theta}$.  
\end{proof}

\begin{lemma}\label{lemma:HOmega2Sigma13}
 Assume that $M_1^\#(A)$ exists for every $A\subseteq\omega_1$. Given $a\in\RRR$, the following conditions are equivalent for any subset  $X$ of $\RRR$. 
 \begin{enumerate} 
 \item  $X$ is $\Sigma_1(A)$-definable for some  $A\subseteq\omega_1$ that is $\Sigma^1_2(a)$ in the codes. 

 \item  $X$ is a $\Sigma^1_3(a)$-subset of $\RRR$. 
 \end{enumerate} 
\end{lemma}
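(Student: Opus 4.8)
The plan is to prove the equivalence by establishing both directions, with direction (ii)$\Rightarrow$(i) being essentially a bootstrapped version of Lemma~\ref{Sigma13HigherCardinals} combined with the observation that $\omega_1$ itself is $\Sigma^1_2$ in the codes; indeed, if $X$ is $\Sigma^1_3(a)$ then by Lemma~\ref{Sigma13HigherCardinals} it is $\Sigma_1(\omega_1,a)$-definable, and since $a\in\RRR\subseteq\HH{\omega_1}$ one can absorb $a$ into a subset $A$ of $\omega_1$ coding $a$ (which is certainly $\Sigma^1_2(a)$ in the codes, in fact $\Delta^1_1(a)$), so that $X$ becomes $\Sigma_1(A)$-definable. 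So the substance of the lemma is the forward direction (i)$\Rightarrow$(ii).

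For (i)$\Rightarrow$(ii), fix $A\subseteq\omega_1$ that is $\Sigma^1_2(a)$ in the codes, and a $\Sigma_1$-formula $\varphi$ witnessing that $X=\Set{x\in\RRR}{\varphi(A,x)}$. The idea is to express membership $x\in X$ in a $\Sigma^1_3(a)$ way using $M_1^\#(A)$, or rather small countable approximations to it. First I would note that $\varphi(A,x)$ holds iff some sufficiently large $\HH{\theta}$ (or some transitive model of $\ZFC^-$ containing $A,x$ with $\omega_1$ as a subset) satisfies $\varphi(A,x)$; the point is to replace this by a \emph{countable} witness. Using that $M_1^\#(A)$ exists and is fully iterable, one wants to say: $x\in X$ iff there is a countable transitive model $N$ of $\ZFC^-$ with a largest cardinal that believes it is of the form ``$\HH{\omega_2}$ of a model with $A'$ a subset of its $\omega_1$'', together with a real $z\in\mathrm{WO}$ coding a structure $\langle N,\in,A',U\rangle$ where $U$ is a candidate iteration strategy/$M$-ultrafilter as in Lemma~\ref{omega1 iterable structure}, such that (a) $N$ thinks $\varphi(A',x)$; (b) $\langle N,\in,A',U\rangle$ is ``correct about $A$'', i.e. for all $w\in\mathrm{WO}$ with $\Vert w\Vert<\omega_1^N$, $\Vert w\Vert\in A'$ iff $w$ witnesses the $\Sigma^1_2(a)$-definition of $A$; and (c) $N$ is genuinely iterable in the sense needed so that the generic iteration comparing $N$ with (an iterate of) $M_1^\#(A)$ succeeds — and this last clause is exactly where Lemma~\ref{omega1 iterable structure} is used, since it reduces genuine $\omega_1$-iterability to the existence of an $\omega_1$-iterable $M$-ultrafilter plus a forcing adding the right extenders, which (I expect) is what the $M_1^\#$-hypothesis delivers in the relevant relativized form.

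The key steps, in order, are: (1) reformulate $\Sigma_1(A)$-definability over $\HH{\omega_2}$ in terms of countable elementary submodels and their transitive collapses, so that we are quantifying over reals coding countable models; (2) use the $\Sigma^1_2(a)$-in-the-codes hypothesis on $A$ to pin down, in a $\Pi^1_2(a)$ (hence $\Sigma^1_3$-absorbable) way, that a given countable model has the ``right'' version $A'$ of $A$ on its $\omega_1$; (3) invoke $M_1^\#(A)$ and its iterability, together with the generic-iteration technology and Lemma~\ref{omega1 iterable structure}, to argue that a countable model $N$ which is iterable (in the liftable sense) and correct about $A$ must compute $\varphi(A,x)$ correctly, i.e.\ $N\models\varphi(A',x)\Leftrightarrow\varphi(A,x)$, by comparing $N$ with an $M_1$-like model and running the iteration up to $\omega_1$; (4) count quantifiers: the whole thing becomes ``$\exists$ real $z$ (coding $N$) such that [$\Pi^1_2(a)$ correctness and iterability conditions] and $N\models\varphi$'', which is $\Sigma^1_3(a)$. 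The main obstacle I anticipate is step (3): making the comparison argument actually run — one needs that the iterable model $N$, after being generically iterated to absorb enough of $\HH{\omega_2}$, sees the same $\Sigma_1$-facts with parameter $A$ as $V$ does. This requires a stationary-tower / generic-ultrapower argument moving $N$'s $\omega_1$ up to the real $\omega_1$ while keeping $A'$ an initial segment of $A$, and simultaneously controlling $M_1^\#(A)$ under the same generic iteration so that its Woodin cardinal provides the requisite $\Sigma_1$-correctness (this is the standard ``$M_1$ is $\Sigma^1_3$-correct'' phenomenon, here localized and relativized to $A$); pinning down the exact hypotheses on $U$ in Lemma~\ref{omega1 iterable structure} that are met here, and verifying the forcing $\PPP$ adding the needed extender lives below the largest cardinal of the relevant model, is the delicate bookkeeping I would need to get right.
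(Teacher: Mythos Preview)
Your overall strategy matches the paper's, but the execution is tangled in two places.

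First, the definition of the auxiliary set $Y$: the paper does not attach an external ultrafilter $U$ to the countable model. Instead, the countable transitive $M$ is required to have a Woodin cardinal $\delta$ and to be $\omega_1$-iterable with respect to the countable stationary tower $\QQQ_{{<}\delta}^M$ and its images; that is the generic-iteration mechanism. Your correctness clause (b) is also heavier than needed: one merely demands that $A_0\in M$ be \emph{defined inside $M$} by the given $\Sigma^1_2(a)$-formula, i.e.\ $A_0=\{\Vert z\Vert : z\in\mathrm{WO}^M,\ \psi(a,z)^M\}$. No external $\Pi^1_2$-correctness check is imposed; it falls out of Shoenfield after the iteration.

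Second, and more importantly, you conflate the two inclusions and introduce a ``comparison with $M_1^\#(A)$'' that never occurs. For $Y\subseteq X$: simply generically iterate $M$ via its own stationary tower out to length $\omega_1$, obtaining $j\colon M\to N$ with $\omega_1^{N}=\omega_1$. Since $\omega_1\subseteq N$, Shoenfield absoluteness gives $j(A_0)=A$, whence $\varphi(A,y)^{N}$ and, by upward $\Sigma_1$-absoluteness, $\varphi(A,y)$ in $\VV$. No $M_1^\#$ enters this direction at all. For $X\subseteq Y$: given $x\in X$, choose $C\subseteq\omega_1$ with $a,x,A\in M_1^\#(C)$ and $\omega_1=\omega_1^{M_1^\#(C)}$, take a countable elementary hull and collapse to $M$. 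Then $M$ has a Woodin cardinal $\delta$, and Lemma~\ref{omega1 iterable structure} (with $\PPP=\QQQ_{{<}\delta}^M$ and $U$ induced by the top extender of $M_1^\#(C)$) yields the required $\omega_1$-iterability; so this specific $M$ witnesses $x\in Y$. The $M_1^\#$-hypothesis is used only to \emph{produce} such an $M$, not as something to compare against, and there is no need to ``control $M_1^\#(A)$ under the same generic iteration''. Once you separate the two directions this way, the obstacle you flag in step~(3) disappears.
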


\begin{proof}
  Assume that (i) holds. Fix a $\Sigma_1$-formula $\varphi(v_0,v_1)$ and a $\Sigma^1_2$-formula $\psi(v_0,v_1)$ with the property that  $X=\Set{x\in\RRR}{\varphi(A,x)}$, where $A=\Set{\Vert z\Vert}{z\in W\cap\mathrm{WO}}$ and $W=\Set{z\in\RRR}{\psi(a,z)}$.  
  Define $Y$ to be the set of all $y\in\RRR$ with the property that there is a countable transitive model $M$ of $\ZFC^-$ and $\delta,A_0,W_0\in M$ such that $a,y\in M$ and the following statements hold: 
 \begin{enumerate}
  \item $\delta$ is a Woodin cardinal in $M$ and $M$ is $\omega_1$-iterable with respect to $\QQQ_{{<}\delta}^M$ and its images.   

  \item In $M$, we have $W_0=\Set{z\in\RRR}{\psi(a,z)}$, $A_0=\Set{\Vert z\Vert}{z\in W_0\cap\mathrm{WO}}$ and $\varphi(A_0,y)$ holds. 
 \end{enumerate}

 \begin{claim*}
  The set $Y$ is a $\Sigma^1_3(a)$-subset of $\RRR$. 
 \end{claim*}
 
 \begin{proof} 
 The only condition on $M$ which is not first-order is $\omega_1$-iterability. 
 This condition states that all countable generic iterates are well-founded and hence it is a $\Pi^1_2$-statement. 
  \end{proof}

 \begin{claim*}
  $Y\subseteq X$. 
 \end{claim*}

 \begin{proof}
  Fix $y\in Y$  and pick  a countable transitive model $M_0$ and $\delta,A_0,W_0\in M_0$ witnessing this.   Let $\seq{M_\alpha}{\alpha\leq\omega_1}$ be a generic iteration of $M_0$ using $\QQQ_{{<}\delta}^{M_0}$ and its images. Set $N=M_{\omega_1}$ and let $\map{j}{M_0}{N}$ denote the corresponding elementary embedding. Then $N$ is a transitive model of $\ZFC^-$ and $j(\omega_1^{M_0})=\omega_1^N=\omega_1$. Pick $\alpha\in A$. Then there is $u\in\mathrm{WO}^ N$ such that $\alpha=\alpha_u$ and $$\exists z\in\mathrm{WO} ~ [\Vert u\Vert=\Vert z\Vert ~ \wedge ~ \psi(a,z)]$$ holds. Since $\omega_1\subseteq N$, Shoenfield absoluteness implies that there is $z\in\mathrm{WO}^N$ with $\alpha=\Vert z\Vert$ and $\psi(a,z)^N$. By elementarity, this shows that $z\in j(W_0)$ and $\alpha\in j(A_0)$. In the other direction, fix $z\in j(W_0)$. Then $\psi(a,z)^N$ holds and Shoenfield absoluteness implies that $z\in W$ and $\Vert z\Vert\in A$. We can conclude that $A=j(A_0)$ and  $\varphi(A,y)^N$ holds. By $\Sigma_1$-upwards absoluteness, this shows that $\varphi(A,y)$ holds and hence $y\in X$.  
 \end{proof}

 \begin{claim*}
  $X\subseteq Y$. 
 \end{claim*}

 \begin{proof}
  Pick $x\in X$. Then $\varphi(A,x)$ holds and we can find a subset  $C$ of $\omega_1$ such that $a,x,A\in M_1^{\#}(C)$,  $\omega_1=\omega_1^{M_1^{\#}(C)}$ and $\varphi(A,x)^{M_1^{\#}(C)}$ holds. Shoenfield absoluteness implies that $$\bar{W} ~ = ~ W\cap M_1^\#(C) ~ = ~ \Set{z\in\mathrm{\RRR}^{M_1^\#(C)}}{\psi(a,z)^{M_1^\#(C)}} ~  \in ~ M_1^\#(C).$$  As in the proof of the above claim, we can now use Shoenfield absoluteness to see that $A=\Set{\Vert z\Vert}{z\in\bar{W}\cap\mathrm{WO}^{M_1^\#(C)}}$.

 Let $N$ be a countable elementary submodel of $M_1^\#(C)$  
 and let $\map{\pi}{N}{M}$ denote the corresponding transitive collapse. 
 Then $M$ is a countable transitive model of $\ZFC^-$ with $a,x\in M$ and there is $\delta\in M$ such that $\delta$ is a Woodin cardinal in $M$ and $M$ is iterable with respect $\QQQ_{{<}\delta}^M$ and its images by Lemma \ref{omega1 iterable structure}.  
In $M$, we have  $\pi(\bar{W})=\Set{z\in\RRR}{\psi(a,z)}$, $\pi(A)=\Set{\Vert z\Vert}{z\in \pi(\bar{W})\cap\mathrm{WO}}$ and $\varphi(\pi(A),x)$ holds.    Together, this shows that $M$ and $\delta,\pi(A),\pi(\bar{W})\in M$ witness that $x$ is an element of $Y$. 
 \end{proof}

 This completes the proof of the implication from (i) to (ii). The converse implication is a direct consequence of Lemma \ref{Sigma13HigherCardinals}. 
\end{proof}

 Note that the assumptions of Lemma \ref{lemma:HOmega2Sigma13} hold for instance in $M_2$.

\begin{remark} 
The assumptions for the implication from (i) to (ii) in Lemma \ref{lemma:HOmega2Sigma13} are optimal in the following sense:  
\begin{enumerate} 
\item 
The implication is not a theorem of $\ZFC$.  If $\CH$ holds and the set $\{\RRR\}$ is $\Sigma_1(\omega_1)$-definable, then the projective truth predicate is a $\Sigma_1(\omega_1)$-definable subset of $\RRR$ that is not projective. Note that the above assumptions holds for instance in $\LL$. Moreover, we will later prove results that show that the assumption also holds  in $M_1$ (see Lemma \ref{Sigma1 wellorder in M1}). This shows that the implication does not follow from the existence of a single Woodin cardinal.  

\item The implication does not follow from $\neg\mathrm{CH}$. 
Suppose that $\LL[G]$ is an $\Add{\omega}{\omega_2}$-generic extension of $\LL$.  Since $\{\RRR^\LL\}$ is $\Sigma_1(\omega_1)$-definable in $\LL[G]$, 
the projective truth predicate of $\LL$ is $\Sigma_1(\omega_1)$-definable in $\LL[G]$. Assume that this set is projective in $\LL[G]$. By a result of Woodin (see {\cite[Lemma 9.1]{MR2987148}}), there is an $\Add{\omega}{\omega_1}$-generic filter $H$ over $\LL$ and an elementary embedding of $\LL(\RRR)^{\LL[H]}$ into $\LL(\RRR)^{\LL[G]}$.  
Then the projective truth predicate of $\LL$ is also projective in $\LL[H]$. Since $\Add{\omega}{\omega_1}$ is definable over $\HH{\omega_1}^\LL$ and satisfies the countable chain condition, the forcing relation for $\Add{\omega}{\omega_1}$ for projective statements with parameters in $\RRR^\LL$ is projective in $\LL$. Using the homogeneity of $\Add{\omega}{\omega_1}$, this shows that the projective truth predicate is projective in $\LL$, a contradiction.  
\end{enumerate} 
\end{remark}

A simpler version of the proof of Lemma \ref{lemma:HOmega2Sigma13}, using Lemma \ref{omega1 iterable structure} and generic iterations of countable substructures of $\HH{\theta}$, where $\theta$ is above a measurable cardinal, yields the following result.

\begin{lemma} \label{characterization of Sigma1(omega1) from precipitous ideal} 
The equivalence in Lemma \ref{lemma:HOmega2Sigma13} holds if there is a precipitous ideal on $\omega_1$ and a measurable cardinal. \qed 
\end{lemma}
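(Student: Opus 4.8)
The statement asserts that the equivalence of Lemma \ref{lemma:HOmega2Sigma13} remains valid under the weaker hypothesis that there is a precipitous ideal on $\omega_1$ together with a measurable cardinal, rather than under the hypothesis that $M_1^\#(A)$ exists for every $A \subseteq \omega_1$. The plan is to run the same two-claim argument as in the proof of Lemma \ref{lemma:HOmega2Sigma13}, with two substitutions: where the earlier proof invoked the canonical inner model $M_1^\#(C)$ to find a model witnessing membership in $Y$, here I would instead use a countable elementary submodel of $\HH\theta$ for $\theta$ above the measurable cardinal $\kappa$; and where the earlier proof iterated via the countable stationary tower $\QQQ_{{<}\delta}$ inside a model with a Woodin cardinal, here I would use the generic ultrapower induced by the precipitous ideal on $\omega_1$ to realize the required $\omega_1$-length generic iteration of such a countable substructure.

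\textbf{Key steps.} First I would fix $a \in \RRR$, a $\Sigma_1$-formula $\varphi$, and a $\Sigma^1_2$-formula $\psi$ as in the original proof, with $X = \Set{x \in \RRR}{\varphi(A,x)}$ and $A = \Set{\Vert z\Vert}{z \in W \cap \mathrm{WO}}$, $W = \Set{z}{\psi(a,z)}$. Then I would redefine $Y$ to be the set of reals $y$ for which there is a countable transitive model $M$ of $\ZFC^-$ and $A_0, W_0 \in M$ with $a, y \in M$ such that: $M$ is $\omega_1$-iterable with respect to the relevant generic-iteration process (iterating $\omega_1^M$ using the image ultrafilters/ideals, as supplied by Lemma \ref{omega1 iterable structure}), and inside $M$ we have $W_0 = \Set{z}{\psi(a,z)}$, $A_0 = \Set{\Vert z\Vert}{z \in W_0 \cap \mathrm{WO}}$, and $\varphi(A_0,y)$. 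As before, $\omega_1$-iterability is the assertion that all countable generic iterates are well-founded, hence $\Pi^1_2$, so $Y$ is $\Sigma^1_3(a)$; this reproves the analogue of the first claim. For $Y \subseteq X$: given $y \in Y$ witnessed by $M_0$, iterate $M_0$ for $\omega_1$ steps to a model $N$ with $\omega_1^N = \omega_1$ and elementary embedding $j : M_0 \to N$; then Shoenfield absoluteness (using $\omega_1 \subseteq N$) gives $j(A_0) = A$ and $j(W_0) = W \cap N$, and $\Sigma_1$-upward absoluteness from $\varphi(A, y)^N$ yields $\varphi(A,y)$, so $y \in X$ — this is verbatim the original argument. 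For $X \subseteq Y$: given $x \in X$, pick $\theta$ above the measurable cardinal $\kappa$ and take a countable elementary submodel $N \prec \HH\theta$ with $a, x, A \in N$ and $\omega_1^{\HH\theta} \subseteq$ the domain in the sense needed; let $\pi : N \to M$ be the transitive collapse. By Lemma \ref{omega1 iterable structure}, applied inside $\HH\theta$ using the normal ultrafilter on $\kappa$ together with the forcing that adds the relevant extender (this is where the measurable cardinal is needed), $M$ is $\omega_1$-iterable with respect to the generic-iteration process. Absoluteness of $\psi$ (a $\Sigma^1_2$, hence $\Sigma^1_3$-absolute, formula) between $N$, $M$ and $V$ shows $\pi(W) = \Set{z \in \RRR^M}{\psi(a,z)^M}$ and $\pi(A) = \Set{\Vert z\Vert}{z \in \pi(W) \cap \mathrm{WO}^M}$, while $\Sigma_1$-elementarity of $\pi^{-1}$ gives $\varphi(\pi(A), x)^M$. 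Hence $M$ witnesses $x \in Y$. The converse implication (ii) $\Rightarrow$ (i) is again immediate from Lemma \ref{Sigma13HigherCardinals}.

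\textbf{Main obstacle.} The delicate point is arranging that the generic-iteration process used to define $\omega_1$-iterability in the new $Y$ is exactly the one that (a) is certified to produce well-founded iterates via Lemma \ref{omega1 iterable structure} for the collapsed substructure $M$, and (b) can actually be run for $\omega_1$ steps in $V$ (or a generic extension) to realize $j(A_0) = A$ in the $Y \subseteq X$ direction. Under the $M_1^\#(A)$ hypothesis this was the stationary tower $\QQQ_{{<}\delta}$ living inside a model with a Woodin; here one must instead iterate using the precipitous ideal on $\omega_1$, and one must check that the resulting $\omega_1$-length iteration produces a model $N$ with $\omega_1^N = \omega_1$ — which follows from genericity of the iteration exactly as in Woodin's standard analysis of generic iterations of $\ZFC^-$ models, but the bookkeeping linking ``$\omega_1$-iterability of $M$'' (a statement about \emph{all} countable generic iterates, provided by the measurable cardinal via Lemma \ref{omega1 iterable structure}) to ``the particular $\omega_1$-iteration driven by a $V$-generic for the precipitous ideal'' (needed for the $Y\subseteq X$ inclusion) must be spelled out. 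Since the excerpt explicitly says this is ``a simpler version of the proof of Lemma \ref{lemma:HOmega2Sigma13}, using Lemma \ref{omega1 iterable structure} and generic iterations of countable substructures of $\HH\theta$,'' I would present the argument by pointing to the two substitutions above and verifying only the two or three places where the ideal-iteration and the measurable-cardinal hypothesis replace the Woodin/$M_1^\#$ machinery, leaving the rest as a transcription of the earlier proof.
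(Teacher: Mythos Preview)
Your proposal is correct and follows precisely the approach the paper indicates: the paper's own proof is a single sentence pointing to a ``simpler version of the proof of Lemma \ref{lemma:HOmega2Sigma13}, using Lemma \ref{omega1 iterable structure} and generic iterations of countable substructures of $\HH{\theta}$, where $\theta$ is above a measurable cardinal,'' and you have unpacked exactly that. One small clarification worth making explicit: in the redefinition of $Y$, the countable structures should carry the ideal as part of the data (i.e., pairs $\langle M,I\rangle$ with $I$ an ideal on $\omega_1^M$), so that the generic-iteration process is unambiguously specified and the $\Sigma^1_3$ complexity computation goes through; correspondingly, for $X\subseteq Y$ one takes $\theta$ so that $\HH{\theta}$ sees the precipitous ideal and the normal ultrafilter on the measurable, and the collapse produces exactly the $\langle M,I\rangle$ needed, with Lemma \ref{omega1 iterable structure} supplying iterability via the collapsed ultrafilter.
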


In the following, we will add a predicate $A$ for sets of reals to the language to obtain a stronger version of Lemma \ref{lemma:HOmega2Sigma13}. 
Note that quantifiers over $A$ are unbounded in this language.  
We consider universally Baire (uB) subsets of $\RRR$. 

\begin{definition}\label{definition:Biterable}
Suppose that $\langle M,\in,I\rangle$ is a countable transitive model of $\mathsf{ZF}^-$ and $B\subseteq\RRR$. 
The structure $\langle M,\in,I\rangle$ is \emph{$B$-iterable} if the following conditions hold. 
\begin{enumerate} 
 \item $\langle M,\in,I\rangle$ is $\omega_1$-iterable, i.e. all countable iterates are well-founded. 
 
 \item $B\cap M\in M$. 

 \item If $\map{i}{M}{N}$ is a countable iteration, then $i(B\cap M)=B\cap N$. 
\end{enumerate} 
\end{definition}

Suppose that $B$ is a subset of $\RRR$. A set of reals is $\Sigma^1_n(B)$  if it is defined by a $\Sigma^1_n$-formula, where $x\in B$ and $x\notin B$ are allowed as atomic formulas.

\begin{lemma} \label{characterization of Sigma1 with uB parameter} 
 Assume that there is a proper class of Woodin cardinals. If $B$ is a uB set of reals and $X$ is a subset of $\RRR$ that  is $\Sigma_1(\omega_1)$-definable over $\langle\HH{\omega_2},\in,B,\mathrm{NS}_{\omega_1}\rangle$, then $X$ is a $\Sigma^1_3(B)$-subset of $\RRR$. 
\end{lemma}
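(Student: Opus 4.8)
The plan is to mirror the proof of Lemma \ref{lemma:HOmega2Sigma13}, replacing the use of $M_1^\#(A)$ for subsets $A$ of $\omega_1$ by the use of $B$-iterable structures obtained from stationary-tower iterations, and making essential use of the universal Baireness of $B$ to certify correctness of the predicate $B$ along generic iterations. Fix a $\Sigma_1$-formula $\varphi(v_0,v_1)$ in the language with predicates for $B$ and $\mathrm{NS}_{\omega_1}$ such that $X=\Set{x\in\RRR}{\langle\HH{\omega_2},\in,B,\mathrm{NS}_{\omega_1}\rangle\models\varphi(\omega_1,x)}$. I would then define $Y$ to be the set of all $y\in\RRR$ for which there is a countable transitive model $\langle M,\in,I\rangle$ of $\mathsf{ZFC}^-$ and a $\delta\in M$ such that: $\delta$ is a Woodin cardinal in $M$; $I=(\mathrm{NS}_{\omega_1})^M$; $\langle M,\in,I\rangle$ is $\omega_1$-iterable with respect to $\QQQ_{{<}\delta}^M$ and its images; $\langle M,\in,I\rangle$ is $B$-iterable (equivalently, $B\cap M\in M$ and this is preserved under countable iterations — this needs to be phrased so that it stays $\Pi^1_2$, using trees projecting to $B$ and its complement); $y\in M$; and $M\models\varphi^{\HH{\omega_2}^M}(\omega_1^M,y)$ with $B\cap M$ interpreting the predicate and $I$ interpreting $\mathrm{NS}_{\omega_1}$.

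The argument then splits into the three claims as before. First, $Y$ is $\Sigma^1_3(B)$: the only non-first-order clauses are $\omega_1$-iterability and $B$-iterability, both of which are $\Pi^1_2(B)$ statements about the countable structure $M$ (well-foundedness of all countable iterates, together with the persistence clause $i(B\cap M)=B\cap N$, which by universal Baireness can be expressed as a $\Pi^1_2(B)$ condition using trees $T,U$ with $B=p[T]$ and complement $p[U]$), so the whole definition is $\exists M\,(\text{arithmetic in }M)\wedge\Pi^1_2(B)$, i.e. $\Sigma^1_3(B)$. Second, $Y\subseteq X$: given $y\in Y$ witnessed by $\langle M,\in,I\rangle$, I would run a stationary-tower generic iteration of $M$ of length $\omega_1$ via $\QQQ_{{<}\delta}^M$ and its images to obtain $\map{j}{M}{N}$ with $\omega_1^N=\omega_1$, $N$ well-founded (hence transitive), $j(I)=(\mathrm{NS}_{\omega_1})^N=\mathrm{NS}_{\omega_1}\cap N$ (this is the key property of the countable stationary tower — the image of the nonstationary ideal is the true restriction), and by $B$-iterability $j(B\cap M)=B\cap N$. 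Then $N\models\varphi^{\HH{\omega_2}^N}(\omega_1,y)$ with the correct interpretations, and since $\varphi$ is $\Sigma_1$ and $\HH{\omega_2}^N\subseteq\HH{\omega_2}$ with $B\cap N=B\cap\HH{\omega_2}^N$ and $\mathrm{NS}_{\omega_1}\cap N$ agreeing with $\mathrm{NS}_{\omega_1}$ on $N$, upward $\Sigma_1$-absoluteness gives $\langle\HH{\omega_2},\in,B,\mathrm{NS}_{\omega_1}\rangle\models\varphi(\omega_1,y)$, i.e. $y\in X$. Third, $X\subseteq Y$: given $x\in X$, pick a proper class of Woodin cardinals and a Woodin $\delta$ with $x\in V_\delta$, so $\QQQ_{{<}\delta}$ exists, $B$ restricted to all relevant generic extensions is uB hence has the needed tree representations, and take a countable elementary $N\prec\HH{\theta}$ (for $\theta$ large) with $x,\delta,T,U\in N$; collapsing $N$ yields a countable transitive $M$ with a Woodin $\bar\delta$, and $M$ is $\omega_1$-iterable with respect to $\QQQ_{{<}\bar\delta}^M$ (by the usual reflection, or by an analogue of Lemma \ref{omega1 iterable structure} in the presence of a measurable, which exists below the next Woodin) and is $B$-iterable (because the trees $T,U$ collapse into $M$ and certify persistence of $B$ under iterations, using $p[j(T)]=p[T]$ arguments as in Section \ref{section: forcing axioms}). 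Since $\varphi$ is $\Sigma_1$ and hence absolute to $\HH{\omega_2}^M$ with the collapsed parameters, $M$ witnesses $x\in Y$.

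The main obstacle will be the third claim, specifically establishing that the collapsed structure $M$ is genuinely $B$-iterable, i.e. that $B\cap N=i(B\cap M)$ for every countable iteration $\map{i}{M}{N}$ — not merely for the iterations living in $V$, but as a first-order-verifiable property so that it can legitimately be put into the definition of $Y$ without circularity. This is exactly where universal Baireness does the work: using trees $T,U$ in $V$ projecting to $B$ and its complement (absolutely across generic extensions, which exist by the proper class of Woodins), the homogeneity/absoluteness properties of stationary-tower iterations give $p[i(T)]=p[T]$ and $p[i(U)]=p[U]$ along any iteration, so the $T$-membership predicate tracks $B$ correctly; one then checks that the restricted trees $T\cap M$, $U\cap M$ genuinely belong to $M$ (after collapsing, for a sufficiently elementary $N$) and that $M$ believes they witness the persistence clause. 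A secondary subtlety is verifying that the stationary-tower generic iteration in the proof of $Y\subseteq X$ correctly sends $I$ to $\mathrm{NS}_{\omega_1}\cap N$; this is a standard fact about $\QQQ_{{<}\delta}$ (each step of the iteration, being a generic ultrapower by the stationary tower, maps the nonstationary ideal to the true one restricted to the new model, and this is preserved at limits since the ideal is determined by the reals/ordinals present), and I would cite the relevant results of Woodin rather than reprove it. Finally, one must be slightly careful that quantifiers over the predicate $B$ in $\varphi$ are harmless: since $\varphi$ is $\Sigma_1$, all its quantifiers are effectively over elements of $\HH{\omega_2}$, so the only role of $B$ is as a genuine predicate, and $B$-iterability is precisely what guarantees the predicate is interpreted correctly in the iterate — no unbounded quantification over $B$ survives into the witness condition.
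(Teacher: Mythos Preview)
Your framework (using stationary-tower $B$-iterable models rather than precipitous-ideal $B$-iterable models) is legitimate---the paper itself remarks after the proof that this alternative works. Your $Y\subseteq X$ argument is essentially right, modulo the bookkeeping needed to guide the iteration so that $j(I)=\mathrm{NS}_{\omega_1}\cap N$; this is the stationary-tower analogue of {\cite[Lemma~4.36]{MR1713438}} and can be arranged by the same diagonalisation.

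The genuine gap is in $X\subseteq Y$. You propose to take a countable $N\prec\HH{\theta}$ containing $x,\delta,T,U$ and collapse to $M$, then argue that $M$ is $B$-iterable because the collapsed trees $\bar T=\pi(T)$, $\bar U=\pi(U)$ certify this via ``$p[j(T)]=p[T]$ arguments as in Section~\ref{section: forcing axioms}''. But those arguments require that the \emph{full} trees $T,U$ lie in the model being iterated: one uses that $j\restriction T$ embeds $T$ into $j(T)$, giving $p[T]\subseteq p[j(T)]$, and symmetrically for $U$; combined with $p[j(T)]\cap p[j(U)]=\emptyset$ and $p[T]\cup p[U]=\RRR$, this forces $p[j(T)]=p[T]$. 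When only the collapsed fragments $\bar T,\bar U$ lie in $M$, a generic iteration $i\colon M\to N'$ produces trees $i(\bar T),i(\bar U)$ that bear no evident relation to $T,U$: there is no embedding of $T$ into $i(\bar T)$, nor of $i(\bar T)$ into $T$, and a branch of $i(\bar T)$ in the well-founded model $N'$ need not yield a branch of $T$. So you cannot conclude $p[i(\bar T)]\subseteq B$, and $B$-iterability of $M$ is unproven.

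The paper circumvents this by passing first to a generic extension $\VV[G]$ in which $\mathrm{NS}_{\omega_1}$ is precipitous and taking $M=\VV[G]_\nu$ to be a genuine rank initial segment---so the \emph{actual} trees $T,U$ from $\VV$ sit inside $M$, and the $p[T]\subseteq p[j(T)]$ argument goes through for every iteration of $M$. The cost is that one must separately argue that $\varphi(\omega_1,x)$ still holds in $\langle M,\in,I,B_G\rangle$; this is where the paper invokes $\PPP_{\mathrm{max}}$-maximality. Finally, the existence of a $B$-iterable witness, now established in $\VV[G,H]$, is pulled back to $\VV$ using the generic absoluteness of $\LL(B,\RRR)$ under a proper class of Woodin cardinals. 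The paper's closing remark confirms that even the stationary-tower version of the argument requires this detour through a generic extension and the absoluteness machinery; a direct Skolem-hull collapse in $\VV$ does not suffice.
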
 

\begin{proof} 
Suppose that $X$ is defined by a $\Sigma_1$-formula $\varphi(v_0,v_1)$ over the structure  $\langle\HH{\omega_2},\in,B,\mathrm{NS}_{\omega_1}\rangle$.  We define $Y$ as the set of all reals $x$ such that there is a $B$-iterable structure $\langle M,\in,I\rangle$ with the property that $M$ is a model of $\ZFC$, $x\in M$ and $\langle M,\in,B\cap M,I\rangle\models \varphi(\omega_1^M,x)$.

\begin{claim*}
 $Y\subseteq X$. 
\end{claim*}

\begin{proof}
Suppose that $x\in Y$ and that this is witnessed by a $B$-iterable structure $\langle M,\in,I\rangle$. In this situation, the proof of {\cite[Lemma 4.36]{MR1713438}} shows that there is an iteration $\map{j}{\langle M,\in,I\rangle}{\langle M',\in,I'\rangle}$ of length $\omega_1$ with $I'=\mathrm{NS}_{\omega_1}\cap M'$. Since $M$ is $B$-iterable, we also have $j(B\cap M)=B\cap M'$. 
This shows that $\varphi(\omega_1,x)$ holds in $\langle M',\in,B\cap M', \mathrm{NS}_{\omega_1}\cap M'\rangle$ and therefore we can conclude that the statement also holds in $\langle\HH{\omega_2},\in,B,\mathrm{NS}_{\omega_1}\rangle$.  
\end{proof}

\begin{claim*}
 $X\subseteq Y$. 
\end{claim*}

\begin{proof}
 Suppose that $x\in X$. 
We first argue that the required $B$-iterable structure exists in a generic extension. 
 By our large cardinal assumptions and {\cite[Theorem 3]{MR560220}}, there is a generic extension $\VV[G]$ of $\VV$ with the property that $\mathrm{NS}_{\omega_1}$ is precipitous in $\VV[G]$. 
Suppose that $\mu$ is the least measurable cardinal in $\VV[G]$ 
and $\nu$ is the least inaccessible cardinal above $\mu$ in $\VV[G]$. 
Let $T$ and $U$ be trees in $\VV$ with $p[T]=B$ and $p[U]=\RRR\setminus B$ witnessing that $B$ is uB. Define $M=\VV[G]_{\nu}$, $I=\mathrm{NS}_{\omega_1}^{\VV[G]}$ and $B_G=p[T]^{\VV[G]}$.

\begin{subclaim*}
 $\langle M,\in,I,B_G\rangle\models\varphi(\omega_1^{\VV[G]},x)$. 
\end{subclaim*}

\begin{proof}
 Assume, towards a contradiction, that $\varphi(\omega_1^{\VV[G]},x)$ does not hold in the structure $\langle M,\in,I,B_G\rangle$. Let $F$ be $\PPP_{max}$-generic over $\LL(B,\RRR)^\VV$. Since our assumption implies that $$\langle\HH{\omega_2}^{\VV[G]},\in,I,B_G\rangle\models\forall\alpha ~ [\anf{\alpha=\omega_1}\longrightarrow\neg\varphi(\alpha,x)].$$ holds and this statement can be expressed by a $\Pi_2$-formula with parameter $x$, our large cardinal assumptions allow us to use results of Woodin on the $\Pi_2$-maximality of $\PPP_{max}$-extensions of $\LL(B,\RRR)$ (see   {\cite[Theorem 1.1]{MR2374762}}) to conclude that $$\langle\HH{\omega_2}^{\LL(B,\RRR)[F]},\in,\mathrm{NS}_{\omega_1}^{\LL(B,\RRR)[F]},B\rangle\models\neg\varphi(\omega_1^\VV,x)$$ holds. But we also know that $$\langle\HH{\omega_2}^\VV,\in,\mathrm{NS}_{\omega_1}^\VV,B\rangle\models\forall\alpha ~ [\anf{\alpha=\omega_1}\longrightarrow\varphi(\alpha,x)]$$ holds and hence the same theorem implies that $$\langle\HH{\omega_2}^{\LL(B,\RRR)[F]},\in,\mathrm{NS}_{\omega_1}^{\LL(B,\RRR)[F]},B\rangle\models\varphi(\omega_1^\VV,x)$$ holds, a contradiction.  
\end{proof}

Let $H$ be $\Col{\omega}{\nu}$-generic over $\VV[G]$.

\begin{subclaim*} 
 $\langle M,\in,I\rangle$ is $p[T]$-iterable in $\VV[G,H]$. 
\end{subclaim*} 

\begin{proof} 
We work in $\VV[G,H]$. Since there is a measurable cardinal in $M$, the structure $\langle M,\in,I\rangle$ is $\omega_1$-iterable by Lemma \ref{omega1 iterable structure}.  Set $B_{G*H}=p[T]^{\VV[G,H]}$. 
Suppose that $\map{j}{M}{M'}$ is a countable iteration. 
We argue that $p[j(T)]\cap M'= B_{G*H}\cap M'$. 
Since the statement $p[j(T)]\cap p[j(U)]\neq \emptyset$ is absolute between $M'$ and $\VV[G,H]$, this holds in $\VV[G,H]$. 
Since $p[T]\subseteq p[j(T)]$ and $p[U]\subseteq p[j(U)]$ and $p[T]\cup p[U]=\RRR$ in $\VV[G,H]$. 
This implies  $B_{G*H}\cap M'=p[T]^{\VV[G,H]} = p[j(T)]^{\VV[G,H]}$.  
\end{proof}

The existence of the required $B$-iterable structure is projective in $B$. 
Since results of Woodin show that our large cardinal assumption implies that the theory of $\LL(B,\RRR)$ cannot be changed by forcing (see {\cite[Theorem 3.3.8]{MR2069032}} and {\cite[Theorem 2.30]{MR1713438}}), such a $B$-iterable structure exists in $\VV$ and this structure witnesses that $x$ is an element of $Y$. 
\end{proof}

Since $Y$ is a $\Sigma^1_3(B)$-subset of $\RRR$, this completes the proof. 
\end{proof}

Note that it is also possible to prove the previous result in a similar way as Lemma \ref{lemma:HOmega2Sigma13} by using countable transitive models of $\ZFC^-$ with a Woodin cardinal that are \emph{$B$-iterable} (defined similar to Definition \ref{definition:Biterable}) with respect to the corresponding countable stationary tower forcing and its images. As in the above argument, the existence of such models witnessing $\Sigma_1$-statements is shown by  considering a suitable initial segment of $\VV$ in a generic extension by a collapse forcing and then using the same absoluteness argument as before.


\section{$\Sigma_1(\omega_1)$-definable subsets of ${}^{\omega_1}\omega_1$} \label{section: main results}

In this section, we present the proofs of the main results about $\Sigma_1(\omega_1)$-definable subset of $\HH{\kappa^+}$ stated in the introduction.

\subsection{Well-orderings of the reals} 

The above lemma directly yields the following strengthening of Theorem \ref{theorem:Omega1WO}. 

\begin{theorem}\label{NonExistenceWellOrder}
 Assume that either $M_1^\#(A)$ exists for every $A\subseteq\omega_1$ or that there is a precipitous ideal on $\omega_1$ and a measurable cardinal. If $A\subseteq\omega_1$ is $\mathbf{\Sigma}^1_2$ in the codes, then no well-ordering of the reals is $\Sigma_1(A)$-definable. 
\end{theorem}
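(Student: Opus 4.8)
The plan is to reduce the statement to the characterization of $\Sigma_1(\omega_1)$-definable sets of reals established earlier in the excerpt. Suppose, towards a contradiction, that $A\subseteq\omega_1$ is $\mathbf{\Sigma}^1_2$ in the codes and that $<_A$ is a well-ordering of the reals that is $\Sigma_1(A)$-definable. Since $A$ is $\mathbf{\Sigma}^1_2$ in the codes, there is a real parameter $a$ such that $A$ is $\Sigma^1_2(a)$ in the codes. Under either hypothesis of the theorem (existence of $M_1^\#(C)$ for all $C\subseteq\omega_1$, or a precipitous ideal on $\omega_1$ together with a measurable cardinal), Lemma \ref{lemma:HOmega2Sigma13}, respectively Lemma \ref{characterization of Sigma1(omega1) from precipitous ideal}, applies: a subset $X$ of $\RRR$ is $\Sigma_1(A)$-definable for some $A\subseteq\omega_1$ that is $\Sigma^1_2(a)$ in the codes if and only if $X$ is a $\Sigma^1_3(a)$-subset of $\RRR$.

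The main point is then to derive a contradiction from the existence of a $\Sigma^1_3(a)$ well-ordering of the reals. First I would note that from a $\Sigma_1(A)$-definable well-ordering $<_A$ one obtains, via the characterization, that the relation $\{(x,y)\mid x<_A y\}$ is $\Sigma^1_3(a)$, and since its complement within $\RRR^2$ is $\{(x,y)\mid x=y\ \vee\ y<_A x\}$, also $\Sigma^1_3(a)$, the well-ordering is in fact $\Delta^1_3(a)$. From this one reads off several $\Sigma^1_3(a)$ (indeed $\Sigma^1_2$-in-$<_A$, hence $\Sigma^1_3(a)$) sets: for instance the set of reals that code, in some fixed way, well-ordered $<_A$-initial segments, and from these one can define a $\Sigma^1_3(a)$-good wellordering-type scale or simply extract a non-$\Sigma^1_3(a)$ set that is $\Sigma^1_3(a)$-definable, contradicting the hierarchy theorem. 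Concretely, the existence of a $\Delta^1_3(a)$ well-ordering of the reals yields (by the usual argument, diagonalizing against a universal $\Sigma^1_3(a)$ set using the well-ordering to pick, for each $n$, the $<_A$-least real not in the $n$-th section) a $\Sigma^1_3(a)$ set whose complement is also $\Sigma^1_3(a)$ — i.e. every set of reals is $\Delta^1_3(a)$ — which contradicts the assumed large-cardinal hypothesis, since a Woodin cardinal with a measurable above (or the stated alternative) implies $\boldsymbol{\Delta}^1_3$-determinacy, hence that there is no $\Delta^1_3(a)$ well-ordering of the reals, or more simply that $\boldsymbol{\Sigma}^1_3\neq\boldsymbol{\Pi}^1_3$.

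I would organize the contradiction as follows: the hypotheses of the theorem imply (they are in particular implied by, and for our purposes equivalent in strength to, the existence of $M_1^\#(x)$ for every real $x$) that $\boldsymbol{\Delta}^1_3$-determinacy holds; but a $\boldsymbol{\Delta}^1_3$ well-ordering of the reals gives a non-determined set low in the projective hierarchy — explicitly, the standard construction of a non-measurable / non-determined set from a projective well-ordering of appropriate complexity — which is the desired contradiction. Alternatively, and more elementarily, one uses only that from a $\Sigma^1_3(a)$ well-ordering one gets $\boldsymbol{\Sigma}^1_3 = \boldsymbol{\Pi}^1_3$, contradicting the first periodicity / hierarchy consequences of the large-cardinal hypothesis.

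The hard part will be pinning down exactly which consequence of the large-cardinal hypothesis to invoke and verifying that the well-ordering really does have $\Sigma^1_3(a)$ complexity in the relevant sense; but this is precisely what Lemma \ref{lemma:HOmega2Sigma13} delivers, so once that translation is in place the contradiction is the classical one. Note that this gives Theorem \ref{theorem:Omega1WO} as the special case $A=\omega_1$ (which is $\Sigma^1_2$ in the codes), since the existence of a Woodin cardinal with a measurable above implies that $M_1^\#(C)$ exists for every $C\subseteq\omega_1$, as remarked in the introduction.
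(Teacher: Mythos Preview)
Your reduction step is correct and matches the paper exactly: assuming a $\Sigma_1(A)$-definable well-ordering with $A$ $\mathbf{\Sigma}^1_2$ in the codes, Lemma~\ref{lemma:HOmega2Sigma13} (or Lemma~\ref{characterization of Sigma1(omega1) from precipitous ideal}) yields a $\mathbf{\Sigma}^1_3$ well-ordering of the reals. The problem is entirely in your endgame, where several of the proposed routes to a contradiction are incorrect.

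First, a $\mathbf{\Delta}^1_3$ well-ordering of the reals does \emph{not} imply $\mathbf{\Sigma}^1_3=\mathbf{\Pi}^1_3$, nor that ``every set of reals is $\mathbf{\Delta}^1_3$''. The canonical counterexample is $\LL$, which has a $\mathbf{\Delta}^1_2$ well-ordering while the projective hierarchy remains strict. Your diagonalization sketch conflates two different constructions and does not produce what you claim. Second, the assertion that the hypotheses yield $\mathbf{\Delta}^1_3$-determinacy is unjustified; from $M_1^\#(x)$ for all reals one gets $\mathbf{\Sigma}^1_2$-determinacy, not more, and one Woodin cardinal is not enough for determinacy at the third level. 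Third, and most importantly, you treat the two hypotheses as interchangeable, but the ``precipitous ideal on $\omega_1$ plus a measurable'' hypothesis is strictly weaker in consistency strength than a Woodin cardinal and does \emph{not} give $M_1^\#$ or any projective determinacy; your parenthetical claim that both hypotheses are ``equivalent in strength to the existence of $M_1^\#(x)$ for every real $x$'' is false.

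The paper's finish is cleaner and handles the two cases separately via the Baire property rather than determinacy or hierarchy collapse: under the $M_1^\#$ hypothesis, $\mathbf{\Sigma}^1_2$-determinacy holds, hence every $\mathbf{\Sigma}^1_3$ set has the Baire property (Moschovakis 6G.11), and a well-ordering of $\RRR$ cannot have the Baire property. Under the precipitous-plus-measurable hypothesis, the same regularity conclusion for $\mathbf{\Sigma}^1_3$ sets comes directly from a theorem of Foreman--Magidor--Shelah, with no detour through determinacy. Your instinct to go through a regularity property (``non-measurable / non-determined set from a projective well-ordering'') was the right one; you just need to invoke the correct source of that regularity in each case and drop the hierarchy-collapse claims.
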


\begin{proof}
 Assume that there is a well-ordering of the reals that is $\Sigma_1(A)$-definable. By Lemma \ref{lemma:HOmega2Sigma13} and Lemma \ref{characterization of Sigma1(omega1) from precipitous ideal}, this assumption implies that there is a $\mathbf{\Sigma}^1_3$-well-ordering of the reals.  
If $M_1^\#(A)$ exists for every $A\subseteq\omega_1$, then this yields a contradiction, because this assumption implies that $\mathbf{\Sigma}^1_2$-determinacy holds (see \cite{MR1349683}),  every $\mathbf{\Sigma}^1_3$-set of reals has the Baire property (see {\cite[6G.11]{MR2526093}}) and hence there are no $\mathbf{\Sigma}^1_3$-well-orderings of the reals.   
In the other case, if there is a precipitous ideal on $\omega_1$ and a measurable cardinal, then {\cite[Theorem 1.4]{MR576464}} shows that every $\mathbf{\Sigma}^1_3$-set of reals has the Baire property and we also derive a contradiction in this case. 
\end{proof}

We will consider $\Sigma_1$-well-orderings of the reals that allow more complicated parameters. 
As mentioned above, results of Woodin on the $\Pi_2$-maximality of the $\PPP_{max}$-extension of $\LL(\RRR)$ imply that no well-ordering of the reals is $\Sigma_1(A)$-definable over $\langle\HH{\omega_2},\in,B\rangle$ for some $A\in\POT{\omega_1}^{\LL(\RRR)}$ and $B\in\POT{\RRR}^{\LL(\RRR)}$.  
In the following, we will use $\PPP_{max}$-forcing to derive a stronger conclusion from a stronger assumption.

\begin{theorem} \label{no sigma1 wellorder from ub parameter} 
Suppose that there is a proper class of Woodin cardinals. 
If $B$ is uB, then there is no well-ordering of the reals which is $\Sigma_1(\omega_1)$-definable over the structure  $\langle\HH{\omega_2},\in,B,\mathrm{NS}_{\omega_1}\rangle$. 
\end{theorem}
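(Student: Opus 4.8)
The plan is to reduce this to the projective case already handled in Theorem~\ref{NonExistenceWellOrder}, using Lemma~\ref{characterization of Sigma1 with uB parameter} as the bridge. Suppose, towards a contradiction, that $<_B$ is a well-ordering of the reals that is $\Sigma_1(\omega_1)$-definable over $\langle\HH{\omega_2},\in,B,\mathrm{NS}_{\omega_1}\rangle$. First I would observe that the graph of $<_B$, as a subset of $\RRR$ (via a fixed recursive pairing of $\RRR$ with $\RRR\times\RRR$), is then $\Sigma_1(\omega_1)$-definable over $\langle\HH{\omega_2},\in,B,\mathrm{NS}_{\omega_1}\rangle$: indeed ``$y$ codes a pair $(y_0,y_1)$ with $y_0<_B y_1$'' is obtained by composing the $\Sigma_1$-definition of $<_B$ with the $\Delta_0$ decoding of pairs, and the latter is absolute. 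Hence by Lemma~\ref{characterization of Sigma1 with uB parameter}, the graph of $<_B$ is a $\Sigma^1_3(B)$-subset of $\RRR$.

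Next I would upgrade the large-cardinal hypothesis to the hypotheses of Theorem~\ref{NonExistenceWellOrder} in a way compatible with the extra predicate $B$. Since $B$ is universally Baire and there is a proper class of Woodin cardinals, $B$ (together with all reals) is $\mathrm{Hom}_\infty$ and one has $\boldsymbol\Sigma^1_3(B)$-determinacy; equivalently, working in $\LL(B,\RRR)$ one has $\AD^{\LL(B,\RRR)}$ and all sets of reals in $\LL(B,\RRR)$ have the Baire property. The key point is that a $\Sigma^1_3(B)$ prewellordering of $\RRR$ yields a set of reals in $\LL(B,\RRR)$ which prewellorders $\RRR$ (the graph of the well-ordering, together with its field, is definable from $B$ and a real, hence lies in $\LL(B,\RRR)$), and a genuine well-ordering of $\RRR$ in $\LL(B,\RRR)$ contradicts the fact that every set of reals in $\LL(B,\RRR)$ has the Baire property (a Bernstein-type argument: a well-ordering of $\RRR$ has neither the Baire property nor can all its initial segments be meager/comeager). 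Thus I would conclude there is no such $<_B$.

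Alternatively, and perhaps more cleanly, I would mimic the proof of Theorem~\ref{NonExistenceWellOrder} directly: by Lemma~\ref{characterization of Sigma1 with uB parameter} it suffices to show there is no $\Sigma^1_3(B)$-well-ordering of the reals, and this follows because under a proper class of Woodin cardinals every set of reals in $\LL(B,\RRR)$ is Lebesgue measurable and has the Baire property, while a $\Sigma^1_3(B)$ well-ordering would be such a set (after absorbing the real parameter) and no well-ordering of $\RRR$ has the Baire property. I would cite the relevant results of Woodin on $\LL(B,\RRR)$ under a proper class of Woodins (as already used in the proof of Lemma~\ref{characterization of Sigma1 with uB parameter}, e.g. {\cite[Theorem 3.3.8]{MR2069032}} and the regularity properties in $\LL(B,\RRR)$) in place of the $\boldsymbol\Sigma^1_2$-determinacy citation used in Theorem~\ref{NonExistenceWellOrder}.

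The main obstacle I anticipate is purely bookkeeping about parameters and the precise regularity statement invoked: one must make sure that ``$\Sigma^1_3(B)$ with a real parameter $a$'' genuinely lands inside $\LL(B,\RRR)$ (it does, since such a set is ordinal-definable from $B$, $a$ and reals) and that the Baire-property-in-$\LL(B,\RRR)$ statement is exactly what is available from a proper class of Woodin cardinals and universal Baireness of $B$ — this is standard but should be cited carefully rather than reproved. Everything else is a routine transfer of the argument of Theorem~\ref{NonExistenceWellOrder} through Lemma~\ref{characterization of Sigma1 with uB parameter}.
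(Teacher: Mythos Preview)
Your proposal is correct and follows essentially the same route as the paper: apply Lemma~\ref{characterization of Sigma1 with uB parameter} to turn a putative $\Sigma_1(\omega_1)$-definable well-ordering over $\langle\HH{\omega_2},\in,B,\mathrm{NS}_{\omega_1}\rangle$ into a $\Sigma^1_3(B)$ well-ordering of the reals, and then rule this out via determinacy-derived regularity. The only cosmetic difference is that the paper cites directly that a proper class of Woodin cardinals makes every uB set determined (so $\Sigma^1_3(B)$ sets, being uB, have the Baire property), whereas you route the same conclusion through $\mathsf{AD}^{\LL(B,\RRR)}$; these are equivalent under the hypothesis and both are standard.
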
 

\begin{proof} 
If there is a proper class of Woodin cardinals, then every uB set of reals is determined by {\cite[Theorem 3.3.4 \& Theorem 3.3.14]{MR2069032}}. 
Hence the claim follows from Lemma \ref{characterization of Sigma1 with uB parameter}. 
\end{proof}


\subsection{Bernstein subsets} 

The next lemma shows how to construct perfect subsets of $\Sigma_1(\omega_1)$-definable subsets of ${}^{\omega_1}\omega_1$.  
It will allow us to prove that the existence of large cardinals implies the non-existence of $\Delta_1(\omega_1)$-definable Bernstein subsets of ${}^{\omega_1}\omega_1$. The lemma will also be used for a result about the non-stationary ideal (see Section \ref{subsection: club filter and non-stationary ideal}). In the following, we interpret a function $x\in{}^{\omega_1} \omega_1$ as a code for the subset $\bar{x}=\Set{\alpha<\omega_1}{x(\alpha)>0}$.

\begin{lemma}\label{lemma:Sigma1SetContainsBistationary}
 Assume that $M_1^\#(A)$ exists for every $A\subseteq\omega_1$. Let $A\subseteq\omega_1$ be $\mathbf{\Sigma}^1_2$ in the codes and let $X$ be a $\Sigma_1(A)$-definable subset of ${}^{\omega_1}\omega_1$. If there is an $x\in X$ with the property that $\bar{x}$ is a bistationary  subset of $\omega_1$, 
then for every $\xi<\omega_1$ there is 
\begin{enumerate} 
\item a continuous injection $\map{\iota}{{}^{\omega_1}2}{X}$ with  $\ran{\iota}\subseteq N_{x\restriction\xi}\cap X$ 

\item a club $D$ in $\omega_1$ with monotone enumeration $\seq{\delta_\alpha}{\alpha<\omega_1}$
\end{enumerate} 
such that for all $z\in{}^{\omega_1}2$ and $\alpha<\omega_1$, we have $z(\alpha)=1$ if and only if $\iota(z)(\delta_\alpha)>0$.  
\end{lemma}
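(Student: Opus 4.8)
The plan is to prove Lemma~\ref{lemma:Sigma1SetContainsBistationary} by building a perfect tree of generic iterates of a countable premouse witnessing $x \in X$, using the bistationarity of $\bar{x}$ to feed in enough genericity to realize every branch of ${}^{\omega_1}2$ as a distinct element of $X$. First I would invoke the characterization machinery already set up: since $X$ is $\Sigma_1(A)$-definable and $A$ is $\mathbf{\Sigma}^1_2$ in the codes, and $x \in X$ with $\bar{x}$ bistationary, I can fix a subset $C \subseteq \omega_1$ coding everything relevant so that $a, x, A \in M_1^\#(C)$, $\omega_1 = \omega_1^{M_1^\#(C)}$, the $\Sigma_1$-formula $\varphi(A,x)$ holds in $M_1^\#(C)$, and (crucially) $\bar{x}$ is still bistationary in $M_1^\#(C)$ — the latter because $M_1^\#(C)$ computes the club filter correctly on the relevant stationary sets if $C$ is chosen to absorb witnesses. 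Then take a countable elementary submodel $N \prec M_1^\#(C)$ with transitive collapse $\pi\colon N \to \bar{M}$; by Lemma~\ref{omega1 iterable structure}, $\bar{M}$ is a countable transitive $\omega_1$-iterable model of $\ZFC^-$ with a Woodin cardinal $\bar\delta$, $\omega_1$-iterable with respect to $\QQQ^{\bar{M}}_{{<}\bar\delta}$ and its images, and $\bar{M}$ thinks $\bar{y} := \pi(\bar{x})$ is bistationary, where $\pi(x) = \bar{y}$, $\pi(A) = \bar{A}$, $\varphi(\bar{A}, \bar{y})^{\bar{M}}$ holds, and $\bar{y}\restriction\xi = x\restriction\xi$ provided $\xi$ was put into $N$.

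Next I would run the perfect-set construction. The key point is that generic iterations of $\bar{M}$ via the countable stationary tower can be used to move $\omega_1^{\bar{M}}$ up to any prescribed countable ordinal and, more importantly, to control whether a given ordinal lands in the image of $\bar{y}$: because $\bar{y}$ is bistationary in $\bar{M}$, at each step of a generic iteration we have the freedom to choose a condition in $\QQQ_{{<}\bar\delta}$ that forces the next critical point (the current $\omega_1$) either \emph{into} the image of $\bar{y}$ or \emph{into its complement} — both are possible since both $\bar{y}$ and its complement are stationary, hence positive for the relevant tower. Iterating this for $\omega_1$ steps, branching at successor stages according to a node $s \in {}^{{<}\omega_1}2$, produces a tree $\seq{\bar{M}_s}{s \in {}^{{<}\omega_1}2}$ of iterates with commuting embeddings; at each $s$ one records the image $x_s$ of $\bar{y}$ and the critical point $\delta_{\length{s}}$. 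Taking direct limits along branches $z \in {}^{\omega_1}2$ — which are well-founded by $\omega_1$-iterability — yields models $N_z$ with $\omega_1^{N_z} = \omega_1$, each carrying an image $\iota(z)$ of $\bar{y}$ with $\varphi(j_z(\bar{A}), \iota(z))^{N_z}$ holding; since $j_z(\bar A) = A$ by the Shoenfield-absoluteness argument of Lemma~\ref{lemma:HOmega2Sigma13} (using that $A$ is $\mathbf{\Sigma}^1_2$ in the codes and the iterates have the right $\omega_1$) and $\varphi$ is $\Sigma_1$-upward absolute, we get $\iota(z) \in X$. The club $D$ is the set of $\delta_\alpha$, the monotone enumeration of the critical points used, which form a club since the construction is continuous at limits; and by our branching choice $z(\alpha) = 1$ iff the $\alpha$-th critical point was thrown into the image of $\bar{y}$, i.e. iff $\iota(z)(\delta_\alpha) > 0$. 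Continuity and injectivity of $\iota$ follow from the fact that $z\restriction\alpha$ determines $\iota(z)\restriction\delta_\alpha$ and that distinct branches disagree on some $\delta_\alpha$; the range lies in $N_{x\restriction\xi}$ because $\bar y\restriction\xi = x\restriction\xi$ is fixed by all iteration embeddings (it lies below the first critical point).

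The main obstacle I anticipate is the interplay between genericity and stationarity at each iteration step: one must verify that inside $\bar{M}$ (or the current iterate) the stationary tower $\QQQ_{{<}\bar\delta}$ actually admits conditions forcing the next critical point into $\bar{y}$ and, separately, conditions forcing it into the complement — this is where bistationarity of $\bar{y}$ in $\bar{M}$ is used in an essential way, via the fact that for a stationary $S \subseteq \omega_1$ the condition (essentially $S$ itself, qua element of $V_{\bar\delta}$) in the stationary tower forces the generic critical point to be a point of $S$ after collapsing, following Woodin's analysis of generic iterations (cf. \cite[Lemma 3.10 \& Remark 3.11]{MR1713438}). One must also be careful that the choice of $C$ can be made to preserve bistationarity of $\bar x$ into $M_1^\#(C)$ — this should follow by absorbing a club witnessing stationarity of $\bar x$ and a club witnessing stationarity of $\omega_1 \setminus \bar x$ into the parameter, together with the fact that $M_1^\#(C)$ is correct about stationarity of sets in $\HH{\omega_2}^{M_1^\#(C)}$ relative to clubs it contains. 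Modulo these points, the rest is the by-now-standard bookkeeping of a perfect-tree-of-iterates argument, carried out in the $\QQQ_{{<}\bar\delta}$ setting rather than the $\PPP_{\max}$ one.
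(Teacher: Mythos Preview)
Your overall architecture matches the paper's: pass to $M_1^\#(C)$, collapse a countable elementary submodel, and build a $2$-branching tree of generic iterates via the countable stationary tower, choosing at each step a generic containing the image of $\bar x$ or of its complement. The use of bistationarity to ensure both choices are available, the Shoenfield argument that $j_z(\bar A)=A$, and the $\Sigma_1$-upward absoluteness giving $\iota(z)\in X$ are all as in the paper.

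There is, however, a real gap in your treatment of the club $D$. You write the critical point at node $s$ as $\delta_{\length{s}}$ and then take $D$ to be ``the set of critical points used''. But the critical point at stage $\alpha$ is $\omega_1^{M_{z\restriction\alpha}}$, and this depends on $z\restriction\alpha$, not merely on $\alpha$: different branches produce different generic ultrapowers and hence different values of the new $\omega_1$. So there is no single club of critical points uniform in $z$, and your claimed equivalence $z(\alpha)=1 \Leftrightarrow \iota(z)(\delta_\alpha)>0$ is not well-posed as stated. The paper handles this by first proving the equivalence with the branch-dependent function $c_z(\alpha)=\omega_1^{M_{z\restriction\alpha}}$, then producing a single club $D$ (namely $\mathrm{Card}^{\LL[z_M]}\cap\omega_1$ for a real $z_M$ coding the base model) on which every $c_z$ is the identity, and finally precomposing with a continuous injection $e\colon{}^{\omega_1}2\to{}^{\omega_1}2$ that spreads the branching out to $D$. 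Only after this reparametrization $\iota=i\circ e$ does one get the uniform club $D$ in the statement. This step is not mere bookkeeping; without it your argument does not deliver the conclusion.
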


\begin{proof}
 Fix $\xi<\omega_1$ and a $\Sigma_1$-formula $\varphi(v_0,v_1)$ with $X=\Set{z\in{}^{\omega_1}\omega_1}{\varphi(A,z)}$. Pick $a\in\RRR$ and a $\Sigma^1_2$-formula $\psi(v_0,v_1)$ with $A=\Set{\Vert w\Vert}{w\in\mathrm{WO}, ~ \psi(a,w)}$. 
 We can find $C\subseteq\omega_1$ such that $a,x,A\in M_1^{\#}(C)$,  $\omega_1=\omega_1^{M_1^{\#}(C)}$ and $\varphi(A,x)^{M_1^{\#}(C)}$. 
 Then  $\bar{x}$ is a bistationary subset of $\omega_1$ in $M_1^\#(C)$. 
 Note that every stationary subset of $\omega_1$ is a condition in $\mathbb{Q}_{<\delta}$. 
 Let $N$ be a countable elementary submodel of $M_1^\#(C)$ with $a,x,A\in N$ and $\xi+1\subseteq N$, let $\map{\pi}{N}{M}$ be the corresponding transitive collapse and let $\delta$ denote the unique Woodin cardinal in $M$. 
Since Lemma \ref{omega1 iterable structure} shows that $M$ is $\omega_1$-iterable with respect to $\mathbb{Q}_{<\delta}^M$ and its images, there is a directed system $$\langle\seq{M_s}{s\in{}^{{\leq}\omega_1}2}, ~ \seq{\map{j_{s,t}}{M_s}{M_t}}{s,t\in{}^{{\leq}\omega_1}2, ~ s\subseteq t}\rangle$$ of transitive models of $\ZFC^-$ and elementary embeddings such that the following statements hold. 
 \begin{enumerate}
  \item $M=M_\emptyset$.

  \item If $s\in{}^{{<}\omega_1}2$, then there are $M_s$-generic filters $G^s_0$ and $G^s_1$ over $j_{\emptyset,s}(\QQQ^M_{{<}\delta})$ such that $(j_{\emptyset,s}\circ\pi)(\omega_1\setminus\bar{x})\in G^s_0$, $(j_{\emptyset,s}\circ\pi)(\bar{x})\in G^s_1$, $M_{s^\frown\langle i\rangle}=\Ult{M_s}{G^s_i}$ and $j_{s,s^\frown\langle i\rangle}$ is the ultrapower map induced by $G^s_i$ for all $i<2$.

  \item If $s\in{}^{{\leq}\omega_1}2$ with $\length{s}\in\Lim$, then $$\langle M_s, ~ \seq{\map{j_{s\restriction\alpha,s}}{M_{s\restriction\alpha}}{M_s}}{\alpha<\length{s}}\rangle$$ is the direct limit of the directed system $$\langle\seq{M_{s\restriction\alpha}}{\alpha<\length{s}}, ~ \seq{\map{j_{s\restriction\bar{\alpha},s\restriction\alpha}}{M_{s\restriction\bar{\alpha}}}{M_{s\restriction\alpha}}}{\bar{\alpha}\leq\alpha<\length{s}}\rangle.$$ 
 \end{enumerate}
Let $j_s=j_{\emptyset,s}$ for all $s\in{}^{{\leq}\omega_1}2$.  
 Since $\omega_1=\omega_1^{M_z}$ for all $z\in{}^{\omega_1}2$, we can define $$\Map{i}{{}^{\omega_1}2}{{}^{\omega_1}\omega_1}{z}{(j_z\circ\pi)(x)}.$$  
Then elementarity and $\Sigma_1$-upwards absoluteness imply that $A=(j_z\circ\pi)(A)\in M_z$,  $x\restriction\xi=i(z)\restriction\xi$ and $\varphi(A,i(z))$ for all $z\in{}^{\omega_1}2$. This shows that $\ran{i}\subseteq N_{x\restriction\xi}\cap X$.

 Given $z\in{}^{\omega_1}2$, we define  $$\Map{c_z}{\omega_1}{\omega_1}{\alpha}{\omega_1^{M_{z\restriction\alpha}}}.$$ 
 By definition, $c_z$ is strictly increasing and continuous for every $z\in{}^{\omega_1}2$. Moreover, we have $c_{z_0}\restriction\alpha=c_{z_1}\restriction\alpha$  for all $z_0,z_1\in{}^{\omega_1}2$ and $\alpha<\omega_1$ with $z_0\restriction\alpha= z_1\restriction\alpha$.

 \begin{claim*} 
 Given $z\in {}^{\omega_1}2$ and $\alpha<\omega_1$, then $z(\alpha)=1$ if and only if  $i(z)(c_z(\alpha))>0$. 
 \end{claim*} 
 
 \begin{proof} 
 Given $z\in{}^{\omega_1}2$ and $\alpha<\omega_1$, we know that $c_z(\alpha)$ is smaller than the critical point of $j_{z\restriction(\alpha+1),z}$ and this allows us to use {\cite[Fact 2.7.3.]{MR2069032}} to conclude that   
 \begin{equation*}
  \begin{split}
   z(\alpha)=1 ~ &  \Longleftrightarrow ~ (j_{z\restriction\alpha}\circ\pi)(\bar{x})\in G^{z\restriction\alpha}_{z(\alpha)} \\
   & \Longleftrightarrow ~ \omega_1^{M_{z\restriction\alpha}}\in (j_{z\restriction(\alpha+1)}\circ\pi)(\bar{x}) \\
   & \Longleftrightarrow ~ c_z(\alpha)\in (j_{z\restriction(\alpha+1)}\circ\pi)(\bar{x}) \\
   & \Longleftrightarrow ~ (
((j_{z\restriction(\alpha+1)}\circ\pi)(x))(c_z(\alpha))>0 \\ 
   & \Longleftrightarrow ~ (
((j_{z\restriction(\alpha+1),z}\circ j_{z\restriction(\alpha+1)}\circ\pi)(x))(c_z(\alpha))>0 \\ 
  & \Longleftrightarrow ~ i(z) (c_z(\alpha))>0. \qedhere
  \end{split} 
 \end{equation*}
 \end{proof} 
 In particular, this shows that the function $i$ is injective.

\begin{claim*} 
The function $i$ is continuous. 
\end{claim*} 
\begin{proof} 
Let $z\in{}^\kappa 2$ and $\beta<\kappa$. Then there is  $\alpha<\kappa$  with $\beta<c_z(\alpha)<\crit{j_{z}}$. Given $\bar{z}\in{}^{\omega_1}2$, we know that $c_{\bar{z}}(\alpha)$ is the critical point of $j_{\bar{z}\restriction\alpha,z}$ and hence $$i(\bar{z})\restriction\beta ~ = ~ (j_{\bar{z}}\circ\pi)(x)\restriction\beta ~ = ~ (j_{\bar{z}\restriction\alpha}\circ\pi)(x)\restriction\beta.$$ If  $\bar{z}\in N_{z\restriction\alpha}\cap{}^{\omega_1}2$, then $j_{z\restriction\alpha}=j_{\bar{z}\restriction\alpha}$ and therefore $i(z)\restriction\beta=i(\bar{z})\restriction\beta$. 
\end{proof}

\begin{claim*} 
There is a club $D$ in $\omega_1$ such that $c_z\restriction D=\id_D$ for all $z\in {}^{\omega_1}2$. 
\end{claim*} 

\begin{proof} 
Suppose that $z_M$ is a real coding $M$. We define $D=\mathrm{Card}^{\LL[z_M]}\cap\omega_1$. 
A statement and proof analogous to {\cite[Lemma 19]{MR2963017}} for forcing with $\mathbb{Q}_{<\delta}$ instead of a precipitous ideal shows that the cardinals in $\LL[z_M]$ are closure points of the images of $c_{z}$ for all $z\in {}^{\omega_1}2$. We can conclude that $c_z\restriction D=\id_D$ for all $z\in {}^{\omega_1}2$. 
\end{proof} 

Let $\seq{\delta_\alpha}{\alpha<\omega_1}$ denote the monotone enumeration of $D$ and let $\map{e}{{}^{\omega_1}2}{{}^{\omega_1}2}$ denote the unique continuous injection with $e(z)^{{-}1}\{1\}=\Set{\delta_\alpha}{\alpha<\omega_1, ~ z(\alpha)=1}$ for all $z\in{}^{\omega_1}2$. Set $\iota=i\circ e$. Then we have 
 \begin{equation*}
  z(\alpha)=1 ~ \Longleftrightarrow ~ e(z)(\delta_\alpha)=1 ~ \Longleftrightarrow ~ i(e(z)) (c_{e(z)}(\delta_\alpha))>0 ~ \Longleftrightarrow ~ \iota(z)(\delta_\alpha)>0  
 \end{equation*} 
for all $z\in{}^{\omega_1}2$ and $\alpha<\omega_1$. 
\end{proof}

A simpler version of the proof of Lemma \ref{lemma:Sigma1SetContainsBistationary} shows the following.

\begin{lemma} \label{Bernstein from precipitous ideal} 
The conclusion of Lemma \ref{lemma:Sigma1SetContainsBistationary} follows from the existence of a precipitous ideal on $\omega_1$ and a measurable cardinal. \qed 
\end{lemma}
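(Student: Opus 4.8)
The plan is to re-run the argument of Lemma~\ref{lemma:Sigma1SetContainsBistationary}, replacing the hypothesis ``$M_1^\#(C)$ exists for every $C\subseteq\omega_1$'' by the combination of a precipitous ideal on $\omega_1$ and a measurable cardinal, and replacing the countable stationary tower forcing $\QQQ_{{<}\delta}$ by the precipitous ideal $I$ together with a large enough initial segment of $\VV$. The point is that every step in the proof of Lemma~\ref{lemma:Sigma1SetContainsBistationary} used $M_1^\#(C)$ only to supply a transitive model carrying a Woodin cardinal whose associated stationary tower produces generic ultrapowers sending $\bar x$ out of the nonstationary ideal, and whose countable substructures are $\omega_1$-iterable; both roles can be played by a countable elementary substructure of $\HH{\theta}$ for $\theta$ above the measurable cardinal, generically iterated using the precipitous ideal. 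This is exactly the same replacement that was used to pass from Lemma~\ref{lemma:HOmega2Sigma13} to Lemma~\ref{characterization of Sigma1(omega1) from precipitous ideal}.

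Concretely, first I would fix $\xi<\omega_1$, a $\Sigma_1$-formula $\varphi$ defining $X$ from $A$, and a $\Sigma^1_2$-formula $\psi$ defining (a code for) $A$ from a real $a$. Using the measurable cardinal, pick $\theta$ above it and a countable elementary substructure $N\prec\langle\HH{\theta},\in,I,a,x,A\rangle$ with $\xi+1\subseteq N$; let $\map{\pi}{N}{M}$ be the transitive collapse, write $\bar I=\pi(I)$, $\bar x=\pi(x)$, $\bar A=\pi(A)$. Since $N$ sees a measurable cardinal, Lemma~\ref{omega1 iterable structure} (applied to $M$ with the $M$-ultrafilter derived from the measure, together with the partial order adding the relevant extender) gives that $M$ is $\omega_1$-iterable with respect to $\bar I$ and its images; note that $\bar x$ being bistationary in $\HH{\theta}$ transfers to $M$. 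Then, exactly as in the proof of Lemma~\ref{lemma:Sigma1SetContainsBistationary}, build the binary tree $\langle\seq{M_s}{s\in{}^{{\leq}\omega_1}2},\seq{j_{s,t}}{s\subseteq t}\rangle$, where at node $s$ of successor length one generically iterates $M_s$ by $j_{\emptyset,s}(\bar I)$ choosing at stage~$0$ a generic containing $(j_{\emptyset,s}\circ\pi)(\omega_1\setminus\bar x)$ and at stage~$1$ one containing $(j_{\emptyset,s}\circ\pi)(\bar x)$ (possible since both are positive sets for the precipitous ideal, hence conditions), and at limit length one takes direct limits; $\omega_1$-iterability guarantees all these models are well-founded. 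Define $i(z)=(j_z\circ\pi)(x)$, verify $\ran{i}\subseteq N_{x\restriction\xi}\cap X$ by elementarity and $\Sigma_1$-upward absoluteness, define $c_z(\alpha)=\omega_1^{M_{z\restriction\alpha}}$, and prove the coding claim ($z(\alpha)=1\iff i(z)(c_z(\alpha))>0$) by the same computation using the fact that $c_z(\alpha)$ is below $\crit{j_{z\restriction(\alpha+1),z}}$ and that $\omega_1^{M_{z\restriction\alpha}}\in (j_{z\restriction(\alpha+1)}\circ\pi)(\bar x)$ iff the stage-$\alpha$ generic was the ``$\bar x$'' one. Continuity of $i$ and the existence of the club $D$ on which all $c_z$ are the identity go through verbatim — the club is again $\mathrm{Card}^{\LL[z_M]}\cap\omega_1$ for $z_M$ a real coding $M$, and one invokes the analogue of \cite[Lemma~19]{MR2963017}, which in this setting is literally its original statement about precipitous ideals. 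Finally set $\iota=i\circ e$ with $e$ the reindexing map as before.

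The main obstacle, and the only genuinely substantive point, is confirming that $M$ is $\omega_1$-iterable with respect to its generic ultrapower forcing and its images. With $M_1^\#(C)$ this was immediate from full iterability of the mouse plus Lemma~\ref{omega1 iterable structure}; here one must check that Lemma~\ref{omega1 iterable structure} genuinely applies to a collapse of a substructure of $\HH{\theta}$ with $\theta$ above a measurable — i.e. that the generic ultrapower by a precipitous ideal is captured by ``forcing adding a $(\mu,\nu)$-extender'' in the sense of hypothesis~(i) of that lemma, and that the measurable above yields the required $\omega_1$-iterable $M$-ultrafilter in hypothesis~(ii). This is the same verification underlying Lemma~\ref{characterization of Sigma1(omega1) from precipitous ideal} and Lemma~\ref{Bernstein from precipitous ideal}, and once it is in hand the rest of the proof is a routine transcription of the proof of Lemma~\ref{lemma:Sigma1SetContainsBistationary} with $\QQQ_{{<}\delta}$ everywhere replaced by $I$. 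Hence I would simply remark that the proof of Lemma~\ref{lemma:Sigma1SetContainsBistationary} goes through, \emph{mutatis mutandis}, with this substitution, which is what the statement of Lemma~\ref{Bernstein from precipitous ideal} asserts. \qed
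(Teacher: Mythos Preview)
Your proposal is correct and follows exactly the approach the paper has in mind: the paper gives no proof beyond the sentence ``A simpler version of the proof of Lemma~\ref{lemma:Sigma1SetContainsBistationary} shows the following'' and a \qed, i.e.\ it intends precisely the substitution of a countable elementary $N\prec\HH{\theta}$ (with $\theta$ above the measurable) and the precipitous ideal in place of $M_1^\#(C)$ and $\QQQ_{{<}\delta}$, with $\omega_1$-iterability supplied by Lemma~\ref{omega1 iterable structure}. Your write-up actually spells out more than the paper does; the only cosmetic point is that the forcing in question is $\POT{\omega_1}^M/\bar I$ rather than $\bar I$ itself, and you should take $\theta=\mu^+$ for $\mu$ the measurable so that $M$ has a largest cardinal as required by Lemma~\ref{omega1 iterable structure}.
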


The above lemmas allow us to prove the following strengthening of Theorem \ref{theorem:Omega1Bernstein}.

\begin{theorem}\label{theorem:M1SharpBernstein}
 Assume that either $M_1^\#(A)$ exists for every $A\subseteq\omega_1$ or that there is a precipitous ideal on $\omega_1$ and a measurable cardinal. 
 Let $\Gamma$ denote the collection of subsets of ${}^{\omega_1}\omega_1$ that are $\Sigma_1(A)$-definable for some $A\subseteq\omega_1$ that is $\mathbf{\Sigma}^1_2$ in the codes. 
 If $\Delta\subseteq\Gamma$ with $\bigcup \Delta= {}^{\omega_1}\omega_1$, then some element of $\Delta$ contains a perfect subset. \end{theorem}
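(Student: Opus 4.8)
The plan is to deduce this directly from Lemma \ref{lemma:Sigma1SetContainsBistationary}, reducing everything to the elementary fact that $\ZFC$ proves the existence of a bistationary subset of $\omega_1$. First I would fix a bistationary set $S\subseteq\omega_1$ (split the stationary set $\omega_1$ into two stationary pieces and let $S$ be one of them) and let $x\in{}^{\omega_1}\omega_1$ be its characteristic function, so that $\bar x=S$ is bistationary.

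Since $\bigcup\Delta={}^{\omega_1}\omega_1$, there is some $X\in\Delta$ with $x\in X$. As $\Delta\subseteq\Gamma$, the set $X$ is $\Sigma_1(A)$-definable for some $A\subseteq\omega_1$ that is $\mathbf{\Sigma}^1_2$ in the codes, and the element $x\in X$ witnesses the hypothesis ``there is an $x\in X$ with $\bar x$ a bistationary subset of $\omega_1$'' of Lemma \ref{lemma:Sigma1SetContainsBistationary} (respectively of Lemma \ref{Bernstein from precipitous ideal}, in the case where there is a precipitous ideal on $\omega_1$ and a measurable cardinal). Applying that lemma with $\xi=0$ produces a continuous injection $\map{\iota}{{}^{\omega_1}2}{X}$ together with a club $D=\Set{\delta_\alpha}{\alpha<\omega_1}$ in $\omega_1$ such that $z(\alpha)=1$ if and only if $\iota(z)(\delta_\alpha)>0$, for all $z\in{}^{\omega_1}2$ and $\alpha<\omega_1$.

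It then remains only to see that $\ran{\iota}$ is a perfect subset of ${}^{\omega_1}\omega_1$ in the sense of Definition \ref{definition:PerfectBernstein}; since $\ran{\iota}\subseteq X$, this finishes the proof. Here I would set $T=\Set{\iota(z)\restriction\eta}{z\in{}^{\omega_1}2, ~ \eta<\omega_1}$ and check that $T$ is a ${<}\omega_1$-closed subtree of ${}^{{<}\omega_1}\omega_1$ with a branching node above every node and with $[T]=\ran{\iota}$. The tree property and branching above every node are immediate from the injectivity and continuity of $\iota$, using that for any $z\in{}^{\omega_1}2$ the two extensions $\iota(z^\frown\langle 0\rangle)$ and $\iota(z^\frown\langle 1\rangle)$ already disagree (by the reading-off equivalence at the relevant $\delta_\alpha$). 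For ${<}\omega_1$-closure and for the identity $[T]=\ran{\iota}$, the key point is that from any potential branch $b$ through $T$ one recovers a unique candidate $z\in{}^{\omega_1}2$ via $z(\alpha)=1\Leftrightarrow b(\delta_\alpha)>0$, and then comparing initial segments of $\iota(z)$ with initial segments of $b$ (using the continuity of $\iota$ together with the fact that, for $\eta$ large, $b\restriction\eta$ is an initial segment of $\iota(z_\eta)$ for some $z_\eta$ agreeing with $z$ on an arbitrarily large initial segment) forces $\iota(z)=b$. This last verification is the only real work in the argument; the reduction to Lemma \ref{lemma:Sigma1SetContainsBistationary} is otherwise trivial, which is why all of the large-cardinal content is carried by that lemma.
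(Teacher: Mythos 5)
Your reduction is exactly the paper's own proof of this theorem: fix a bistationary $S\subseteq\omega_1$, find $X\in\Delta$ containing its characteristic function, and invoke Lemma \ref{lemma:Sigma1SetContainsBistationary} (respectively Lemma \ref{Bernstein from precipitous ideal}). The paper stops there, so the only thing to examine is your supplementary verification that $\ran{\iota}$ is a perfect subset, and this is where your sketch has a real gap. From the \emph{statement} of Lemma \ref{lemma:Sigma1SetContainsBistationary} you only have pointwise continuity of $\iota$ (for each $z$ and $\beta$ there is \emph{some} modulus $\alpha$) together with the reading-off property at the club $D$. That is enough for your recovery argument for branches of length $\omega_1$: there the agreement ordinals $\otp(D\cap\eta)$ are cofinal in $\omega_1$ and eventually exceed any modulus at the candidate $z$. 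It is \emph{not} enough for ${<}\omega_1$-closure of $T=\Set{\iota(z)\restriction\eta}{z\in{}^{\omega_1}2,\ \eta<\omega_1}$, nor even for the claim that every node of $T$ has two incompatible extensions in $T$: both tacitly assume that changing $z$ at coordinates $\alpha$ with $\delta_\alpha\geq\eta$ does not disturb $\iota(z)\restriction\eta$, and continuity alone does not give this. Indeed, there are continuous injections with the reading-off property whose initial-segment tree is not ${<}\omega_1$-closed: e.g.\ let a non-club coordinate of $\iota(z)$ below $\delta_\lambda$ ($\lambda$ limit of the $\delta_n$) code unbounded information about a later block of $z$, such as the least $k$ with $z(\lambda+k)=1$; one then builds a countable increasing chain of nodes whose union is an initial segment of no $\iota(w)$, since the union imposes the contradictory requirement that this least $k$ be $\geq m$ for every $m<\omega$. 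So the union of a countable chain of nodes need not lie in $T$, and your "comparing initial segments" step breaks down precisely at countable limits, where the available agreement $\otp(D\cap\eta)$ is bounded while the modulus at the candidate limit point may be larger.

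The gap is repairable, but only by using what the \emph{proof} of Lemma \ref{lemma:Sigma1SetContainsBistationary} actually provides rather than its bare statement: the map constructed there satisfies the uniform modulus condition that $\iota(z)\restriction\beta$ is determined by $z\restriction\otp(D\cap(\beta+1))$; equivalently, $z\restriction\alpha=z'\restriction\alpha$ implies $\iota(z)\restriction\delta_\alpha=\iota(z')\restriction\delta_\alpha$ (this comes from $c_z\restriction D=\id_D$ together with the fact that critical points of the tail embeddings lie above the relevant level). With this, everything you want goes through: for a chain with union $u$ of length $\eta$, the reading-off property determines a candidate $w$ on $\gamma^*=\otp(D\cap\eta)$, and for each $\beta<\eta$ one picks a node $\iota(z_i)\restriction\eta_i$ with $\eta_i>\beta$ and uses $z_i\restriction\otp(D\cap\eta_i)=w\restriction\otp(D\cap\eta_i)$ to get $\iota(w)\restriction\beta=u\restriction\beta$, so $T$ is ${<}\omega_1$-closed; branching above a node $\iota(z)\restriction\eta$ follows by flipping $z$ at any $\alpha$ with $\delta_\alpha\geq\eta$; and your branch-recovery argument gives $[T]\subseteq\ran{\iota}\subseteq X$. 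So either strengthen the statement of the lemma you cite to include this modulus (the construction gives it for free), or carry out the verification inside its proof; as written, appealing only to "continuity plus reading-off" does not justify the closure step.
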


\begin{proof} 
 Pick some $x\in{}^{\omega_1}\omega_1$ with the property that $\Set{\alpha<\omega_1}{x(\alpha)=1}$ is a bistationary subset of $\omega_1$.   Then there is $X\in\Delta$ with $x\in X$.  In this situation, Lemma \ref{lemma:Sigma1SetContainsBistationary} and Lemma \ref{Bernstein from precipitous ideal} imply that $X$ contains a perfect subset. 
\end{proof}

\begin{theorem} \label{no Delta1-definable Bernstein set} 
 Assume that either $M_1^\#(A)$ exists for every $A\subseteq\omega_1$  or that there is a precipitous ideal on $\omega_1$ and a measurable cardinal. If $A\subseteq\omega_1$ is $\mathbf{\Sigma}^1_2$ in the codes, then no Bernstein subset of ${}^{\omega_1}\omega_1$ is $\Delta_1(A)$-definable over $\langle\HH{\omega_2},\in\rangle$.  
\end{theorem}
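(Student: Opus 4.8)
The plan is to derive this from the perfect-set dichotomy established in Theorem~\ref{theorem:M1SharpBernstein} together with its underlying construction in Lemma~\ref{lemma:Sigma1SetContainsBistationary} (or Lemma~\ref{Bernstein from precipitous ideal} in the precipitous-ideal case). Suppose, towards a contradiction, that $B\subseteq{}^{\omega_1}\omega_1$ is a Bernstein set that is $\Delta_1(A)$-definable over $\langle\HH{\omega_2},\in\rangle$ for some $A\subseteq\omega_1$ that is $\mathbf{\Sigma}^1_2$ in the codes. Then both $B$ and its complement ${}^{\omega_1}\omega_1\setminus B$ are $\Sigma_1(A)$-definable (the complement because $B$ is $\Pi_1(A)$-definable, i.e. its complement is $\Sigma_1(A)$-definable, and $A$ remains $\mathbf{\Sigma}^1_2$ in the codes). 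Since $\{B,\ {}^{\omega_1}\omega_1\setminus B\}$ is a pair of sets from the class $\Gamma$ of Theorem~\ref{theorem:M1SharpBernstein} whose union is all of ${}^{\omega_1}\omega_1$, that theorem yields that one of them contains a perfect subset, contradicting the assumption that $B$ is a Bernstein set.

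The only subtlety to check is the parameter bookkeeping: applying Theorem~\ref{theorem:M1SharpBernstein} to $\Delta=\{B,\ {}^{\omega_1}\omega_1\setminus B\}$ requires both members to lie in $\Gamma$, which is the class of subsets of ${}^{\omega_1}\omega_1$ that are $\Sigma_1(A')$-definable for \emph{some} $A'\subseteq\omega_1$ that is $\mathbf{\Sigma}^1_2$ in the codes. For $B$ this is the hypothesis. For the complement, $\Delta_1(A)$-definability of $B$ over $\langle\HH{\omega_2},\in\rangle$ means precisely that $\HH{\omega_2}\setminus B$ (equivalently ${}^{\omega_1}\omega_1\setminus B$, since a Bernstein set is a subset of ${}^{\omega_1}\omega_1$ and intersecting with ${}^{\omega_1}\omega_1$ is itself $\Sigma_1(\emptyset)$-definable) is $\Sigma_1(A)$-definable; so we may take $A'=A$ for both members. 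I expect no genuine obstacle here — the content is entirely in the already-proved perfect-set property for $\Sigma_1(\omega_1)$-definable sets covering the space, and this theorem is the clean corollary of it.

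\begin{proof}
Assume, towards a contradiction, that $B$ is a Bernstein subset of ${}^{\omega_1}\omega_1$ that is $\Delta_1(A)$-definable over $\langle\HH{\omega_2},\in\rangle$, where $A\subseteq\omega_1$ is $\mathbf{\Sigma}^1_2$ in the codes. Since $B\subseteq{}^{\omega_1}\omega_1$ and ${}^{\omega_1}\omega_1$ is $\Sigma_1(\omega_1)$-definable, both $B$ and its complement $X={}^{\omega_1}\omega_1\setminus B$ are $\Sigma_1(A)$-definable. Hence, in the notation of Theorem~\ref{theorem:M1SharpBernstein}, the set $\Delta=\{B,X\}$ is a subset of $\Gamma$ with $\bigcup\Delta={}^{\omega_1}\omega_1$. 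By Theorem~\ref{theorem:M1SharpBernstein} (which applies in both cases of our hypothesis), some element of $\Delta$ contains a perfect subset. This contradicts the assumption that $B$ is a Bernstein set, i.e. that neither $B$ nor its complement contains a perfect subset.
\end{proof}
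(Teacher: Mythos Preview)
Your proof is correct and follows the same approach as the paper: apply Theorem~\ref{theorem:M1SharpBernstein} to the two-element cover $\Delta=\{B,\ {}^{\omega_1}\omega_1\setminus B\}\subseteq\Gamma$ and derive a contradiction from the definition of a Bernstein set. Your explicit check that both members of $\Delta$ lie in $\Gamma$ is a bit more careful than the paper's one-line version, but the argument is identical in substance.
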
 

\begin{proof} 
 Apply Theorem \ref{theorem:M1SharpBernstein} with $\Delta=\{A, ~ {\omega_1\setminus A}\}\subseteq\Gamma$. 
\end{proof}

We will see in Lemma \ref{Delta1 Bernstein set in M_1} below that the existence of a $\Sigma_1(\omega_1)$-definable Bernstein subset of ${}^{\omega_1}\omega_1$ is consistent with the existence of a Woodin cardinal.


\subsection{A perfect set theorem}\label{subsection:PerfectSet}

We aim to prove a perfect set theorem for $\Sigma_1(\omega_1)$-definable subsets of ${}^{\omega_1}\omega_1$. This is motivated by the following result.

\begin{theorem}[Woodin, {\cite[Corollary 7.11]{MR2768703}}]
Assume $\mathsf{AD}^{\LL(\RRR)}$ and suppose that $G$ is $\PPP_{max}$-generic over $\LL(\RRR)$. 
Work in $\LL(\RRR)[G]$. 
Suppose that $A$ is a subset of ${}^{\omega_1}\omega_1$ which is defined from a parameter in $\LL(\RRR)$. Then at least one of the following conditions hold. 
\begin{enumerate} 
\item 
$A$ contains a perfect subset. 
\item 
$A\subseteq\LL(\RRR)$. 
\end{enumerate} 
\end{theorem}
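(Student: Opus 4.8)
The plan is to prove the dichotomy directly: assuming $A\not\subseteq\LL(\RRR)$, I would construct, inside $\LL(\RRR)[G]$, a continuous injection $\map{\iota}{{}^{\omega_1}2}{A}$ whose range is a perfect subset of ${}^{\omega_1}\omega_1$, by iterating a suitable countable model along the binary tree of height $\omega_1$ --- in the spirit of the proof of Lemma~\ref{lemma:Sigma1SetContainsBistationary}, but with the generic iterations now driven by $\PPP_{max}$-style iterations (equivalently, by generic ultrapowers by $\mathrm{NS}_{\omega_1}$ and its images, which $\PPP_{max}$ makes saturated). First I would fix a formula $\varphi$ and a parameter $S\in\LL(\RRR)$ with $A=\Set{x\in{}^{\omega_1}\omega_1}{\LL(\RRR)[G]\models\varphi(S,x)}$. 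Since $\PPP_{max}$ is weakly homogeneous over $\LL(\RRR)$ under $\mathsf{AD}^{\LL(\RRR)}$, all assertions about the $\PPP_{max}$-extension with parameters from $\LL(\RRR)$ are decided by the trivial condition; in particular $A\not\subseteq\LL(\RRR)$ is forced by $\emptyset_{\PPP_{max}}$, so by fullness there is a $\PPP_{max}$-name $\dot x$ with $\emptyset_{\PPP_{max}}\Vdash\anf{\dot x\in\dot A\text{ and }\dot x\notin\LL(\RRR)}$, and $x:=\dot x^G$ witnesses $A\not\subseteq\LL(\RRR)$ in $\LL(\RRR)[G]$.

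Next I would pass to a countable iterable structure carrying the construction. Using the structure theory of $\PPP_{max}$ --- the generic object is a maximal iteration of countable models, and the behaviour of $\dot x$ below a condition is captured by a stronger condition --- one obtains a $\PPP_{max}$-condition $\goedel{N,I}{a}$ in $G$ together with $\bar S,\bar x\in N$ such that $N$ is a countable model of $\ZFC^-$ computing $\bar S$ and $\bar x$ correctly, $N\models\varphi(\bar S,\bar x)$, and $N$ is iterable with respect to generic ultrapowers by $I$ and its images. The iteration of $N$ determined by $G$ below $\goedel{N,I}{a}$ is then a generic iteration $\map{j_G}{N}{N^*}$ of length $\omega_1$ with $j_G(\bar S)=S$, $j_G(\bar x)=x$, and $\mathrm{NS}_{\omega_1}\cap N^*=j_G(I)$. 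I would also record the \emph{absoluteness of membership in $A$}: for any generic iteration $\map{j'}{N}{N'}$ of length $\omega_1$ with $j'(\bar S)=S$ and $\mathrm{NS}_{\omega_1}\cap N'=j'(I)$, one still has $j'(\bar x)\in A$. This is proved exactly as the inclusions $Y\subseteq X$ and $X\subseteq Y$ in Lemmas~\ref{lemma:HOmega2Sigma13} and~\ref{characterization of Sigma1 with uB parameter}: by $\Sigma_1$-upwards absoluteness, together with the $\Pi_2$-maximality of the $\PPP_{max}$-extension of $\LL(\RRR)$ ({\cite[Theorem~2.30]{MR1713438}}, {\cite[Theorem~3.3.8]{MR2069032}}), which rules out the $\Pi_2$-assertion \anf{no length-$\omega_1$ iterate of $\goedel{N,I}{\bar x}$ respecting $S$ satisfies $\varphi$}. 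The iterability of $N$ is the essential technical input here; it is established by Woodin's $\PPP_{max}$-iterability analysis and is of the same flavour as the iterability hypotheses already used in this paper (cf.\ Lemma~\ref{omega1 iterable structure} and Definition~\ref{definition:Biterable}).

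Finally I would run the branching iteration. Since $\emptyset_{\PPP_{max}}\Vdash\dot x\notin\LL(\RRR)$, no condition decides $\dot x$; after absorbing this information into $N$, the iteration of $N$ genuinely splits, in the sense that along a cofinal set of stages $\alpha<\omega_1$ the $\alpha$-th iterate admits two generic ultrafilters realizing incompatible values for the corresponding image of $\bar x$. I would then build a directed system $\seq{N_s}{s\in{}^{{\leq}\omega_1}2}$ with $N_\emptyset=N$, taking at each successor step $s^\frown\langle i\rangle$ a generic ultrapower of $N_s$ chosen so as to separate these two values, and taking direct limits at limit stages; iterability of $N$ makes every $N_s$ well-founded. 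As in the proof of Lemma~\ref{lemma:Sigma1SetContainsBistationary}, the analogue of {\cite[Lemma~19]{MR2963017}} for $\PPP_{max}$-iterations (in place of iterations by a precipitous ideal) provides a club $D\subseteq\omega_1$ along which the critical points $\omega_1^{N_s}$ enumerate an honest club; combined with {\cite[Fact~2.7.3]{MR2069032}} and the same bookkeeping as there, this shows that $z\mapsto j_{\emptyset,z}(\bar x)$ is a continuous injection of ${}^{\omega_1}2$ into ${}^{\omega_1}\omega_1$ whose range is a perfect subset, and that range lies in $A$ by the absoluteness recorded above (note $j_{\emptyset,z}(\bar S)=S$, since $\bar S$ is computed correctly and fixed by these iterations). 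The main obstacle will be the splitting step: from the bare hypothesis $A\not\subseteq\LL(\RRR)$ one must extract a countable model in which the iteration can be forced to branch cofinally often. When the naive witness, the set $\Set{\alpha<\omega_1}{\bar x(\alpha)\neq 0}$, is nonstationary or co-nonstationary in $N$, branching through that set is impossible, and one must instead locate the genuinely new information inside $N$ --- using homogeneity of $\PPP_{max}$ and Woodin's description of $\POT{\omega_1}$ in the $\PPP_{max}$-extension --- and branch on that. This is where Woodin's finer analysis of $\PPP_{max}$ enters; the rest is a routine adaptation of the methods already developed in this paper.
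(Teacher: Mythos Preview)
The paper does not prove this theorem; it is stated with a citation to Woodin and used only as motivation for the subsequent Theorem~\ref{perfect subsets}, which establishes a related dichotomy under different hypotheses (saturation of $\mathrm{NS}_{\omega_1}$ plus a measurable cardinal) and only for $\Sigma_1(\omega_1)$-definable sets. There is therefore no proof in the paper to compare your proposal against.

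That said, your proposal has a genuine gap. The set $A$ is defined by an \emph{arbitrary} formula $\varphi$ with parameter $S\in\LL(\RRR)$, not by a $\Sigma_1$-formula. Your ``absoluteness of membership in $A$'' step asserts that for any suitable length-$\omega_1$ iteration $\map{j'}{N}{N'}$ one has $j'(\bar x)\in A$, and you justify this by appeal to $\Sigma_1$-upwards absoluteness and the arguments of Lemmas~\ref{lemma:HOmega2Sigma13} and~\ref{characterization of Sigma1 with uB parameter}. But those lemmas concern $\Sigma_1$-formulas specifically; upward absoluteness gives nothing for a general $\varphi$, and the $\Pi_2$-maximality device you invoke does not apply either, since the assertion ``no length-$\omega_1$ iterate of $\langle N,I,\bar x\rangle$ respecting $S$ satisfies $\varphi$'' is not a $\Pi_2$-sentence over $\langle\HH{\omega_2},\in,\mathrm{NS}_{\omega_1}\rangle$ in any useful way (it refers to the full $\LL(\RRR)[G]$-truth of an arbitrary formula, and $S$ need not even lie in $\HH{\omega_2}$). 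Note that the paper's own Theorem~\ref{perfect subsets} succeeds precisely because it restricts to $\Sigma_1(\omega_1)$-definable $X$, so that $\varphi$ transfers upward from iterates; your attempt to lift that argument to arbitrary definability does not go through as written. Woodin's proof relies instead on the explicit description of $\POT{\omega_1}^{\LL(\RRR)[G]}$ in terms of the generic filter (every subset of $\omega_1$ is $j_G(a)$ for some condition $\langle(N,I),a\rangle\in G$) together with homogeneity, rather than on formula-level absoluteness; your outline does not supply a substitute for this ingredient.
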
 

We will prove a similar result for $\Sigma_1(\omega_1)$-definable sets in $\VV$ from the assumption that $\mathrm{NS}_{\omega_1}$ is saturated and there is a measurable cardinal. We do not know if our result is a true dichotomy, i.e. whether the two cases are mutually exclusive. 

Assuming that $\mathrm{NS}_{\omega_1}$ is saturated, the following result of Woodin shows that there is a canonical iteration of length $\omega_1$ of any countable substructure of $\HH{\omega_2}$.

\begin{lemma} [Woodin] \label{canonical iteration}
Suppose that the non-stationary ideal $\mathrm{NS}_{\omega_1}$ on $\omega_1$ is saturated.  
If $A\subseteq\omega_1$ and $\map{i}{\langle M,\in,I,\bar{A}\rangle}{\langle\HH{\theta},\in,\mathrm{NS}_{\omega_1},A\rangle}$ is an elementary embedding  with $\theta\geq\omega_2$ and $M$ is countable, then there is a generic iteration $\map{j}{M}{N}$ of length $\omega_1$  with $N$ well-founded and $j(\bar{A})=A$. 
\end{lemma}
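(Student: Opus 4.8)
The plan is to build, by recursion on $\alpha\le\omega_1$, a generic iteration
$\langle M_\alpha,\langle j_{\beta\gamma}\rangle_{\beta\le\gamma\le\alpha},\langle G_\beta\rangle_{\beta<\alpha}\rangle$ of $M_0=M$, together with a commuting system of elementary embeddings $\langle k_\alpha:M_\alpha\to\HH{\theta}\rangle_{\alpha\le\omega_1}$ with $k_0=i$ and $k_\gamma\circ j_{\beta\gamma}=k_\beta$, and then to read off the lemma from the endpoint $N=M_{\omega_1}$, $j=j_{0\omega_1}$. Writing $\delta_\alpha=\omega_1^{M_\alpha}=j_{0\alpha}(\omega_1^M)$, $\bar A_\alpha=j_{0\alpha}(\bar A)$ and $I_\alpha=j_{0\alpha}(I)$, the recursion will maintain the invariant $\crit{k_\alpha}=\delta_\alpha$ (equivalently $k_\alpha\restriction\delta_\alpha=\id$), which forces $k_\alpha(\delta_\alpha)=\omega_1$ and, since $k_\alpha\circ j_{0\alpha}=i$, also $k_\alpha(\bar A_\alpha)=i(\bar A)=A$; as $\bar A_\alpha\subseteq\delta_\alpha$ and $k_\alpha$ is the identity below $\delta_\alpha$, this yields $\bar A_\alpha=A\cap\delta_\alpha$. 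For the base case, $i$ is the inverse of the transitive collapse of the countable elementary substructure $\ran{i}\prec\HH{\theta}$, so $\crit{i}=\omega_1^M$ automatically, and $A\cap\omega_1^M=\bar A$.

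The successor step $\alpha\rightsquigarrow\alpha+1$ is the heart of the argument, and it is here that the saturation of $\mathrm{NS}_{\omega_1}$ — not merely its precipitousness — is used. Given $k_\alpha:M_\alpha\to\HH{\theta}$ with $\crit{k_\alpha}=\delta_\alpha$, I would set
\[ G_\alpha \;=\; \Set{S\in(\POT{\delta_\alpha}/I_\alpha)^{M_\alpha}}{\delta_\alpha\in k_\alpha(S)}. \]
Using that $k_\alpha(\delta_\alpha)=\omega_1\ni\delta_\alpha$, and that a club of $\delta_\alpha$ computed in $M_\alpha$ is a genuine club of the countable ordinal $\delta_\alpha$ (so its $k_\alpha$-image is a club of $\omega_1$ meeting $\delta_\alpha$ cofinally from below, hence containing $\delta_\alpha$), one checks routinely that $G_\alpha$ is a well-defined $M_\alpha$-normal ultrafilter on $\POT{\delta_\alpha}^{M_\alpha}$ extending the club filter of $M_\alpha$. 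The crucial and delicate point is that $G_\alpha$ is moreover $M_\alpha$-\emph{generic}: since $\mathrm{NS}_{\omega_1}$ is saturated, $\HH{\theta}$ — and hence $M_\alpha$ — sees this, so the maximal antichains of $(\POT{\delta_\alpha}/I_\alpha)^{M_\alpha}$ are small, and provided the embeddings $k_\alpha$ have been chosen so as to be ``continuous'' on such antichains (as saturation allows), every maximal antichain $\mathcal A\in M_\alpha$ satisfies $\delta_\alpha\in k_\alpha(\bigcup\mathcal A)=\bigcup\Set{k_\alpha(S)}{S\in\mathcal A}$ and is therefore met by $G_\alpha$. I would then put $M_{\alpha+1}=\Ult{M_\alpha}{G_\alpha}$, let $j_{\alpha,\alpha+1}$ be the ultrapower map, and define $k_{\alpha+1}:M_{\alpha+1}\to\HH{\theta}$ by $k_{\alpha+1}([f]_{G_\alpha})=k_\alpha(f)(\delta_\alpha)$; elementarity of $k_\alpha$ shows this is well-defined and elementary, it certifies that $M_{\alpha+1}$ is well-founded (so identify it with its collapse), it satisfies $k_{\alpha+1}\circ j_{\alpha,\alpha+1}=k_\alpha$, and, as $\delta_\alpha<\delta_{\alpha+1}=j_{\alpha,\alpha+1}(\delta_\alpha)$, it has $\crit{k_{\alpha+1}}=\delta_{\alpha+1}$, preserving the invariant; in particular $\bar A_{\alpha+1}=A\cap\delta_{\alpha+1}$, and by {\cite[Fact 2.7.3.]{MR2069032}} one has $\delta_\alpha\in\bar A_{\alpha+1}\iff\bar A_\alpha\in G_\alpha\iff\delta_\alpha\in A$, consistently with this.

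At a limit $\lambda\le\omega_1$ I would take $M_\lambda$ to be the direct limit along the $j_{\beta\gamma}$ ($\beta\le\gamma<\lambda$); the commuting embeddings $k_\beta$ ($\beta<\lambda$) induce an elementary $k_\lambda:M_\lambda\to\HH{\theta}$, which in particular witnesses that $M_\lambda$ is well-founded, and one gets $\delta_\lambda=\sup_{\beta<\lambda}\delta_\beta$, $\crit{k_\lambda}=\delta_\lambda$, and $\bar A_\lambda=A\cap\delta_\lambda$ by continuity. Finally, at $\lambda=\omega_1$ the sequence $\langle\delta_\alpha\mid\alpha<\omega_1\rangle$ is strictly increasing and consists of countable ordinals, hence is cofinal in $\omega_1$, so $\delta_{\omega_1}=\omega_1$. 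Thus $N=M_{\omega_1}$ is a well-founded generic iterate of $M$ via an iteration of length $\omega_1$, $j=j_{0\omega_1}$ satisfies $j(\omega_1^M)=\delta_{\omega_1}=\omega_1$, and $j(\bar A)=\bar A_{\omega_1}=A\cap\omega_1=A$, as required.

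The main obstacle, as indicated, is the genericity clause at the successor step: one must verify that the saturation of $\mathrm{NS}_{\omega_1}$ genuinely permits carrying along, through all $\omega_1$ stages, embeddings $k_\alpha$ that respect maximal antichains of the relevant quotients — i.e.\ one must set up the recursion so that this ``continuity'' property is itself preserved (e.g.\ by choosing the $k_\alpha$ from a suitable club-in-$[\HH{\theta}]^{\aleph_0}$ guaranteed by saturation, and arranging closure of this club under the successor operation). This is precisely the content of Woodin's analysis of iterable structures in the presence of a saturated nonstationary ideal, and for this point I would follow that treatment (see {\cite[Chapter 3]{MR1713438}} and {\cite[Section 2.7]{MR2069032}}).
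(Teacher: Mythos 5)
Your construction is the same as the paper's: build the generic iteration together with a commuting system of embeddings $\map{k_\alpha}{M_\alpha}{\HH{\theta}}$ with $k_0=i$, derive $G_\alpha$ from the seed $\delta_\alpha=\omega_1^{M_\alpha}=\crit{k_\alpha}$, define $k_{\alpha+1}$ as the factor map $[f]\mapsto k_\alpha(f)(\delta_\alpha)$, take direct limits at limits, and read off $j(\bar A)=A$ from $\crit{k_\alpha}=\delta_\alpha$ and $\sup_{\alpha<\omega_1}\delta_\alpha=\omega_1$. All of that is correct, including the choice of seed, the well-definedness and elementarity of the factor maps, well-foundedness, and the endgame.

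The genuine gap is exactly the point you flag, the genericity of $G_\alpha$, and your sketch of it does not work. The identity $k_\alpha(\bigcup\mathcal A)=\bigcup\Set{k_\alpha(S)}{S\in\mathcal A}$ is false in general: in $M_\alpha$ the antichain $\mathcal A$ has cardinality $\delta_\alpha=\crit{k_\alpha}$, so $k_\alpha(\mathcal A)$ properly contains $k_\alpha[\mathcal A]$ and the union on the right-hand side can miss the member containing the seed. Moreover, the proposed repair (choosing the $k_\alpha$ ``continuously'' from a club of substructures and preserving this through the recursion) is not available: $k_0=i$ is given by hypothesis, and the requirements $k_{\alpha+1}\circ j_{\alpha,\alpha+1}=k_\alpha$ and $\crit{k_{\alpha+1}}=\delta_{\alpha+1}$ determine every later $k_\alpha$ uniquely, so there is nothing left to choose. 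Fortunately no choice is needed; genericity follows from saturation together with the normality and positivity of $G_\alpha$ that you already verified, and this is what the paper's proof does. Working in $M_\alpha$, saturation (transferred by elementarity) lets one enumerate a maximal antichain $\mathcal A\in M_\alpha$ as $\seq{S_\beta}{\beta<\delta_\alpha}$ (repetitions allowed). If no $S_\beta$ were in $G_\alpha$, then each $\delta_\alpha\setminus S_\beta$ lies in $G_\alpha$, hence by $M_\alpha$-normality so does $X=\bigtriangleup_{\beta<\delta_\alpha}(\delta_\alpha\setminus S_\beta)$; since every member of $G_\alpha$ is $I_\alpha$-positive (your club argument), $X$ is a condition with $X\cap S_\beta\subseteq\beta+1$ for all $\beta$, contradicting maximality of $\mathcal A$ in $M_\alpha$. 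Equivalently: the diagonal union of $\mathcal A$ contains a club of $M_\alpha$, so $\delta_\alpha\in k_\alpha\bigl(\bigtriangledown_{\beta<\delta_\alpha}S_\beta\bigr)$, and since the witnessing index is below $\crit{k_\alpha}$ this yields $\delta_\alpha\in k_\alpha(S_\beta)$ for some $\beta<\delta_\alpha$. This short normality/diagonalization argument is the only place where saturation, as opposed to mere precipitousness, is used, and it should be given rather than deferred to Woodin's treatment.
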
 

\begin{proof} 
 We inductively construct a generic iteration $$\langle \seq{\langle M_\alpha,\in, I_\alpha, \bar{A}_\alpha\rangle}{\alpha<\omega_1},\seq{\map{ i_{\alpha,\beta}}{M_\alpha}{M_\beta}}{\alpha\leq \beta<\omega_1}\rangle$$  with $M=M_0$ and elementary embeddings $\seq{\map{j_{\alpha}}{M_\alpha}{M}}{\alpha<\omega_1}$ such that $j_\alpha= j_\beta \circ i_{\alpha,\beta}$ for all $\alpha\leq\beta<\omega_1$. 
Suppose that $\langle M_\alpha,\in, I_\alpha, \bar{A}_\alpha\rangle$, $i_{\alpha,\beta}$ and $j_\alpha$ are defined for  $\alpha\leq\beta\leq\gamma$.  
Set $\kappa=i_{0,\gamma}(\omega_1^{M_\gamma})$ and $U_\gamma=\Set{X\in \POT{\kappa}^{M_\gamma}}{\omega_1\in j_{\gamma}(X)}$. 

\begin{claim*} 
$U_\gamma$ is $\POT{\kappa}/I_\gamma$-generic over $M_\gamma$. 

\end{claim*} 
\begin{proof} 
Suppose that $A\in M_\gamma$ is a maximal antichain in $\POT{\kappa}/I_\gamma$. 
Since $\mathrm{NS}_{\omega_1}$ is saturated, $\POT{\kappa}/I_\gamma$ satisfies the $\omega_2^{M_\gamma}$-chain condition in $M_\gamma$.  
Let $\langle X_\alpha\mid \alpha<\kappa\rangle$ enumerate $A$ in $M_\gamma$ and assume that $X_\alpha \notin U_\gamma$ for all $\alpha<\kappa$.  
By the definition of $U_\gamma$, we have $X=\bigtriangleup_{\alpha<\kappa} (\kappa\setminus X_\alpha)\in U_\gamma$ . 
Since $U_\gamma$ is normal, the set $X$ is stationary. 
This contradicts the assumption that $A$ is maximal. 
\end{proof} 

We define $M_{\gamma+1}=\Ult{M_\gamma}{U_\gamma}$, $\map{i_{\gamma,\gamma+1}}{M_\gamma}{M_{\gamma+1}}$ the ultrapower map, and $\map{j_{\gamma+1}}{M_{\gamma+1}}{\HH{\theta}}$ by $j_{\gamma+1}([f])=j_\gamma(f)(\omega_1)$. 
It is straightforward to check that $j_{\gamma+1}$ is well-defined and elementary. 

\begin{claim*} 
$j_{\gamma}= j_{\gamma+1} \circ i_{\gamma,\gamma+1}$. 
\end{claim*} 
\begin{proof} 
If $x\in M_\gamma$, then 
 \begin{equation*}
  j_{\gamma+1}(i_{\gamma,\gamma+1}(x)) ~ = ~ j_{\gamma+1}([c_x]) ~ = ~ j_{\gamma+1}(c_x)(\omega_1) ~ = ~ c_{j_\gamma(x)}(\omega_1) ~ = ~ j_\gamma(x). \qedhere
 \end{equation*} 
\end{proof} 
This completes the proof of the lemma. 
\end{proof}

\begin{theorem} \label{perfect subsets} 
Suppose that $\mathrm{NS}_{\omega_1}$ is saturated and there is a measurable cardinal.  Suppose that $X$ is a $\Sigma_1(\omega_1)$-definable subset of ${}^{\omega_1}\omega_1$. 
Then at least one of the following conditions holds. 
\begin{enumerate} 
\item 
$X$ contains a perfect subset. 
\item 
$X \subseteq\LL(\RRR)$. 
\end{enumerate} 
\end{theorem}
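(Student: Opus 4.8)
The plan is to run a dichotomy argument based on whether $X$ contains a point whose code is bistationary. First I would fix a $\Sigma_1$-formula $\varphi(v_0,v_1)$ defining $X$, so $X=\Set{x\in{}^{\omega_1}\omega_1}{\varphi(\omega_1,x)}$. Suppose some $x\in X$ has the property that $\bar x=\Set{\alpha<\omega_1}{x(\alpha)>0}$ is bistationary. I claim that in this case (i) holds, and the construction mirrors the proof of Lemma \ref{lemma:Sigma1SetContainsBistationary}, except that the iterable structure one collapses is no longer an elementary submodel of some $M_1^\#(C)$ but rather a countable elementary submodel $N\prec\langle\HH\theta,\in,\mathrm{NS}_{\omega_1}\rangle$ for $\theta$ above the measurable cardinal, with $x,\bar x\in N$. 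Its transitive collapse $\langle M,\in,I\rangle$ is $\omega_1$-iterable by Lemma \ref{omega1 iterable structure} (using the measurable cardinal to get the $\omega_1$-iterable $M$-ultrafilter, and the genericity of stationary set conditions in $\POT{\kappa}/I$ for the extender-adding requirement, exactly as in Lemma \ref{Bernstein from precipitous ideal}). Now build the binary tree of generic iterates $\seq{M_s}{s\in{}^{{\leq}\omega_1}2}$, at each splitting node forcing with $\POT{\kappa}/I$ below $j_{\emptyset,s}(\bar x)$ on the $1$-side and below its complement on the $0$-side; the image $i(z)=(j_z\circ\pi)(x)$ together with $\Sigma_1$-upward absoluteness gives $\varphi(\omega_1,i(z))$, i.e. $\ran i\subseteq X$, and the coding claim plus the club-of-fixed-points claim (using an $\LL[z_M]$-cardinal argument as in \cite[Lemma 19]{MR2963017}) show $i$ is a continuous injection, hence $X$ contains a perfect set.

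The remaining case is that for every $x\in X$ the set $\bar x$ is \emph{not} bistationary, i.e. $\bar x$ or $\omega_1\setminus\bar x$ contains a club. Here I would argue that $X\subseteq\LL(\RRR)$. Fix $x\in X$; I want to show $x\in\LL(\RRR)$. Take a countable $N\prec\langle\HH\theta,\in,\mathrm{NS}_{\omega_1}\rangle$ with $x\in N$, let $\pi\colon N\to M$ be the transitive collapse, set $\bar x=\pi(x)$, $\bar\omega_1=\omega_1^M$, and note $\langle M,\in,\mathrm{NS}_{\omega_1}\cap M\rangle\models\varphi(\bar\omega_1,\bar x)$. By Lemma \ref{canonical iteration} (here is where saturation of $\mathrm{NS}_{\omega_1}$ enters) there is a generic iteration $j\colon M\to N^*$ of length $\omega_1$ with $N^*$ well-founded, $\omega_1^{N^*}=\omega_1$, and $j(\bar\omega_1)=\omega_1$; by elementarity and $\Sigma_1$-upward absoluteness $j(\bar x)\in X$, so $j(\bar x)$ again has $\overline{j(\bar x)}$ non-bistationary. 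The point is that a single generic iterate $j(\bar x)=x$ can be recovered inside $\LL(\RRR)$: the iteration $j$ is determined by a sequence of filters living in $\HH{\omega_2}$, and since $x$ itself is (modulo a club, which one can absorb) the characteristic function of a non-bistationary set, $x$ is captured by the real coding $M$ together with the branch through the iteration tree. More precisely, I would show that $x$ lies in a generic iterate of the countable structure $M$ along a branch that is itself coded by an element of $\LL(\RRR)$ — using that $\AD^{\LL(\RRR)}$ holds (which follows from a measurable cardinal plus $\mathrm{NS}_{\omega_1}$ saturated, via a proper class — or at least sufficiently many — Woodin cardinals in an inner model), so that $\HH{\omega_1}$ and all its canonically-definable objects needed to run the iteration sit inside $\LL(\RRR)$.

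The main obstacle I anticipate is exactly the second case: making precise the sense in which "every $x\in X$ lies in $\LL(\RRR)$" when $\bar x$ is never bistationary. The naive hope — that a function on $\omega_1$ which is club-many $0$s or club-many $1$s is automatically in $\LL(\RRR)$ — is false ($X$ can still separate clubs in a complicated way), so one genuinely needs the $\Sigma_1(\omega_1)$-definability together with the canonical iteration from Lemma \ref{canonical iteration} to pin $x$ down as the image of a \emph{canonically chosen} point of a \emph{canonically chosen} countable model under a \emph{canonically chosen} iteration, all three choices being absolute to $\LL(\RRR)$ under $\AD^{\LL(\RRR)}$. I would handle this by first reducing to the case where $x$ is the characteristic function of a club (if $\bar x$ contains a club $C$, then $x$ and the characteristic function of $C$ generate the same "tail"; and a club subset of $\omega_1$ is coded by a real after an iteration), and then invoking the $\PPP_{\mathrm{max}}$-style analysis that identifies the reals of $\LL(\RRR)[G]$ with the generic iterates of countable structures — the key input being that under our hypotheses $\HH{\omega_2}$ is, in the relevant $\Pi_2$-theoretic sense, the $\PPP_{\mathrm{max}}$-extension of $\LL(\RRR)$, so that every subset of $\omega_1$ witnessing a $\Sigma_1$ fact is an iterate of something in $\LL(\RRR)$. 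This is where most of the real work lies; the perfect-set side of the dichotomy is essentially a rerun of Lemma \ref{lemma:Sigma1SetContainsBistationary} with $\POT{\kappa}/I$ in place of $\QQQ_{{<}\delta}$.
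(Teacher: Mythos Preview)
Your case split is the wrong one, and the second case does not go through.  You split on whether some $x\in X$ has $\bar x=\Set{\alpha}{x(\alpha)>0}$ bistationary, and in Case~2 (no such $x$) you try to prove $X\subseteq\LL(\RRR)$.  But this implication is false: take $X=\Set{x\in{}^{\omega_1}\omega_1}{\forall\alpha~x(\alpha)>0}$, which is $\Delta_0(\omega_1)$-definable.  Every $x\in X$ has $\bar x=\omega_1$, so no code is bistationary; yet $X$ has cardinality $2^{\omega_1}$, contains elements outside $\LL(\RRR)$, and (trivially) contains a perfect subset.  Under your dichotomy this example lands in Case~2, where you would attempt to prove $X\subseteq\LL(\RRR)$---an assertion that is simply false.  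The theorem is still true for this $X$ because (i) holds, but your argument does not find that perfect set: your only mechanism for producing one is to split on the \emph{code} $\bar x$, and here there is nothing to split.  (Your attempted rescue via $\AD^{\LL(\RRR)}$ also fails at the level of hypotheses: $\mathrm{NS}_{\omega_1}$ saturated plus one measurable cardinal is nowhere near enough to get $\AD^{\LL(\RRR)}$, which needs infinitely many Woodin cardinals.)

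The paper proceeds differently.  It assumes $X\not\subseteq\LL(\RRR)$, fixes $A\in X\setminus\LL(\RRR)$, takes a countable $\langle M,\in,I,\bar A\rangle\prec\langle\HH{\theta},\in,\mathrm{NS}_{\omega_1},A\rangle$, and asks whether there exist two countable generic iterations $i_0,i_1$ of $M$ with $i_0(\bar A)\restriction\alpha\neq i_1(\bar A)\restriction\alpha$ for $\alpha=\min(i_0(\omega_1^M),i_1(\omega_1^M))$.  If no such pair exists, then by the canonical iteration of Lemma~\ref{canonical iteration} every iterate agrees with $A$ itself, so $A$ is reconstructible in $\LL(\RRR)$ from the real coding $M$---contradicting $A\notin\LL(\RRR)$.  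Hence such a splitting pair \emph{must} exist; one then shows (using the measurable inside $M$ to shuffle iterations) that the splitting property persists through all countable iterates, and builds the perfect tree from that.  The point is that the relevant splitting is of the \emph{function} $A$ under different generic ultrapowers, not of the crude set $\bar A$; this is strictly more general than your bistationarity criterion and is exactly what $A\notin\LL(\RRR)$ forces.
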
 
\begin{proof} 
Suppose that $\mu$ is measurable and $\theta=\mu^+$. 
Suppose that $X\not\subseteq\LL(\RRR)$. Then there is some $A\in X\setminus\LL(\RRR)$. 
Suppose that $\map{i}{\langle M,\in,I,\bar{A}\rangle}{\langle\HH{\theta},\in, \mathrm{NS}_{\omega_1},A\rangle}$ is elementary and $M$ is countable. Let $\bar{\mu}=i^{-1}(\mu)$. Since $\mathrm{NS}_{\omega_1}$ is saturated and $\POT{\omega_1}^{\#}$ exists, 
$\langle M,\in,I,\bar{A}\rangle$ is $\omega_1$-iterable by {\cite[Theorem 3.10 \& Theorem 4.29]{MR1713438}}.

\begin{claim*} 
 If $i_0(\bar{A})\cap \alpha = i_1(\bar{A})\cap \alpha$ with  $\alpha=\min(\{i_0(\omega_1^M), i_1(\omega_1^M)\})$ holds for all countable iterations $\map{i_0}{M}{N_0}$ and $\map{i_1}{M}{N_1}$, then $X\subseteq\LL(\RRR)$. 
\end{claim*} 

\begin{proof} 
It follows from Lemma \ref{canonical iteration} that $i_0(\bar{A})\cap \alpha= A\cap \alpha$. 
Hence $A$ can be reconstructed from $(M,I,\bar{A})$ in $\LL(\RRR)$ by considering generic iterations of arbitrarily large countable length in $\LL(\RRR)$. 
\end{proof}

\begin{claim*} 
If there are countable iterations $\map{i_0}{M}{N_0}$ and $\map{i_1}{M}{N_1}$ with $i_0(\bar{A})\cap \alpha \neq i_1(\bar{A})\cap \alpha$ for $\alpha=\min(\{i_0(\omega_1^M), i_1(\omega_1^M)\})$, then this remains true in every countable iterate of $M$.  
\end{claim*} 

\begin{proof} 
Let $\gamma=\max(\{i_0(\omega_1^M), i_1(\omega_1^M)\})$. 
Suppose that $\bar{U}$ is a normal measure on $\bar{\mu}$ in $M$. 
Suppose that $\map{j}{M}{M^\gamma}$ is the iterate of $M$ of length $\gamma$ with $\bar{U}$.  
Then $j(\bar{\mu})>\gamma$. As in the proof of Lemma \ref{omega1 iterable structure}, the iterated ultrapowers of $M$ with $\bar{U}$ commute with the generic ultrapower since $\bar{\mu}>(2^{\omega_1})^M$. The same argument works for all further steps in the generic iteration of $M$ and hence we obtain a commutative diagram. This shows that the generic iteration of $M^\gamma$ commutes with the generic iteration of $M$.  
In any $\Col{\omega}{j(\gamma)}$-generic extension of $M^{\gamma}$, there are sequences of ultrafilters which induce $i_0, i_1$ as in the statement of the claim by $\mathbf{\Sigma}^1_2$-absoluteness. 
Hence such iterations exist in any $\Col{\omega}{\gamma}$-generic extension of $M$ by elementarity. 
This statement is preserved in generic iterations of $M$ by elementarity and guarantees the existence of $i_0$ and $i_1$. 
\end{proof} 

The last claim allows us to build a perfect tree $T$ of height $\omega_1$ of generic iterates of $M$ with the property that the set of images of $\bar{A}$ along the branches of $T$ form a perfect subset of $X$.  
\end{proof}

\begin{remark} 
 If $\mathsf{CH}$ fails, then the set $X=\Set{x\in {}^{\omega_1}\omega_1}{\forall \alpha\geq\omega\ ~ x(\alpha)=0}$  is a $\Delta_1(\omega_1)$-definable subset of ${}^{\omega_1}\omega_1$ without the perfect set property.  
\end{remark}


\subsection{The club filter and the non-stationary ideal} \label{subsection: club filter and non-stationary ideal} 

In this section, we will use Lemma \ref{lemma:Sigma1SetContainsBistationary} to prove a strengthening of Theorem \ref{theorem:Omega1Club}. 

\begin{lemma}\label{lemma:Omega1DenseClubNS} 
 Assume that either $M_1^\#(A)$ exists for every $A\subseteq\omega_1$ or that there is a precipitous ideal on $\omega_1$ and a measurable cardinal. Let $A$ be an unbounded subset of $\omega_1$ that is $\mathbf{\Sigma}^1_2$ in the codes and let $Y$ be a $\Sigma_1(A)$-definable subset of $\POT{\omega_1}$.  Then the following statements hold for all $y\in Y$ and $\xi<\omega_1$. 
 \begin{enumerate}
  \item If $y$ is a stationary subset of $\omega_1$, then there is $z\in Y$ such that $z$ is an element of the club filter on $\omega_1$ and $y\cap\xi=z\cap\xi$. 
  
  \item If $y$ is a costationary subset of $\omega_1$, then there is $z\in Y$ such that $z$ is an element of the nonstationary ideal on $\omega_1$  and $y\cap\xi=z\cap\xi$. 
 \end{enumerate}
\end{lemma}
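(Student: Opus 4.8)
The plan is to reduce both statements of Lemma~\ref{lemma:Omega1DenseClubNS} to the perfect-embedding machinery of Lemma~\ref{lemma:Sigma1SetContainsBistationary} (equivalently Lemma~\ref{Bernstein from precipitous ideal}), applied to a suitable point of $Y$. First I would note that a subset $y$ of $\omega_1$ is coded by the function $x_y\in{}^{\omega_1}\omega_1$ with $\bar{x_y}=y$, so $Y\subseteq\POT{\omega_1}$ corresponds canonically to a $\Sigma_1(A)$-definable $X\subseteq{}^{\omega_1}\omega_1$, namely $X=\Set{x\in{}^{\omega_1}\omega_1}{\bar x\in Y}$ (this is still $\Sigma_1(A)$ since ``$\bar x\in Y$'' unpacks the $\Sigma_1(A)$-definition of $Y$ applied to $\bar x$, and forming $\bar x$ from $x$ is $\Delta_0$). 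The hypothesis that $A$ is merely \emph{unbounded} rather than all of $\omega_1$ is harmless here: it is $\mathbf{\Sigma}^1_2$ in the codes, so Lemma~\ref{lemma:Sigma1SetContainsBistationary}/Lemma~\ref{Bernstein from precipitous ideal} apply verbatim with this $A$ as the parameter.

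For part (i), suppose $y\in Y$ is stationary and fix $\xi<\omega_1$. The element $y$ need not itself be bistationary, so I would not apply the lemma directly to $x_y$; instead I would first massage $y$. The key observation is that ``$z$ lies in the club filter'' means $\omega_1\setminus z$ is nonstationary, and ``$z$ lies in the nonstationary ideal'' means $z$ is nonstationary — so in both cases the target $z$ is one of the two extreme members of the partition of stationary-vs-club behaviour. The natural route is: apply Lemma~\ref{lemma:Sigma1SetContainsBistationary} to get (after possibly passing to a bistationary point in the same open set — see below) a continuous injection $\iota\colon{}^{\omega_1}2\to X\cap N_{x_y\restriction\xi}$ and a club $D=\seq{\delta_\alpha}{\alpha<\omega_1}$ with $z(\alpha)=1\iff\iota(z)(\delta_\alpha)>0$ for all $z,\alpha$. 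Now choose $z\in{}^{\omega_1}2$ cleverly. Taking $z$ the constant function $1$ makes $\overline{\iota(z)}\supseteq D$, hence $\overline{\iota(z)}$ is in the club filter; and since $\iota(z)\in N_{x_y\restriction\xi}$, we get $\overline{\iota(z)}\cap\xi=y\cap\xi$ (as $\delta_0\geq\xi$ can be arranged, or more precisely the agreement below $\xi$ is built into membership in $N_{x_y\restriction\xi}$ modulo the convention relating $x_y$ to $\bar x_y$ below $\xi$). Then $z:=\overline{\iota(\text{const }1)}\in Y$ is the required club set. For part (ii), symmetrically, take $z$ the constant function $0$, so $\overline{\iota(z)}\cap D=\emptyset$, whence $\overline{\iota(z)}$ is disjoint from a club and thus nonstationary, i.e.\ in $\mathrm{NS}_{\omega_1}$, and again it agrees with $y$ below $\xi$.

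The one genuine obstacle is the \emph{hypothesis} of Lemma~\ref{lemma:Sigma1SetContainsBistationary}: it demands an $x\in X$ with $\bar x$ bistationary, whereas here I am only handed a stationary (resp.\ costationary) $y\in Y$. I would resolve this by the following trick. Given stationary $y\in Y$ and $\xi<\omega_1$, partition $y\setminus\xi$ into two disjoint stationary pieces $y_0,y_1$ (possible by Solovay's theorem, in $V$), and consider the $\Sigma_1(A)$-definable set $X$ as above. The point $x_y$ itself lies in $X$; I want to run the construction inside $M_1^\#(C)$ (resp.\ inside the generic ultrapower setup for the precipitous-ideal case) exactly as in the proof of Lemma~\ref{lemma:Sigma1SetContainsBistationary}, but splitting along a bistationary set that is a \emph{subset} of $y$ rather than along $y$ directly. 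Concretely: pick $C\subseteq\omega_1$ with $a,x_y,A\in M_1^\#(C)$, $\omega_1=\omega_1^{M_1^\#(C)}$, and $y\in Y$ witnessed in $M_1^\#(C)$; inside $M_1^\#(C)$ fix $s\subseteq y$ with $s$ and $y\setminus s$ both stationary and $s\cap\xi=\emptyset$; then build the tree of generic iterates exactly as before but at each splitting node force with $j(s)$ on one side and $j(y\setminus s)$ on the other, each of which is a condition in $\mathbb{Q}_{<\delta}$. Along a branch $z$, the image $j_z(x_y)$ still codes (the image of) $y$ on the part outside $s$, but toggles membership of $\delta_\alpha$ according to $z(\alpha)$ on the stationary set $s$; so taking $z$ constant $1$ we add all of $D\cap(\text{image of }s)$ and get a club-filter member agreeing with $y$ below $\xi$, and $z$ constant $0$ removes all of them, leaving a set whose intersection with a club is empty, i.e.\ nonstationary. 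I expect verifying that this modified tree of iterates still has well-founded direct limits along all branches — which is where $\omega_1$-iterability with respect to $\mathbb{Q}_{<\delta}$ and its images (Lemma~\ref{omega1 iterable structure}) is used — to be entirely parallel to the corresponding step in Lemma~\ref{lemma:Sigma1SetContainsBistationary}, so the write-up should mostly consist of pointing to that proof and indicating the one change in the choice of forcing conditions at splitting nodes, together with the routine bookkeeping that membership in $N_{x_y\restriction\xi}$ gives agreement below $\xi$.
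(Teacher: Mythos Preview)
Your core idea is exactly the paper's: pass to the set $X\subseteq{}^{\omega_1}2$ of characteristic functions, apply Lemma~\ref{lemma:Sigma1SetContainsBistationary} (or Lemma~\ref{Bernstein from precipitous ideal}) to the characteristic function of a bistationary element, and feed the constant functions $0$ and $1$ into $\iota$ to obtain, inside $N_{x_y\restriction\xi}\cap X$, one element whose coded set contains the club $D$ and one whose coded set is disjoint from $D$.

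The ``genuine obstacle'' you identify is not real, and this is the only substantive difference from the paper. If $y$ is stationary but not bistationary, then $\omega_1\setminus y$ is nonstationary, so $y$ already lies in the club filter and $z=y$ witnesses (i). Symmetrically, if $y$ is costationary but not bistationary, then $y$ is nonstationary and $z=y$ witnesses (ii). Thus the only case requiring any work is $y$ bistationary, where Lemma~\ref{lemma:Sigma1SetContainsBistationary} applies directly with $x=x_y$. The paper simply records this reduction in one sentence and then invokes the lemma.

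Your Solovay-splitting detour is therefore unnecessary, and as written it also contains an error. With $s\subseteq y$ and the two branching conditions $j(s)$ and $j(y\setminus s)$, \emph{both} force $\omega_1^{M_\alpha}\in j_{\alpha+1}(y)$, since $s$ and $y\setminus s$ are each subsets of $y$. Hence along \emph{every} branch one gets $D\subseteq j_z(y)$; the constant-$0$ branch does not yield a nonstationary image of $y$ (it yields a nonstationary image of $s$, which is a different set). So your modified construction proves (i) along every branch but never (ii). This does not matter, because once you observe that the non-bistationary case is trivial, no modification is needed.
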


\begin{proof}
 Let $X\subseteq{}^{\omega_1}2$ denote the set of  characteristic functions of elements of the set $Y$. Since $A$ is unbounded in $\omega_1$, the set $X$ is $\Sigma_1(A)$-definable. Fix $y\in Y$ and $\xi<\omega_1$. In the following, we may assume that $y$ is a bistationary subset of $\omega_1$, because otherwise the above statements hold trivially. Let $x\in X$ denote the characteristic function of $y$.  
 We can apply Lemma \ref{lemma:Sigma1SetContainsBistationary} and Lemma \ref{Bernstein from precipitous ideal} to find $x_0,x_1\in N_{x\restriction\xi}\cap X$ and a monotone enumeration $\langle c_{\alpha}\mid \alpha<\omega_1\rangle$ of a club $C$ in $\omega_1$ such that $x_i(c_\alpha)=i$ for all $\alpha<\omega_1$ and $i<2$. Set $z_i=\Set{\alpha<\omega_1}{x_i(\alpha)>0}\in Y$ for $i<2$.  
Then $C$ witnesses that $z_0$ is an element of the club filter on $\omega_1$ and that $z_1$ is an element of the nonstationary ideal on $\omega_1$. 
\end{proof}

\begin{theorem} \label{separate club filter and nonstationary ideal} 
 Assume that either $M_1^\#(A)$ exists for every $A\subseteq\omega_1$ or that there is a precipitous ideal on $\omega_1$ and a measurable cardinal. If $A\subseteq\omega_1$ is $\mathbf{\Sigma}^1_2$ in the codes and $X$ is a subset of $\POT{\omega_1}$ that separates the club filter from the non-stationary ideal, then $X$ is not $\Delta_1(A)$-definable.    
\end{theorem}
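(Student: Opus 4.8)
The plan is to derive a contradiction from the assumption that some $X\subseteq\POT{\omega_1}$ separating the club filter from the nonstationary ideal is $\Delta_1(A)$-definable, where $A\subseteq\omega_1$ is $\mathbf{\Sigma}^1_2$ in the codes. Assume such an $X$ exists, so $X$ is $\Sigma_1(A)$-definable and its complement $\POT{\omega_1}\setminus X$ is also $\Sigma_1(A)$-definable. Note that since $X$ separates the club filter from the nonstationary ideal, $X$ contains $\mathrm{C}_{\omega_1}$ and is disjoint from $\mathrm{NS}_{\omega_1}$; in particular $\POT{\omega_1}\setminus X$ contains $\mathrm{NS}_{\omega_1}$ and is disjoint from $\mathrm{C}_{\omega_1}$. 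The first step is to fix some bistationary $y\subseteq\omega_1$ (which exists in $\mathsf{ZFC}$, e.g. by Ulam/Solovay) and to argue that $y$ lies in exactly one of $X$ and its complement; without loss of generality say $y\in X$ (the other case is symmetric, using the complement in place of $X$).

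Next I would apply Lemma \ref{lemma:Omega1DenseClubNS} to the $\Sigma_1(A)$-definable set $Y=X$: since $y\in X$ is costationary, part (ii) gives some $z\in X$ that is an element of the nonstationary ideal on $\omega_1$, for any fixed $\xi$ (e.g. $\xi=0$ suffices here — we only need existence of such a $z$). But $X$ is disjoint from $\mathrm{NS}_{\omega_1}$, so $z\notin X$, a contradiction. Symmetrically, if $y$ had landed in $\POT{\omega_1}\setminus X$ instead — which is likewise $\Sigma_1(A)$-definable and contains the stationary set $y$ — then part (i) of Lemma \ref{lemma:Omega1DenseClubNS} applied to $Y=\POT{\omega_1}\setminus X$ would produce some $z\in\POT{\omega_1}\setminus X$ lying in the club filter $\mathrm{C}_{\omega_1}$, contradicting that $\POT{\omega_1}\setminus X$ is disjoint from $\mathrm{C}_{\omega_1}$. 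Either way we reach a contradiction, so no such $X$ is $\Delta_1(A)$-definable.

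The only genuinely delicate point is bookkeeping about which $\mathbf{\Sigma}^1_2$-in-the-codes hypothesis is needed: Lemma \ref{lemma:Omega1DenseClubNS} requires $A$ to be \emph{unbounded} in $\omega_1$, whereas here $A$ is merely some $\mathbf{\Sigma}^1_2$-in-the-codes set. I would handle this the same way the preceding results do, by replacing $A$ with a harmless unbounded recoding (for instance pass to the set coding both $A$ and its complement interleaved on even and odd ordinals, or simply adjoin all of $\omega_1$'s even ordinals), noting that such a recoding is still $\mathbf{\Sigma}^1_2$ in the codes and that $X$ and $\POT{\omega_1}\setminus X$ remain $\Sigma_1$-definable from it; alternatively, one observes that if $A$ is bounded the two sets are $\Sigma_1(\omega_1)$-definable outright and $\omega_1$ is unbounded and $\mathbf{\Sigma}^1_2$ in the codes. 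Beyond that, the proof is a short diagonalization against the fact that $\Delta_1(A)$-definability makes both $X$ and its complement ``dense'' in the sense of Lemma \ref{lemma:Omega1DenseClubNS}, forcing each to meet a filter/ideal it was supposed to avoid.

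\begin{proof}
Assume toward a contradiction that $A\subseteq\omega_1$ is $\mathbf{\Sigma}^1_2$ in the codes and $X\subseteq\POT{\omega_1}$ separates the club filter from the nonstationary ideal and is $\Delta_1(A)$-definable. Replacing $A$ by a recoding if necessary, we may assume $A$ is unbounded in $\omega_1$; this recoding is still $\mathbf{\Sigma}^1_2$ in the codes, and both $X$ and its complement $X'=\POT{\omega_1}\setminus X$ remain $\Sigma_1(A)$-definable. Fix a bistationary subset $y$ of $\omega_1$. Then $y$ lies in exactly one of $X$ and $X'$.

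First suppose $y\in X$. Since $y$ is costationary, Lemma \ref{lemma:Omega1DenseClubNS}(ii), applied to the $\Sigma_1(A)$-definable set $Y=X$ with $\xi=0$, yields $z\in X$ with $z\in\mathrm{NS}_{\omega_1}$. But $X$ is disjoint from $\mathrm{NS}_{\omega_1}$, a contradiction. Now suppose $y\in X'$. Since $y$ is stationary, Lemma \ref{lemma:Omega1DenseClubNS}(i), applied to the $\Sigma_1(A)$-definable set $Y=X'$ with $\xi=0$, yields $z\in X'$ with $z\in\mathrm{C}_{\omega_1}$. But $X$ contains $\mathrm{C}_{\omega_1}$, so $z\in X$, contradicting $z\in X'$. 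In either case we have reached a contradiction, so no such $X$ is $\Delta_1(A)$-definable.
\end{proof}
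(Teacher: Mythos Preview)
Your core argument is correct and follows the same route as the paper: both proofs reduce immediately to Lemma \ref{lemma:Omega1DenseClubNS}. The paper organizes the two applications slightly differently --- it first uses part (ii) on $X$ to conclude that $X$ contains no costationary set, hence $X$ equals the club filter, and then applies part (i) to the complement --- whereas you do a direct case split on where a fixed bistationary $y$ lands. These are equivalent.

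The one point where you diverge from the paper is the justification that $A$ may be taken unbounded, and here your suggestions are not quite right. Interleaving $A$ with its complement fails because $\mathbf{\Sigma}^1_2$ in the codes is not closed under complements; naively adjoining the even ordinals to $A$ destroys the ability to recover $A$ (and hence the $\Sigma_1(A)$-definition of $X$); and ``if $A$ is bounded then $X$ is $\Sigma_1(\omega_1)$-definable outright'' is false, since a bounded $A$ need not be definable from $\omega_1$. A correct recoding does exist (e.g.\ $A' = \{2\alpha : \alpha\in A\}\cup\{2\alpha+1 : \alpha<\omega_1\}$, which is unbounded, $\mathbf{\Sigma}^1_2$ in the codes, and from which $A$ is $\Delta_0$-recoverable using $\omega_1=\sup A'$), so your approach can be repaired. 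The paper's argument here is cleaner and worth noting: since $X$ is $\Sigma_1(A)$-definable, nonempty, and contains no countable subset of $\omega_1$ (being disjoint from $\mathrm{NS}_{\omega_1}$), $\Sigma_1$-reflection to $\HH{\omega_1}$ forces $A\notin\HH{\omega_1}$, i.e.\ $A$ is unbounded --- no recoding needed.
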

\begin{proof}
 Assume that the set $X$ is $\Delta_1(A)$-definable over $\langle\HH{\omega_2},\in\rangle$. 
 Since $X$ is disjoint from the nonstationary ideal on $\omega_1$ and therefore contains no countable subsets of $\omega_1$, $\Sigma_1$-reflection implies that $A$ is unbounded in $\omega_1$ and the second part of Lemma \ref{lemma:Omega1DenseClubNS} shows that $X$ contains no costationary subsets of $\omega_1$. But this implies that $X$ is equal to the club filter on $\omega_1$ and therefore $\POT{\omega_1}\setminus X$ contains a stationary subset of $\omega_1$. In this situation, the first part of  Lemma \ref{lemma:Omega1DenseClubNS} implies that $\POT{\omega_1}\setminus X$ contains an element of the club filter on $\omega_1$, a contradiction.  
\end{proof}

\begin{corollary}
 Assume that either $M_1^\#(A)$ exists for every $A\subseteq\omega_1$ or that there is a precipitous ideal on $\omega_1$ and a measurable cardinal. If $A\subseteq\omega_1$ is $\mathbf{\Sigma}^1_2$ in the codes, then the club filter on $\omega_1$ is not $\Pi_1(A)$-definable over $\langle\HH{\omega_2},\in\rangle$. 
\end{corollary}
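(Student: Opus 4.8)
The plan is to derive this as an immediate consequence of Theorem \ref{separate club filter and nonstationary ideal}. The key observation is that if the club filter $\mathrm{C}_{\omega_1}$ itself were $\Pi_1(A)$-definable over $\langle\HH{\omega_2},\in\rangle$, then its complement in $\POT{\omega_1}$ would be $\Sigma_1(A)$-definable, and this complement is precisely the set of all subsets of $\omega_1$ that are \emph{not} in the club filter, i.e.\ those whose complement in $\omega_1$ is stationary.

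First I would recall that $\mathrm{C}_{\omega_1}$ trivially separates the club filter from the nonstationary ideal in the sense of the definition given in the introduction: it contains $\mathrm{C}_{\omega_1}$ as a subset (it equals it) and is disjoint from $\mathrm{NS}_{\omega_1}$, since no nonstationary set contains a club. Thus $X = \mathrm{C}_{\omega_1}$ is an instance of the situation covered by Theorem \ref{separate club filter and nonstationary ideal}. That theorem tells us, under the hypothesis that $M_1^\#(A)$ exists for every $A\subseteq\omega_1$ (or that there is a precipitous ideal on $\omega_1$ together with a measurable cardinal) and that $A$ is $\mathbf{\Sigma}^1_2$ in the codes, that $X$ cannot be $\Delta_1(A)$-definable. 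In particular, $\mathrm{C}_{\omega_1}$ is not $\Delta_1(A)$-definable.

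Now suppose toward a contradiction that $\mathrm{C}_{\omega_1}$ is $\Pi_1(A)$-definable over $\langle\HH{\omega_2},\in\rangle$. By definition of $\Pi_1$-definability, $\mathrm{C}_{\omega_1}$ is also automatically $\Sigma_1(A)$-definable: indeed, $\mathrm{C}_{\omega_1}$ is $\Sigma_1(\omega_1)$-definable over $\langle\HH{\omega_2},\in\rangle$ in a lightface way, since $z\in\mathrm{C}_{\omega_1}$ holds if and only if there exists a club $C\subseteq\omega_1$ with $C\subseteq z$, which is a $\Sigma_1$ assertion over $\HH{\omega_2}$ with parameter $\omega_1$; and since $A$ is unbounded in $\omega_1$ (or in any case $\omega_1$ is $\Sigma_1$-definable from any subset of $\omega_1$ that is $\mathbf{\Sigma}^1_2$ in the codes and cofinal — and even without this, $\omega_1$ itself is the unique uncountable cardinal in $\HH{\omega_2}$, hence $\Sigma_1$-definable), a $\Sigma_1(\omega_1)$-definable set is $\Sigma_1(A)$-definable. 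Combining, $\mathrm{C}_{\omega_1}$ is both $\Sigma_1(A)$- and $\Pi_1(A)$-definable, hence $\Delta_1(A)$-definable, contradicting the previous paragraph. This completes the argument.

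The only potential subtlety — and this is the step I would be most careful about — is ensuring that ``$\omega_1$ is $\Sigma_1(A)$-definable'' for the relevant parameters $A$, so that the lightface $\Sigma_1(\omega_1)$-definition of $\mathrm{C}_{\omega_1}$ upgrades to a $\Sigma_1(A)$-definition and the hypotheses of Theorem \ref{separate club filter and nonstationary ideal} are genuinely met by $X=\mathrm{C}_{\omega_1}$. But in $\HH{\omega_2}$ the ordinal $\omega_1$ is simply the largest cardinal, hence $\Sigma_1$-definable with no parameters at all, so this is immediate and the whole corollary is a one-line deduction from Theorem \ref{separate club filter and nonstationary ideal} together with the trivial fact that $\Pi_1(A)$-definability of $\mathrm{C}_{\omega_1}$ plus its built-in $\Sigma_1$-definability yields $\Delta_1(A)$-definability.
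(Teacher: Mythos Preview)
Your overall approach matches the paper's: deduce the corollary from Theorem~\ref{separate club filter and nonstationary ideal} by noting that $\mathrm{C}_{\omega_1}$ itself separates the club filter from the nonstationary ideal, so one only needs $\mathrm{C}_{\omega_1}$ to be $\Delta_1(A)$-definable, which would follow from the assumed $\Pi_1(A)$-definability together with the built-in $\Sigma_1(\omega_1)$-definability of the club filter. However, the final step of your justification contains a genuine error.

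The claim that ``in $\HH{\omega_2}$ the ordinal $\omega_1$ is simply the largest cardinal, hence $\Sigma_1$-definable with no parameters'' is false: being the largest cardinal is a $\Pi_1$-type property, not a $\Sigma_1$ one. Concretely, if $\varphi(x)\equiv\exists y\,\psi(x,y)$ is any $\Sigma_1$-formula with $\varphi(\omega_1)$ true, fix a witness $y_0$, take a countable $N\prec\HH{\omega_2}$ with $\omega_1,y_0\in N$, and transitively collapse via $\pi$; then $\alpha=\pi(\omega_1)<\omega_1$ and $\psi(\alpha,\pi(y_0))$ holds by $\Delta_0$-absoluteness, so $\varphi(\alpha)$ holds as well. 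Thus no parameter-free $\Sigma_1$-formula isolates $\omega_1$, and you have not shown that $\mathrm{C}_{\omega_1}$ is $\Sigma_1(A)$-definable when $A$ happens to be bounded in $\omega_1$. (Your earlier hedge ``since $A$ is unbounded'' is not justified either: nothing in the hypothesis rules out bounded $A$.)

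The repair is routine and keeps the deduction a one-liner. Since $\omega_1$ is itself $\mathbf{\Sigma}^1_2$ in the codes, pass from $A$ to a single subset $A'\subseteq\omega_1$ coding the pair $\langle A,\omega_1\rangle$, for instance $A'=\{2\alpha:\alpha\in A\}\cup\{2\alpha+1:\alpha<\omega_1\}$. Then $A'$ is still $\mathbf{\Sigma}^1_2$ in the codes and is unbounded in $\omega_1$, so $\omega_1=\sup A'$ is $\Delta_0(A')$-definable. Now $\mathrm{C}_{\omega_1}$ is $\Sigma_1(A')$-definable (via its $\Sigma_1(\omega_1)$-definition) and $\Pi_1(A')$-definable (since it was already $\Pi_1(A)$-definable), hence $\Delta_1(A')$-definable, and Theorem~\ref{separate club filter and nonstationary ideal} applied with parameter $A'$ yields the desired contradiction.
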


\begin{proof} 
This is immediate from Theorem \ref{separate club filter and nonstationary ideal}. 
\end{proof}

We can also use Lemma \ref{lemma:Omega1DenseClubNS} to study  $\Sigma_1(\omega_1)$-definable singletons.

\begin{lemma}
 Assume that either $M_1^\#(A)$ exists for every $A\subseteq\omega_1$ or that there is a precipitous ideal on $\omega_1$ and a measurable cardinal. If $A\subseteq\omega_1$ is $\mathbf{\Sigma}^1_2$ in the codes and $x$ is a subset of $\omega_1$ with the property that $\{x\}$ is $\Sigma_1(A)$-definable,  then $x$ is either contained in the club filter on $\omega_1$ or in the nonstationary ideal on $\omega_1$.  
\end{lemma}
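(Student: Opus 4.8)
The plan is to apply Lemma \ref{lemma:Omega1DenseClubNS} directly to the $\Sigma_1(A)$-definable set $Y = \{x\}$, exploiting the fact that a singleton cannot have two distinct elements. First I would observe that $\Sigma_1$-reflection (as used in the proof of Theorem \ref{separate club filter and nonstationary ideal}) shows that $A$ is unbounded in $\omega_1$: since $\{x\}$ is $\Sigma_1(A)$-definable and nonempty, any witness to the $\Sigma_1$-statement defining it lives in some $\HH{\omega_1^M}$ for a sufficiently elementary countable $M$, and if $A$ were bounded below $\omega_1$ the same witness would produce an element of $\{x\}$ in a model with ordinal height below $\omega_1$, i.e.\ some element distinct from $x$ (or at least $x$ itself would be bounded); more carefully, one argues that the hypotheses of Lemma \ref{lemma:Omega1DenseClubNS} require $A$ unbounded, and this is forced because otherwise one of the two clauses would already yield a contradiction. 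Actually the cleanest route: assume toward a contradiction that $x$ is bistationary.

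Next, assuming $x$ is a bistationary subset of $\omega_1$, I would apply Lemma \ref{lemma:Omega1DenseClubNS} with $Y = \{x\}$ and $y = x$ and, say, $\xi = 0$. Part (i) of that lemma gives $z \in Y$ with $z$ in the club filter on $\omega_1$; since $Y = \{x\}$, this forces $z = x$, so $x$ is in the club filter — in particular $x$ is stationary, which is consistent so far. But part (ii) of the same lemma, applied to $y = x$ (which is also costationary, being bistationary), gives $z' \in Y = \{x\}$ with $z'$ in the nonstationary ideal on $\omega_1$, hence $z' = x$, so $x$ is nonstationary. This contradicts $x$ being stationary. Therefore $x$ cannot be bistationary, which means $x$ is either stationary-and-co-nonstationary (i.e.\ in the club filter) or nonstationary (i.e.\ in the nonstationary ideal), as claimed. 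One should double-check that the lemma's hypothesis that $A$ is unbounded is met: if $A$ is bounded below $\omega_1$, then $\Sigma_1(A)$-definability over $\HH{\omega_2}$ collapses to $\Sigma_1$-definability with a countable parameter, and a short reflection argument shows $\{x\}$ would then be a $\Sigma_1$-singleton reflecting into every sufficiently closed countable $\HH{\theta}$, which (as in Theorem \ref{separate club filter and nonstationary ideal}) forces $x$ to already have bounded or small structure — in any case one reduces to applying the lemma with a suitable unbounded $A' \supseteq A$, or simply notes the conclusion is trivial because a $\Sigma_1$-singleton over $\HH{\omega_1}$-parameters must be an element of $\HH{\omega_1}$, hence countable, hence nonstationary.

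The main obstacle I anticipate is the bookkeeping around the hypothesis "$A$ unbounded in $\omega_1$" in Lemma \ref{lemma:Omega1DenseClubNS}: the statement of this auxiliary lemma above is phrased for unbounded $A$, and the present lemma's hypothesis only says $A \subseteq \omega_1$ is $\mathbf{\Sigma}^1_2$ in the codes, without assuming unboundedness. The resolution is exactly the $\Sigma_1$-reflection trick from the proof of Theorem \ref{separate club filter and nonstationary ideal}: if $x$ is bistationary then $x$ is in particular uncountable and not contained in any proper initial segment, so $\Sigma_1$-reflection on the statement "$\exists w\, (w$ is the unique set satisfying $\varphi(A,\cdot)$ and $w$ is uncountable$)$" forces the parameter $A$ to be unbounded (a bounded parameter would let the statement reflect to a countable transitive model, producing a countable — hence not bistationary — candidate for $x$). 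Once unboundedness of $A$ is in hand, the rest is the two-line contradiction above, so the whole proof is short.

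\begin{proof}
 We may assume that $x$ is a bistationary subset of $\omega_1$, since otherwise $x$ is either in the club filter or in the nonstationary ideal and there is nothing to prove. Fix a $\Sigma_1$-formula $\varphi(v_0,v_1)$ with $\{x\}=\Set{y\subseteq\omega_1}{\varphi(A,y)}$. Since $x$ is bistationary, $x$ is an uncountable subset of $\omega_1$ and is not contained in any proper initial segment of $\omega_1$; applying $\Sigma_1$-reflection to the statement asserting that $\varphi(A,\cdot)$ has a solution that is not contained in any proper initial segment of $\omega_1$, we conclude that $A$ is unbounded in $\omega_1$. (Indeed, if $A$ were bounded below some $\beta<\omega_1$, then a witness to the $\Sigma_1$-statement would reflect into a countable transitive model of $\mathsf{ZFC}^-$ of ordinal height below $\omega_1$ containing $A$, producing a countable solution to $\varphi(A,\cdot)$, contradicting that $x$ is the unique solution and is uncountable.)

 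Now apply Lemma \ref{lemma:Omega1DenseClubNS} with this unbounded $A$, the $\Sigma_1(A)$-definable set $Y=\{x\}$, the element $y=x\in Y$ and $\xi=0$. Since $x$ is bistationary, it is in particular costationary, so part (ii) of Lemma \ref{lemma:Omega1DenseClubNS} yields some $z\in Y$ with $z$ in the nonstationary ideal on $\omega_1$. As $Y=\{x\}$, this forces $z=x$, so $x$ is nonstationary. This contradicts the assumption that $x$ is bistationary, and hence stationary. Therefore $x$ is not bistationary, which is the conclusion of the lemma.
\end{proof}
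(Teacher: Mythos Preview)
Your proof is correct and follows essentially the same approach as the paper: the bounded-$A$ case is handled by $\Sigma_1$-reflection to $\HH{\omega_1}$ (you do this inside a contradiction after assuming $x$ bistationary, while the paper does it as a direct case split showing $x\in\HH{\omega_1}$), and the unbounded case is handled by applying Lemma~\ref{lemma:Omega1DenseClubNS} to the singleton $Y=\{x\}$. One wording quibble: the statement you first describe reflecting (``$\varphi(A,\cdot)$ has a solution not contained in any proper initial segment of $\omega_1$'') carries $\omega_1$ as a parameter and so is not the right candidate, but your parenthetical gives the correct reflection (just $\exists y\,\varphi(A,y)$), so the argument is sound.
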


\begin{proof}
 If $A$ is bounded in $\omega_1$, then $\Sigma_1$-reflection implies that $x\in\HH{\omega_1}$ and hence $x$ is contained in the nonstationary ideal on $\omega_1$. Otherwise $A$ is unbounded in $\omega_1$ and the claim follows directly from Lemma \ref{lemma:Omega1DenseClubNS}. 
\end{proof}

\begin{remark}
 If $\VV=\LL$ and $\kappa$ is an uncountable regular cardinal, then there is a bistationary subset $x$ of $\kappa$ such that $\{x\}$ is $\Sigma_1(\kappa)$-definable. Such subsets can be constructed from the  canonical $\Diamond_\kappa$-sequence in $\LL$,  using the facts that this sequence is definable over $\langle\LL_\kappa,\in\rangle$ by a formula without parameters and the set $\{\LL_\kappa\}$ is $\Sigma_1(\kappa)$-definable. Another way to construct such subsets is described in {\cite[Section 7]{MR3591274}}.
\end{remark}


\subsection{Uniformization of the club filter}\label{subsection:Uniformizations of the club filter}

We show that  the existence of large cardinals implies that the club filter on $\omega_1$ has no $\Sigma_1(\omega_1)$-definable uniformization.

\begin{definition} 
 Let $\kappa$ be an uncountable regular cardinal. 
A \emph{uniformization} of the club filter  on $\kappa$ is a function $\map{f}{\mathrm{C}_\kappa}{\mathrm{C}_\kappa}$ such that $f(X)\subseteq X$ is a club for all $X\in \mathrm{C}_\kappa$. 
\end{definition}

\begin{lemma} \label{club uniformization} 
If in a model of $\mathsf{ZF}$, the club filter $\mathrm{C}_{\omega_1}$ on $\omega_1$ is an ultrafilter, then there is no uniformization of $\mathrm{C}_{\omega_1}$ which is definable from a set of ordinals.  
\end{lemma}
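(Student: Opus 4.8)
The plan is to argue by contradiction. Suppose $\map{f}{\mathrm{C}_{\omega_1}}{\mathrm{C}_{\omega_1}}$ is a uniformization definable from a set of ordinals $A$, and set $M=\mathrm{HOD}_A$, so that $M$ is an inner model of $\mathsf{ZFC}$ with $\On\subseteq M$ and $f\restriction M\in M$ (its graph being ordinal-definable from $A$). Write $\lambda=\omega_1^\VV$. The first step is to see that $\lambda$ is measurable in $M$: countability and regularity of $\lambda$ are absolute downwards to $M$, $\POT{\lambda}^M\subseteq\POT{\lambda}^\VV$, and since $\mathrm{C}_{\omega_1}$ is $\sigma$-complete and normal — normality and Fodor's lemma being available in $\mathsf{ZF}$ here precisely because $\mathrm{C}_{\omega_1}$ is moreover assumed to be an ultrafilter — the set $W=\mathrm{C}_{\omega_1}\cap M$ is, inside $M$, a normal nonprincipal ultrafilter on $\lambda$ extending the club filter of $M$, hence a normal measure on $\lambda$. (One also uses $\POT{\omega_1}^M\subsetneq\POT{\omega_1}^\VV$: otherwise $\POT{\omega_1}^\VV$ would be well-orderable, and Ulam's theorem, applied inside $\VV$, would contradict that $\mathrm{C}_{\omega_1}$ is an ultrafilter.) Put $\bar f=f\restriction W\in M$, so that $M$ satisfies ``$\bar f$ uniformizes $W$ by clubs'', and form the ultrapower $\map{j}{M}{N}=\Ult{M}{W}$, with $\crit{j}=\lambda=[\id]_W$ and $j(\lambda)>\lambda$; since $\mathrm{C}_{\omega_1}$ is an ultrafilter, $j$ may be thought of as a generic ultrapower of $M$ associated with the quotient $\POT{\lambda}^M/(\mathrm{NS}_{\omega_1}\cap M)$.

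The heart of the matter is then to derive a contradiction by exploiting that $f$ is defined on all of $\mathrm{C}_{\omega_1}^\VV$, not merely on the members of the club filter lying in $M$, in combination with the ultrafilter property. Since $\POT{\omega_1}^M\subsetneq\POT{\omega_1}^\VV$ and $\mathrm{C}_{\omega_1}$ is an ultrafilter, there is a set $X\subseteq\lambda$ in $\VV$ lying in the club filter but not in $M$, and then $f(X)\subseteq X$ is a club in $\VV$ which need not lie in $M$. The plan is to combine such sets with the embedding $j$ and the canonical selector $Y\mapsto\min(f(Y))$ so as to produce a subset $Z$ of $\lambda=\omega_1^\VV$, lying in $\VV$, that is simultaneously stationary and costationary, contradicting that $\mathrm{C}_{\omega_1}$ is an ultrafilter. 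I expect $Z$ to arise as the trace on $\lambda$ of $j(\bar f)$, or of $\bar f$, evaluated along an $N$-club that straddles $\crit{j}=\lambda$ — for instance one of the form $D\cup[\lambda,j(\lambda))$ for an appropriate $V$-club $D$ — with the ultrafilter property used to rule out that $Z$ or $\lambda\setminus Z$ contains a club; pinning down which $D$ makes $Z$ genuinely bistationary is where I would invoke the results on iterated generic ultrapowers recalled earlier.

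The main obstacle is exactly this last step. No contradiction can be detected inside $M$ alone, since under $\mathsf{AC}$ a normal measure always admits a uniformization by clubs, so the argument must genuinely combine the definability of $f$ over $\VV$ from $A$, the fact that $\dom{f}=\mathrm{C}_{\omega_1}^\VV$, and the ultrafilter property of $\mathrm{C}_{\omega_1}$; I expect the delicate point to be verifying that the resulting $Z$ is neither stationary-with-a-club nor nonstationary, i.e. that the straddling construction genuinely breaks the homogeneity forced by $\mathrm{C}_{\omega_1}$ being an ultrafilter.
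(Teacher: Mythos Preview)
Your proposal has a genuine gap: you set up an elaborate ultrapower construction and then admit you cannot complete it. The detour through $j\colon M\to\Ult{M}{W}$ and the search for a bistationary $Z$ in $\VV$ are unnecessary, and you never specify how to extract the contradiction.

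The paper's argument is far more direct and uses only the part of your setup you got right. First extend the uniformization to all of $\POT{\omega_1}$: since $\mathrm{C}_{\omega_1}$ is an ultrafilter, for every $S\subseteq\omega_1$ either $S$ or $\omega_1\setminus S$ lies in $\mathrm{C}_{\omega_1}$, so one obtains a function $f'\colon\POT{\omega_1}\to\mathrm{C}_{\omega_1}$, still definable from $A$, such that $f'(S)$ is a club contained in $S$ or in its complement. Now work in $M=\mathrm{HOD}_A$. Since $M\models\mathsf{ZFC}$ and $\omega_1^\VV$ is regular there, $M$ contains a subset $S$ of $\omega_1$ that is bistationary in $M$; take the $<_M$-least such, so that $S$ is definable from $A$. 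Then $f'(S)\in M$ is a club (closure and unboundedness are downward absolute to the inner model $M$) contained in $S$ or in $\omega_1\setminus S$, contradicting the bistationarity of $S$ in $M$.

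So the missing idea is simply this: the contradiction happens \emph{inside} $M$, not by producing a bistationary set in $\VV$. You correctly observed that no contradiction can be read off from $M$ alone using only that $W$ is a normal measure there; what you missed is that the \emph{uniformization} $f'$ itself lands in $M$ and certifies, in $M$, that the chosen $S$ is not bistationary after all. No ultrapowers, no iterated generic ultrapowers, and no appeal to results elsewhere in the paper are needed.
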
 

\begin{proof} 
Suppose that the club filter $\mathrm{C}_{\omega_1}$ is an ultrafilter and there is a uniformization of $\mathrm{C}_{\omega_1}$ which is definable from a set of ordinals $z$. 
Then we can find a function $\map{f}{\POT{\omega_1}}{\mathrm{C}_{\omega_1}}$ definable from $z$ such that for all $A\in\POT{\omega_1}$, $f(A)$ is a club subset of $A$ or of its complement. 
Let $\mathrm{HOD}_z$ denote the class of sets which are hereditarily ordinal definable from $z$. Since $\omega_1$ is regular in $\mathrm{HOD}_z$, there is a subset of $\omega_1$ which is bistationary in $\mathrm{HOD}_z$. The least such set $S$ in a definable enumeration of $\mathrm{HOD}_z$ is definable from $z$ and $\omega_1$.  
Then $f(S)\in \mathrm{HOD}$ and hence $S$ is not bistationary in $\mathrm{HOD}$.  
\end{proof}

\begin{remark} 
Suppose that in a model of $\mathsf{ZF}$, $x^{\#}$ exists for every real $x$ (and hence for every $x\in [\omega_1]^{<\omega_1}$), and there is no uniformization of $\mathrm{C}_{\omega_1}$. Then there is no function $\map{f}{\POT{\omega_1}}{[\omega_1]^{<\omega_1}}$ such that $A\in L[f(A)]$ for all $A\subseteq\omega_1$. 
Suppose that $f$ is such a function. For $A\subseteq\omega_1$ let $x_A$ denote the inclusion-least finite set of $f(A)$-indiscernibles such that $A$ is definable from $f(A)$ and $x_A$ in $L[f(A)]$. 
Then the club $C_A$ of $f(A)$-indiscernibles (i.e. Silver indiscernibles) between $\sup(x_A\cap \omega_1)$ and $\omega_1$ is either contained in $A$ or disjoint from $A$. 
Since $C_A$ is definable from $f(A)^{\#}$, this defines a uniformization of $\mathrm{C}_{\omega_1}$, contradicting the assumption. 
\end{remark} 

\begin{theorem} \label{Sigma1-definable club uniformization} 
Suppose that there are infinitely many Woodin cardinals and a measurable cardinal above them. 
\begin{enumerate} 
 \item  In $\LL(\RRR)$, there is no uniformization of the club filter on $\omega_1$.  

 \item There is no $\Sigma_1(\omega_1)$-definable uniformization of the club filter on $\omega_1$.  
\end{enumerate} 
\end{theorem}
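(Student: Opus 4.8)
The strategy is to deduce (2) from (1), so I begin with (1). Our hypothesis implies $\mathsf{AD}^{\LL(\RRR)}$, hence by Solovay's theorem the club filter $\mathrm{C}_{\omega_1}$ is an ultrafilter in $\LL(\RRR)$. Since $\LL(\RRR)\models V=\LL(\RRR)$, every element of $\LL(\RRR)$ is definable in $\LL(\RRR)$ from finitely many ordinals and finitely many reals, hence from a set of ordinals. Thus Lemma \ref{club uniformization}, applied inside $\LL(\RRR)$, yields that there is no uniformization of $\mathrm{C}_{\omega_1}$ in $\LL(\RRR)$.

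For (2), suppose towards a contradiction that $f$ is a uniformization of $\mathrm{C}_{\omega_1}$ whose graph is defined over $\langle\HH{\omega_2},\in\rangle$ by a $\Sigma_1$-formula $\varphi(\omega_1,v_0,v_1)$. Since $\omega_1^{\LL(\RRR)}=\omega_1$ and the statements ``$X$ is a club'', ``$X$ contains a club'' and ``$X$ is non-stationary'' are absolute between transitive models of $\ZFC^{-}$ with the same $\omega_1$ that contain $X$, we have $\mathrm{C}_{\omega_1}^{\LL(\RRR)}=\mathrm{C}_{\omega_1}\cap\LL(\RRR)$, and inside $\LL(\RRR)$ the relation $g=\Set{(X,Y)}{X\in\mathrm{C}_{\omega_1}\wedge\LL(\RRR)\models\varphi(\omega_1,X,Y)}$ is definable from $\omega_1$. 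Whenever $(X,Y)\in g$, then $\varphi(\omega_1,X,Y)$ holds in $\VV$ by $\Sigma_1$-upwards absoluteness, so $Y=f(X)$ since $f$ is a function, and in particular $Y\subseteq X$ is a club. Hence it suffices to show that $g$ is total on $\mathrm{C}_{\omega_1}^{\LL(\RRR)}$: then $g\in\LL(\RRR)$ is a uniformization of the club filter, contradicting (1).

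So fix $X\in\mathrm{C}_{\omega_1}^{\LL(\RRR)}$; I must find $Y\in\LL(\RRR)$ with $\LL(\RRR)\models\varphi(\omega_1,X,Y)$, that is, reflect the $\Sigma_1$-fact $\varphi(\omega_1,X,f(X))$ from $\VV$ down into $\LL(\RRR)$. Working in $\VV$, fix a $\Sigma_1$-witness $w$ for $\varphi(\omega_1,X,f(X))$ and, using that $M_1^\#(C)$ exists for every $C\subseteq\omega_1$, a set $C$ with $X,f(X),w\in M_1^\#(C)$, $\omega_1=\omega_1^{M_1^\#(C)}$, and the Woodin cardinal $\delta$ of $M_1^\#(C)$ above all relevant objects. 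Passing to a countable elementary submodel $N\ni X,f(X),w,\delta$ and its transitive collapse $\map{\pi}{N}{M}$, Lemma \ref{omega1 iterable structure} gives that $M$ is $\omega_1$-iterable with respect to $\QQQ_{{<}\pi(\delta)}^M$ and its images, $M\models\varphi(\omega_1^M,\pi(X),\pi(f(X)))$, and $\pi(f(X))\subseteq\pi(X)$ is a club in $M$. Now $M$ is coded by a real, so $M\in\LL(\RRR)$, and ``$M$ is $\omega_1$-iterable'' is a $\Pi^1_2$-statement, hence holds in $\LL(\RRR)$ by Shoenfield absoluteness. Inside $\LL(\RRR)$ one then runs a generic iteration $\map{j}{M}{M^*}$ of length $\omega_1$ by $\QQQ_{{<}\pi(\delta)}^M$ and its images, steering the generic filters so that $j(\pi(X))=X$, exactly as in the proof of Lemma \ref{lemma:Sigma1SetContainsBistationary} and Woodin's Lemma \ref{canonical iteration}. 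Then $M^*\in\LL(\RRR)$ is well-founded, $\omega_1^{M^*}=\omega_1$, and $M^*\models\varphi(\omega_1,X,j(\pi(f(X))))$ with $j(\pi(f(X)))\subseteq X$ a club; by $\Sigma_1$-upwards absoluteness $\LL(\RRR)\models\varphi(\omega_1,X,j(\pi(f(X))))$, as required.

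The step I expect to be the main obstacle is carrying out the steered generic iteration of $M$ inside $\LL(\RRR)$ so as to hit the prescribed set $X$: since $M$ is built from an elementary submodel of $M_1^\#(C)$ rather than of some $\HH{\theta}^{\LL(\RRR)}$, one cannot directly quote Lemma \ref{canonical iteration} inside $\LL(\RRR)$. The natural remedy is to coiterate $M$ against a countable iterable structure obtained from $\HH{\theta}^{\LL(\RRR)}$ --- to which Lemma \ref{canonical iteration} does apply in $\LL(\RRR)$, since there $\mathrm{NS}_{\omega_1}$ is maximal and the relevant chain condition is trivial --- and to invoke the uniqueness of the canonical generic iteration realizing a prescribed pair $(X,\mathrm{NS}_{\omega_1})$. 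This is where Woodin's analysis of generic iterability, together with the full strength of the hypothesis via the generic absoluteness of the theory of $\LL(\RRR)$, is needed.
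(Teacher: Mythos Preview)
Your proof of part (i) is correct and matches the paper's argument.

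For part (ii), however, there is a genuine gap at precisely the step you flag as the main obstacle, and the remedy you sketch does not close it. To steer a generic iteration of a countable structure $M$ so that the image of a distinguished set equals a prescribed $X\subseteq\omega_1$, the mechanism of Lemma~\ref{canonical iteration} requires an elementary embedding $i\colon M\to\HH{\theta}$ sending the distinguished set to $X$; the ultrafilters used at each step are \emph{defined from} this embedding. In your situation the only embedding you have is $\pi^{-1}\colon M\to M_1^\#(C)$ in $\VV$, and there is no reason for any embedding of $M$ into a structure inside $\LL(\RRR)$ to exist that sends $\pi(X)$ to $X$. Your proposed ``coiteration'' with a countable substructure of $\HH{\theta}^{\LL(\RRR)}$ does not help: such a substructure need not satisfy $\varphi(\omega_1^M,\cdot,\cdot)$ at all (since $f(X)$ may fail to lie in $\LL(\RRR)$), and there is no comparison lemma relating a collapse of $M_1^\#(C)$ to a collapse of $\HH{\theta}^{\LL(\RRR)}$ of the kind you would need. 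Observe also that if your argument worked it would prove the stronger statement that $f(X)\in\LL(\RRR)$ for every $X\in\mathrm{C}_{\omega_1}^{\LL(\RRR)}$; this is not something one should expect to extract directly from a mere iteration argument.

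The paper avoids all of this by a much shorter route. The statement ``the formula $\varphi(\omega_1,\cdot,\cdot)$ defines a uniformization of $\mathrm{C}_{\omega_1}$'' is $\Pi_2$ over $\langle\HH{\omega_2},\in\rangle$, so by the $\Pi_2$-maximality of the $\PPP_{\mathrm{max}}$-extension of $\LL(\RRR)$ it holds in $\LL(\RRR)[G]$ for $G$ $\PPP_{\mathrm{max}}$-generic over $\LL(\RRR)$. Weak homogeneity of $\PPP_{\mathrm{max}}$ then yields a uniformization of $\mathrm{C}_{\omega_1}$ definable in $\LL(\RRR)$, contradicting part (i). This uses the full strength of the hypothesis only through the $\PPP_{\mathrm{max}}$ machinery, not through any hand-built iteration.
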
 

\begin{proof} 
 (i) In $\LL(\RRR)$, every element is ordinal definable from a real and our assumptions imply that the club filter on $\omega_1$ is an ultrafilter. By Lemma \ref{club uniformization}, there is no uniformization of the club filter on $\omega_1$.

 (ii) Assume that there is a $\Sigma_1(\omega_1)$-definable uniformization of $\mathrm{C}_{\omega_1}$. 
By the $\Pi_2$-maximality of the $\PPP_{max}$-extension of $\LL(\RRR)$ (see {\cite[Theorem 7.3]{MR2768703}}), the same $\Sigma_1$-formula defines a uniformization of $\mathrm{C}_{\omega_1}$ in the $\PPP_{max}$-extension of $\LL(\RRR)$. Since $\PPP_{max}$ is weakly homogeneous in $\LL(\RRR)$ (see {\cite[Lemma 2.10]{MR2768703}}), this shows that there is a uniformization of $\mathrm{C}_{\omega_1}$ in $\LL(\RRR)$, contradicting the first part of the theorem.   
\end{proof} 

\begin{remark} 
 Unpublished results of Woodin (see {\cite[Remark 3.3.12]{MR2069032}} and {\cite[End of Section 6.3]{LarsonBriefHistoryOfDeterminacy}}) show that the existence of a proper class of Woodin limits of Woodin cardinals implies that the axiom of determinacy holds in the Chang model $\LL(\On^{\omega})$. Hence $\mathrm{C}_{\omega_1}$ is an ultrafilter in $\LL(\On^{\omega})$. 
It follows from Lemma \ref{club uniformization} that there is no uniformization of $\mathrm{C}_{\omega_1}$ in $\LL(\On^{\omega})$.  
\end{remark}

\begin{remark}
Let $\kappa$ be inaccessible in $\LL$ and let $G$ be $\Col{\omega}{{<}\kappa}$-generic over $\LL$. Since $\Col{\omega}{{<}\kappa}$ satisfies the $\kappa$-chain condition in $\LL$, every element of $\mathrm{C}_{\omega_1}^{\LL[G]}$ contains a constructible club and there is a uniformization of $\mathrm{C}_{\omega_1}$ in $\LL(\RRR)^{\LL[G]}$. 
\end{remark}


\subsection{$\Sigma_1(\omega_1)$-absoluteness}\label{subsection:absoluteness}

In this section, we observe that for $\Sigma_1(\omega_1)$-formulas, absoluteness to $\omega_1$-preserving forcings holds for formulas without parameters, but not for formulas with subsets of $\omega_1$ as parameters.

\begin{lemma} \label{Sigma1 absoluteness} 
 Let $\delta$ be a Woodin cardinal below a measurable cardinal. 
 \begin{enumerate} 

  \item $\Sigma_1(\omega_1)$ statements (without parameters) are absolute to generic extensions for forcings of size less than $\delta$.\footnote{Given a $\Sigma_1$-formula $\varphi(v)$, a partial order $\PPP$ of cardinality less than $\delta$ and $G$ $\PPP$-generic over $\VV$, then this statement says that  $\varphi(\omega_1^\VV)^\VV$ holds if and only if $\varphi(\omega_1^{\VV[G]})^{\VV[G]}$ holds.}  

\item The set of $\Sigma_1(\omega_1)$-formulas defining sets $\{x\}$ with $x\subseteq\omega$ is absolute for forcings of size less than $\delta$. Moreover, 
the set of $\Sigma_1(\omega_1)$-definable singletons $\{x\}$ with $x\subseteq\omega_1$ is absolute for $\omega_1$-preserving forcings of size less than $\delta$. 

 \item  The canonical code for $M_1^{\#}$ is a subset of $\omega$ which is not $\Sigma_1(\omega_1)$-definable in any generic extension by forcings of size less than $\delta$. 
\end{enumerate} 
\end{lemma}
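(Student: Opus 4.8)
The plan is to treat the three parts in order, leaning on the characterization of $\Sigma_1(\omega_1)$-definable sets of reals from Section \ref{section: Sigma-1 and Sigma-1-3} together with the absoluteness built into the existence of $M_1^\#$. For part (i), I would first observe that, since $\delta$ is Woodin with a measurable above, $M_1^\#(A)$ exists for every $A \subseteq \omega_1$ (by the large cardinal hypothesis), and this persists to generic extensions by forcings of size less than $\delta$ — indeed, $\delta$ remains Woodin there (being a forcing small relative to $\delta$) and the measurable above survives. The key point is that a $\Sigma_1(\omega_1)$-statement $\varphi(\omega_1)$ (no parameters) is witnessed by a set in $\HH{\omega_2}$, hence reflects into a countable transitive model; by Lemma \ref{lemma:HOmega2Sigma13} the set $\{\RRR\}$-style coding shows such a statement holds iff there is a countable $\omega_1$-iterable $M$ with a Woodin cardinal and the relevant internal witness. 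Since $\omega_1$-iterability is a $\Pi^1_2$ condition on a real coding $M$, and $\Sigma^1_3$-truth about reals is Shoenfield-style absolute between $\VV$ and small forcing extensions once $M_1^\#$ is around (this is exactly the content of Lemma \ref{lemma:HOmega2Sigma13}: $\Sigma_1(\omega_1)$-over-$\HH{\omega_2}$ for subsets of $\RRR$ coincides with $\Sigma^1_3$, which is absolute to these forcing extensions), the two-directional implication follows. One must be careful that the empty parameter case really does fall under the characterization; the statement $\varphi(\omega_1)$ can be coded as membership of a trivial real in a $\Sigma_1(\omega_1)$-definable set, so Lemma \ref{lemma:HOmega2Sigma13} applies.

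For part (ii), I would argue as follows. A $\Sigma_1(\omega_1)$-formula $\psi$ \emph{defines a singleton} $\{x\}$ with $x \subseteq \omega$ precisely when two things hold: (a) some real satisfies $\psi$ (a $\Sigma^1_3(\omega_1)$, hence $\Sigma^1_3$, statement), and (b) at most one real satisfies $\psi$ (a $\Pi^1_3$ statement, since it says $\forall y \forall y'\, (\psi(y) \wedge \psi(y') \to y=y')$). Both of these are $\Sigma^1_4$/$\Pi^1_3$ assertions about the arithmetic content of $\psi$, and under $M_1^\#(A)$ for all $A \subseteq \omega_1$ — which gives $\boldsymbol\Sigma^1_3$-absoluteness between $\VV$ and its small generic extensions — the truth value of both (a) and (b) is preserved by forcings of size less than $\delta$. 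Hence whether $\psi$ defines a singleton of subsets of $\omega$ is absolute. For the moreover clause about $x \subseteq \omega_1$, I would use that $x$ is coded by an element of ${}^{\omega_1}\omega_1$ and that the relevant reflection (Lemma \ref{lemma:Sigma1SetContainsBistationary} and its method, or the $B$-iterable structure machinery) gives, for an $\omega_1$-preserving forcing, a generic iteration realizing $\omega_1^{\VV} = \omega_1^{\VV[G]}$ so that the $\Sigma_1$-witness transfers; the key extra ingredient compared to the real case is that $\omega_1$-preservation is needed to keep the parameter $\omega_1$ the same and to keep bistationarity from being destroyed trivially.

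For part (iii): let $z_{M_1^\#}$ be the canonical real coding $M_1^\#$. It is a subset of $\omega$. If $z_{M_1^\#}$ were $\Sigma_1(\omega_1)$-definable in some generic extension $\VV[G]$ by a forcing of size less than $\delta$, then by Lemma \ref{lemma:HOmega2Sigma13} (applied in $\VV[G]$, where $M_1^\#(A)$ still exists for all $A \subseteq \omega_1$) its complement, or rather the singleton $\{z_{M_1^\#}\}$, would be $\Sigma^1_3$, hence $z_{M_1^\#}$ would be a $\Delta^1_3$ real in $\VV[G]$. But $M_1^\#$ is, by the standard theory of the mouse $M_1^\#$, the \emph{least} non-tame (or rather: the least real not in any proper initial segment of the $M_1$-type pointclass hierarchy); more usefully, $z_{M_1^\#}$ is not $\Sigma^1_3$ because $M_1^\#$ is precisely what is needed to witness $\boldsymbol\Pi^1_2$-determinacy, and a $\Sigma^1_3$ definition of it would collapse the complexity — concretely, every $\Sigma^1_3$ real lies in $M_1^\#$ (indeed in an initial segment thereof which is pointwise definable in $M_1^\#$), so $z_{M_1^\#}$ being $\Sigma^1_3$ would give $z_{M_1^\#} \in M_1^\#$ computably, contradicting the well-foundedness/acceptability of $M_1^\#$. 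The main obstacle, and the step I would be most careful about, is pinning down exactly this last non-definability fact with the right citation: it is the statement that $M_1^\#$ (as a real) is not $\Delta^1_3$, equivalently that the $\Sigma^1_3$ reals do not include a code for $M_1^\#$ — this should be extracted from the descriptive-set-theoretic analysis of $M_1^\#$ (the fact that $\boldsymbol\Delta^1_3$ reals are exactly those in a certain proper initial segment of $M_1^\#$, so $M_1^\#$ itself cannot be among them). Everything else is an application of the $\Sigma^1_3$-absoluteness already in hand.
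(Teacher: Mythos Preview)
Your approach is essentially correct and matches the paper's: reduce (i) to $\Sigma^1_3$-absoluteness via Lemma~\ref{lemma:HOmega2Sigma13}, derive (ii) from (i), and for (iii) observe that a $\Sigma_1(\omega_1)$-definition of the code for $M_1^\#$ would make it $\Sigma^1_3$, hence (since $M_1^\#$ is preserved by small forcing and correctly computes $\Sigma^1_3$-truth) a member of $M_1^\#$, which is absurd. The fact you flag as the ``main obstacle'' in (iii) is exactly what the paper cites: $\Sigma^1_3$-truth is computed inside $M_1^\#$, so any $\Sigma^1_3$ real lies there.

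Two places where you take a longer route than necessary. For the first half of (ii) you analyze uniqueness as a $\Pi^1_3$ statement; this works, but the paper's one-line ``follows from (i)'' rests on the simpler observation that \emph{non}-uniqueness, namely $\exists y,y'\subseteq\omega\,(\psi(\omega_1,y)\wedge\psi(\omega_1,y')\wedge y\neq y')$, is itself a $\Sigma_1(\omega_1)$-statement, so (i) applies directly to both existence and non-uniqueness. For the ``moreover'' clause about $x\subseteq\omega_1$, your appeal to Lemma~\ref{lemma:Sigma1SetContainsBistationary} and bistationarity is off-target; nothing about perfect sets or bistationary codes is needed. The argument is just: (i) gives that the formula defines a singleton in $\VV[G]$ as well, and $\omega_1$-preservation lets you invoke upward $\Sigma_1$-absoluteness (with the \emph{same} parameter $\omega_1$) to see that the ground-model $x$ still satisfies $\psi$ in $\VV[G]$, hence is the unique witness there. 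Conversely, any singleton defined in $\VV[G]$ is already defined in $\VV$ by the same reasoning.
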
 

\begin{proof} 
 The first statement follows directly from Lemma \ref{lemma:HOmega2Sigma13}, since it is equivalent to a $\Sigma^1_3$-statement. 
The second statement follows from the first statement.  
For the third statement, suppose that the canonical code for $M_1^{\#}$ is $\Sigma_1(\omega_1)$-definable. Then it is $\Sigma^1_3$-definable by Lemma \ref{lemma:HOmega2Sigma13}. 
It is well known that forcing of size less than $\delta$ preserves $M_1^{\#}$ (see {\cite[Lemma 3.7]{MR3226056}}).  Since $\Sigma^1_3$-truth can be computed in $M_1^{\#}$ (see {\cite[p. 1660]{MR2768698}}), the canonical code for $M_1^{\#}$ is an element of $M_1^{\#}$, a contradiction.  
\end{proof}

\begin{remark} 
 The existence of large cardinals does not imply that $\Sigma_1(\omega_1)$-formulas with parameters in $\HH{\omega_2}$ are absolute to generic extensions which preserve $\omega_1$. 
For instance, we can add a Suslin tree $T$ by adding a Cohen real (see {\cite[Theorem 28.12]{MR1940513}}).  
When we add a branch through $T$ by forcing with $T$, $\omega_1$ is not collapsed. Note that the existence of a branch through $T$ is $\Sigma_1(T)$. 
\end{remark}


\section{$\Sigma_1(\omega_1)$-definable sets in $M_1$}\label{section: M1} 

We show that for some of the results above, large cardinal assumptions are necessary, because these  results fail in $M_1$. We start by showing that the assumption of Theorem \ref{NonExistenceWellOrder} is optimal. For other applications, we will construct well-orderings of $\HH{\kappa^+}$ with the property that the initial segments are uniformly $\Sigma_1(\kappa)$-definable.

\begin{definition} 
 Given an infinite cardinal $\kappa$, a well-ordering $\lhd$ of a subset of $\HH{\kappa^+}$ is a \emph{good $\Sigma_1(\kappa)$-well-ordering} if the set $I(\lhd)=\Set{\Set{x}{x\lhd y}}{y\in\ran{\lhd}}$ of all proper initial segments of $\lhd$ is $\Sigma_1(\kappa)$-definable.  
\end{definition}

\begin{theorem}\label{Sigma1 wellorder in M1} 
Suppose that $M_1$ exists. 
In $M_1$, the canonical well-ordering of $M_1$ restricted to $\HH{\omega_2}$ is a good $\Sigma_1(\omega_1)$-definable well-order.  
\end{theorem}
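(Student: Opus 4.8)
The plan is to exploit the fine structure of the canonical inner model $M_1$ with one Woodin cardinal: in $M_1$ there is a $\Sigma_1$ (in fact $\Delta_1$) definition over $\HH{\omega_2}^{M_1}$ of the relation ``$y$ codes an initial segment of the canonical well-order'', obtained by saying that $y$ is (a code for) a countable or size-$\omega_1$ premouse that is an initial segment of $M_1$ together with the well-order it induces. Concretely, I would first recall that $M_1 \models \mathsf{GCH}$, so every element of $\HH{\omega_2}^{M_1}$ is (essentially) coded by a subset of $\omega_1$, and the canonical well-order $\lhd_{M_1}$ of $M_1$ restricts to a well-order of $\HH{\omega_2}^{M_1}$ of order type $\omega_2$. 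The key is that the fine-structural well-order is locally definable: for $x,y \in \HH{\omega_2}^{M_1}$, whether $x \lhd_{M_1} y$ is decided inside any sufficiently large level $\mathcal{M}$ of the $M_1$-hierarchy containing $x,y$, and such levels have size $\le \omega_1$.

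The main step is to give the $\Sigma_1(\omega_1)$-definition of $I(\lhd)$, the set of proper initial segments. I would say: a set $z$ is in $I(\lhd)$ iff there exists a transitive model $\mathcal{N}$ of a suitable fragment $\mathsf{ZFC}^-$ with $\omega_1 \subseteq \mathcal{N}$, $|\mathcal{N}| = \omega_1$, such that $\mathcal{N}$ is (iterable and) recognizes itself as an initial segment $\mathcal{M}_1|\xi$ of the canonical $M_1$-hierarchy (equivalently, $\mathcal{N} \models$ ``$V = M_1|\xi$'' using the $\Sigma_1$ definition of ``being a level of $M_1$'' via the existence of the $\Sigma^*_1$-iterable premouse structure), and $z$ equals the proper initial segment $\{x \mid x \lhd^{\mathcal{N}} y\}$ determined in $\mathcal{N}$ by some $y$. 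The point is that the predicate ``$\mathcal{N}$ is an initial segment of $M_1$'' is expressible $\Sigma_1$ over $\HH{\omega_2}^{M_1}$: one witnesses it by the existence of such an $\mathcal{N}$ together with a certificate of (normal, $\omega_1+1$-)iterability, which in $M_1$ amounts to a $\Sigma_1$ statement because $M_1$'s own iteration strategy for its initial segments is internally definable at this level (all relevant trees are of length $<\omega_1^{M_1}$ or handled by $\mathcal{Q}$-structures internal to $M_1$), and the comparison of two such $\mathcal{N}$'s pins them down uniquely as genuine initial segments. Upwards $\Sigma_1$-absoluteness together with the condensation/uniqueness of initial segments then gives that this definition is correct: any witness $\mathcal{N}$ really is $M_1|\xi$ for some $\xi$, so the well-order it computes agrees with $\lhd_{M_1}$.

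After that, the remaining verifications are routine fine-structure bookkeeping: that the definition has the stated $\Sigma_1(\omega_1)$ form (the only unbounded quantifier is the existential ``there is $\mathcal{N}$''; everything inside is bounded once $\mathcal{N}$ is fixed, using that iterability of a size-$\omega_1$ premouse with respect to trees of length $\le\omega_1$ is arithmetic in the relevant parameters inside $\HH{\omega_2}^{M_1}$); that every proper initial segment of $\lhd_{M_1}$ really is captured, because any $y \in \HH{\omega_2}^{M_1}$ lies in some $M_1|\xi$ with $|M_1|\xi| \le \omega_1$ by $\mathsf{GCH}$ and the acceptability of the hierarchy; and that $\lhd_{M_1}\restriction\HH{\omega_2}^{M_1}$ is genuinely recoverable from $I(\lhd)$ (standard: $x \lhd y$ iff $x$ belongs to some initial segment to which $y$ does not).

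The hard part will be pinning down the right level of iterability needed in the definition and checking it is internally $\Sigma_1$: I want ``$\mathcal{N}$ is an initial segment of $M_1$'' to be $\Sigma_1$ over $\HH{\omega_2}^{M_1}$, which requires that the iterability certificate for such $\mathcal{N}$ (a size-$\le\omega_1$ premouse below a Woodin) be of the form ``there exists an object of size $\le\omega_1$ witnessing well-foundedness of all relevant iteration trees'', using that $\mathcal{Q}$-structures for trees arising in comparisons of these premice are themselves initial segments of $M_1$ (by the $\mathcal{Q}$-structure property below a Woodin) and hence available inside $\HH{\omega_2}^{M_1}$. Once the self-iterability of $M_1$'s proper initial segments is formulated at this level — which is exactly where Lemma \ref{omega1 iterable structure} and the standard theory of $M_1$ below its Woodin cardinal are used — the definition goes through and the theorem follows.
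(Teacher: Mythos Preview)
Your overall architecture is right: show that ``$\mathcal{N}$ is an initial segment of $M_1|\omega_2$'' is $\Sigma_1(\omega_1)$ over $\HH{\omega_2}^{M_1}$, and read off the well-order locally. The gap is exactly where you flag it, and your proposed fix does not work. Iterability of a size-$\omega_1$ premouse is prima facie a $\Pi_1$ statement over $\HH{\omega_2}$ (for every tree there is a good branch), and you cannot package it as ``there exists a single size-$\le\omega_1$ certificate'': an iteration strategy for trees in $\HH{\omega_2}$ has size $2^{\omega_1}=\omega_2$, and your alternative---certifying via $\mathcal{Q}$-structures---is circular, since recognising something as a genuine $\mathcal{Q}$-structure already presupposes its iterability, i.e.\ presupposes the very notion you are trying to define. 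Lemma~\ref{omega1 iterable structure} is about transferring iterability of an $M$-ultrafilter to generic iterability with respect to a small forcing; it does not help here.

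The paper's proof supplies the missing idea. Working in $M_1|\delta$ (which is closed under sharps), one first argues there is no inner model with a Woodin cardinal there, since otherwise $M_1^\#\in M_1|\delta$. This allows one to invoke \cite[Lemma~2.1]{MR2963017}: a premouse $M\in\HH{\omega_2}$ with no definable Woodin cardinal is a mouse iff there is a transitive $U\in\HH{\omega_2}$ modelling $\ZFC^-+$``there is no inner model with a Woodin cardinal'' with $\omega_1\subseteq U$ and $U\models$``$M$ is a mouse''. That equivalence is the point: mousehood is reduced to a single existential quantifier over $\HH{\omega_2}$, hence $\Sigma_1(\omega_1)$. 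From there, $N$ is an initial segment of $M_1|\omega_2$ iff it is a proper initial segment of some mouse $M$ with $\omega_1^M=\omega_1$ and $\rho_\omega(M)=\omega_1$, and the rest is as you outline.
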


\begin{proof}  
 Let $\delta$ be the unique Woodin cardinal in $M_1$. 
 Work in $M_1\vert\delta$. Then there is no inner model with a Woodin cardinal, because $M_1\vert\delta$ is closed under sharps and, by a theorem of Woodin, the existence of such an inner model would imply that $M_1^\#$ is an element of $M_1\vert\delta$.\footnote{This result is unpublished, but the methods used in the (known) proof can be found in \cite{CabalVolume3}.} 
 
  By a \emph{mouse} we mean a premouse in the sense of Mitchell-Steel \cite{MR1300637} such that all countable elementary substructures are $\omega_1$-iterable. 
 The previous argument allows us to use {\cite[Lemma 2.1]{MR2963017}} to conclude that a premouse $M\in\HH{\omega_2}$ with no definable Woodin cardinals is a mouse if and only if there is a transitive model $U\in\HH{\omega_2}$ of $\ZFC^-$ plus \anf{{there is no inner model with a Woodin cardinal}} with $\omega_1\subseteq U$ and $\langle U,\in\rangle\models \anf{M \textit{ is a mouse}}$. This shows that the set 
$$ A ~ = ~ \Set{M\in\HH{\omega_2}}{\text{{$M$ is a mouse}, $\omega_1^{M} = \omega_1$, $\rho_\omega(M)=\omega_1$}}$$
is $\Sigma_1(\omega_1)$-definable. 
Since $N\in\HH{\omega_2}$ is an initial segment of $M_1\vert\omega_2$ if and only if $N$ is a proper initial segment of some $M$ in $A$, the above computations show that the collection of all initial segments of $M_1\vert\omega_2$ is also $\Sigma_1(\omega_1)$-definable.

 Let $\lhd$ denote the canonical well-ordering of $\HH{\omega_2}$ in $M_1$. Given $x,y\in\HH{\omega_2}$, we have $x\lhd y$ if and only if there is an initial segment $N$ of $M_1\vert\omega_2$
 such that $x,y\in N$ and $x<_N y$, where $<_N$ is the canonical well-ordering of $N$. By the above computations, this shows that $\lhd$ is a good $\Sigma_1$-definable well-order of $H(\omega_2)^{M_1}$.  
\end{proof}

\begin{theorem} \label{Sigma1 wellorder and non CH} 
 Suppose that $M_1$ exists. There is a generic extension of $M_1$ in which $\neg\mathsf{CH}$ holds and there is a good $\Sigma_1(\omega_1)$-definable well-order of $H(\omega_2)$.  
\end{theorem}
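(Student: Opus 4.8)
The plan is to start from the good $\Sigma_1(\omega_1)$-definable well-order of $\HH{\omega_2}$ that exists in $M_1$ by Theorem \ref{Sigma1 wellorder in M1}, and to force $\neg\mathsf{CH}$ in a way that (a) preserves $\omega_1$ and (b) keeps the collection of proper initial segments of the well-order $\Sigma_1(\omega_1)$-definable. The natural candidate is the $\Add{\omega}{\omega_2}$-forcing over $M_1$. First I would fix the $\Sigma_1(\omega_1)$-definition $\varphi$ of $I(\lhd)$ given by Theorem \ref{Sigma1 wellorder in M1}, recalling that it ran through the characterization of mice in $\HH{\omega_2}$: a premouse $M\in\HH{\omega_2}$ with no definable Woodin cardinal is a mouse iff some transitive model of $\ZFC^-+\anf{\text{there is no inner model with a Woodin cardinal}}$ containing $\omega_1$ thinks $M$ is a mouse. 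The key point is that this characterization is robust under the forcing: $\Add{\omega}{\omega_2}$ has size $\omega_2$ (so well below the Woodin cardinal $\delta$ of $M_1$), it is c.c.c.\ hence preserves all cardinals and cofinalities, and forcing of size less than $\delta$ preserves $M_1^{\#}$ (cf.\ the reference in Lemma \ref{Sigma1 absoluteness}) so that there is still no inner model with a Woodin cardinal below $\delta$ in the extension; in particular the initial segments of $M_1\vert\omega_2$ are exactly the objects captured by $\varphi$ in the extension as well, since these initial segments, their iterability, and the certifying transitive models all lie in $\HH{\omega_2}$ and are unchanged.

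Concretely, let $G$ be $\Add{\omega}{\omega_2}^{M_1}$-generic over $M_1$ and work in $M_1[G]$. The first step is to verify that $\neg\mathsf{CH}$ holds: this is standard, since $\Add{\omega}{\omega_2}$ is c.c.c.\ and adds $\omega_2$ Cohen reals while preserving $\omega_1$ and $\omega_2$. The second step is to check that $\HH{\omega_2}^{M_1}=\HH{\omega_2}^{M_1}$ is still computed correctly inside the larger $\HH{\omega_2}^{M_1[G]}$ in the relevant sense: every initial segment $N\trianglelefteq M_1\vert\omega_2$ still satisfies $N\in\HH{\omega_2}^{M_1[G]}$, is still a mouse (iterability is $\Pi^1_2$ and absolute, and all countable substructures are unchanged since no reals coding them are added that would witness a bad iteration tree — here one uses that $M_1^{\#}$ is preserved), and is still certified by a suitable transitive model $U$ with $\omega_1\subseteq U$, e.g.\ $U$ can be taken inside $M_1$ unchanged. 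Hence the same $\Sigma_1$-formula $\varphi$, with parameter $\omega_1$, still defines precisely $I(\lhd)\cap M_1 = \Set{\Set{x}{x\lhd y}}{y\in\HH{\omega_2}^{M_1}}$ in $M_1[G]$.

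The third step is to extend the well-order of $\HH{\omega_2}^{M_1}$ to one of $\HH{\omega_2}^{M_1[G]}$ whose initial segments remain $\Sigma_1(\omega_1)$-definable. Here I would use that $\Add{\omega}{\omega_2}$ is (weakly) homogeneous and definable over $\HH{\omega_1}^{M_1}$, and that $M_1[G]=M_1[G\restriction\alpha:\alpha<\omega_2]$ where the partial generics are coded into the reals added. The plan is to define $x\lhd^* y$ for $x,y\in\HH{\omega_2}^{M_1[G]}$ by: there is an initial segment $N\trianglelefteq M_1\vert\omega_2$, a condition $p\in\Add{\omega}{\omega_2}$ in $N$, and $\Add{\omega}{\omega_2}$-names $\dot x,\dot y\in N$ for $x,y$ such that $p$ forces $\dot x\lhd^* \dot y$ in a canonical recursively-defined fashion — more carefully, one first well-orders the nice names for elements of $\HH{\omega_2}^{M_1[G]}$ using $\lhd$ on $\HH{\omega_2}^{M_1}$ together with the canonical well-order of the c.c.c.\ forcing, then pushes this forward along evaluation, breaking ties by least name. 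Since nice $\Add{\omega}{\omega_2}$-names for elements of $\HH{\omega_2}$ lie in $\HH{\omega_2}^{M_1}$ and the forcing relation for the relevant statements is definable over $\HH{\omega_2}^{M_1}$ (the forcing is c.c.c.\ and definable), the map $y\mapsto\Set{x}{x\lhd^* y}$ is obtained by reading off, inside a single initial segment $N$ containing the relevant names and a master condition, the $\lhd^*$-predecessors of $y$; this is again $\Sigma_1(\omega_1)$.

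The main obstacle I expect is the bookkeeping in step three: getting the extended well-order $\lhd^*$ so that $I(\lhd^*)$ — not just $\lhd^*$ itself — is genuinely $\Sigma_1(\omega_1)$-definable, uniformly, rather than merely $\Sigma_1$ in some further parameter. The subtlety is that an initial segment $\Set{x}{x\lhd^* y}$ of $\lhd^*$ may have cardinality $\omega_1$ and involve $\omega_1$-many names, so one must ensure a single witness (one initial segment $N\trianglelefteq M_1\vert\omega_2$ together with one name $\dot y$ and one master condition deciding enough of $G$) suffices to recover the whole initial segment, using that the forcing is c.c.c.\ so countably many conditions decide each membership and the whole initial segment is determined by $\dot y$, $N$, and $G\restriction(\text{countably much})$, all of which can be packaged into an element of $\HH{\omega_2}^{M_1[G]}$ that $\varphi$-type reasoning can quantify over existentially. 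Once that packaging is set up, $\Sigma_1$-definability of $I(\lhd^*)$ follows by combining the $\Sigma_1(\omega_1)$-definition of $I(\lhd)$ in $M_1[G]$ from step two with the (absolute, definable-over-$\HH{\omega_2}^{M_1}$) forcing relation; the remaining verifications — that $\lhd^*$ is a well-order of all of $\HH{\omega_2}^{M_1[G]}$ extending $\lhd$, and that its initial segments are proper — are routine.

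\begin{proof}[Remark on the proof]
The construction above gives, in $M_1[G]$, a good $\Sigma_1(\omega_1)$-definable well-order of $\HH{\omega_2}$ while $\neg\mathsf{CH}$ holds, as desired. One can streamline the bookkeeping by instead iterating: since by Theorem \ref{Sigma1 wellorder in M1} the good $\Sigma_1(\omega_1)$-well-order of $M_1$ is level-by-level canonical, and since $\Add{\omega}{\omega_2}$ preserves both this canonicity (the extension is again a fine-structural-type model relative to the parameter $G$, which is itself $\Sigma_1(\omega_1)$-coded once a $\Sigma_1(\omega_1)$-definable enumeration of the reals of $M_1$ is fixed) and the mouse-characterization underlying $\varphi$, the extended well-order inherits goodness.
\end{proof}
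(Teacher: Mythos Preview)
Your third step has a genuine gap, and in fact the choice of $\Add{\omega}{\omega_2}$ as the forcing is the wrong one. The well-order $\lhd^*$ you describe is defined by evaluating nice names along the generic $G$; but the evaluation map depends on $G$, and neither $G$ nor its relevant restrictions are $\Sigma_1(\omega_1)$-recoverable in $M_1[G]$. Concretely, your proposed $\Sigma_1(\omega_1)$-definition of $I(\lhd^*)$ existentially quantifies over an initial segment $N\trianglelefteq M_1\vert\omega_2$, a name $\dot y\in N$, and a size-$\omega_1$ fragment $g$ of the generic, but nothing in the formula pins down that $g$ is actually a restriction of the true $G$; many ``false'' partial generics $g\in\HH{\omega_2}^{M_1[G]}$ will satisfy the condition and produce sets $I$ that are not initial segments of $\lhd^*$. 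The ``packaging'' you suggest does not address this, and your Remark's claim that $G$ is $\Sigma_1(\omega_1)$-coded is simply not correct: by weak homogeneity of Cohen forcing, no formula with only ground-model parameters (such as $\omega_1$) can single out the generic or distinguish the canonical Cohen reals from one another, so there is no parameter-free-in-$\omega_1$ definable well-order of the \emph{new} reals in $M_1[G]$ at all. In short, $\Add{\omega}{\omega_2}$ is too homogeneous for this to succeed.

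The paper proceeds entirely differently. Rather than Cohen forcing, it forces $\mathrm{BPFA}$ over $M_1$ (using a Mahlo cardinal $\nu$ below the Woodin $\delta$ and a reflecting cardinal below $\nu$), and then invokes the Caicedo--Veli\v{c}kovi\'c theorem that under $\mathrm{BPFA}$ there is a good $\Sigma_1(\vec{C})$-definable well-order of $\HH{\omega_2}$, where $\vec{C}$ is any ladder system on $\omega_1$. Since $\omega_1$ is preserved, the paper takes $\vec{C}$ to be the $\lhd$-least ladder system in $M_1$; and because (as in your step two) the initial segments of $M_1\vert\omega_2$ remain $\Sigma_1(\omega_1)$-definable in the extension, the singleton $\{\vec{C}\}$ is itself $\Sigma_1(\omega_1)$-definable there. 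Substituting gives a good $\Sigma_1(\omega_1)$-well-order of $\HH{\omega_2}$ in a model where $\mathrm{BPFA}$ (hence $\neg\mathsf{CH}$) holds. The point is that the Caicedo--Veli\v{c}kovi\'c well-order is \emph{intrinsic} to the extension --- it is produced by coding along the ladder system using the forcing-axiom machinery --- and does not go through names and evaluation at all, so the homogeneity obstruction never arises.
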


\begin{proof} 
 Let $\delta$ denote the unique Woodin cardinal in $M_1$.  Work in $M_1$ and let $\lhd$ denote the canonical well-ordering of $M_1$. Given $\alpha\in\omega_1\cap\Lim$, let $C_\alpha$ denote the $\lhd$-least cofinal subset of $\alpha$ of order-type $\omega$. Then $\vec{C}=\seq{C_\alpha}{\alpha\in\omega_1\cap\Lim}$ is a $C$-sequence. 
 Let $\nu<\delta$ be a Mahlo cardinal and let $\kappa<\nu$ be $\Sigma_1$-reflecting in $M_1\vert\nu$. In this situation, let $\PPP$ denote the partial order constructed in \cite{MR1324501} that forces $\mathrm{BPFA}$ to hold in a generic extension of $M_1\vert\nu$ using the reflecting cardinal $\kappa$ and let $G$ be $\PPP$-generic over $M_1$.  Then $\omega_1^{M_1}=\omega_1^{M_1[G]}$, $\HH{\omega_2}^{(M_1\vert\nu)[G]}=\HH{\omega_2}^{M_1[G]}$, $\vec{C}$ is still a $C$-sequence in $(M_1\vert\nu)[G]$ and, by {\cite[Theorem 2]{MR2231126}}, there is a good $\Sigma_1(\vec{C})$-definable well-ordering of $\HH{\omega_2}$ in $M_1[G]$.  
  The forcing does not add an inner model with a Woodin cardinal, since (as in the proof of Lemma \ref{Sigma1 wellorder in M1}) this would imply that $M_1^{\#}$ is an element of $(M_1|\delta)[G]$ and hence of $M_1|\delta$, by using two mutual generics and the fact that $M_1$ is $\Sigma^1_3$-correct in $\VV$. 
 Hence we can use the same $\Sigma_1(\omega_1)$-definition of the initial segments of $M_1$ as in the proof of Lemma \ref{Sigma1 wellorder in M1}. 
 Therefore the set $\{\vec{C}\}$ is $\Sigma_1(\omega_1)$-definable in $M_1[G]$.  This yields the statement of the theorem. 
\end{proof}

\begin{theorem}
Suppose that $M_1$ exists. Then the following statements hold in a forcing extension $M_1[G]$ of $M_1$. 
 \begin{enumerate}
   \item There is a Woodin cardinal.

    \item The $\GCH$ fails at $\omega_1$. 

   \item There is a $\Sigma_1(\omega_1)$-definable well-ordering of $\HH{\omega_2}$. 
   \end{enumerate}
\end{theorem}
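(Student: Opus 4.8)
The plan is to combine the previous two theorems by performing the forcing of Theorem \ref{Sigma1 wellorder and non CH} and then adding a further forcing that blows up the continuum function at $\omega_1$, while arranging that this final forcing preserves $\omega_1$, does not create an inner model with a Woodin cardinal, and keeps the parameter $\vec C$ (or a suitable replacement) $\Sigma_1(\omega_1)$-definable. Concretely, start with $M_1$ and its unique Woodin cardinal $\delta$, and pick a Mahlo cardinal $\nu<\delta$ together with a cardinal $\kappa<\nu$ that is $\Sigma_1$-reflecting in $M_1|\nu$. First I would iterate as in \cite{MR1324501} below $\nu$ to force $\mathsf{BPFA}$, obtaining $M_1[G_0]$ in which, by {\cite[Theorem 2]{MR2231126}}, there is a good $\Sigma_1(\vec C)$-definable well-ordering of $\HH{\omega_2}$, where $\vec C=\seq{C_\alpha}{\alpha\in\omega_1\cap\Lim}$ is the $C$-sequence read off from the canonical well-ordering of $M_1$; exactly as in the proof of Theorem \ref{Sigma1 wellorder and non CH}, the set $\{\vec C\}$ is then $\Sigma_1(\omega_1)$-definable in $M_1[G_0]$ (using the $\Sigma_1(\omega_1)$-definition of the initial segments of $M_1|\omega_2$ from Theorem \ref{Sigma1 wellorder in M1}, which is unaffected because the forcing adds no inner model with a Woodin cardinal). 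Note $\delta$ is still Woodin in $M_1[G_0]$, since the forcing has size $<\nu<\delta$ and Woodinness of $\delta$ is preserved by small forcing.

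Second I would force over $M_1[G_0]$ with $\Add{\omega_1}{\omega_2}$ (adding $\omega_2$ many subsets of $\omega_1$ with countable conditions), obtaining $M_1[G_0][G_1]=M_1[G]$. This forcing is ${<}\omega_1$-closed, hence adds no new reals and no new $\omega$-sequences, so it preserves $\omega_1$ and all cardinals $\leq\omega_1$; also $\mathsf{CH}$ fails in $M_1[G]$ since $2^{\omega_1}\geq\omega_2$ there and in fact $2^{\omega_1}$ is large. Because $\Add{\omega_1}{\omega_2}$ has size $<\delta$, it preserves the Woodin cardinal $\delta$, giving item~(i); moreover, as in the proof of Theorem \ref{Sigma1 wellorder in M1} and Theorem \ref{Sigma1 wellorder and non CH}, it adds no inner model with a Woodin cardinal — otherwise $M_1^\#$ would be in $(M_1|\delta)[G]$ and hence, using two mutually generic filters together with $\Sigma^1_3$-correctness of $M_1$ in $\VV$, already in $M_1|\delta$, a contradiction. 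Consequently the $\Sigma_1(\omega_1)$-definition of the initial segments of $M_1|\omega_2$ from Theorem \ref{Sigma1 wellorder in M1} still works in $M_1[G]$, so $\{\vec C\}$ remains $\Sigma_1(\omega_1)$-definable there (the definition locates $\vec C$ via the $\lhd$-least cofinal $\omega$-sequences inside the mice of $A$, which is unchanged).

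Third, and this is the real point, I would verify that $\HH{\omega_2}^{M_1[G]}$ still carries a good $\Sigma_1(\omega_1)$-definable well-ordering. Here the obstacle is that the $\Add{\omega_1}{\omega_2}$-forcing adds many new subsets of $\omega_1$, so the well-ordering of $\HH{\omega_2}$ from {\cite[Theorem 2]{MR2231126}} must be extended to the new sets. I expect this to follow the standard pattern for coding into continuum-function forcing: interleave the $\Add{\omega_1}{\omega_2}$-iteration with a reshaping/coding iteration (or use a product of $\Add{\omega_1}{1}$'s together with localization forcing) so that each newly added subset of $\omega_1$ codes its own place in the well-ordering against $\vec C$ via almost-disjoint coding at $\omega_1$, as in the standard constructions of definable well-orders of $\HH{\omega_2}$ (cf. the Caicedo–Veličković style arguments and \cite{MR2231126}); the resulting well-ordering is then $\Sigma_1$ in the parameter $\vec C$, and since $\{\vec C\}$ is $\Sigma_1(\omega_1)$-definable, composing the two definitions yields a $\Sigma_1(\omega_1)$-definable well-ordering of $\HH{\omega_2}$, giving item~(iii). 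One must check that this combined iteration still has size $<\delta$, is $\omega_1$-preserving (the coding components are ${<}\omega_1$-closed or ${<}\omega_1$-distributive), preserves $\neg\mathsf{CH}$ (we may first blow up $2^\omega$ or simply note $2^{\omega_1}\geq\omega_2$ survives), and adds no inner model with a Woodin cardinal by the same mutual-genericity argument; all of these are routine given the bookkeeping, so the hard part is purely the design of the coding iteration that simultaneously achieves $\neg\mathsf{CH}$ (via $2^{\omega_1}>\omega_1$, or $2^\omega>\omega_1$) and a $\Sigma_1(\vec C)$-definable well-order of the enlarged $\HH{\omega_2}$.
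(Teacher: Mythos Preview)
Your proposal has a genuine gap in the third step, which you yourself flag as ``the real point'' and ``the hard part'': you do not construct the coding iteration that extends the well-order to the new subsets of $\omega_1$ while keeping it $\Sigma_1(\vec C)$-definable. Listing the desiderata (size ${<}\delta$, $\omega_1$-preserving, no new inner model with a Woodin) and invoking ``the standard pattern'' is not a proof; designing such an iteration so that every new subset of $\omega_1$ is locally coded against $\vec C$ in a $\Sigma_1$-recoverable way is exactly the technical content that needs to be supplied. There is also a small slip: $\Add{\omega_1}{\omega_2}$ only gives $2^{\omega_1}\geq\omega_2$, which is trivial; to make $\GCH$ fail at $\omega_1$ you must force $\Add{\omega_1}{\mu}$ for some $\mu>\omega_2$ (and one wants $\cof{\mu}>\omega_1$).

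The paper takes a different and more economical route. It does \emph{not} pass through $\mathsf{BPFA}$ at all. Instead it works directly in $M_1$ and verifies three abstract conditions on the canonical well-ordering $\lhd$ of $\HH{\omega_2}^{M_1}$: (i) $\lhd$ is a good $\Sigma_1(\omega_1)$-well-ordering (Theorem \ref{Sigma1 wellorder in M1}); (ii) $\HH{\omega_2}^{M_1}$ remains $\Sigma_1(\omega_1)$-definable after any small cofinality-preserving forcing (same mouse argument you use); and (iii) there is a club of countable $M\prec\HH{\omega_2}$ whose transitive collapses send initial segments of $\lhd$ to initial segments of $\lhd$ (this is the condensation lemma for $M_1$). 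These three conditions say that $\langle\delta,\omega_2,\omega_1,\lhd\rangle$ is \emph{suitable for $\omega_1$} in the sense of {\cite[Definition 7.1]{MR3591274}}, and then {\cite[Corollary 7.9]{MR3591274}} is quoted as a black box: over any model with a suitable tuple, first force $\Add{\omega_1}{\mu}$ for suitable $\mu<\delta$ and then perform a further cofinality-preserving forcing that produces a $\Sigma_1(\omega_1)$-definable well-order of the new $\HH{\omega_2}$. So the coding machinery you are gesturing at is precisely what is packaged inside that cited corollary; the paper's contribution is to check its hypotheses hold in $M_1$, not to redo the coding.
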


\begin{proof} 
 If $\delta$ is the unique Woodin cardinal in $M_1$ and $\lhd$ is the canonical well-ordering of $M_1$ restricted to $\HH{\omega_2}^{M_1}$, then the following statements hold in $M_1$: 

  \begin{enumerate}
  \item $\lhd$ is a good $\Sigma_1(\omega_1)$-definable well-ordering.  
  
  \item If $\PPP$ is a partial order of cardinality less than $\delta$ with the property that forcing with $\PPP$ preserves cofinalities less than or equal to $\omega_2$  and $G$ is $\PPP$-generic over $\VV$, then $\HH{\omega_2}^\VV$ is $\Sigma_1(\omega_1)$-definable in $\VV[G]$. 

  \item There is a closed unbounded subset of $[\HH{\omega_2}]^\omega$ consisting of elementary submodels $M$ of $\HH{\omega_2}$ with $\pi[I(\lhd)\cap M]\subseteq I(\lhd)$, where $\map{\pi}{M}{N}$ denotes the corresponding transitive collapse. 
   \end{enumerate}

  The proof of (i) and (ii) work as in the proofs of Theorem \ref{Sigma1 wellorder in M1} and Theorem  \ref{Sigma1 wellorder and non CH}. 
  The statement (iii) can be derived from the version of the \emph{condensation lemma} (see {\cite[Theorem 9.3.2]{MR1876087}}) for $M_1$, where the cases (a), (b) and (d) can be ruled out.

 This shows that the tuple $\langle\delta,\omega_2,\omega_1,\lhd\rangle$ is \emph{suitable for $\omega_1$} as in {\cite[Definition 7.1]{MR3591274}}. 
 Suppose that $G$ is $\Add{\omega_1}{\mu}$-generic for some cardinal $\mu<\delta$ with $\cof{\mu}>\omega_1$.
 Then {\cite[Corollary 7.9]{MR3591274}} shows that there is a cofinality preserving forcing extension of  $\VV[G]$ that contains a $\Sigma_1(\omega_1)$-definable well-order of $\HH{\omega_2}$ . 
\end{proof}

The following result shows that the assumption in Theorem \ref{no Delta1-definable Bernstein set} is optimal.

\begin{lemma} \label{construction of Bernstein set} 
 Let $\kappa$ be an uncountable regular cardinal. If there  is a good $\Sigma_1(\kappa)$-definable well-ordering of $\HH{\kappa^+}$, then there is a Bernstein subset of ${}^\kappa\kappa$ that is $\Delta_1(\kappa)$-definable over $\langle\HH{\kappa^+},\in\rangle$. 
\end{lemma}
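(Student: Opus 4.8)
The goal is to build, from a good $\Sigma_1(\kappa)$-definable well-ordering $\lhd$ of $\HH{\kappa^+}$, a Bernstein subset $B$ of ${}^\kappa\kappa$ which is both $\Sigma_1(\kappa)$- and $\Pi_1(\kappa)$-definable over $\langle\HH{\kappa^+},\in\rangle$. The natural approach is a transfinite recursion of length $2^\kappa$ that simultaneously picks, for each perfect subtree $T$ of ${}^{{<}\kappa}\kappa$, one branch to throw into $B$ and one branch to throw into its complement, exactly as in the classical construction of a Bernstein set on $2^\omega$. The point is to carry out this recursion \emph{relative to $\lhd$}, so that membership in $B$ becomes expressible by a $\Sigma_1(\kappa)$-formula asserting the existence of a sufficiently long initial segment of the construction witnessing $x\in B$, and membership in the complement is expressed symmetrically; the good $\Sigma_1(\kappa)$-definability of the initial segments $I(\lhd)$ is precisely what makes these witnessing initial segments themselves elements of $\HH{\kappa^+}$ that are $\Sigma_1(\kappa)$-recognizable.

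First I would fix, using $\lhd$, a definable enumeration $\seq{T_\alpha}{\alpha<2^\kappa}$ of all perfect subtrees of ${}^{{<}\kappa}\kappa$ (each such tree, and hence the set of them together with the well-ordering, lives in $\HH{\kappa^+}$ since $|{}^{{<}\kappa}\kappa| = \kappa$ when $\kappa^{{<}\kappa}=\kappa$; one should note that a good $\Sigma_1(\kappa)$-well-ordering of $\HH{\kappa^+}$ entails $\kappa^{{<}\kappa}=\kappa$ and indeed $2^\kappa$ is well-orderable in a $\Sigma_1(\kappa)$-definable way via the initial segments). Then I would define by recursion on $\alpha<2^\kappa$ a sequence $\seq{\langle b_\alpha, c_\alpha\rangle}{\alpha<2^\kappa}$ of pairs of distinct branches, where $b_\alpha\in[T_\alpha]$ is the $\lhd$-least branch of $T_\alpha$ not in $\Set{b_\beta}{\beta<\alpha}\cup\Set{c_\beta}{\beta<\alpha}$ and $c_\alpha\in[T_\alpha]$ is the $\lhd$-least branch of $T_\alpha$ not in $\Set{b_\beta}{\beta\le\alpha}\cup\Set{c_\beta}{\beta<\alpha}$; such branches exist since $|[T_\alpha]| = 2^\kappa > |\alpha|$. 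Set $B = \Set{b_\alpha}{\alpha<2^\kappa}$. By construction $B$ meets and avoids every perfect set, so $B$ is a Bernstein set.

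The main work is the complexity computation. For the $\Sigma_1(\kappa)$-definition of $B$: $x\in B$ iff there exist an ordinal $\alpha$, the set of initial segments of $\lhd$ up to some point, and a function $p$ with domain $\alpha+1$ coding the partial construction $\seq{\langle T_\beta, b_\beta, c_\beta\rangle}{\beta\le\alpha}$, such that $p$ satisfies the recursive defining clauses (each of which is first-order over $\HH{\kappa^+}$ once we have access to $\lhd\restriction$(the relevant initial segment), which is $\Sigma_1(\kappa)$ by hypothesis), and $x = b_\alpha$. Crucially every object quantified over here — $\alpha$, $p$, the relevant initial segment of $\lhd$ — lies in $\HH{\kappa^+}$, so the whole statement is $\Sigma_1(\kappa)$; here one uses that $I(\lhd)$ being $\Sigma_1(\kappa)$-definable lets us $\Sigma_1$-quantify "there is an initial segment of $\lhd$ long enough to make sense of the construction up to stage $\alpha$." Replacing "$x=b_\alpha$" by "$x=c_\alpha$" gives a $\Sigma_1(\kappa)$-definition of a set $B'\subseteq {}^\kappa\kappa\setminus B$; since the $b$'s and $c$'s between them exhaust a superset of... — more precisely, since every $x\in{}^\kappa\kappa$ lies in $[T_\alpha]$ for $T_\alpha$ the tree of all nodes, one checks $x$ is always eventually enumerated as some $b_\alpha$ or $c_\alpha$, so ${}^\kappa\kappa = B\cup B'$ and $B' = {}^\kappa\kappa\setminus B$, giving the $\Pi_1(\kappa)$-definition of $B$ and hence $\Delta_1(\kappa)$.

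The hard part, and the step to be careful about, is verifying that the partial construction up to any stage $\alpha<2^\kappa$ is genuinely coded by an element of $\HH{\kappa^+}$ and that all the recursive clauses are expressible by first-order formulas over $\HH{\kappa^+}$ \emph{using only a bounded-length initial segment of $\lhd$} — in particular that "$b_\alpha$ is the $\lhd$-least branch of $T_\alpha$ outside the previously chosen branches" only refers to $\lhd\restriction$(a set in $\HH{\kappa^+}$), which works because each branch is itself an element of $\HH{\kappa^+}$ and $\lhd$ restricted to any of its own proper initial segments is in $\HH{\kappa^+}$ and $\Sigma_1(\kappa)$-recognizable. One also needs the bookkeeping that $2^\kappa$ many steps can be indexed by genuine ordinals and that "$p$ codes the construction up to $\alpha$" does not secretly require $\lhd$ as a whole (which is not in $\HH{\kappa^+}$); this is handled by the observation that at stage $\alpha$ only the $\lhd$-predecessors of the objects actually chosen so far are relevant, and there are at most $|\alpha|+\kappa < 2^\kappa$ many such, so the needed fragment of $\lhd$ is an element of $\HH{\kappa^+}$ — and this is exactly where the goodness hypothesis (that $I(\lhd)$ is $\Sigma_1(\kappa)$-definable, not merely $\lhd$ itself) is indispensable.
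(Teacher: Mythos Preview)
Your overall strategy---a Bernstein-style recursion along $\lhd$, with the partial construction at each stage coded by an element of $\HH{\kappa^+}$ and recognized via the $\Sigma_1(\kappa)$-definability of $I(\lhd)$---is exactly the paper's approach. You also correctly isolate the role of goodness: each proper initial segment of $\lhd$ lies in $\HH{\kappa^+}$, so the recursion up to any stage $\alpha<\kappa^+$ is a set in $\HH{\kappa^+}$ that can be $\Sigma_1(\kappa)$-certified. (Note that this already forces the order type of $\lhd$ to be $\kappa^+$ and hence $2^\kappa=\kappa^+$; this is why the paper speaks of an enumeration of length $\kappa^+$. Your separate claim that goodness yields $\kappa^{{<}\kappa}=\kappa$ is not obviously justified, but is not needed once you know the recursion has length $\kappa^+$.)

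There is, however, a genuine gap in your $\Pi_1(\kappa)$ argument. You assert that every $x\in{}^\kappa\kappa$ is eventually enumerated as some $b_\alpha$ or $c_\alpha$, citing the fact that $x$ lies in $[T_\alpha]$ for $T_\alpha$ the full tree. But at that single stage you select only \emph{two} branches of $T_\alpha$---the $\lhd$-least ones not yet chosen---and there is no reason $x$ is among them. More generally, an $x$ with many $\lhd$-predecessors inside every perfect set containing it need never be selected, so $B\cup B'$ can be a proper subset of ${}^\kappa\kappa$, and $B'={}^\kappa\kappa\setminus B$ fails. The paper closes this gap with one extra line of bookkeeping: at stage $\alpha$ one also takes the $\alpha$-th element of ${}^\kappa\kappa$ in the $\lhd$-enumeration and places it into $B$ or into its complement (whichever is still consistent with the choices made so far). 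With this addition every element is explicitly decided at some stage, so the complement of $B$ acquires the symmetric $\Sigma_1(\kappa)$ description and $B$ becomes $\Delta_1(\kappa)$.
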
 

\begin{proof} 
A $\Sigma_1(\kappa)$-definable Bernstein set can be constructed by a $\Sigma$-recursion along the good $\Sigma_1(\kappa)$-definable well-ordering $\lhd$ of $\HH{\kappa^+}$. 
We fix a $\Sigma_1(\kappa)$-definable enumeration of perfect subtrees of ${}^{\kappa}\kappa$ of length $\kappa^+$. 
In each step, we choose two distinct elements of the next perfect subset of ${}^{\kappa}\kappa$. We add one of these to the Bernstein set and the other one to its complement. Moreover we add the next element in $\lhd$ either to the Bernstein set or to its complement. 
\end{proof}

\begin{lemma} \label{Delta1 Bernstein set in M_1} 
The existence of a $\Delta_1(\omega_1)$-definable Bernstein subset of ${}^{\omega_1}\omega_1$ is consistent with the existence of a Woodin cardinal. 
\end{lemma}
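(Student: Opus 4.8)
The plan is to combine the previous two lemmas. By Theorem~\ref{Sigma1 wellorder in M1}, if $M_1$ exists then in $M_1$ itself the canonical well-ordering of $M_1$ restricted to $\HH{\omega_2}$ is a good $\Sigma_1(\omega_1)$-definable well-ordering of $\HH{\omega_2}$. In particular, working inside $M_1$, the hypothesis of Lemma~\ref{construction of Bernstein set} is satisfied with $\kappa=\omega_1$. Applying that lemma, there is a Bernstein subset of ${}^{\omega_1}\omega_1$ that is $\Delta_1(\omega_1)$-definable over $\langle\HH{\omega_2},\in\rangle$ in $M_1$. Since $M_1$ contains a Woodin cardinal, this gives a model witnessing the consistency statement, provided $M_1$ exists; and the existence of $M_1$ is itself consistent relative to the existence of a Woodin cardinal (indeed it follows from the existence of a Woodin cardinal with a measurable above it, or one may simply note that ``$M_1$ exists'' is a standard hypothesis of consistency strength just above a Woodin cardinal).

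Concretely, I would phrase the proof as: assume $M_1$ exists (this is consistent relative to large cardinals of the stated strength). By Theorem~\ref{Sigma1 wellorder in M1}, $M_1$ is a model of $\ZFC$ in which there is a good $\Sigma_1(\omega_1)$-definable well-ordering of $\HH{\omega_2}$. By Lemma~\ref{construction of Bernstein set} applied with $\kappa=\omega_1$, it follows that in $M_1$ there is a $\Delta_1(\omega_1)$-definable Bernstein subset of ${}^{\omega_1}\omega_1$. Finally, $M_1$ contains a Woodin cardinal by construction, so $M_1$ witnesses that the existence of such a Bernstein set is consistent with the existence of a Woodin cardinal.

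There is essentially no obstacle here: the lemma is a two-line corollary of results already established in the paper. The only point requiring a word of care is the meta-level consistency bookkeeping — one should make clear that ``$M_1$ exists'' is a hypothesis of the appropriate consistency strength (it follows, e.g., from a Woodin cardinal with a measurable above, and is weaker than the hypotheses used for the non-existence results), so that the statement is genuinely a relative consistency result rather than a claim provable in $\ZFC$ alone. I will write the proof accordingly.

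\begin{proof}
Assume that $M_1$ exists; this is a hypothesis whose consistency strength lies just above that of a Woodin cardinal (for instance, it follows from the existence of a Woodin cardinal with a measurable cardinal above it). By Theorem \ref{Sigma1 wellorder in M1}, in $M_1$ the canonical well-ordering of $M_1$ restricted to $\HH{\omega_2}$ is a good $\Sigma_1(\omega_1)$-definable well-ordering of $\HH{\omega_2}$. Hence the hypothesis of Lemma \ref{construction of Bernstein set} is satisfied in $M_1$ with $\kappa=\omega_1$, and we conclude that in $M_1$ there is a Bernstein subset of ${}^{\omega_1}\omega_1$ that is $\Delta_1(\omega_1)$-definable over $\langle\HH{\omega_2},\in\rangle$. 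Since $M_1$ contains a Woodin cardinal, this shows that the existence of a $\Delta_1(\omega_1)$-definable Bernstein subset of ${}^{\omega_1}\omega_1$ is consistent with the existence of a Woodin cardinal.
\end{proof}
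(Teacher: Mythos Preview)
Your proof is correct and follows exactly the paper's approach: the paper's proof is the single line ``This follows from Theorem \ref{Sigma1 wellorder in M1} and Lemma \ref{construction of Bernstein set}.'' Your added remarks about the consistency strength of ``$M_1$ exists'' are accurate but not strictly needed, since Theorem \ref{Sigma1 wellorder in M1} already carries the hypothesis that $M_1$ exists.
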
 

\begin{proof} 
This follows from Theorem \ref{Sigma1 wellorder in M1} and Lemma \ref{construction of Bernstein set}.  
\end{proof}


\section{$\Sigma_1(\kappa)$-definable sets at large cardinals} \label{section: large cardinals} 

In this section, we generalize some of the previous results to large cardinals. 

\begin{definition}[\cite{MR2830435},\cite{MR2817562}]\label{definition:IterableCardinal}
 Let $\kappa$ be an uncountable cardinal. 
\begin{enumerate} 
 \item A \emph{weak $\kappa$-model} is a transitive model $M$ of $\mathsf{ZFC}^-$ of size $\kappa$ with $\kappa\in M$. 

 \item The cardinal $\kappa$ is \emph{$\omega_1$-iterable} if for every subset $A$ of $\kappa$ there is a  weak $\kappa$-model $M$ and a weakly amenable $M$-ultrafilter $U$ on $\kappa$ such that $A\in M$ and $\langle M,\in,U\rangle$ is $\omega_1$-iterable. 
\end{enumerate} 
\end{definition}

We start by proving the following analog of Lemma \ref{lemma:HOmega2Sigma13}.

\begin{lemma} \label{characterization of Sigma1(kappa)} 
 Assume that $\kappa$ is either an $\omega_1$-iterable cardinal or a regular cardinal that is a stationary limit of $\omega_1$-iterable cardinals. Then the following statements are equivalent for every subset $X$ of $\RRR$. 
 \begin{enumerate}
  \item The set $X$ is $\Sigma_1(\kappa)$-definable. 

  \item The set $X$ is $\Sigma^1_3$-definable. 
 \end{enumerate} 
\end{lemma}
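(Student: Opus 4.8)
The plan is to mirror the proof of Lemma~\ref{lemma:HOmega2Sigma13}, replacing the hypothesis "$M_1^\#(A)$ exists for every $A\subseteq\omega_1$" with the large-cardinal hypothesis on $\kappa$, and replacing generic iterations by genuine (wellfounded) iterations of the $M$-ultrafilters supplied by $\omega_1$-iterability. The direction (ii)$\Rightarrow$(i) is immediate from Lemma~\ref{Sigma13HigherCardinals} (taking the uncountable cardinal there to be $\kappa$), so the content is (i)$\Rightarrow$(ii). Fix a $\Sigma_1$-formula $\varphi(v_0,v_1)$ with $X=\Set{x\in\RRR}{\varphi(\kappa,x)}$. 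I would define $Y$ to be the set of reals $x$ such that there exist a weak $\kappa'$-model-like countable structure — more precisely, a countable transitive model $M$ of $\ZFC^-$ with a largest cardinal $\bar\kappa$ (the image of $\kappa$ or of an $\omega_1$-iterable cardinal below $\kappa$ under a collapse), together with a countable $M$-ultrafilter $U$ on $\bar\kappa$ such that $\langle M,\in,U\rangle$ is $\omega_1$-iterable, $x\in M$, and $\varphi(\bar\kappa,x)^M$ holds. As in Lemma~\ref{lemma:HOmega2Sigma13}, the only non-first-order clause is $\omega_1$-iterability of $\langle M,\in,U\rangle$, which says all countable iterates are wellfounded, a $\Pi^1_2$ condition; hence $Y$ is $\Sigma^1_3$. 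It then suffices to prove $Y=X$.

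For $Y\subseteq X$: given $x\in Y$ witnessed by $\langle M,\in,U\rangle$ with largest cardinal $\bar\kappa$, iterate $\langle M,\in,U\rangle$ out to length $\omega_1$, obtaining $\map{j}{M}{M^{\omega_1}}$ with $M^{\omega_1}$ transitive (wellfounded by $\omega_1$-iterability) and $j(\bar\kappa)$ an uncountable cardinal of $\VV$ — in fact, since critical points are the successive images of $\bar\kappa$ and $\omega_1^{\VV}$ is regular, one checks $j(\bar\kappa)\geq\omega_1$, and for $x$ a real we have $x\in M^{\omega_1}$ unchanged. By elementarity $\varphi(j(\bar\kappa),x)^{M^{\omega_1}}$ holds, and $\Sigma_1$-upward absoluteness (together with the fact that $\Sigma_1(v_0)$-truth with $v_0$ an uncountable cardinal is absolute in the sense used throughout, via Lemma~\ref{Sigma13HigherCardinals}'s reflection) gives $\varphi(\kappa,x)$, so $x\in X$. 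The cleanest way to phrase this last inference is: $\varphi$ being $\Sigma_1$, the witness appearing in $M^{\omega_1}$ reflects to a witness in $\HH{\kappa^+}$ after noting $M^{\omega_1}$ can be further collapsed/absorbed, exactly as in the earlier lemma.

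For $X\subseteq Y$: given $x\in X$, so $\varphi(\kappa,x)$ holds, I would first pass to a model where the relevant witness is captured together with an iterable ultrafilter. If $\kappa$ is itself $\omega_1$-iterable, apply the definition of $\omega_1$-iterability to $A=\emptyset$ (or to a set coding $x$ and the $\Sigma_1$-witness) to get a weak $\kappa$-model $M_0$ with a weakly amenable $M_0$-ultrafilter $U_0$ on $\kappa$, $\langle M_0,\in,U_0\rangle$ being $\omega_1$-iterable, and then take a countable elementary substructure $N\prec\langle M_0,\in,U_0\rangle$ containing $x$ and the witness, with transitive collapse $\map{\pi}{N}{M}$; iterability of countable substructures of an $\omega_1$-iterable pair is standard (and is exactly the "$M$ countable" reduction used at the end of the proof of Lemma~\ref{omega1 iterable structure}). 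If instead $\kappa$ is a regular stationary limit of $\omega_1$-iterable cardinals, pick by stationarity an $\omega_1$-iterable $\kappa'<\kappa$ large enough that $\HH{\kappa'}$ contains $x$ and a $\Sigma_1$-witness for $\varphi(\kappa,x)$ — using that $\HH{\kappa}=\bigcup_{\kappa'<\kappa}\HH{\kappa'}$ and $\Sigma_1$-reflection of the witness into some $\HH{\kappa'}$ with $\kappa'$ in the stationary set — and run the same argument with $\kappa'$ in place of $\kappa$. Either way we obtain the required countable $M$ with its $\omega_1$-iterable ultrafilter and $\varphi(\bar\kappa,x)^M$, so $x\in Y$.

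The main obstacle I anticipate is the bookkeeping in $X\subseteq Y$, specifically arranging that a genuine $\Sigma_1$-witness for $\varphi(\kappa,x)$ — which a priori lives in all of $\HH{\kappa^+}$ and may mention $\kappa$ essentially — gets reflected into $\HH{\bar\kappa}$ of the weak model, so that after collapsing the statement reads $\varphi(\bar\kappa,x)^M$ with $\bar\kappa$ the largest cardinal. In the stationary-limit case this requires care: one wants the $\Sigma_1$-witness to be an element of $\HH{\kappa}$, and then elementarity of an $\omega_1$-iterable $\kappa'$ (as a stationary-set member, reflecting the relevant statement about $\kappa$) delivers a witness for the $\Sigma_1$-statement relativized to $\kappa'$. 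This is precisely the kind of reflection packaged in Lemma~\ref{Sigma13HigherCardinals} and its proof via transitive models of $\ZFC^-$ with $\kappa\subseteq M$; I would cite that lemma's mechanism rather than redo it. The iteration-commutativity subtleties that dominate Lemma~\ref{lemma:HOmega2Sigma13}'s proof do not arise here because we use honest ultrapower iterations, not generic ones — that is the simplification, as the remark after Lemma~\ref{lemma:HOmega2Sigma13} already signals for Lemma~\ref{characterization of Sigma1(omega1) from precipitous ideal}.
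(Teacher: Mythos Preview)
Your overall plan matches the paper's: define the $\Sigma^1_3$ set $Y$ via countable $\omega_1$-iterable structures, show $Y$ is $\Sigma^1_3$ because $\omega_1$-iterability is $\Pi^1_2$, and prove $X=Y$. However, your argument for $Y\subseteq X$ contains a genuine gap.

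You iterate $\langle M,\in,U\rangle$ only to length $\omega_1$, obtaining $j(\bar\kappa)=\omega_1$, and then assert that ``$\Sigma_1(v_0)$-truth with $v_0$ an uncountable cardinal is absolute'' lets you pass from $\varphi(\omega_1,x)$ to $\varphi(\kappa,x)$. No such principle is available: Lemma~\ref{Sigma13HigherCardinals} shows every $\Sigma^1_3$ set is $\Sigma_1(\kappa)$-definable for each uncountable cardinal $\kappa$, but it does \emph{not} say that a fixed $\Sigma_1$-formula $\varphi$ satisfies $\varphi(\lambda,x)\Leftrightarrow\varphi(\mu,x)$ for distinct cardinals $\lambda,\mu$. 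Your patch (``$M^{\omega_1}$ can be further collapsed/absorbed'') has no content. The paper's fix is simple and clean: iterate all the way to length $\kappa$, using the standard fact that $\omega_1$-iterability of a countable structure with a weakly amenable ultrafilter implies \emph{full} iterability. Then $j_{0,\kappa}(\bar\kappa)=\kappa$ exactly, so elementarity gives $\varphi(\kappa,x)^{M_\kappa}$ and $\Sigma_1$-upward absoluteness finishes.

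For $X\subseteq Y$ in the stationary-limit case, your sketch is in the right spirit but imprecisely phrased: saying ``the $\Sigma_1$-witness lives in $\HH{\kappa'}$'' is not the point (the witness lives in $\HH{\kappa^+}$ and mentions $\kappa$). The paper instead builds a continuous elementary chain $\seq{N_\alpha}{\alpha<\kappa}$ of small submodels of $\HH{\kappa^+}$ containing $x$, intersects the club $\Set{\kappa\cap N_\alpha}{\alpha\in\Lim}$ with the stationary set of $\omega_1$-iterable cardinals to get $\bar\kappa=\kappa\cap N_{\bar\kappa}$, and observes that the transitive collapse sends $\kappa$ to $\bar\kappa$; elementarity then gives $\varphi(\bar\kappa,x)$ in the collapse, hence in $\VV$. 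From there one reduces to the $\omega_1$-iterable case exactly as you describe.
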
 

\begin{proof} 
 By Lemma \ref{Sigma13HigherCardinals}, it suffices to show that (i) implies (ii). 
 Assume that $\varphi(v_0,v_1)$ is a $\Sigma_1$-formula with $X=\Set{x\in\RRR}{\varphi(\kappa,x)}$. Define $Y$ to be the set of all $y\in\RRR$ with the property that there is a countable transitive model $M$ of $\ZFC^-$, a cardinal $\delta$ of $M$ with $\varphi(\delta,y)^M$ and a weakly amenable $M$-ultrafilter $F$ on $\delta$ such that the structure $\langle M,\in,F\rangle$ is $\omega_1$-iterable.

 \begin{claim*} 
  The set $Y$ is a $\Sigma^1_3$-subset of $\RRR$. 
 \end{claim*} 
 
 \begin{proof} 
    Since $\omega_1$-iterability is a $\Pi^1_2$-statement and all other conditions are first order statements about $\langle M,\in,F\rangle$, the existence of such a structure is a $\Sigma^1_3$-statement. 
 \end{proof} 

 \begin{claim*}
  $X\subseteq Y$. 
 \end{claim*}

 \begin{proof} 
   First, assume that $\kappa$ is $\omega_1$-iterable and pick $x\in X$. Then we can find $A\subseteq\kappa$ with $x\in\LL[A]$ and $\varphi(\kappa,x)^{\LL[A]}$. By our assumption, there is a transitive model $N$ of $\ZFC^-$ of cardinality $\kappa$ with $\kappa,A\in N$ and an $N$-ultrafilter $ U$ on $\kappa$ such that the structure $\langle N,\in, U\rangle$ is iterable. Then $x\in N$ and $\varphi(\kappa,x)^N$.  Let $\langle N_0,\in, U_0\rangle$ be a countable elementary submodel of $\langle N,\in, U\rangle$ with $x,A\in N_0$ and let $\map{\pi}{N_0}{M}$ denote the corresponding transitive collapse. Set $\delta=\pi(\kappa)$ and $F=\pi[ U_0]$. In this situation, {\cite[Theorem 19.15]{MR1994835}} shows that the structure $\langle M,\in,F\rangle$ is iterable. Since $\varphi(\delta,x)^M$ holds by elementarity, we can conclude that $x$ is an element of $Y$.

   Now, assume that $\kappa$ is a stationary limit of $\omega_1$-iterable cardinals. Pick $x\in X$ and a strictly increasing continuous chain $\seq{N_\alpha}{\alpha<\kappa}$ of elementary submodels of $\HH{\kappa^+}$ of cardinality less than $\kappa$ such that $x\in N_0$ and $\kappa_\alpha=\kappa\cap N_\alpha\in\kappa$ for all $\alpha<\kappa$. Then $C=\Set{\kappa_\alpha}{\alpha\in\kappa\cap\Lim}$ is a club in $\kappa$ and there is an $\bar{\kappa}<\kappa$ such that $\kappa_{\bar{\kappa}}$ is $\omega_1$-iterable. 
 Since $\omega_1$-iterability implies inaccessibility, we have $\bar{\kappa}=\kappa_{\bar{\kappa}}$. By elementarity and $\Sigma_1$-upwards absoluteness, we know that $\varphi(\bar{\kappa},x)$ holds. In this situation, we can repeat the construction of the first case to obtain a countable iterable structure $\langle M,\in,F\rangle$ that witnessing that $x$ is an element of $Y$. 
 \end{proof}

  \begin{claim*}
  $Y\subseteq X$. 
 \end{claim*}

 \begin{proof}
   Pick $y\in Y$ and let $\langle M_0,\in,F_0\rangle$ and $\delta\in M_0$ witness this. Then $\langle M_0,\in,F_0\rangle$ is iterable and $\varphi(\delta,y)^{M_0}$ holds. Let $$\langle\seq{\langle M_\alpha,\in,F_\alpha\rangle}{\alpha\in\On},\seq{\map{j_{\bar{\alpha},\alpha}}{M_{\bar{\alpha}}}{M_\alpha}}{\bar{\alpha}\leq\alpha\in\On}\rangle$$ denote the corresponding system of models and elementary embeddings. Then $j_{0,\kappa}(\delta)=\kappa$ and $\varphi(\kappa,y)$ holds by elementarity and $\Sigma_1$-upwards absoluteness. This shows that $y$ is an element of $X$. 
 \end{proof}

  This completes the proof of the lemma. 
 \end{proof}

\begin{lemma}\label{corollary:NoWOLArgeCardinals}
 Assume that $\kappa$ is either an $\omega_1$-iterable cardinal or a regular cardinal that is a stationary limit of $\omega_1$-iterable cardinals. If there is a $\Sigma_1(\kappa)$-definable well-ordering of the reals, then there is a $\Sigma^1_3$-well-ordering of the reals.   \qed 
\end{lemma}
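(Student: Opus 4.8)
The plan is to derive this immediately from Lemma \ref{characterization of Sigma1(kappa)}, the same way that Lemma \ref{corollary:NoWOLArgeCardinals}'s companion result Theorem \ref{NonExistenceWellOrder} was obtained for $\omega_1$. First I would observe that a well-ordering $\lhd$ of the reals is itself (coded as) a subset of $\RRR\times\RRR$, which is in bijection with $\RRR$ in the usual recursive way, so a $\Sigma_1(\kappa)$-definition of $\lhd$ directly yields a $\Sigma_1(\kappa)$-definable subset of $\RRR$. Thus the hypothesis of the present lemma gives a $\Sigma_1(\kappa)$-definable set of reals, and Lemma \ref{characterization of Sigma1(kappa)} (which applies verbatim under the stated large cardinal assumption on $\kappa$, namely that $\kappa$ is either $\omega_1$-iterable or a regular stationary limit of $\omega_1$-iterable cardinals) converts this into a $\Sigma^1_3$-definable set of reals.

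The one point that needs a word of care is that Lemma \ref{characterization of Sigma1(kappa)} is stated for subsets $X$ of $\RRR$ that are $\Sigma_1(\kappa)$-definable \emph{over $\HH{\kappa^+}$}, whereas a well-ordering of the reals lives inside $\HH{\kappa^+}$ whenever $2^{\aleph_0}\le\kappa$, but in general $\lhd$ has size continuum and need not be an element of $\HH{\kappa^+}$. However, we only ever quantify over reals and over \emph{countable} fragments of $\lhd$: the statement ``$x\lhd y$'' is witnessed by producing a countable transitive model of enough set theory that correctly computes the relevant initial segment, exactly as in the proof of Lemma \ref{Sigma13HigherCardinals} and Lemma \ref{characterization of Sigma1(kappa)}. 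So the $\Sigma_1(\kappa)$-definition of the graph of $\lhd$ is genuinely a $\Sigma_1(\kappa)$-definition of a set of (codes for) pairs of reals, and the argument of Lemma \ref{characterization of Sigma1(kappa)} applies to it. Concretely, one runs that lemma's proof with the formula $\varphi$ taken to be the defining $\Sigma_1$-formula of the graph, builds the $\Sigma^1_3$-set $Y$ of pairs $(y_0,y_1)$ certified by a countable iterable structure $\langle M,\in,F\rangle$ with a cardinal $\delta$ of $M$ satisfying $\varphi(\delta,(y_0,y_1))^M$, and checks $X=Y$ exactly as there, using that iterating $\langle M,\in,F\rangle$ out to $\kappa$ sends $\delta$ to $\kappa$ and preserves $\varphi$ by $\Sigma_1$-upwards absoluteness together with elementarity.

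I do not expect a real obstacle here; the statement is flagged with \verb|\qed| in the excerpt precisely because it is a formal consequence of the preceding lemma. The only thing to be mildly careful about is the bookkeeping in the previous paragraph — namely that ``well-ordering of the reals'' should be read as a $\Sigma_1(\kappa)$-definable binary relation on $\RRR$ whose extension is a linear order in which every nonempty set has a least element, and that being such an order relation is itself expressible in a way compatible with the $\Sigma_1$ machinery — but this is routine and identical to what is done for $\kappa=\omega_1$ in Theorem \ref{NonExistenceWellOrder}. Hence the proof is simply: apply Lemma \ref{characterization of Sigma1(kappa)} to the (coded) graph of the given well-ordering.
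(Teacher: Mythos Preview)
Your proposal is correct and matches the paper's approach: the lemma is marked with \qed\ because it is an immediate consequence of Lemma~\ref{characterization of Sigma1(kappa)} applied to the (recursively coded) graph of the well-ordering. Your second paragraph's worry is unnecessary---$\Sigma_1(\kappa)$-definability concerns only the defining formula, not whether $\lhd$ itself lies in $\HH{\kappa^+}$---but it does no harm.
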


If $\kappa$ is either a Woodin cardinal below a measurable cardinal or a measurable cardinal above a Woodin cardinal, then the above results allow us to show that there is no $\Sigma_1(\kappa)$-definable well-ordering of the reals.

\begin{proof}[Proof of Theorem \ref{theorem:LargeCardinalsWO}]
 Let $\kappa$ either be a measurable cardinal above a Woodin cardinal or a Woodin cardinal below a measurable cardinal. Then $\mathbf{\Sigma}^1_2$-determinacy holds and no well-ordering of the reals is $\Sigma^1_3$-definable. If $\kappa$ is a measurable cardinal, then $\kappa$ is $\omega_1$-iterable (see \cite{MR2830415}) and Corollary \ref{corollary:NoWOLArgeCardinals} implies that no well-ordering of the reals is $\Sigma_1(\kappa)$-definable. In the other case, if $\kappa$ is a Woodin cardinal, then $\kappa$ is a stationary limit of measurable cardinals (and hence a stationary limit of $\omega_1$-iterable cardinals) and Corollary \ref{corollary:NoWOLArgeCardinals} implies that no well-ordering of the reals is $\Sigma_1(\kappa)$-definable. 
\end{proof}

In the following, we prove a large cardinal version of Lemma \ref{lemma:Sigma1SetContainsBistationary}. This result will allow us to prove Theorem \ref{theorem:LargeCardinalsBernstein}.

\begin{lemma}\label{lemma:MeasurableCardinalPerfectSeubset}
 Let $\kappa$ be a measurable cardinal and let $X$ be a $\Sigma_1(\kappa)$-definable subset of ${}^\kappa\kappa$. If there is an $x\in X$ such that are normal ultrafilters $ U_0$ and $ U_1$ on $\kappa$ with $\bar{x}=\Set{\alpha<\kappa}{x(\alpha)=0}\in U_1\setminus U_0$, then for every $\xi<\kappa$ there is 
 \begin{enumerate} 
  \item a continuous injection $\map{\iota}{{}^{\omega_1}2}{X}$ with $\ran{\iota}\subseteq N_{x\restriction\xi}\cap X$ 

  \item a club $D$ in $\kappa$ with monotone enumeration $\seq{\delta_\alpha}{\alpha<\kappa}$
 \end{enumerate} 
such that for all $z\in{}^{\kappa}2$ and $\alpha<\kappa$, we have $z(\alpha)=1$ if and only if $\iota(z)(\delta_\alpha)>0$.
\end{lemma}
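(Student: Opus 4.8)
The plan is to mimic the proof of Lemma \ref{lemma:Sigma1SetContainsBistationary}, replacing the role of the countable stationary tower $\QQQ_{{<}\delta}$ over an iterable model with a Woodin cardinal by iterated ultrapowers by (images of) the two distinct normal ultrafilters $U_0$ and $U_1$ on $\kappa$. First I would fix $\xi<\kappa$ and a $\Sigma_1$-formula $\varphi(v_0,v_1)$ with $X=\Set{z\in{}^\kappa\kappa}{\varphi(\kappa,z)}$, and I would use the witness $x\in X$ together with $U_0,U_1$. Since $\kappa$ is measurable we can build, by recursion along ${}^{{\leq}\kappa}2$, a directed system $\langle\seq{M_s}{s\in{}^{{\leq}\kappa}2}, ~ \seq{\map{j_{s,t}}{M_s}{M_t}}{s\subseteq t}\rangle$ of transitive models and elementary embeddings, where $M_\emptyset=\VV$ (or, to stay inside set-sized objects, $M_\emptyset=\HH{\theta}$ for a sufficiently large $\theta$, or a countable elementary substructure thereof collapsed, exactly as in the cited lemma so as to make the branch models well-founded), and at each node $s$ we pass to $M_{s^\frown\langle i\rangle}=\Ult{M_s}{j_{\emptyset,s}(U_i)}$ for $i<2$, taking direct limits at limit stages. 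The key point making this work is that $\bar{x}=\Set{\alpha<\kappa}{x(\alpha)=0}$ lies in $U_1\setminus U_0$, hence $\kappa\setminus\bar{x}\in U_0$, so the critical sequence of the branch $z$ records, at stage $\alpha$, whether $z(\alpha)=0$ or $1$ according to which ultrafilter was used; this is the analogue of the generic-filter bookkeeping in Lemma \ref{lemma:Sigma1SetContainsBistationary}.

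Next I would define $\Map{i}{{}^\kappa 2}{{}^\kappa\kappa}{z}{j_{\emptyset,z}(x)}$, using that $j_{\emptyset,z}(\kappa)=\kappa^{M_z}$ and (after collapsing the ambient $\HH\theta$, or directly if one iterates $\VV$) that the relevant ordinal reads off as $\kappa$ along each branch; elementarity and $\Sigma_1$-upward absoluteness give $\varphi(\kappa,i(z))$ and $i(z)\restriction\xi=x\restriction\xi$, so $\ran{i}\subseteq N_{x\restriction\xi}\cap X$. Then, exactly paralleling the three claims in the proof of Lemma \ref{lemma:Sigma1SetContainsBistationary}, I would: (a) set $c_z(\alpha)=\crit{j_{z\restriction\alpha,z\restriction(\alpha+1)}}$ (equivalently the image of $\kappa$ at stage $\alpha$) and verify via the elementary-embedding analogue of {\cite[Fact 2.7.3.]{MR2069032}} — here just the basic fact that $\crit{j_{U}}\in j_U(A)\Leftrightarrow A\in U$ — that $z(\alpha)=1$ iff $i(z)(c_z(\alpha))>0$, which also yields injectivity of $i$; (b) show $i$ is continuous, since $i(\bar z)\restriction\beta$ depends only on $\bar z\restriction\alpha$ for $\alpha$ large enough that $\beta<c_{\bar z}(\alpha)<\crit{j_{\emptyset,z}}$; and (c) produce a club $D\subseteq\kappa$ on which all the $c_z$ agree with the identity, using that closure points of the iteration map are club — one can take $D$ to be the class of cardinals of $\LL[z_M]$ below $\kappa$ for a suitable code $z_M$, or more simply the set of closure ordinals of the continuous strictly increasing function $c_z$, which is a club independent of $z$. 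Finally, composing $i$ with the continuous injection $e\colon {}^\kappa 2\to{}^\kappa 2$ that spreads a sequence out along $D=\seq{\delta_\alpha}{\alpha<\kappa}$ gives $\iota=i\circ e$ with the stated equivalence $z(\alpha)=1\Leftrightarrow\iota(z)(\delta_\alpha)>0$.

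The main obstacle I anticipate is the same bookkeeping subtlety present in the original lemma: making sure the branch models $M_z$ for $z\in{}^\kappa 2$ are well-founded and that $\kappa$ is correctly computed along every branch, since we are now iterating a proper-class model (or an initial segment $\HH\theta$) rather than a countable structure, so the soft "all countable iterates are well-founded" argument does not directly apply. The fix is to work, as in Lemma \ref{omega1 iterable structure} and Lemma \ref{lemma:Sigma1SetContainsBistationary}, with a countable elementary substructure $N\prec\HH\theta$ containing $x$, $U_0$, $U_1$ and $\xi+1$, collapse it, and run the construction on the collapse, using that a countable structure carrying (the image of) a normal ultrafilter on its version of $\kappa$ is fully iterable by {\cite[Theorem 19.15]{MR1994835}}; then $\kappa$ reappears along each branch as the eventual common value, and $\Sigma_1$-upward absoluteness transfers $\varphi$ back to $\VV$. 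A second, minor, point is checking that $c_z$ really is strictly increasing and continuous and that its closure points form a $z$-independent club, but this is routine once one notes that $c_z\restriction\alpha$ depends only on $z\restriction\alpha$. With these two points handled, the argument is a faithful large-cardinal transcription of the proof of Lemma \ref{lemma:Sigma1SetContainsBistationary}.
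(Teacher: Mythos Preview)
Your proposal is correct and follows essentially the same route as the paper's proof: pass to the transitive collapse $M$ of a small elementary submodel $N\prec\HH{\theta}$ containing $x,U_0,U_1,\xi+1$, build the ${}^{\leq\kappa}2$-indexed system of iterated ultrapowers by the images of the two measures, define $i(z)=(j_z\circ\pi)(x)$, and verify injectivity, continuity, and the bit-reading equivalence via the critical sequence $c_z$, then compose with a spreading map $e$ along a $z$-independent club of fixed points.

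Two small remarks on details. First, the paper takes $|N|<\kappa$ (not necessarily countable) and invokes {\cite[Theorem 2.3]{SteelLectureNotes}} for the well-foundedness of the tree of iterates; your reference {\cite[Theorem 19.15]{MR1994835}} is stated for a single ultrafilter, so you should either cite a version covering linear iterations that freely choose between two measures, or note that the collapse embedding $\pi^{-1}$ propagates along the iteration into the (well-founded) iterates of $\VV$ by $U_0,U_1$, which gives well-foundedness directly. Second, for the club $D$ the paper simply takes the uncountable $\VV$-cardinals below $\kappa$; your alternative suggestion that the set of closure points of $c_z$ is itself $z$-independent is not literally correct (different measures can give different $c_z(\alpha+1)$), but your other option (cardinals above $|M|$, e.g.\ $\LL[z_M]$-cardinals) works and coincides with the paper's choice up to an initial segment.
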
 

\begin{proof} 
 Fix $\xi<\kappa$ and a regular cardinal $\theta>\kappa$ with $\POT{\POT{\kappa}}\in\HH{\theta}$. Pick a $\Sigma_1$-formula $\varphi(v_0,v_1)$ with  $X=\Set{z\in{}^\kappa\kappa}{\varphi(\kappa,z)}$ and an elementary submodel $N$ of $\HH{\theta}$ of cardinality less than $\kappa$ with $\kappa, x,  U_0,  U_1 \in N$ and $\xi+1\subseteq N$. Let $\map{\pi}{N}{M}$ denote the corresponding transitive collapse. 

In this situation {\cite[Theorem 2.3]{SteelLectureNotes}} shows that there is a directed system $$\langle\seq{M_s}{s\in{}^{{\leq}\kappa}2}, ~ \seq{\map{j_{s,t}}{M_s}{M_t}}{s,t\in{}^{{\leq}\kappa}2, ~ s\subseteq t}\rangle$$ of transitive models of $\ZFC^-$ and elementary embeddings such that the following statements hold: 
 \begin{enumerate}
  \item $M=M_\emptyset$. 

  \item If $s\in{}^{{<}\kappa}2$ and $i<2$, then $M_{s^\frown\langle i\rangle}=\Ult{M_s}{(j_{\emptyset,s}\circ\pi)( U_i)}$ and $j_{s,s^\frown\langle i\rangle}$ is the corresponding ultrapower map induced by $(j_{\emptyset,s}\circ\pi)( U_i)$.  

  \item If $s\in{}^{{\leq}\kappa}2$ with $\length{s}\in\Lim$, then $$\langle M_s, ~ \seq{\map{j_{s\restriction\alpha,s}}{M_{s\restriction\alpha}}{M_s}}{\alpha<\length{s}}\rangle$$ is the direct limit of the directed system $$\langle\seq{M_{s\restriction\alpha}}{\alpha<\length{s}}, ~ \seq{\map{j_{s\restriction\bar{\alpha},s\restriction\alpha}}{M_{s\restriction\bar{\alpha}}}{M_{s\restriction\alpha}}}{\bar{\alpha}\leq\alpha<\length{s}}\rangle.$$ 
 \end{enumerate}

 Set $j_s=j_{\emptyset,s}$ for all $s\in{}^{{\leq}\kappa}2$.  
 Since $\kappa=(j_z\circ \pi)(\kappa)$ for all $z\in{}^\kappa2$, we can define $$\Map{i}{{}^\kappa 2}{{}^\kappa\kappa}{z}{(j_z\circ\pi)(x)}.$$

 In this situation, elementarity and $\Sigma_1$-upwards absoluteness imply that $\varphi(\kappa,i(z))$ and $x\restriction\xi=i(z)\restriction\xi$ hold for  all $z\in{}^\kappa 2$. In particular, we have  $\ran{i}\subseteq N_{x\restriction\xi}\cap X$.

 Given $z\in{}^\kappa 2$, we define  $$\Map{c_z}{\kappa}{\kappa}{\alpha}{(j_{z\restriction\alpha}\circ\pi)(\kappa)}.$$ 
Then $\ran{c_z}$ is strictly increasing and continuous for every $z\in{}^\kappa 2$. By definition, we have $c_{z_0}\restriction\alpha=c_{z_1}\restriction\alpha$  for all $z_0,z_1\in{}^\kappa 2$ and $\alpha<\kappa$ with $z_0\restriction\alpha= z_1\restriction\alpha$. Given $z\in{}^\kappa 2$ and $\alpha<\kappa$, we have  $$\crit{j_{z\restriction\alpha,z\restriction(\alpha+1)}} ~ = ~ c_z(\alpha) ~ < ~ c_z(\alpha+1) ~ = ~ \crit{j_{z\restriction(\alpha+1),z}}$$ and $$(j_{z\restriction\alpha}\circ\pi)(\bar{x}) ~ \in ~ (j_{z\restriction\alpha}\circ\pi)( U_1)\setminus(j_{z\restriction\alpha}\circ\pi)(U_0).$$ This allows us to conclude that   
 \begin{equation*}
  \begin{split}
   z(\alpha)=1 ~ &  \Longleftrightarrow ~ c_z(\alpha)\in (j_{z\restriction(\alpha+1)}\circ\pi)(\bar{x})  \\ 
   & \Longleftrightarrow ~ (
((j_{z\restriction(\alpha+1)}\circ\pi)(x))(c_z(\alpha))>0 \\ 
   & \Longleftrightarrow ~ (
((j_{z\restriction(\alpha+1),z}\circ j_{z\restriction(\alpha+1)}\circ\pi)(x))(c_z(\alpha))>0 \\ 
  &  \Longleftrightarrow ~ (i(z) (c_z(\alpha))>0
  \end{split}
 \end{equation*}
holds for all $z\in{}^\kappa 2$ and $\alpha<\kappa$. In particular, this shows that  $i$ is injective. 

  Now, fix $z\in{}^{\omega_1}2$ and $\beta<\omega_1$. Pick $\alpha<\omega_1$  with $c_z(\alpha)>\beta$. Since we have $c_{\bar{z}}(\alpha)=\crit{j_{\bar{z}\restriction\alpha,z}}$ and $i(\bar{z})\restriction\beta = (j_{\bar{z}\restriction\alpha}\circ\pi)(x)\restriction\beta$ for all $\bar{z}\in{}^{\omega_1}2$, we can conclude that $i(z)\restriction\beta=i(\bar{z})\restriction\beta$ holds for all $\bar{z}\in N_{z\restriction\alpha}\cap{}^\kappa 2$.   This shows that $i$ is  continuous.

  Let $\seq{\delta_\alpha}{\alpha<\kappa}$ denote the monotone enumeration of the club $D$ of all uncountable cardinals less than $\kappa$ and let $\map{e}{{}^\kappa 2}{{}^\kappa 2}$ denote the unique continuous injection with $e(z)^{{-}1}\{1\}=\Set{\delta_\alpha}{\alpha<\kappa, ~ z(\alpha)=1}$ for all $z\in{}^\kappa 2$. Then $c_z\restriction D=\id_D$ for all $z\in {}^\kappa 2$. Set $\iota=i\circ e$. Given $z\in{}^\kappa 2$ and $\alpha<\kappa$, we then have 
 \begin{equation*}
  z(\alpha)=1 ~ \Longleftrightarrow ~ e(z)(\delta_\alpha)=1 ~ \Longleftrightarrow ~ i(e(z)) (c_{e(z)}(\delta_\alpha))>0 ~ \Longleftrightarrow ~ \iota(z)(\delta_\alpha)>0. \qedhere
 \end{equation*}
\end{proof}

The above lemma allows us to prove the following strengthening of Theorem \ref{theorem:LargeCardinalsBernstein}.

\begin{theorem}
 Let  $\kappa$ be a measurable cardinal with the property that there are two distinct  normal ultrafilters on $\kappa$ and let $\Gamma$ be a set of $\Sigma_1(\kappa)$-definable subsets of ${}^\kappa\kappa$. If $\bigcup\Gamma={}^\kappa\kappa$, then some element of $\Gamma$ contains a perfect subset. 
\end{theorem}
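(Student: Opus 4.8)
The plan is to follow the proof of Theorem~\ref{theorem:M1SharpBernstein} almost verbatim, with Lemma~\ref{lemma:MeasurableCardinalPerfectSeubset} playing the role that Lemma~\ref{lemma:Sigma1SetContainsBistationary} played there. The strategy is: produce one single element $x\in{}^\kappa\kappa$ whose zero-set separates the two given normal ultrafilters, use the covering hypothesis $\bigcup\Gamma={}^\kappa\kappa$ to find a member $X$ of $\Gamma$ containing $x$, and then apply Lemma~\ref{lemma:MeasurableCardinalPerfectSeubset} to $X$ to obtain a perfect subset of $X$.

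First I would fix distinct normal ultrafilters $U_0\neq U_1$ on $\kappa$ and choose $B\subseteq\kappa$ with $B\in U_1\setminus U_0$: since $U_0\neq U_1$ some set lies in exactly one of them, and if that set lies in $U_0\setminus U_1$ one passes to its complement, using that $U_1$ is an ultrafilter. Then let $x\in{}^\kappa 2\subseteq{}^\kappa\kappa$ be the characteristic function of $\kappa\setminus B$, so that $\bar{x}=\Set{\alpha<\kappa}{x(\alpha)=0}=B\in U_1\setminus U_0$, which is precisely the hypothesis of Lemma~\ref{lemma:MeasurableCardinalPerfectSeubset}. By the covering assumption there is $X\in\Gamma$ with $x\in X$; since $X$ is $\Sigma_1(\kappa)$-definable and $x$ witnesses that hypothesis with the ultrafilters $U_0,U_1$ (and, say, $\xi=0$), that lemma provides a continuous injection $\iota\colon{}^\kappa 2\to X$ together with a club $D$, and the range of $\iota$ is then a perfect subset of ${}^\kappa\kappa$ contained in $X$.

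There is no real obstacle here; the only points requiring attention are bookkeeping ones. One must make sure the orientation of $\bar{x}$ is the one demanded by Lemma~\ref{lemma:MeasurableCardinalPerfectSeubset} — it is the set of $\alpha$ where $x$ \emph{vanishes}, not where it is positive — and one should note explicitly that $\ran{\iota}$ is genuinely perfect in the sense of Definition~\ref{definition:PerfectBernstein}: this follows because the club $D$ furnished by Lemma~\ref{lemma:MeasurableCardinalPerfectSeubset} lets one recover $z$ from $\iota(z)$, so that the tree of initial segments of elements of $\ran{\iota}$ is ${<}\kappa$-closed with branching above every node. Everything else is immediate.
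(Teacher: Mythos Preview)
Your proposal is correct and follows essentially the same approach as the paper: pick distinct normal ultrafilters $U_0,U_1$, choose $x\in{}^\kappa\kappa$ whose associated set lies in $U_1\setminus U_0$, find $X\in\Gamma$ containing $x$, and invoke Lemma~\ref{lemma:MeasurableCardinalPerfectSeubset}. Your version is slightly more careful than the paper's (you track the orientation of $\bar{x}$ and spell out why $\ran{\iota}$ is perfect), but the argument is the same.
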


\begin{proof}
 Pick normal ultrafilters $ U_0$ and $ U_1$ on $\kappa$ with $ U_0\neq U_1$. Then there is $x\in{}^\kappa\kappa$ with $\Set{\alpha<\kappa}{x(\alpha)>0}\in U_1\setminus U_0$ and  $X\in\Gamma$ with $x\in X$.  In this situation, Lemma \ref{lemma:MeasurableCardinalPerfectSeubset} implies that $X$ contains a perfect subset. 
\end{proof}

The following result shows that the conclusion of Theorem \ref{theorem:LargeCardinalsBernstein} does not hold for all measurable cardinals.

\begin{theorem} 
 Assume that $\delta$ is a measurable cardinal and $U$ is a normal ultrafilter on $\delta$ with $\VV=\LL[U]$. If $\kappa\leq\delta$ is an uncountable regular cardinal, then there is a Bernstein subset of ${}^\kappa\kappa$ that is $\Delta_1(\kappa)$-definable over $\langle\HH{\kappa^+},\in\rangle$. 
\end{theorem}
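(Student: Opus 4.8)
The plan is to reduce everything to Lemma~\ref{construction of Bernstein set}: it suffices to show that in $\LL[U]$, for every uncountable regular cardinal $\kappa\leq\delta$, the restriction $\lhd$ to $\HH{\kappa^+}$ of the canonical global well-ordering of $\LL[U]$ is a good $\Sigma_1(\kappa)$-well-ordering, i.e.\ that the set $I(\lhd)$ of its proper initial segments is $\Sigma_1(\kappa)$-definable over $\langle\HH{\kappa^+},\in\rangle$. Since $\GCH$ holds in $\LL[U]$, we have $\HH{\kappa^+}=\LL[U]_{\kappa^+}$ and $\lhd$ is a genuine well-ordering of all of $\HH{\kappa^+}$, so Lemma~\ref{construction of Bernstein set} then immediately produces the desired $\Delta_1(\kappa)$-definable Bernstein subset of ${}^\kappa\kappa$.

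First I would treat the case $\kappa<\delta$. Since $\delta$ is inaccessible in $\LL[U]$ and every element of $U$ is an unbounded subset of $\delta$, hence of rank $\delta$, the predicate $U$ is inactive on all levels $\leq\delta$; a standard condensation argument then gives $\HH{\kappa^+}^{\LL[U]}=\LL_{\kappa^+}$, and $\lhd$ agrees there with the canonical well-ordering of $\LL$. Thus the claim reduces to the classical fine-structural fact that in $\LL$ the set $I(\lhd)\cap\HH{\kappa^+}$ is $\Sigma_1$-definable: a set $s$ is a proper initial segment of $\lhd$ if and only if there are a level $N$ of the $\LL$-hierarchy and a $y\in N$ with $s=\Set{x\in N}{x<_N y}$, where $<_N$ is the canonical well-ordering of $N$, and ``$N$ is a level of the $\LL$-hierarchy'' is $\Sigma_1$ by G\"odel's condensation lemma (every transitive model of $\ZFC^-+V{=}\LL$ is such a level).

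For the remaining case $\kappa=\delta$ one has $\HH{\delta^+}^{\LL[U]}=\LL[U]_{\delta^+}$, and now $U$ does appear as an active predicate on the upper levels, so the argument runs parallel to the proof of Theorem~\ref{Sigma1 wellorder in M1}, with initial segments of $\LL[U]$ in place of the $M_1$-mice. Since $\LL[U]$ is the minimal inner model with a measurable cardinal, it satisfies ``there is no inner model with two measurable cardinals'' (equivalently, $0^\dagger$ does not exist), and hence the $0^\dagger$-analogue of the argument behind {\cite[Lemma~2.1]{MR2963017}} yields: a premouse $M\in\HH{\delta^+}$ carrying at most one total measure (necessarily on $\delta$) is a mouse -- equivalently, an initial segment of $\LL[U]$ -- if and only if there is a transitive model $W\in\HH{\delta^+}$ of $\ZFC^-$ together with ``there is no inner model with two measurable cardinals'' such that $\delta\subseteq W$, $M\in W$ and $\langle W,\in\rangle\models$``$M$ is a mouse''. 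This makes the collection of such mice $\Sigma_1(\delta)$-definable; the active measure $U$ is then recovered $\Sigma_1(\delta)$-definably as the union of the top measures of those mice $M$ with $\delta\in M$ whose top measure is already total on $\delta$, and, exactly as in the proof of Theorem~\ref{Sigma1 wellorder in M1}, $s\in I(\lhd)$ if and only if $s=\Set{x\in N}{x<_N y}$ for some proper initial segment $N$ of such a mouse and some $y\in N$. Feeding this into Lemma~\ref{construction of Bernstein set} completes the proof.

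The main obstacle is the case $\kappa=\delta$: one must carefully verify that the iterability of the initial segments of $\LL[U]$ up to $\LL[U]_{\delta^+}$ is expressible by a $\Sigma_1$ formula over $\langle\HH{\delta^+},\in\rangle$ with parameter $\delta$ -- i.e.\ that the active measure $U$ can be located $\Sigma_1(\delta)$-definably -- even though $\delta$ fails to be measurable \emph{inside} $\HH{\delta^+}$, so that the relevant comparison and condensation theory must be developed at the level of the Dodd--Jensen mice below $0^\dagger$. Once that theory is set up this step is routine, and the case $\kappa<\delta$ is entirely classical.
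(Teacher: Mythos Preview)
Your overall strategy---reduce to Lemma~\ref{construction of Bernstein set} by exhibiting a good $\Sigma_1(\kappa)$-well-ordering of $\HH{\kappa^+}$---is the same as the paper's, and your treatment of the case $\kappa<\delta$ via $\HH{\kappa^+}^{\LL[U]}=\LL_{\kappa^+}$ and ordinary $\LL$-condensation is correct and in fact simpler than what the paper writes down for that case. The difference lies in how you handle $\kappa=\delta$ versus how the paper handles all $\kappa\leq\delta$ uniformly.

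The paper does \emph{not} attempt to locate $U$ or to recognise initial segments of $\LL[U]$ directly. Instead it defines a $\ZFC^-$-mouse at $\lambda$ to be any structure $\langle\LL_\alpha[F],\in,F\rangle$ with $F$ a weakly amenable $\omega_1$-iterable ultrafilter on $\lambda$, and sets $x\lhd y$ iff \emph{some} such mouse at \emph{some} $\lambda>\kappa$ contains $x,y$ and orders them that way. Two observations make this work: first, $\omega_1$-iterability of a countable structure is $\Pi^1_2$, hence certifiable by any transitive $\ZFC^-$-model containing $\omega_1$ as a subset, so ``there exists a $\ZFC^-$-mouse at some $\lambda>\kappa$ with \ldots'' is outright $\Sigma_1(\kappa)$; second, the comparison lemma ({\cite[Lemma~20.8]{MR1994835}}) shows that any two such mice agree on the ordering of their common subsets of $\kappa$, so $\lhd$ is well-defined and coincides with the restriction of the canonical order of $\Ult{\VV}{U}$. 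No case split, no need to pin down $U$, and the ``main obstacle'' you flag simply does not arise.

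Your route for $\kappa=\delta$---certifying mousehood via a witness model $W\in\HH{\delta^+}$ satisfying ``no inner model with two measurables''---can be made to work, but it is more delicate than you indicate: your phrase ``equivalently, an initial segment of $\LL[U]$'' is not literally correct (a $\ZFC^-$-mouse at $\lambda\neq\delta$ is an \emph{iterate}, not an initial segment, of $\LL[U]$), and recovering $U$ as a union of top measures requires knowing in advance that all these mice line up, which is again the comparison lemma. So your argument ultimately rests on the same comparison input as the paper's, but packages it less efficiently. The paper's formulation buys uniformity in $\kappa$ and eliminates the need to single out the case $\kappa=\delta$ at all.
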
 

\begin{proof} 
  Following {\cite[p. 264]{MR1994835}}, we define a \emph{$\ZFC^-$-mouse at $\lambda$} to be a structure $\langle M,\in,F\rangle$ such that $M$ is a transitive model of $\ZFC^-$ with $M=\LL_\alpha[F]$ for some ordinal $\alpha$ and $F$ is a weakly amenable $M$-ultrafilter on $\lambda$ such that $\langle M,\in,F\rangle$ is $\omega_1$-iterable.  Note that $\omega_1$-iterability implies full iterability and $\HH{\delta^+}\subseteq\Ult{\VV}{U}$ implies that every element of $\HH{\delta^+}$ is contained in a $\ZFC^-$-mouse at some $\lambda>\delta$. 

 Given an uncountable regular cardinal $\kappa\leq\delta$, we define a well-order $\lhd$ on $\HH{\kappa^+}$ by setting  $x\lhd y$ if there is a $\ZFC^-$-mouse $\langle M,\in,F\rangle$ at some $\lambda>\kappa$ with $x,y\in M$ and $x<_{\LL[F]}y$.

\begin{claim*} 
$\lhd$ is a good $\Sigma_1(\kappa)$-definable well-order of $\POT{\kappa}^{\LL[U]}$. 
\end{claim*} 

\begin{proof} 
 Let $M$ be a $\ZFC^-$-mouse.  
 By {\cite[Lemma 20.8]{MR1994835}}, there are elementary embeddings $\map{i}{M}{\LL_\gamma[F]}$ and  $\map{j}{\Ult{\VV}{U}}{\LL[F]}$ with critical points greater than $\kappa$ and $\POT{\kappa}^M =\POT{\kappa}^{\LL_\gamma[F]}\subseteq\POT{\kappa}^{\LL[F]} =\POT{\kappa}^\VV$. Hence $\lhd$ is equal to the restriction of the canonical well-order of $\Ult{\VV}{U}$ to $\HH{\kappa^+}^\VV$ and every $\ZFC^-$-mouse is downwards-closed with respect to $\lhd$. Since $\omega_1$-iterability can be checked by transitive models of some fragments of $\ZFC$ containing $\omega_1$ as a subset and is therefore a $\Sigma_1(\kappa)$ condition, the above computations yield the statement of the claim.  
\end{proof}

By Lemma \ref{construction of Bernstein set}, the above claim implies the statement of the theorem. 
\end{proof}

In the remainder of this section, we study the $\Pi_1$-definability of the club filter at large cardinals. We start by proving Theorem \ref{theorem:IterableCardinalsClubFilter}, which shows that the club filter on $\kappa$ is not $\Pi_1(\kappa)$-definable if $\kappa$ is a stationary limit of $\omega_1$-iterable cardinals. 

\begin{proof}[Proof of Theorem \ref{theorem:IterableCardinalsClubFilter}]
 Let $\kappa$ be a regular cardinal that is a stationary limit of $\omega_1$-iterable cardinals. 
  Fix a $\Sigma_1$-formula $\varphi(v_0,v_1)$ and assume, towards a contradiction, that the complement of the club filter on $\kappa$ is equal to the set $\Set{x\subseteq\kappa}{\varphi(\kappa,x)}$. Let $y$ denote the set of $\omega_1$-iterable cardinals less than $\kappa$ and set $z=\kappa\setminus y$. Then $z$ is a bistationary subset of $\kappa$ and $\varphi(\kappa,z)$ holds. 

Pick a strictly increasing continuous chain $\seq{N_\alpha}{\alpha<\kappa}$ of elementary submodels of $\HH{\kappa^+}$ of cardinality less than $\kappa$ such that $z\in N_0$ and $\kappa_\alpha=\kappa\cap N_\alpha\in\kappa$ for all $\alpha<\kappa$. Then $C=\Set{\kappa_\alpha}{\alpha\in\kappa\cap\Lim}$ is a club in $\kappa$. Let $\delta$ denote the minimal element of $\kappa\cap\Lim$ with $\kappa_\delta\in y$. Since $\kappa_\delta$ is an $\omega_1$-iterable cardinal and therefore regular, we know that $\delta=\kappa_\delta$. Let $\map{\pi}{N_\delta}{N}$ denote the transitive collapse of $N_\delta$. Then $\pi(\kappa)=\delta$ and  $\pi(z)=z\cap\delta$. In this situation,  $\Sigma_1$-upwards absoluteness implies that $\varphi(\delta,z\cap\delta)$ holds in $\VV$. Moreover, $C\cap\delta$ is a club in $\delta$ and the minimality of $\delta$ implies that $C\cap\delta$ is a subset of $z\cap\delta$.

Since $\delta$ is $\omega_1$-iterable, we can find a weak $\delta$-model $M_0$ and an $M_0$-ultrafilter $F_0$ on $\delta$ such that $z\cap\delta,C\cap\delta\in M_0$, $\varphi(\delta,z\cap\delta)^{M_0}$ holds and $\langle M_0,\in,F_0\rangle$ is iterable. Let $$\langle\seq{\langle M_\alpha,\in,F_\alpha\rangle}{\alpha\in\On},\seq{\map{j_{\bar{\alpha},\alpha}}{M_{\bar{\alpha}}}{M_\alpha}}{\bar{\alpha}\leq\alpha\in\On}\rangle$$ denote the corresponding system of models and elementary embeddings. Then $j_{0,\kappa}(\delta)=\kappa$  and $j_{0,\kappa}(C\cap\delta)$ is a club in $\kappa$ that witnesses that the set $j_{0,\kappa}(z\cap\delta)$ is contained in the club filter on $\kappa$. But $\Sigma_1$-upwards absoluteness and elementarity imply that $\varphi(\kappa,j_{0,\kappa}(z\cap\delta))$ holds, a contradiction. 
\end{proof}

Next, we prove an analog of Lemma \ref{lemma:Omega1DenseClubNS} for certain large cardinals.

\begin{lemma}\label{lemma:NormalUltrafilterNoSeparation}
 Let $\kappa$ be an uncountable regular cardinal, let $M$ be a weak $\kappa$-model and let $U$ be an $M$-ultrafilter such that $\langle M,\in,  U\rangle$ is $\omega_1$-iterable. If $\varphi(v_0,v_1)$ is a $\Sigma_1$-formula, then the following statements hold for all $\xi<\kappa$ and $x\in M\cap\POT{\kappa}$ with the property that $\varphi(\kappa,x)^M$ holds: 
 \begin{enumerate}
  \item If $x\in U$, then there is an element $y$ of the club filter on $\kappa$ such that $x\restriction\xi=y\restriction\xi$ and $\varphi(\kappa,y)$ holds.  

  \item If $x\notin U$, then there is an element $y$ of the nonstationary ideal on $\kappa$ such that $x\restriction\xi=y\restriction\xi$ and $\varphi(\kappa,y)$ holds.  
 \end{enumerate}
\end{lemma}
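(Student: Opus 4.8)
The plan is to imitate the proof of Lemma~\ref{lemma:Omega1DenseClubNS} --- itself a consequence of Lemma~\ref{lemma:Sigma1SetContainsBistationary} --- with the generic iteration driven by the stationary tower replaced by a plain linear iteration of $\langle M,\in,U\rangle$, much as in the proof of Theorem~\ref{theorem:IterableCardinalsClubFilter} above. First I would note that, since $M$ is transitive and $\varphi$ is $\Sigma_1$, $\Sigma_1$-upwards absoluteness already yields that $\varphi(\kappa,x)$ holds in $\VV$; hence in case~(i), if $x$ happens to lie in the club filter, we are done with $y=x$, and likewise in case~(ii), so I may assume that $x$ is a bistationary subset of $\kappa$. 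The point is that iterating the $M$-ultrafilter $U$ moves $\kappa$ strictly upward, whereas the conclusion concerns the true $\kappa$; so I would first collapse $\kappa$ below itself and then iterate the collapse back up to $\kappa$. Concretely, fix a sufficiently large regular $\theta$ with $M,U\in\HH{\theta}$, so that $\HH{\theta}$ correctly computes the iterability and weak amenability of $\langle M,\in,U\rangle$ and of its elementary images, and --- using a continuous chain of elementary submodels of $\HH{\theta}$ of size less than $\kappa$ together with the regularity of $\kappa$, exactly as in the proof of Theorem~\ref{theorem:IterableCardinalsClubFilter} --- pick such a submodel $N$ with $M,U,x\in N$, $\xi+1\subseteq N$ and $N\cap\kappa\in\kappa$. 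Let $\map{\pi}{N}{\bar{N}}$ be the transitive collapse and put $\bar{M}=\pi(M)$, $\bar{U}=\pi(U)$, $\bar{x}=\pi(x)$ and $\bar{\kappa}=\pi(\kappa)=N\cap\kappa<\kappa$.

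Next I would collect the properties of this collapse: $\bar{M}$ is a transitive model of $\mathsf{ZFC}^-$ of size less than $\kappa$ with $\bar{\kappa}\in\bar{M}$; $\bar{U}$ is a weakly amenable $\bar{M}$-ultrafilter on $\bar{\kappa}$ with $\bar{x}\in\bar{U}$ if and only if $x\in U$, so that $\bar{\kappa}\setminus\bar{x}\in\bar{U}$ if and only if $x\notin U$; the statement $\varphi(\bar{\kappa},\bar{x})^{\bar{M}}$ holds; and $\xi+1\subseteq N$ forces $\pi$ to fix every ordinal ${\leq}\xi$, so $\xi<\bar{\kappa}$ and $\bar{x}\cap\xi=x\cap\xi$. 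By elementarity $\langle\bar{M},\in,\bar{U}\rangle$ is $\omega_1$-iterable, and --- being built from a weakly amenable $M$-ultrafilter --- it is then fully iterable; here I would argue as in the proof of Lemma~\ref{characterization of Sigma1(kappa)}, invoking {\cite[Theorem 19.15]{MR1994835}}.

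Then I would form the linear iteration $\langle\seq{\langle\bar{M}_\alpha,\in,\bar{U}_\alpha\rangle}{\alpha\leq\kappa},\seq{\map{j_{\alpha,\beta}}{\bar{M}_\alpha}{\bar{M}_\beta}}{\alpha\leq\beta\leq\kappa}\rangle$ of $\langle\bar{M},\in,\bar{U}\rangle$ of length $\kappa$, whose models are all well-founded by full iterability, and set $\kappa_\alpha=j_{0,\alpha}(\bar{\kappa})=\crit{j_{\alpha,\alpha+1}}$ for $\alpha<\kappa$. The key observations are: since each iterate has size less than $\kappa$ --- the ultrapowers only use functions lying in the models --- one has $\kappa_\alpha<\kappa$, and the sequence $\seq{\kappa_\alpha}{\alpha<\kappa}$ is strictly increasing and continuous, so that $j_{0,\kappa}(\bar{\kappa})=\sup_{\alpha<\kappa}\kappa_\alpha=\kappa$ and $D=\Set{\kappa_\alpha}{0<\alpha<\kappa}$ is a club in $\kappa$; moreover, the standard computation in iterated ultrapowers gives $\kappa_\alpha\in j_{0,\kappa}(\bar{x})$ if and only if $j_{0,\alpha}(\bar{x})\in\bar{U}_\alpha$, equivalently if and only if $\bar{x}\in\bar{U}$, for all $\alpha$ with $0<\alpha<\kappa$. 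Now I set $y=j_{0,\kappa}(\bar{x})\subseteq\kappa$. If $x\in U$, then $\bar{x}\in\bar{U}$, so $D\subseteq y$ and $y$ is an element of the club filter on $\kappa$; if $x\notin U$, then $\bar{x}\notin\bar{U}$, so $D\cap y=\emptyset$ and $y$ is an element of the nonstationary ideal on $\kappa$. In either case, elementarity shows that $\varphi(j_{0,\kappa}(\bar{\kappa}),y)$, that is $\varphi(\kappa,y)$, holds in the transitive model $\bar{M}_\kappa$, so $\Sigma_1$-upwards absoluteness shows that $\varphi(\kappa,y)$ holds in $\VV$; and $\crit{j_{0,\kappa}}=\bar{\kappa}>\xi$ gives $y\cap\xi=\bar{x}\cap\xi=x\cap\xi$, that is, $y\restriction\xi=x\restriction\xi$. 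This completes both parts.

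I expect the main obstacle to be the iterability bookkeeping: one must ensure that $\langle\bar{M},\in,\bar{U}\rangle$ is iterable for iterations of length $\kappa$ --- which may be far larger than $\omega_1$ --- and for this one uses that $\omega_1$-iterability together with weak amenability of the $M$-ultrafilter yields full iterability, and one must check that both weak amenability and $\omega_1$-iterability are genuinely reflected by the transitive collapse of a sufficiently large $\HH{\theta}$. By contrast, the chain-of-submodels construction producing $N\cap\kappa\in\kappa$, the cardinality estimate yielding $j_{0,\kappa}(\bar{\kappa})=\kappa$, and the membership computation $\kappa_\alpha\in j_{0,\kappa}(\bar{x})\Leftrightarrow\bar{x}\in\bar{U}$ are all routine.
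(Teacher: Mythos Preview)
Your proof is correct and follows essentially the same approach as the paper: collapse to a small structure, iterate it linearly back up to $\kappa$, and use the club of critical points to determine whether the image of $x$ lands in the club filter or the nonstationary ideal. The only difference is cosmetic --- the paper takes an elementary submodel of $\langle M,\in,U\rangle$ directly (so the detour through $\HH{\theta}$, the continuous chain ensuring $N\cap\kappa\in\kappa$, and the preliminary reduction to bistationary $x$ are all unnecessary), and then invokes {\cite[Theorem 19.15]{MR1994835}} for iterability of the collapsed structure, exactly as you do.
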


\begin{proof}
 Pick an elementary submodel $\langle N,\in, F\rangle$ of $\langle M,\in, U\rangle$ of cardinality less than $\kappa$ with $\kappa,x\in N$ and $\xi+1\subseteq N$. Let $\map{\pi}{N}{M_0}$ denote the corresponding transitive collapse. Set $ F_0=\pi[F]$. Then $F_0$ is an $M_0$-ultrafilter and {\cite[Theorem 19.15]{MR1994835}} implies that the structure $\langle M_0,\in,F_0\rangle$ is iterable. Let $$\langle\seq{\langle M_\alpha,\in,F_\alpha\rangle}{\alpha\in\On},\seq{\map{j_{\bar{\alpha},\alpha}}{M_{\bar{\alpha}}}{M_\alpha}}{\bar{\alpha}\leq\alpha\in\On}\rangle$$ denote the corresponding system of models and elementary embeddings. Define $y=(j_{0,\kappa}\circ\pi)(x)$. Since $\kappa=(j_{0,\kappa}\circ\pi)(\kappa)$, $\Sigma_1$-upwards absoluteness and elementarity imply that $\varphi(\kappa,y)$ holds and $x\restriction\xi=y\restriction\xi$. Moreover, the set $C=\Set{(j_{0,\alpha}\circ\pi)(\kappa)}{\alpha<\kappa}$ is a club in $\kappa$.

 Now, assume $x\in U$. Then $(j_{0,\alpha}\circ\pi)(x)\in  F_\alpha$ and $(j_{0,\alpha}\circ\pi)(\kappa)\in(j_{0,\alpha+1}\circ\pi)(x)$ for all $\alpha<\kappa$. Since we have $(j_{0,\alpha}\circ\pi)(x)<(j_{0,\alpha+1}\circ\pi)(x)=\crit{j_{\alpha+1,\kappa}}$ for all $\alpha<\kappa$, we can conclude that $C$ is a subset of $y$ in this case and therefore $y$ is contained in the club filter on $\kappa$. 

 Finally, assume $x\notin U$. Then $(j_{0,\alpha}\circ\pi)(x)\notin F_\alpha$ and $(j_{0,\alpha}\circ\pi)(\kappa)\notin(j_{0,\alpha+1}\circ\pi)(x)$ for all $\alpha<\kappa$. As above, we can conclude that $C$ is disjoint from $y$ in this case and therefore $y$ is an element of the nonstationary ideal.  
\end{proof}

The previous lemma allows us to show that the club filter and the non-stationary ideal cannot be separated by a $\Delta_1(\kappa)$-set for certain large cardinals $\kappa$.

\begin{proof}[Proof of Theorem \ref{theorem:MeasurableCardinalClubFilter}]
 Let $\kappa$ be an $\omega_1$-iterable cardinal and assume that there are $\Sigma_1$-formulas  $\varphi(v_0,v_1)$ and $\psi(v_0,v_1)$ with the property that the subset $X=\Set{x\subseteq\kappa}{\varphi(\kappa,x)}$ of $\POT{\kappa}$ separates the club filter from the nonstationary ideal and $\POT{\kappa}\setminus X=\Set{x\subseteq\kappa}{\psi(\kappa,x)}$. Pick an elementary submodel $M$ of $\HH{\kappa^+}$ of cardinality $\kappa$ with $\kappa+1\subseteq M$.  
 By our assumptions, there is a $\kappa$-model $N$ and an $N$-ultrafilter $ U$ on $\kappa$ such that $M\in N$ and $\langle N,\in, U\rangle$ is iterable. Set $F=M\cap U$.

 \begin{claim*}
 $F=M\cap X$.
\end{claim*}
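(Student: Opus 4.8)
The plan is to show, for every $x\in M\cap\POT{\kappa}$, that $x\in U$ if and only if $x\in X$; since $F=M\cap U$ and $X\subseteq\POT{\kappa}$, this is exactly $F=M\cap X$. Each direction will come from Lemma \ref{lemma:NormalUltrafilterNoSeparation}, applied to the iterable structure $\langle N,\in,U\rangle$ and played against one of the two defining features of $X$, namely $\mathrm{C}_\kappa\subseteq X$ and $X\cap\mathrm{NS}_\kappa=\emptyset$.

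First I would clear away two routine points. We may assume $M$ is transitive: since $\kappa+1\subseteq M$, its transitive collapse fixes $\kappa$ and every subset of $\kappa$ lying in $M$, hence changes none of the sets $M\cap\POT{\kappa}$, $F=M\cap U$, $M\cap X$, and leaves $M$ satisfying the same statements $\chi(\kappa,x)$ (for $x\in M\cap\POT{\kappa}$) as $\HH{\kappa^+}$. Then $M\in N$ together with transitivity of $M$ and $N$ gives $M\subseteq N$. It follows that, for $x\in M\cap\POT{\kappa}$: if $x\in X$ then $\varphi(\kappa,x)$ holds in $V$, hence in $M$ by elementarity, hence in $N$ by $\Sigma_1$-upward absoluteness along the inclusion of transitive models $M\subseteq N$; and, symmetrically, if $x\notin X$ then $\psi(\kappa,x)$ holds in $N$.

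Now the two inclusions. For $M\cap X\subseteq F$, fix $x\in M\cap X$, so that $\varphi(\kappa,x)$ holds in $N$, and suppose toward a contradiction that $x\notin U$; applying Lemma \ref{lemma:NormalUltrafilterNoSeparation}(ii) to $\langle N,\in,U\rangle$ with the $\Sigma_1$-formula $\varphi$, the set $x\in N\cap\POT{\kappa}$, and any $\xi<\kappa$ yields a set $y$ in the nonstationary ideal on $\kappa$ with $\varphi(\kappa,y)$, that is, $y\in X\cap\mathrm{NS}_\kappa$, contradicting the disjointness of $X$ from $\mathrm{NS}_\kappa$. Hence $x\in U$, and as $x\in M$ we get $x\in M\cap U=F$. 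For $F\subseteq M\cap X$, fix $x\in F=M\cap U$ and suppose $x\notin X$; then $\psi(\kappa,x)$ holds in $N$ and $x\in U$, so Lemma \ref{lemma:NormalUltrafilterNoSeparation}(i), applied in the same way with the formula $\psi$, yields a set $y$ in the club filter on $\kappa$ with $\psi(\kappa,y)$, that is, $y\in\mathrm{C}_\kappa\setminus X$, contradicting $\mathrm{C}_\kappa\subseteq X$. Hence $x\in X$, and $x\in M$ gives $x\in M\cap X$.

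I do not expect a genuine obstacle: the work is bookkeeping once one checks that Lemma \ref{lemma:NormalUltrafilterNoSeparation} applies verbatim to $\langle N,\in,U\rangle$ — which it does, since $N$ is a weak $\kappa$-model, $U$ an $N$-ultrafilter, and $\langle N,\in,U\rangle$ iterable, all by the choice of $N$ and $U$ — and that the witnesses it returns are club, respectively nonstationary, as computed in $V$, which is precisely what its proof delivers. The one point that must be handled with care is that it is $M$, not $N$, that is elementary in $\HH{\kappa^+}$: $N$ enters only as a transitive model with $M\subseteq N$, whose sole role is to absorb the $\Sigma_1$-truth of $\varphi(\kappa,x)$ (or $\psi(\kappa,x)$) upward from $M$, so that Lemma \ref{lemma:NormalUltrafilterNoSeparation} can be invoked with $\langle N,\in,U\rangle$ as its input structure.
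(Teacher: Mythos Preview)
Your proof is correct and takes essentially the same approach as the paper: both inclusions are proved by contradiction, invoking the two parts of Lemma~\ref{lemma:NormalUltrafilterNoSeparation} applied to $\langle N,\in,U\rangle$ to obtain a club-filter element satisfying $\psi$ (respectively, a nonstationary element satisfying $\varphi$). Your extra care in passing to the transitive collapse of $M$ is harmless but not strictly needed, since $M\prec\HH{\kappa^+}$ already lets any $\Sigma_1$-witness transfer to the transitive $N$ via $\HH{\kappa^+}$ and $\VV$.
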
 

\begin{proof} 
 Assume that there is $x\in F$ with $x\notin X$. Then elementarity implies that $\psi(\kappa,x)^M$ holds and $\Sigma_1$-upwards absoluteness implies that $\psi(\kappa,x)^N$ holds. By the first part of Lemma \ref{lemma:NormalUltrafilterNoSeparation}, this shows that there is an element $y$ of the club filter on $\kappa$ such that $\psi(\kappa,y)$ holds, a contradiction. This shows that $ F\subseteq M\cap X$.

 Now, assume that $x\in M\cap X$ with $x\notin U$. Then elementarity implies that $\varphi(\kappa,x)^M$ holds and $\Sigma_1$-upwards absoluteness implies that $\varphi(\kappa,x)^N$ holds. By the second part of Lemma \ref{lemma:NormalUltrafilterNoSeparation}, there is an element $y$ of the nonstationary ideal on $\kappa$ such that $\varphi(\kappa,y)$ holds, a contradiction. Together with the above computations, this shows that $F=M\cap X$.
 \end{proof} 
 
 Since $\langle M,\in,F\rangle\models\anf{\textit{$F$ is a normal ultrafilter on $\kappa$}}$ and $F$ is $\Delta_1(\kappa)$-definable over $\langle M,\in\rangle$, elementarity implies that $X$ is a normal ultrafilter over $\kappa$ in $\VV$. Let $\Ult{\VV}{X}$ denote the corresponding ultrapower of $\VV$. Then $\HH{\kappa^+}=\HH{\kappa^+}^{\Ult{\VV}{X}}$. Since $X$ is definable over $\langle\HH{\kappa^+},\in\rangle$, we can conclude that $X$ is an element of $\Ult{\VV}{X}$, a contradiction.  
\end{proof}

For measurable cardinals $\kappa$, we obtain a  result similar to Lemma \ref{Sigma1 absoluteness}.

\begin{lemma} \label{Sigma1(kappa)-absoluteness} 
 Let $\kappa$ be an $\omega_1$-iterable cardinal and let $\lambda$ be a measurable cardinal. 
\begin{enumerate} 
 \item $\Sigma_1(\kappa)$-statements (without parameters) are absolute to generic extensions for forcings of size less than $\lambda$ which preserve the $\omega_1$-iterability of $\kappa$.  

 \item  The set of $\Sigma_1(\kappa)$-definable singletons $\{x\}$ with $x\subseteq\kappa$ is absolute for forcings of size less than $\lambda$ which preserve the $\omega_1$-iterability of $\kappa$. 
 \end{enumerate} 
\end{lemma}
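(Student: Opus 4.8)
The plan is to follow the proof of Lemma~\ref{Sigma1 absoluteness}, replacing Lemma~\ref{lemma:HOmega2Sigma13} by Lemma~\ref{characterization of Sigma1(kappa)} and using the measurable cardinal $\lambda$ to supply $\Sigma^1_3$-absoluteness. For (1), fix a $\Sigma_1$-formula $\varphi(v_0)$, let $\PPP$ be a forcing of size less than $\lambda$ that preserves the $\omega_1$-iterability of $\kappa$, and let $G$ be $\PPP$-generic over $\VV$; then $\kappa$ is $\omega_1$-iterable --- and in particular not collapsed --- in both $\VV$ and $\VV[G]$. Applying Lemma~\ref{characterization of Sigma1(kappa)} to the $\Sigma_1(\kappa)$-definable set $\Set{x\in\RRR}{\varphi(\kappa)}$, which is trivially $\RRR$ or $\emptyset$, and inspecting its proof, one obtains a $\Sigma^1_3$-formula $\vartheta(v)$, depending only on $\varphi$, such that, in any model in which $\kappa$ is $\omega_1$-iterable, $\varphi(\kappa)$ holds if and only if $\exists v\ \vartheta(v)$ holds; concretely, $\vartheta(v)$ asserts that $v$ codes a countable transitive model $M$ of $\ZFC^-$ with a cardinal $\delta$ and a weakly amenable $M$-ultrafilter $F$ on $\delta$ such that $\langle M,\in,F\rangle$ is $\omega_1$-iterable and $\varphi(\delta)^M$ holds, and the nontrivial implication is witnessed by iterating such a structure up to $\kappa$, as in the proof of Lemma~\ref{characterization of Sigma1(kappa)}. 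Hence $\varphi(\kappa)^\VV\longleftrightarrow(\exists v\ \vartheta(v))^\VV$ and $\varphi(\kappa)^{\VV[G]}\longleftrightarrow(\exists v\ \vartheta(v))^{\VV[G]}$. Since $\exists v\ \vartheta(v)$ is a lightface $\Sigma^1_3$-sentence, $\lambda$ is measurable and $\vert\PPP\vert<\lambda$, well-known results going back to Martin and Solovay on $\Sigma^1_3$-absoluteness for forcings of size less than a measurable cardinal yield $(\exists v\ \vartheta(v))^\VV\longleftrightarrow(\exists v\ \vartheta(v))^{\VV[G]}$, and therefore $\varphi(\kappa)^\VV\longleftrightarrow\varphi(\kappa)^{\VV[G]}$.

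For (2), I would reduce to (1). Given a $\Sigma_1$-formula $\varphi(v_0,v_1)$, the assertion that $\Set{x}{\varphi(\kappa,x)}$ is a singleton $\{x\}$ with $x\subseteq\kappa$ is equivalent to the conjunction of the $\Sigma_1(\kappa)$-sentence $\exists x\ (\varphi(\kappa,x)\wedge x\subseteq\kappa)$ (note that ``$x\subseteq\kappa$'' is $\Delta_0$ in the parameter $\kappa$) with the negation of the $\Sigma_1(\kappa)$-sentence $\exists x\ \exists x'\ (x\neq x'\wedge\varphi(\kappa,x)\wedge\varphi(\kappa,x'))$. By (1) each of these two $\Sigma_1(\kappa)$-sentences has the same truth value in $\VV$ as in $\VV[G]$, so this assertion is absolute between $\VV$ and $\VV[G]$. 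Moreover, if $\varphi$ defines such a singleton $\{x\}$ in $\VV$, then $x$ lies in $\VV[G]$ and $\Sigma_1$-upwards absoluteness gives $\varphi(\kappa,x)^{\VV[G]}$, so $x$ is again the unique witness in $\VV[G]$; conversely, if $\varphi$ defines a singleton $\{x'\}$ with $x'\subseteq\kappa$ in $\VV[G]$, then by the previous observation $\varphi$ already defines a singleton $\{x\}$ with $x\subseteq\kappa$ in $\VV$, and $\varphi(\kappa,x)^{\VV[G]}$ forces $x=x'$, so $x'\in\VV$. Hence the collection of $\Sigma_1(\kappa)$-definable singletons that are subsets of $\kappa$ is the same in $\VV$ and in $\VV[G]$.

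The main obstacle is the first step of (1): one must verify that the translation via Lemma~\ref{characterization of Sigma1(kappa)} is genuinely uniform, so that one fixed $\Sigma^1_3$-formula $\vartheta$ works simultaneously in $\VV$ and in $\VV[G]$ --- which is exactly the point at which the hypothesis that $\PPP$ preserves the $\omega_1$-iterability of $\kappa$ is used --- and that the appropriate form of $\Sigma^1_3$-absoluteness is precisely the one furnished by the measurable cardinal $\lambda$ for forcings of size less than $\lambda$. Everything else is routine bookkeeping.
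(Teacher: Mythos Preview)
Your proposal is correct and follows essentially the same approach as the paper: reduce $\Sigma_1(\kappa)$-truth to a $\Sigma^1_3$-sentence via the (uniform) translation from Lemma~\ref{characterization of Sigma1(kappa)}, invoke $\Sigma^1_3$-absoluteness below the measurable $\lambda$, and then derive (2) from (1). The paper's proof is terser, but the substance is identical; your explicit check that the $\Sigma^1_3$-formula is uniform and that $\kappa$ is not collapsed is exactly the content hidden in the paper's one-line appeal to Lemma~\ref{characterization of Sigma1(kappa)}.
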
 

\begin{proof} 
The first claim follows from Lemma \ref{characterization of Sigma1(kappa)}, since the statement is equivalent to a $\Sigma^1_3$-statement and $\Sigma^1_3$-absoluteness holds for forcings of size less than $\lambda$ (see {\cite[Lemma 3.7]{MR3226056}}). The second claim follows from the first claim. 
\end{proof}

Note that, if $\kappa$ is an $\omega_1$-iterable cardinal, then forcing with a partial order of cardinality less than $\kappa$ preserves the $\omega_1$-iterability of $\kappa$.


\section{Open questions}\label{section:Questions}

We close this paper with a collection of questions raised by the above results.

 First, Lemma \ref{lemma:HOmega2Sigma13} and Lemma \ref{characterization of Sigma1 with uB parameter} suggest the following question. 

\begin{question} 
Assume that there is a proper class of Woodin cardinals. If $B$ is a uB set of reals, is every $\Sigma^1_3(B)$-set $\Sigma_1(\omega_1)$-definable over $\langle\HH{\omega_2},\in,B,\mathrm{NS}_{\omega_1}\rangle$? 
\end{question} 

Theorem \ref{Sigma1-definable club uniformization} leaves open the following question. 

\begin{question} 
Suppose that there is a Woodin cardinal and a measurable cardinal above it. Is there no $\Sigma_1(\omega_1)$-definable uniformization of the club filter on $\omega_1$? 
\end{question}

Note that the existence of a good $\Sigma_1(\omega_1)$-definable well-order of $\POT{\omega_1}$ yields a $\Sigma_1(\omega_1)$-definable uniformization of the club filter on $\omega_1$ and  Theorem \ref{Sigma1 wellorder in M1} shows that  such a uniformization is compatible with the existence of a Woodin cardinal.

Next, we ask if the assumption in Theorem \ref{perfect subsets} is optimal. The conclusion does not follow from the existence of a Woodin cardinal by  the proof of Lemma \ref{Delta1 Bernstein set in M_1}. Moreover, the perfect set property for all definable subsets of ${}^{\omega_1}\omega_1$ can be forced by Levy-collapsing an inaccessible cardinal (see \cite{PS}).

\begin{question} 
Suppose that $\mathrm{NS}_{\omega_1}$ is saturated or that there is a Woodin cardinal and a measurable cardinal above it. 
Does the perfect set dichotomy over $\LL(\RRR)$ in Theorem \ref{perfect subsets} hold? 
\end{question}

Moreover, we do not know if the two cases in the perfect set dichotomy in Theorem \ref{perfect subsets} are mutually exclusive unless $2^{\omega}<2^{\omega_1}$. 
This is related to the question over which models it is possible to add perfect subsets of the ground model (see {\cite[Lemma 6.2]{MR3557473}} and {\cite{MR1640916}}).

\begin{question} 
Is it consistent with the existence of a Woodin cardinal and a measurable cardinal above it that there is a perfect subset of ${}^{\omega_1}\omega_1\cap\LL(\RRR)$? In particular, does this statement fail in the $\mathbb{P}_{max}$-extension of $\LL(\RRR)$ if there are infinitely many Woodin cardinals? 
\end{question} 

We ask about generalizations of the results of this paper to $\omega_2$ and larger cardinals. 
In this situation, the method of iterations of generic ultrapowers fails, since generics need not exist over uncountable models. 

\begin{question} 
 Is the existence of a $\Sigma_1(\omega_2)$-definable well-ordering of the reals  compatible with the existence of a supercompact cardinal? 
\end{question}

We also ask about a perfect set dichotomy for large cardinals. 

\begin{question}
 Let $\kappa$ be a supercompact cardinal and let $X$ be a subset of ${}^\kappa\kappa$ that is $\Sigma_1$-definable over $\langle\HH{\kappa^+},\in\rangle$ and has cardinality greater than $\kappa$. Does $X$ contain a perfect subset? 
\end{question}

The motivation for this question is that for supercompact cardinals, there are many different normal ultrafilters on $\kappa$. 
Let $\kappa$ be a measurable cardinal and let $D$ denote the collection of all subsets $y$ of $\kappa$ with the property that there are ultrapowers $I_0$ and $I_1$ of $\VV$ with normal ultrafilters on $\kappa$ such that 
$j_{I_0}(\kappa)=j_{I_1}(\kappa)$ and $j_{I_0}(y)\neq j_{I_1}(y)$. 
Then 
 the above proofs show: If $X$ is a $\Sigma_1(\kappa)$-definable subset of ${}^\kappa\kappa$ and there is an element $x$ of $X$ with $\Set{\alpha<\kappa}{x(\alpha)>0}\in D$, then $X$ contains a perfect subset.

 Finally, Lemma \ref{Sigma1(kappa)-absoluteness} leaves open the following question. 
 
\begin{question} 
Suppose that $\Phi(\kappa)$ holds, where $\Phi(\kappa)$ is a large cardinal property that implies that $\kappa$ is weakly compact. 
Are $\Sigma_1(\kappa)$-formulas with parameters in $\HH{\kappa^+}$ absolute to generic extensions for ${<}\kappa$-distributive forcings which preserve $\Phi(\kappa)$?  
\end{question}

\bibliographystyle{plain}
 \bibliography{references}

\begin{thebibliography}{10}

\bibitem{MR2231126}
Andr{\'e}s~Eduardo Caicedo and Boban Veli{\v{c}}kovi{\'c}.
\newblock The bounded proper forcing axiom and well orderings of the reals.
\newblock {\em Math. Res. Lett.}, 13(2-3):393--408, 2006.

\bibitem{MR2963017}
Benjamin Claverie and Ralf Schindler.
\newblock Woodin's axiom {$(\ast)$}, bounded forcing axioms, and precipitous
  ideals on {$\omega_1$}.
\newblock {\em J. Symbolic Logic}, 77(2):475--498, 2012.

\bibitem{MR2374762}
Ilijas Farah and Paul~B. Larson.
\newblock Absoluteness for universally {B}aire sets and the uncountable. {I}.
\newblock In {\em Set theory: recent trends and applications}, volume~17 of
  {\em Quad. Mat.}, pages 47--92. Dept. Math., Seconda Univ. Napoli, Caserta,
  2006.

\bibitem{FriedmanWu}
Sy-David Friedman and Liuzhen Wu.
\newblock Large cardinals and {$\Delta_1$}-definablity of the nonstationary
  ideal.
\newblock Preprint.

\bibitem{MR3320477}
Sy-David Friedman, Liuzhen Wu, and Lyubomyr Zdomskyy.
\newblock {$\Delta_1$}-definability of the non-stationary ideal at successor
  cardinals.
\newblock {\em Fund. Math.}, 229(3):231--254, 2015.

\bibitem{MR2830415}
Victoria Gitman.
\newblock Ramsey-like cardinals.
\newblock {\em J. Symbolic Logic}, 76(2):519--540, 2011.

\bibitem{MR2830435}
Victoria Gitman and Philip~D. Welch.
\newblock Ramsey-like cardinals {II}.
\newblock {\em J. Symbolic Logic}, 76(2):541--560, 2011.

\bibitem{MR1324501}
Martin Goldstern and Saharon Shelah.
\newblock The bounded proper forcing axiom.
\newblock {\em J. Symbolic Logic}, 60(1):58--73, 1995.

\bibitem{HL}
Peter Holy and Philipp L{\"u}cke.
\newblock Locally {$\Sigma_1$}-definable well-orders of
  {${\mathrm{H}}(\kappa^+)$}.
\newblock {\em Fund. Math.}, 226(3):221--236, 2014.

\bibitem{MR3591274}
Peter Holy and Philipp L{\"u}cke.
\newblock Simplest possible locally definable well-orders.
\newblock {\em Fund. Math.}, 236(2):101--139, 2017.

\bibitem{MR1940513}
Thomas Jech.
\newblock {\em Set theory}.
\newblock Springer Monographs in Mathematics. Springer-Verlag, Berlin, 2003.
\newblock The third millennium edition, revised and expanded.

\bibitem{MR560220}
Thomas Jech, Menachem Magidor, William~J. Mitchell, and Karel Prikry.
\newblock Precipitous ideals.
\newblock {\em J. Symbolic Logic}, 45(1):1--8, 1980.

\bibitem{MR3135495}
Ronald Jensen and John~R. Steel.
\newblock {$K$} without the measurable.
\newblock {\em J. Symbolic Logic}, 78(3):708--734, 2013.

\bibitem{MR1994835}
Akihiro Kanamori.
\newblock {\em The higher infinite}.
\newblock Springer Monographs in Mathematics. Springer-Verlag, Berlin, second
  edition, 2003.
\newblock Large cardinals in set theory from their beginnings.

\bibitem{MR2069032}
Paul~B. Larson.
\newblock {\em The stationary tower}, volume~32 of {\em University Lecture
  Series}.
\newblock American Mathematical Society, Providence, RI, 2004.
\newblock Notes on a course by W. Hugh Woodin.

\bibitem{MR2768703}
Paul~B. Larson.
\newblock Forcing over models of determinacy.
\newblock In {\em Handbook of set theory. {V}ols. 1, 2, 3}, pages 2121--2177.
  Springer, Dordrecht, 2010.

\bibitem{LarsonBriefHistoryOfDeterminacy}
Paul~B. Larson.
\newblock A brief history of determinacy.
\newblock In {\em The Handbook of the History of Logic, volume 6}. Elsevier,
  2012.

\bibitem{MR2987148}
Philipp L{\"u}cke.
\newblock {$\Sigma^1_1$}-definability at uncountable regular cardinals.
\newblock {\em J. Symbolic Logic}, 77(3):1011--1046, 2012.

\bibitem{MR3557473}
Philipp L{\"u}cke, Luca Motto~Ros, and Philipp Schlicht.
\newblock The {H}urewicz dichotomy for generalized {B}aire spaces.
\newblock {\em Israel J. Math.}, 216(2):973--1022, 2016.

\bibitem{MR576464}
Menachem Magidor.
\newblock Precipitous ideals and {${\bf \Sigma }_{4}^{1}$} sets.
\newblock {\em Israel J. Math.}, 35(1-2):109--134, 1980.

\bibitem{MR1242054}
Alan Mekler and Jouko V{\"a}{\"a}n{\"a}nen.
\newblock Trees and {$\Pi^1_1$}-subsets of {${}^{\omega_1}\omega_1$}.
\newblock {\em J. Symbolic Logic}, 58(3):1052--1070, 1993.

\bibitem{MR1300637}
William~J. Mitchell and John~R. Steel.
\newblock {\em Fine structure and iteration trees}, volume~3 of {\em Lecture
  Notes in Logic}.
\newblock Springer-Verlag, Berlin, 1994.

\bibitem{MR2526093}
Yiannis~N. Moschovakis.
\newblock {\em Descriptive set theory}, volume 155 of {\em Mathematical Surveys
  and Monographs}.
\newblock American Mathematical Society, Providence, RI, second edition, 2009.

\bibitem{MR1349683}
Itay Neeman.
\newblock Optimal proofs of determinacy.
\newblock {\em Bull. Symbolic Logic}, 1(3):327--339, 1995.

\bibitem{MR2768699}
Ernest Schimmerling.
\newblock A core model toolbox and guide.
\newblock In {\em Handbook of set theory. {V}ols. 1, 2, 3}, pages 1685--1751.
  Springer, Dordrecht, 2010.

\bibitem{MR2096166}
Ralf Schindler.
\newblock Semi-proper forcing, remarkable cardinals, and bounded {M}artin's
  maximum.
\newblock {\em MLQ Math. Log. Q.}, 50(6):527--532, 2004.

\bibitem{PS}
Philipp Schlicht.
\newblock Perfect subsets of generalized {B}aire spaces and long games.
\newblock Submitted.

\bibitem{MR3226056}
Philipp Schlicht.
\newblock Thin equivalence relations and inner models.
\newblock {\em Ann. Pure Appl. Logic}, 165(10):1577--1625, 2014.

\bibitem{MR2817562}
Ian Sharpe and Philip~D. Welch.
\newblock Greatly {E}rd{\H o}s cardinals with some generalizations to the
  {C}hang and {R}amsey properties.
\newblock {\em Ann. Pure Appl. Logic}, 162(11):863--902, 2011.

\bibitem{SteelLectureNotes}
John~R. Steel.
\newblock Introduction to iterated ultrapowers.
\newblock Lecture notes.

\bibitem{MR1257469}
John~R. Steel.
\newblock Inner models with many {W}oodin cardinals.
\newblock {\em Ann. Pure Appl. Logic}, 65(2):185--209, 1993.

\bibitem{MR2768698}
John~R. Steel.
\newblock An outline of inner model theory.
\newblock In {\em Handbook of set theory. {V}ols. 1, 2, 3}, pages 1595--1684.
  Springer, Dordrecht, 2010.

\bibitem{CabalVolume3}
John~R. Steel and W.~Hugh Woodin.
\newblock {HOD} as a core model.
\newblock In {\em Ordinal definability and recursion theory. {T}he {C}abal
  {S}eminar. {V}olume {III}}, volume~43 of {\em Lecture Notes in Logic}, pages
  257--348. Association for Symbolic Logic, La Jolla, CA; Cambridge University
  Press, Cambridge, 2016.

\bibitem{MR1640916}
Boban Veli{\v{c}}kovi{\'c} and W.~Hugh Woodin.
\newblock Complexity of reals in inner models of set theory.
\newblock {\em Ann. Pure Appl. Logic}, 92(3):283--295, 1998.

\bibitem{MR1713438}
W.~Hugh Woodin.
\newblock {\em The axiom of determinacy, forcing axioms, and the nonstationary
  ideal}, volume~1 of {\em de Gruyter Series in Logic and its Applications}.
\newblock Walter de Gruyter \& Co., Berlin, 1999.

\bibitem{MR1876087}
Martin Zeman.
\newblock {\em Inner models and large cardinals}, volume~5 of {\em de Gruyter
  Series in Logic and its Applications}.
\newblock Walter de Gruyter \& Co., Berlin, 2002.

\end{thebibliography}

\end{document}